\newtheorem{theorem}{Theorem}[section]
\newtheorem{lemma}[theorem]{Lemma}
\newtheorem{prop}[theorem]{Proposition}
\theoremstyle{definition}
\newtheorem{definition}[theorem]{Definition}
\theoremstyle{remark}
\newtheorem{remark}[theorem]{Remark}
\numberwithin{equation}{section}
\newcommand\nc{\newcommand}
\nc{\E}{\mathbf{E}}
\nc{\R}{\mathbb R}
\nc{\C}{\mathbb C}
\nc{\Q}{\mathbb Q}
\nc{\Z}{\mathbb Z}
\nc{\wt}{\widetilde}
\nc{\rnc}{\renewcommand}
\nc{\e}{\varepsilon}
\nc{\grad}{\nabla}
\nc{\sck}{\mathrm{k}}
\nc{\bUz}{\mathbf{U}
}
\nc{\abbr}[1]{{\sc\lowercase{#1}}}
\rnc{\leq}{\leqslant}
\rnc{\geq}{\geqslant}
\rnc{\d}{\mathrm{d}}
\newenvironment{nouppercase}{%
  \renewcommand{\uppercasenonmath}[1]{}}{}
\title{\Large A flow-type scaling limit for random growth with memory}
\author{Amir Dembo and Kevin Yang}
\begin{document}
\setstretch{0.99}
\subjclass[2010]{60K35, 60K37, 82C22, 82C24}
\begin{nouppercase}
\maketitle
\end{nouppercase}
\begin{abstract}
We study a stochastic Laplacian growth model, where a set $\mathbf{U}\subseteq\R^{\d}$ grows according to a reflecting Brownian motion in $\mathbf{U}$ stopped at level sets of its boundary local time. We derive a scaling limit for the leading-order behavior of the growing boundary (i.e. ``interface"). It is given by a geometric flow-type \abbr{PDE}. It is obtained by an averaging principle for the reflecting Brownian motion. We also show that this geometric flow-type \abbr{PDE} is locally well-posed, and its blow-up times correspond to changes in the diffeomorphism class of the growth model. Our results extend those of \cite{DGHS}, which restricts to star-shaped growth domains and radially outwards growth, so that in polar coordinates, the geometric flow transforms into a simple \abbr{ODE} with infinite lifetime. Also, we remove the ``separation of scales" assumption that was taken in \cite{DGHS}; this forces us to understand the local geometry of the growing interface.
\end{abstract}

\section{Introduction}
Random growth processes driven by diffusive particles are ubiquitous models for deposition, network dynamics, and various processes in biology, such as diffusion limited aggregation (\abbr{DLA}) \cite{WS}, the dielectric breakdown model \cite{NPW}, and the Hastings-Levitov model \cite{HL}. In a nutshell, the growth evolves according to the harmonic measure on its boundary (the ``interface") with respect to a fixed source. For example, in \abbr{DLA}, one grows a subset of $\Z^{\d}$ by sequentially sampling independent random walks starting at $\infty$ and adding whatever boundary point of the growing set that the random walks hit first.

Generally speaking, stochastic Laplacian growth models are difficult to analyze mathematically in part due to the complicated geometry of the interface. One exception is internal diffusion limited aggregation (\abbr{IDLA}), introduced in \cite{MD} to describe chemical deposition and in \cite{DF} from an entirely different algebraic perspective. This model is \abbr{DLA}, but the source is a fixed point in the interior of the growing set. This makes the interface much smoother than in \abbr{DLA}. Indeed, in \cite{LBG}, it was shown that the leading order behavior of the interface is spherical. Similar results were shown for variations of \abbr{IDLA} in \cite{BDCKL,GQ,LL,LP}. (Strictly speaking, in \cite{LP}, models beyond \abbr{IDLA} are also considered, and the scaling limit is determined by a \abbr{PDE} free boundary problem. Similarly, a multi-particle \abbr{IDLA} is related to a Stefan free boundary problem in \cite{GQ}.) However, there are many other types of stochastic Laplacian growth models of high interest. These include excited and reinforced random walks; see \cite{BW} for more background. Roughly speaking, in these models, every time the diffusive particle hits the interface, rather than resampling a new particle from a fixed source, one resamples the particle from a source determined by the previous hit location. For example, one can consider a random walk on a growing domain that reflects towards the origin whenever it hits the boundary; this would give the origin-excited random walk (\abbr{OERW}). In this case, the random walk can explore the local geometry of the boundary (near whatever boundary point it is currently located at), and the interface can also develop more interesting geometries than just the sphere. It is still expected that models like \abbr{OERW} have a limiting shape in the long-time limit, but there is no clear path for trying to tackle this problem \cite{K12}.

A key difficulty in analyzing models of this type is the non-Markovian nature of both the marginal interface process, and the marginal particle process. Nevertheless, \cite{DGHS} successfully studies a continuum version, where one takes a smooth subset in $\R^{\d}$, runs a Brownian motion inside the submanifold until it hits the boundary, adds a ``small" bump (say of volume $\e$) to the submanifold at that point, pushes the Brownian motion back towards the origin by a ``large" distance of order $1$, and repeats. (Actually, \cite{DGHS} studies a more general setup, where Brownian motion can be replaced by any Markov process satisfying ergodic and regularity properties for its transition kernel, and push-back towards the origin can be replaced by any map satisfying a ``separation of scales" condition that we explain shortly.) Precisely, in \cite{DGHS}, the authors compute the $\e\to0$ scaling limit for the random growth model; this limit is described by an explicit and simple \abbr{ODE} (we explain this shortly). Outside of \cite{DGHS}, let us also mention \cite{VST,NT,NST0,NST}. These study non-isotropic and off-lattice models. However, they restrict to growth in dimension $2$ (e.g. because of the use of conformal maps).

Our main interest is to push \cite{DGHS} further, specifically the following two issues left open in that paper.
\begin{enumerate}
\item The growth models in \cite{DGHS} are defined to grow radially outward and be smooth, star-shaped domains in $\R^{\d}$ (i.e. their boundaries are images of the $(\d-1)$-dimensional sphere under a real-valued map). This rules out seeing any interesting geometry, e.g. changes in diffeomorphism class {{}or Riemannian structure}. (We also mention other growth models of high interest, such as the Eden model \cite{Eden} and \abbr{DLA} \cite{WS}, for which a main difficulty is the complicated geometry that can develop at the microscopic level.) It also gives a canonical choice of coordinates that one can use globally to analyze the growth and particle processes. For instance, the geometry is captured by a real-valued function. Therefore, it is essentially one-dimensional (hence the simple differential equation derived in \cite{DGHS}).
\item In \cite{DGHS}, the push-back map acting on the particle when it hits the boundary is macroscopic; it does not depend on the scaling parameter $\e$. In particular, \cite{DGHS} needs a priori separation of scales between the size of the added bumps and this map. This assumption allows one to avoid dealing with a singular harmonic measure for the next step in the process, which would require geometric inputs. The ``push-back" map essentially regularizes this harmonic measure automatically, allowing \cite{DGHS} to proceed with rather abstract (analytic) methods and without the need for any geometry.
\end{enumerate}
We now briefly explain the content of this paper more precisely. The growth process that we consider can be described as follows. Take a compact, connected subset $\bUz\subseteq\R^{\d}$ with smooth boundary $\partial\bUz$. (Here, $\d\geq1$ is a fixed dimension.) Next, let $t\mapsto\mathbf{w}(t)\in\bUz$ denote a standard Brownian motion with unit normal reflection on $\partial\bUz$. (The exact reflection rule is not so important; the boundary vector field determining the reflection direction just has to be smooth and never parallel to $\partial\bUz$.) Let $\e>0$ be a ``microscopic scale" that we take to zero to get a scaling limit. The growth model is given by the following (see Figure \ref{figure:1}).
\begin{enumerate}
\item Suppose $\mathbf{w}(0)\in\partial\bUz$. According to a Poisson clock of speed $\e^{-1}$, sample a random point $\mathbf{z}\in\partial\bUz$ that is distributed as $\mathbf{w}(\tau)$, where $\tau$ is the first time that the boundary local time of $\mathbf{w}(\cdot)$ equals $\delta=\delta(\e)>0$ (some parameter to be determined). Add to $\bUz$ a smooth ``bump" (or subset) of volume $\e$ centered at the point $\mathbf{z}$. (Since $\mathbf{z}\in\partial\bUz$, this represents a small ``growth" of $\bUz$.) 
\begin{itemize}
\item Using local time to sample the next point for the particle, {{}as opposed to the push-back procedure in \cite{DGHS}}, is just a convenient (and analytically nice) choice of regularization of harmonic measure. Here, $\delta$ is analogous to the length-scale of the ``push-back map" mentioned earlier. The point is that we allow it to depend on $\e$.
\end{itemize}
\item Repeat, but start $\mathbf{w}$ at the random point $\mathbf{z}$ from above, and replace $\bUz$ by its augmentation.
\end{enumerate}
\begin{figure}[h!]
\includegraphics[width=0.3\textwidth]{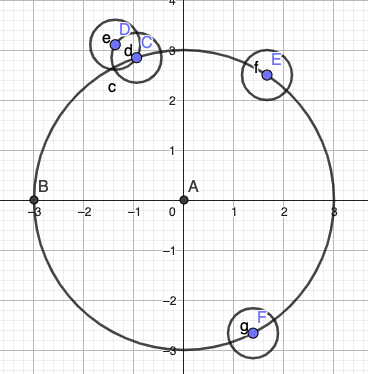}
\caption{The model of interest when the initial set $\bUz$ is a circle centered at point $\mathbf{A}$. Here, $\mathbf{B}$ (on the left) is $\mathbf{w}(0)$. The point $\mathbf{C}$ is sampled by starting reflecting Brownian motion $\mathbf{B}$ and waiting until its boundary local time equals $\delta(\e)$. At $\mathbf{C}$, we augment the circle by a small ball. The point $\mathbf{D}$ is sampled by starting reflecting Brownian motion on the augmented set at $\mathbf{C}$, and waiting for the boundary local time to hit $\delta(\e)$. Then, augment the set by adding a circle centered at $\mathbf{D}$. Iterate to sample $\mathbf{E}$ then $\mathbf{F}$ (with their respective circles). It is likely that if $\delta(\e)\ll1$, the $(k+1)$-th sampled point will be close to the $k$-th sampled point. But Brownian excursion theory says that this probability is not too close to $1$; this is why $\mathbf{E}$ is far from $\mathbf{D}$, for example.}
\label{figure:1}
\end{figure}
This model is similar to that in \cite{BBCH}. Both are reflecting Brownian motions with respect to a metric that depends on its boundary local time. In \cite{BBCH}, the goal is computing the invariant measure {{}of the Brownian particle}. The goal of this work, which we explain shortly, is to compute the large-scale \emph{dynamical} behavior of the metric \emph{by means of invariant measures of the Brownian particle}, more or less.

The above description consists of a Brownian ``particle" which drives the growth of an evolving subset. One can also rewrite this process as the evolution of the boundary (i.e. the interface) rather than the whole set itself. In particular, adding a small aggregate to the set is equivalent to letting the interface move ``outward" by small amount. Now, assuming that the diffeomorphism class of the set and its boundary does not change after the interface moves outward, one can express the new interface as the graph of a function on the original interface. Similar to how mean curvature flow for an evolving hypersurface is conveniently described by a \abbr{PDE} for a function on the original (i.e. ``initial") hypersurface, the above dynamics can be thought of as a particle process plus a time-dependent (random) function on the original interface $\partial\bUz$. This is the perspective we will take, as it is more convenient to work with, analytically. 

Our main result (Theorem \ref{theorem:thm2}) states that as we take $\e\to0$, the evolution of this random growth model converges to the solution of a deterministic ``flow-type" equation \eqref{eq:limit}, whose local-in-time well-posedness is shown in Theorem \ref{theorem:thm1}. This limit can be but is \emph{not} globally-in-time well-posed in general {{}(as evidenced by the explicit solutions given in Section \ref{section:examplesection}), and in particular, we show in Theorem \ref{theorem:thm1b} that blow-up of \eqref{eq:limit} occurs exactly when its solution fails to be an immersion. (Moreover, a canonical extension of this flow gives rise to an interface process whose blow-up occurs exactly when it fails to be a smooth codimension $1$ sub-manifold in $\R^{\d}$; we explain this after Theorem \ref{theorem:thm1b}.)} This, along with going well beyond radial growth, addresses the first limitation of \cite{DGHS}. As for the second limitation of \cite{DGHS}, \emph{we only need the local time parameter $\delta(\e)$ to satisfy $\delta(\e)\gg\e$, i.e. we allow any mesoscopic scale regularization} (with the same limit across all such scales). So, the speed of the Brownian particle can be anything much faster than the speed of the growth. In words, we show an \emph{averaging} or \emph{homogenization} principle; as $\e\to0$, the randomness of the Brownian particle averages out, {{}and the interface flows outwards at a speed depending only on its Euclidean surface measure (normalized to be a probability measure). In particular, the evolution of the interface becomes autonomous as $\e\to0$.} To accomplish this removal of separation of scales, we use methods of stochastic differential geometry, which is in stark contrast to the non-geometric methods in \cite{DGHS}.

{{}This concludes the heuristic overview of this paper. In Section \ref{section:mainresults}, we state and describe the main results and model precisely, followed by illustrating examples in Section \ref{section:examplesection}. The structure of the rest of the paper is described in Section \ref{subsection:organization}.}
\subsection{Acknowledgments}
We thank Otis Chodosh and Mykhaylo Shkolnikov
for helpful discussions. We thank Yuval Peres, Amanda Turner, and James Norris for helpful comments and for explaining their work. We thank James Norris for bringing up questions about global lifetime for special initial data to \eqref{eq:limit}. {{}We would also like to thank an anonymous referee for their very helpful comments, which improved the manuscript significantly.} Research supported in part by \abbr{NSF} grant \abbr{DMS}-1954337 (A.D.), and by a fellowship from the \abbr{ARCS} foundation and the \abbr{NSF} Mathematical Sciences Postdoctoral Fellowship program under Grant. No. \abbr{DMS}-2203075 (K.Y.). Both authors were sponsored by the \abbr{NSF} under Grant. No. \abbr{DMS}-1928930 at a program hosted at \abbr{MSRI} in Berkeley, CA during Fall 2021.
\subsection{Notation}
Given any set $I$, whenever we write $a \lesssim_{I} b$, we mean $|a|\leq C|b|$, where $C\geq0$ depends only on $I$. Similarly, when we write $a\gtrsim_{I} b$, we mean $b\lesssim_{I}a$. If there is no subscript, then $I$ is empty.
%
%
%
\section{Main results}\label{section:mainresults}
\subsection{The macroscopic flow equation}
In a nutshell, the macroscopic limit is a geometric flow for a subset of $\R^{\d}$ that ``inflates" in the outward normal direction with speed determined by its surface measure (normalized to be a probability measure). With notation explained afterwards, it is given by the following integral equation (in which $t\geq0$ and $x\in\partial\bUz$): 
\begin{align}
\Phi^{\mathrm{hom}}(t,x) \ = \ \Phi^{\mathrm{hom}}(0,x) + \int_{0}^{t}\d s\int_{\partial \bUz}\mathbf{T}^{\Phi^{\mathrm{hom}}(s,\cdot)}(\d y)\mathscr{K}(x,y)\mathsf{n}^{\Phi^{\mathrm{hom}}(s,\cdot)}(y).\label{eq:limit}
\end{align}
Above, $\mathscr{K}\in\mathscr{C}^{\infty}(\partial\bUz\times\partial\bUz,\R_{\geq0})$ is a fixed smooth kernel; it determines the shape of the outwards growth. To define the other terms in \eqref{eq:limit}, first fix a map $\Phi:\partial\bUz\to\R^{\d}$ that is diffeomorphic onto its image. We let $\mathbf{T}^{\Phi}$ be the pullback under $\Phi:\partial\bUz\to\Phi(\partial\bUz)$ of the Euclidean surface measure on $\Phi(\partial\bUz)$ (normalized to be a probability measure). We let $\mathsf{n}^{\Phi}(y)$ be the unit outward normal to $\Phi(\partial\bUz)$ at $\Phi(y)$. (It points away from the ``interior" of $\Phi(\partial\bUz)$, i.e. away from the unique compact, connected component with boundary $\Phi(\partial\bUz)$.)

To analyze \eqref{eq:limit}, it is convenient to compute the $\mathbf{T}$-measure on the \abbr{RHS} more explicitly. For this purpose, we introduce notation that will be important throughout this paper. For the rest of this paper, we fix a compact, connected set $\bUz\subseteq\R^{\d}$ with smooth boundary $\partial\bUz$.
\begin{definition}\label{definition:limit}
Let $\mathscr{C}^{\infty}_{\simeq}(\partial\bUz,\R^{\d})$ denote the space of all smooth maps on $\partial\bUz$ that are diffeomorphisms onto their image. For $\sck\geq0$, we let 
$\mathscr{C}^{\sck}_{\simeq}(\partial\bUz,\R^{\d})$ be {{}the set of injective maps in the completion of $\mathscr{C}^{\infty}_{\simeq}(\partial\bUz,\R^{\d})$ under the following metric:}
\begin{align}
\langle\phi,\psi\rangle_{\mathscr{C}^{\sck}} \ := \ \|\phi-\psi\|_{\mathscr{C}^{\sck}}+\sup_{i,j=1,\ldots,\d-1}\|(\mathrm{Jac}\phi)^{-1}_{ij}-(\mathrm{Jac}\psi)^{-1}_{ij}\|_{\mathscr{C}^{\sck-1}}.
\end{align}
Above, $\|\cdot\|_{\mathscr{C}^{\sck}}$ is the standard $\mathscr{C}^{\sck}(\partial\bUz,\R^{\d})$-norm, and $\mathrm{Jac}\phi$ is the Jacobian (or derivative matrix) of $\phi$ with respect to a fixed orthonormal frame $\mathbf{e}_{1},\ldots,\mathbf{e}_{\d-1}$ for the $(\d-1)$-dimensional manifold $\partial\bUz$. (To be precise, $\mathrm{Jac}\phi(y)$ is a linear map from the tangent space of $\partial\bUz$ at $y\in\partial\bUz$ to that of $\phi(\partial\bUz)$ at $\phi(y)$. The first tangent space has orthonormal basis $\mathbf{e}_{i}(y)$. The latter tangent space is given an orthonormal basis obtained by taking the image of $\mathbf{e}_{1}(y),\ldots,\mathbf{e}_{\d-1}(y)$ of the Jacobian of $\phi:\partial\bUz\to\R^{\d}$ and doing Gram-Schmidt for the resulting $(\d-1)$-many vectors; note these are linearly independent if $\phi$ is diffeomorphic onto its image. In particular, $\mathrm{Jac}\phi(y)$ has entries given by polynomials in the derivatives of $\phi$ along $\mathbf{e}_{1},\ldots,\mathbf{e}_{\d-1}$ at $y$. Also, $(\mathrm{Jac}\phi)^{-1}_{ij}$ is the $(i,j)$-entry of the inverse of $\mathrm{Jac}\phi$, which is well-defined since $\phi$ is a diffeomorphism onto its image.) 

Next, for any smooth hypersurface $\mathbb{H}\subseteq\R^{\d+1}$, we let $\d\Sigma_{\mathbb{H}}$ be the Euclidean surface metric on $\mathbb{H}$.
\end{definition}
{{}Injectivity is important to be able to make sense of the normal vector in \eqref{eq:limit}. We will only consider small perturbations of a diffeomorphisms anyway which preserve this injectivity property.} 

By change-of-variables, we can now compute
\begin{align}
\mathbf{T}^{\Phi}(\d y) \ := \ \frac{|\det\mathrm{Jac}\Phi(y)|}{\int_{\partial\bUz}|\det\mathrm{Jac}\Phi(w)|\d\Sigma_{\partial\bUz}(\d w)}\d\Sigma_{\partial\bUz}(\d y).\label{eq:tequation}
\end{align}
Our first result, Theorem \ref{theorem:thm1}, shows that \eqref{eq:limit} is locally-in-time well-posed. Before we state this precisely, for any $\tau \in (0,\infty]$, we let $\mathscr{C}([0,\tau),\mathscr{C}^{\mathrm{k}}_{\simeq}(\partial\bUz,\R^{\d}))$ be the space of continuous $\mathscr{C}^{\mathrm{k}}_{\simeq}(\partial\bUz,\R^{\d}))$-valued functions on $[0,\tau)$. We give it the topology of convergence in $\mathscr{C}^{\mathrm{k}}_{\simeq}(\partial\bUz,\R^{\d}))$ locally uniformly on $[0,\tau)$.
\begin{theorem}\label{theorem:thm1}
{{}Suppose that $\Phi^{\mathrm{hom}}(0,\cdot)\in\mathscr{C}^{10}_{\simeq}(\partial \bUz,\R^{\d})$. Let $\tau_{\mathrm{sol}}\geq0$ be the largest time such that \eqref{eq:limit} has a unique solution $\Phi^{\mathrm{hom}}(\cdot,\cdot)$ in $\mathscr{C}([0,\tau_{\mathrm{sol}}),\mathscr{C}^{10}_{\simeq}(\partial \bUz,\R^{\d}))$ with initial data $\Phi^{\mathrm{hom}}(0,\cdot)$. Then $\tau_{\mathrm{sol}}\in(0,\infty]$.}
\end{theorem}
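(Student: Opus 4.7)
The plan is to recast \eqref{eq:limit} as the Banach-space ODE $\dot\Phi=F(\Phi)$, where
\begin{align*}
F(\Phi)(x)\ :=\ \int_{\partial\bUz}\mathbf{T}^{\Phi}(\d y)\,\mathscr{K}(x,y)\,\mathsf{n}^{\Phi}(y),
\end{align*}
and to solve the associated Picard map $\mathcal{S}[\Phi](t,\cdot):=\Phi^{\mathrm{hom}}(0,\cdot)+\int_{0}^{t}F(\Phi(s,\cdot))\,\d s$ via the contraction principle on a small closed metric ball $B_{R}\subseteq\mathscr{C}^{10}_{\simeq}(\partial\bUz,\R^{\d})$ around $\Phi^{\mathrm{hom}}(0,\cdot)$. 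The structural fact driving all estimates is that the integrand of $F(\Phi)(x)$ depends on $\Phi$ only through the first-order Jacobian $\mathrm{Jac}\Phi$: both $\mathsf{n}^{\Phi}(y)$ (built from a cross-product / Gram--Schmidt procedure) and the density of $\mathbf{T}^{\Phi}$ (proportional to $|\det\mathrm{Jac}\Phi(y)|$) are smooth pointwise functions of the entries of $\mathrm{Jac}\Phi(y)$ on the open set where this Jacobian has full rank, which is precisely the standing hypothesis in $\mathscr{C}^{10}_{\simeq}$. Moreover $x$ enters $F(\Phi)(x)$ only through the smooth kernel $\mathscr{K}(\cdot,y)$, so every $x$-derivative falls entirely on $\mathscr{K}$.

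From this I would first deduce the uniform bound $\|F(\Phi)\|_{\mathscr{C}^{k}}\leq C_{k,R}$ for every $k\geq0$ and every $\Phi\in B_{R}$, by moving all $x$-derivatives onto $\mathscr{K}$. Next I would establish the Lipschitz estimate
\begin{align*}
\|F(\Phi_{1})-F(\Phi_{2})\|_{\mathscr{C}^{10}}\ \leq\ C_{R}\,\|\Phi_{1}-\Phi_{2}\|_{\mathscr{C}^{1}},\qquad\Phi_{1},\Phi_{2}\in B_{R},
\end{align*}
via the mean-value theorem applied to the smooth map $\mathrm{Jac}\Phi\mapsto|\det\mathrm{Jac}\Phi|\,\mathsf{n}^{\Phi}$ on the relatively compact set $\{\|(\mathrm{Jac}\Phi)^{-1}\|_{\mathscr{C}^{0}}\leq R\}$, together with the analogous bound for the normalization denominator of $\mathbf{T}^{\Phi}$. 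Consequently $\|\mathcal{S}[\Phi](t,\cdot)-\Phi^{\mathrm{hom}}(0,\cdot)\|_{\mathscr{C}^{10}}=O(\tau)$ on $[0,\tau]$, so by openness of $\mathscr{C}^{10}_{\simeq}$ in $\mathscr{C}^{10}$ the output $\mathcal{S}[\Phi](t,\cdot)$ stays inside $\mathscr{C}^{10}_{\simeq}$ for $\tau$ small.

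The inverse-Jacobian piece of the metric distance $\langle\mathcal{S}[\Phi_{1}](t,\cdot),\mathcal{S}[\Phi_{2}](t,\cdot)\rangle_{\mathscr{C}^{10}}$ is handled through the identity $A^{-1}-B^{-1}=-A^{-1}(A-B)B^{-1}$, the $\mathscr{C}^{9}$-Leibniz rule, and uniform $\mathscr{C}^{9}$-control of $(\mathrm{Jac}\mathcal{S}[\Phi_{i}])^{-1}$ (inherited from the uniform $\mathscr{C}^{10}$-bound on $\mathcal{S}[\Phi_{i}]$ and the fact that $\Phi^{\mathrm{hom}}(0,\cdot)\in\mathscr{C}^{10}_{\simeq}$). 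Summing the two contributions gives
\begin{align*}
\langle\mathcal{S}[\Phi_{1}](t,\cdot),\mathcal{S}[\Phi_{2}](t,\cdot)\rangle_{\mathscr{C}^{10}}\ \leq\ C_{R}\,t\,\sup_{s\in[0,t]}\langle\Phi_{1}(s,\cdot),\Phi_{2}(s,\cdot)\rangle_{\mathscr{C}^{10}},
\end{align*}
which is a strict contraction on $\mathscr{C}([0,\tau],B_{R})$ for $\tau=\tau(R)>0$ small. Banach's fixed-point theorem then furnishes the unique solution on $[0,\tau]$; the maximal existence time $\tau_{\mathrm{sol}}\in(0,\infty]$ is obtained by the standard continuation argument, with blow-up only possible when $\|\Phi^{\mathrm{hom}}(t,\cdot)\|_{\mathscr{C}^{10}}$ or $\|(\mathrm{Jac}\Phi^{\mathrm{hom}}(t,\cdot))^{-1}\|_{\mathscr{C}^{0}}$ escapes to infinity (the latter capturing precisely the change of diffeomorphism class highlighted in the introduction).

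The main technical obstacle is the chain- and Leibniz-rule bookkeeping for the inverse-Jacobian contribution to the $\mathscr{C}^{10}$-metric, since $\mathrm{Jac}\Phi(y)$ is itself defined implicitly by a Gram--Schmidt orthonormalization against a $\Phi$-dependent frame on $T_{\Phi(y)}\Phi(\partial\bUz)$. Once one verifies that the entries of this $(\d-1)\times(\d-1)$ matrix are rational functions of the components of the Euclidean derivatives $\d\Phi(y)\cdot\mathbf{e}_{i}(y)$, smooth on $\{\|(\mathrm{Jac}\Phi)^{-1}\|\leq R\}$, the remainder of the argument is a textbook Picard--Lindel\"of application in this slightly non-linear metric setting.
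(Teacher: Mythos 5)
Your proposal is correct and takes essentially the same approach as the paper: both set up the Picard map $\mathfrak{S}\Phi(t,\cdot)=\Phi^{\mathrm{hom}}(0,\cdot)+\int_0^t F(\Phi(s,\cdot))\,\d s$, observe that every $x$-derivative falls only on the smooth kernel $\mathscr{K}$ (no loss of regularity), establish Lipschitz continuity of $\Phi\mapsto\mathsf{n}^{\Phi}$ and $\Phi\mapsto\mathbf{T}^{\Phi}$ on a small $\mathscr{C}^{10}_{\simeq}$-ball around the initial data (the paper's Lemmas \ref{lemma:thm11} and \ref{lemma:thm12}), control the inverse-Jacobian piece of the $\langle\cdot,\cdot\rangle_{\mathscr{C}^{10}}$-metric via matrix perturbation and the inverse function theorem, and conclude with Banach's fixed point theorem plus the standard continuation argument for $\tau_{\mathrm{sol}}$. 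The only cosmetic difference is that the paper packages the kernel-continuity estimates as standalone lemmas and invokes the inverse function theorem somewhat more abstractly where you use the explicit identity $A^{-1}-B^{-1}=-A^{-1}(A-B)B^{-1}$.
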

The use of $\mathscr{C}^{10}_{\simeq}$ in Theorem \ref{theorem:thm1} is not important; any $\sck\geq1$ (instead of $\sck=10$) would likely be enough. (Because we assumed that $\mathscr{K}$ is smooth, our analysis does not depend very much on the number of derivatives $\sck$, except for the fact that the measure $\mathbf{T}^{\Phi}$ in \eqref{eq:limit} already requires a derivative of $\Phi$. So, in principle, we are only forced to take $\sck\geq1$. We chose $\sck=10$ in order to avoid having to track the number of derivatives of certain functions of $\Phi$ that we are allowed to take, which may exceed one merely for technical reasons.)

We emphasize that \eqref{eq:limit} is non-random, so $\tau_{\mathrm{sol}}$ is also non-random. {{}{{}It turns out that $\tau_{\mathrm{sol}}$ has a \emph{geometric} meaning, as indicated by the following result. (In Theorem \ref{theorem:thm1b} below, the function space is $\mathscr{C}^{10}(\partial\bUz,\R^{\d})$, not $\mathscr{C}^{10}_{\simeq}(\partial\bUz,\R^{\d})$. We also recall that an \emph{immersion} is a $\mathscr{C}^{1}$ injective map whose Jacobian is nowhere singular.)
\begin{theorem}\label{theorem:thm1b}
Recall the notation from Theorem \ref{theorem:thm1}, and suppose that $\tau_{\mathrm{sol}}<\infty$. There exists $\Phi^{\mathrm{hom}}(\tau_{\mathrm{sol}},\cdot)\in\mathscr{C}^{10}(\partial\bUz,\R^{\d})$ such that $\Phi^{\mathrm{hom}}(t,\cdot)\to\Phi^{\mathrm{hom}}(\tau_{\mathrm{sol}},\cdot)$ in $\mathscr{C}^{10}(\partial\bUz,\R^{\d})$ as $t\to\tau_{\mathrm{sol}}^{-}$. Moreover, $\Phi^{\mathrm{hom}}(\tau_{\mathrm{sol}},\cdot)$ is not an immersion.
\end{theorem}
Theorem \ref{theorem:thm1b} first shows that $\Phi^{\mathrm{hom}}(t,\partial\bUz)$ converges as $t\to\tau_{\mathrm{sol}}$. It also indicates a ``geometric" singularity at $\tau_{\mathrm{sol}}$. This does \emph{not} rule out being able to use the inclusion $\Phi^{\mathrm{hom}}(\tau_{\mathrm{sol}},\partial\bUz)\subseteq\R^{\d}$ to give $\Phi^{\mathrm{hom}}(\tau_{\mathrm{sol}},\partial\bUz)$ a smooth (or Riemannian) structure that may be diffeomorphic to $\partial\bUz$ through another map. (It seems to be difficult to describe how $\Phi^{\mathrm{hom}}(t,\partial\bUz)$ changes \emph{topologically} as $t\geq0$ varies, e.g. how does its fundamental group change? We construct examples in Section \ref{section:examplesection} that show how $\Phi^{\mathrm{hom}}(t,\partial\bUz)$ can become not diffeomorphic or not even homotopy equivalent to $\partial\bUz$ as $t\geq0$ grows.) But if $\Phi^{\mathrm{hom}}(\tau_{\mathrm{sol}},\partial\bUz)$ is a smooth codimension $1$ sub-manifold in $\R^{\d}$, we can solve the flow \eqref{eq:limit} with $\Phi^{\mathrm{hom}}(\tau_{\mathrm{sol}},\partial\bUz)$ in place of $\partial\bUz$ and with initial data given by the identity map. Thus, we get an evolving interface even after $\tau_{\mathrm{sol}}$, unless the interface fails to be a smooth codimension $1$ sub-manifold at time $\tau_{\mathrm{sol}}$ when viewed as a subset of $\R^{\d}$. In particular, the maximal time for this new evolving interface, obtained by gluing solutions to \eqref{eq:limit} together on different time-intervals, is the first time where it fails to be a smooth codimension $1$ sub-manifold in $\R^{\d}$.}}
\subsection{The microscopic random growth model}
We now construct the growth model as a Markov process. It will use more decorated notation. For this reason, we clarify Definition \ref{definition:interface} immediately afterwards.

Before we give the construction, we make an additional assumption (that will be true for the setting of our main result, Theorem \ref{theorem:thm2}). \emph{Assume that $\bUz$ has a Euclidean collar of length 1}, which we denote by $\mathbf{C}_{1}$. That is, there is a set $\mathbf{C}_{1}\subseteq\bUz$ such that each $x\in\mathbf{C}_{1}$ can be written uniquely in terms of its distance $\mathrm{dist}(x,\partial\bUz)$ from the boundary and the ``horizontal" point $x^{\mathrm{hor}}\in\partial\bUz$ that satisfies $\mathrm{dist}(x,\partial\bUz)=|x-x^{\mathrm{hor}}|$. So, there exists an isomorphism $\mathbf{C}_{1}\simeq\partial\bUz\times[0,1]$ given by $x\mapsto(x^{\mathrm{hor}},\mathrm{dist}(x,\partial\bUz))$. 
\begin{definition}\label{definition:interface}
Define the state space $\Omega:=\partial\bUz\times\mathscr{C}^{10}_{\simeq}(\partial\bUz,\R^{\d})$. 

Take the microscopic scaling parameter $\e>0$. Define $t\mapsto(\mathbf{z}^{\e}(t),\Phi^{\e}(t))\in\Omega$ as the Markov process with the following generator (for test functions $F:\Omega\to\R$), which uses notation to be explained afterwards:
\begin{align}
\mathscr{L}^{\e}F(z,\Phi) \ := \ \e^{-1}\int_{\partial\bUz}\mathbf{T}^{\Phi,\delta}_{z}(\d y)\left[F(y,\mathscr{T}^{\e,y}\Phi)-F(z,\Phi)\right], {{}\quad (z,\Phi)\in\Omega}. 
\end{align}
First, given any $y\in\partial\bUz$, we let $\mathscr{T}^{\e,y}\Phi(x)=\Phi(x)+\e\mathscr{K}(x,y)\mathsf{n}^{\Phi}(y)$ be the addition of a volume $\e$ bump to the $\Phi$-interface (as in the limiting dynamics \eqref{eq:limit}, but without integration over $y$). (Recall $\mathscr{K}$ from \eqref{eq:limit}.) It remains to specify $\mathbf{T}_{z}^{\Phi,\delta}(\d y)$. Let us choose it be the probability measure on $\partial\bUz$ {{}corresponding to the law of $\mathbf{x}^{\Phi,z}(\tau^{\Phi,z,\delta})$, which we define below precisely. (In words, $\mathbf{x}^{\Phi,z}$ denotes a reflecting Brownian motion on $\bUz$ that is defined with respect to a metric determined by $\Phi$, that is reflected on $\partial\bUz$, and that starts at $z\in\partial\bUz$; see \cite{DH} for the construction of a reflecting Brownian motion. We stop it at the first time $\tau^{\Phi,z,\delta}$ that the boundary local time of $\mathbf{x}^{\Phi,z}$ is equal to a fixed parameter $\delta=\delta(\e)$.)}
\begin{enumerate}
\item Given any $\Phi\in\mathscr{C}^{10}_{\simeq}(\partial\bUz,\R^{\d})$, we define a metric $\mathsf{g}^{\Phi}$ on $\bUz$ as follows. (See Figure \ref{figure:3}.)

First, let $\mathsf{g}^{\Phi}|_{\partial\bUz}$ be the pullback of the Euclidean surface metric on $\Phi(\partial\bUz)$ under 
\begin{align}
\Phi:\partial\bUz\simeq\Phi(\partial\bUz).
\end{align}
This is a metric on $\partial\bUz$, and it defines the restriction of our metric $\mathsf{g}^{\Phi}$ to $\partial\bUz$. On the interior $\bUz\setminus\mathbf{C}_{1}$, we let $\mathsf{g}^{\Phi}$ be standard Euclidean metric. Finally, on the collar $\mathbf{C}_{1}\simeq\partial\bUz\times[0,1]$, we let it interpolate between our boundary metric $\mathsf{g}^{\Phi}|_{\partial\bUz}$ and the Euclidean metric. We define it precisely as follows.

First, consistently with the previous paragraph for $\Phi$ the identity map, let $\mathsf{g}^{\mathrm{Id}}|_{\partial\bUz}$ be the Euclidean surface metric on $\partial\bUz$. Now, recall the identification $\mathbf{C}_{1}\simeq\partial\bUz\times[0,1]$, of any $x\in\mathbf{C}_{1}$ 
with $(x^{\mathrm{hor}},\mathrm{dist}(x,\partial\bUz))$, yielding
the \emph{interpolating metric} on $\mathbf{C}_{1}$:
\begin{align}
\mathsf{g}^{\mathrm{inter},\Phi}(x) \ &:= \ \mathsf{g}^{\mathrm{Id}}|_{\partial\bUz}(x^{\mathrm{hor}})\chi(\mathrm{dist}(x,\partial\bUz)) 
+ \ \mathsf{g}^{\Phi}|_{\partial\bUz}(x^{\mathrm{hor}})\left\{1-\chi(\mathrm{dist}(x,\partial\bUz))\right\}.
\end{align}
In the above, $\chi\in\mathscr{C}^{\infty}(\R)$ is $[0,1]$-valued such that $\chi(r)=0$ for $r\leq\frac12$ and $\chi(r)=1$ for $r\geq1$. In words, as we vary $\mathrm{dist}(x,\partial\bUz)$ from $0$ to $1$, we smoothly transition from $\mathsf{g}^{\Phi}|_{\partial\bUz}$ to $\mathsf{g}^{\mathrm{Id}}|_{\partial\bUz}$. (We actually hit $\mathsf{g}^{\mathrm{Id}}|_{\partial\bUz}$ when $\mathrm{dist}(x,\partial\bUz)=\frac12$ with our choice of $\chi$.) 

So far, if we think of $\mathbf{C}_{1}\simeq\partial\bUz\times[0,1]$ as a foliation of $\partial\bUz\times\{r\}$ for $r\in[0,1]$, we have defined a smooth (in $r$) family of metrics on each leaf $\partial\bUz\times\{r\}$. To get a metric on the entire foliation $\mathbf{C}_{1}\simeq\partial\bUz\times[0,1]$, we are left to choose any metric on $[0,1]$. In particular, we now define
\begin{align}
\mathsf{g}^{\Phi}|_{\mathbf{C}_{1}}:=\mathsf{g}^{\mathrm{inter},\Phi}(x)\wedge\d\mu^{\mathrm{Euc}},
\end{align}
which is a product metric on $\mathbf{C}_{1}\simeq\partial\bUz\times[0,1]$ (here, $\d\mu^{\mathrm{Euc}}$ is Euclidean metric on $[0,1]$).
\item The process $\mathbf{x}^{\Phi,z}$ is a reflecting Brownian motion on $\bUz$ with unit inward normal reflection and initial condition $z\in\partial\bUz$. It is defined with respect to the metric $\mathsf{g}^{\Phi}$. (Again, see \cite{DH}.) Also, given $\delta=\delta(\e)$, define the following stopping time, in which $\mathbf{L}(t)$ is the boundary local time of $\mathbf{x}^{\Phi,z}$:
\begin{align}
\tau^{\Phi,z,\delta} \ := \ \inf\{t\geq0:\mathbf{L}(t)=\delta\}.
\end{align}
(This stopping time is almost surely finite; see Lemma \ref{lemma:sde}.)
\end{enumerate}
\end{definition}
\begin{figure}[h!]
\includegraphics[width=0.3\textwidth]{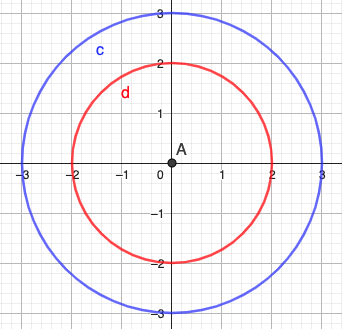}
\caption{Here, $\bUz$ is the disk whose boundary is the blue circle. The region between the blue and red circles is the collar $\mathbf{C}_{1}$ for $\bUz$. The metric $\mathsf{g}^{\Phi}$ is defined to be Euclidean inside the red circle. On the blue boundary, it is the pullback of Euclidean surface metric on $\Phi(\partial\bUz)$. Interpolate between blue and red circles to finish the definition of $\mathsf{g}^{\Phi}$.}
\label{figure:3}
\end{figure}
\begin{remark}
{{}An \abbr{SDE} representation of $\mathbf{x}^{\Phi,z}$ is given at the beginning of Section \ref{section:couplingproof}.}
\end{remark}
The process $t\mapsto(\mathbf{z}^{\e}(t),\Phi^{\e}(t))$ has two components. The second is the interface (or more precisely, the function determining the interface). The dynamics are described by placing a bump $x\mapsto\e\mathscr{K}(x,y)\mathsf{n}^{\Phi}(y)$ to the function $x\mapsto\Phi(x)$, where $y$ is the location of the next jump of $\mathbf{z}^{\e}(t)$. (Integrating over $y\in\partial\bUz$ in the generator comes from averaging over all possible jump locations according to the probability of said jump.)

What warrants more clarification is $\mathbf{z}^{\e}(t)$. As noted in the introduction, the particle we want to consider is Brownian motion with respect to the time-independent standard Euclidean metric and normal reflection on a \emph{time-dependent} set. If the time-dependent domain does not change diffeomorphism class, we can diffeomorphically map it to the original set $\bUz$. More precisely:
\begin{itemize}
\item Suppose $\bUz^{+}$ is $\bUz$ plus a small bump near a point on $\partial\bUz$. Take any diffeomorphism $\partial\bUz\simeq\partial\bUz^{+}$, and recall the foliation $\mathbf{C}_{1}\simeq\partial\bUz\times[0,1]$. We build a diffeomorphism $\bUz\simeq\bUz^{+}$ as follows. To each leaf in the foliation $\mathbf{C}_{1}\simeq\partial\bUz\times[0,1]=\{\partial\bUz\times\{r\}\}_{r\in[0,1]}$, apply a diffeomorphism that interpolates smoothly between $\partial\bUz\simeq\partial\bUz^{+}$ at $r=0$ and the identity at $r=1$. Then, let its restriction to $\bUz\setminus\mathbf{C}_{1}$ be the identity. (Intuitively, ``stretch" the ``edge" of the collar $\mathbf{C}_{1}$ while fixing $\bUz\setminus\mathbf{C}_{1}$.) The metric obtained by pulling back Euclidean metric on $\bUz^{+}$ under this diffeomorphism has the form $\mathsf{g}^{\Phi}$ from Definition \ref{definition:interface}. {{}(Indeed, we change the metric on the boundary $\partial\bUz$ and smoothly interpolate it with the Euclidean metric. Our choice of interpolation makes it so that how close to we are to Euclidean metric on $\partial\bUz$ at a particular point $x\in\bUz$ is a smooth function of only $\mathrm{dist}(x,\partial\bUz)$.)}
\end{itemize}
In particular, Definition \ref{definition:interface} is not just an isolated construction. It rewrites the growing domain as a fixed set with an evolving metric, which is, again, much more analytically convenient to use. For example, instead of a reflecting Brownian motion on a time-varying domain, we only need to study a reflecting Brownian motion on a fixed domain with a time-dependent metric. We also clarify that the exact constraints for the interpolator $\chi$ are not too important. Indeed, we can describe the same growing domain with a different choice of $\chi$ if we slightly adjust our diffeomorphism above. In fact, to capture the growing domain picture above, we should let $\chi$ evolve in an adapted fashion as well. This is not an issue, however; we can still choose $\chi$ to satisfy the constraints spelled out in Definition \ref{definition:interface}, which are there only for later analytic convenience. Moreover, the scaling limit we obtain is independent of $\chi(\cdot)$. One disadvantage with this description of the interface model is that it is a specific parameterization of the growth model which only holds at best until its diffeomorphism class changes. For further discussion on this, see Section \ref{subsection:extensionglobal}.

In this representation, the process $t\mapsto\mathbf{z}^{\e}(t)$ is a Poisson jump process of speed $\e^{-1}$, where the law of a jump is determined by stopping the reflecting Brownian motion $\mathbf{x}^{\Phi,z}$ when its boundary local time reaches $\delta=\delta(\e)$. The underlying process is often called the \emph{boundary trace} \cite{Hsu0} of the reflecting Brownian motion $\mathbf{x}^{\Phi,z}$ at time $\delta=\delta(\e)$,  {{}and it enjoys analytically nice properties. For instance, the generator of the boundary trace of Brownian motion on a fixed Riemannian manifold is the well-studied Dirichlet-to-Neumann operator \cite{ARP,Hsu0,EO}, and its invariant measure is the Riemannian measure on its the boundary \cite{Hsu0,EO}.} This is useful for proving homogenization in $\mathbf{z}^{\e}$, which is key to deriving a macroscopic limit for $\Phi^{\e}$.
\subsection{Convergence $\Phi^{\e}\to\Phi^{\mathrm{hom}}$}
We first introduce the relevant topological spaces, taken from \cite{Bil},
in which convergence will hold. {{}Let $\mathscr{D}([0,\tau),\mathscr{C}^{10}_{\simeq}(\partial\bUz,\R^{\d}))$ be the space of cadlag functions $[0,\tau)\to\mathscr{C}^{10}_{\simeq}(\partial\bUz,\R^{\d})$ with the following Skorokhod topology. We say $\{\Gamma^{(\e)}\}_{\e>0}$ converges to $\Gamma$ as $\e\to0$ in $\mathscr{D}([0,\tau),\mathscr{C}^{10}_{\simeq}(\partial\bUz,\R^{\d}))$ if for any $\rho>0$, we have $\Gamma^{(\e)}\to\Gamma$ in the Skorokhod space $\mathscr{D}([0,\tau-\rho],\mathscr{C}^{10}_{\simeq}(\partial\bUz,\R^{\d}))$, whose definition and properties can be found in Sections 12 and 13 of \cite{Bil}. In words, Skorokhod convergence of paths defined for times in the open interval $[0,\tau)$ is equivalent to convergence when restricted to the ``compact approximations" $[0,\tau-\rho]$ of $[0,\tau]$ for $\rho>0$ small.} 

Let us also clarify that by a collar of length $\ell$, we (again) mean a set $\mathbf{C}_{\ell}\subseteq\bUz$ such that every element in $x\in\mathbf{C}_{\ell}$ can be written uniquely as its distance, which is $\leq\ell$, from the boundary $\partial\bUz$ and a point {{}$x^{\mathrm{hor}}\in\bUz$} that satisfies $\mathrm{dist}(x,\partial\bUz)=|x-x^{\mathrm{hor}}|$. 

Finally, let $\tau_{\mathrm{sol}}^{\e}$ denote the supremum of all $t\geq0$ for which $\Phi^{\e}(t,\cdot)\in\mathscr{C}^{10}_{\simeq}(\partial\mathbf{U},\R^{\d})$. Recall $\tau_{\mathrm{sol}}$ from Theorem \ref{theorem:thm1}.
\begin{theorem}\label{theorem:thm2}
First, assume $\delta=\delta(\e)>0$ in \emph{Definition \ref{definition:interface}} satisfies $\delta\times\e^{-1}\to\infty$ and $\delta\to0$ as $\e\to0$. Next, assume $\Phi^{\e}(0,\cdot)=\Phi^{\mathrm{init}}(\cdot)\in\mathscr{C}^{10}_{\simeq}(\partial\bUz,\R^{\d})$ is deterministic and independent of $\e$. Assume $\bUz\subseteq\R^{\d}$ is a compact, connected subset with smooth boundary, and assume it has a collar of length 3.

Let $\Phi^{\mathrm{hom}}$ solve \eqref{eq:limit} with $\Phi^{\mathrm{hom}}(0,\cdot)=\Phi^{\mathrm{init}}(\cdot)$. We have $\tau_{\mathrm{sol}}^{\e}\wedge\tau_{\mathrm{sol}}\to\tau_{\mathrm{sol}}$ and $\Phi^{\e}-\Phi^{\mathrm{hom}}\to0$ in the space $\mathscr{D}([0,\tau_{\mathrm{sol}}),\mathscr{C}^{10}_{\simeq}(\partial\bUz,\R^{\d}))$, with both of these limits being in probability as 
$\e \to 0$.
\end{theorem}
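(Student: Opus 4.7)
My approach is a semimartingale decomposition followed by a two-scale averaging argument. Fix $\tau_1<\tau_{\mathrm{sol}}$; it suffices to prove
\[
\sup_{t\in[0,\tau_1\wedge\tau_{\mathrm{sol}}^{\e}]}\|\Phi^{\e}(t,\cdot)-\Phi^{\mathrm{hom}}(t,\cdot)\|_{\mathscr{C}^{10}}\longrightarrow 0
\]
in probability, since this forces $\tau_{\mathrm{sol}}^{\e}>\tau_1$ with high probability (giving $\tau_{\mathrm{sol}}^{\e}\wedge\tau_{\mathrm{sol}}\to\tau_{\mathrm{sol}}$) and delivers the required Skorokhod convergence on $[0,\tau_{\mathrm{sol}})$. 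Fix a compact neighborhood $\mathscr{N}\subset\mathscr{C}^{10}_{\simeq}(\partial\bUz,\R^{\d})$ of the deterministic trajectory $\{\Phi^{\mathrm{hom}}(t,\cdot):t\in[0,\tau_1]\}$ and set $\sigma^{\e}:=\inf\{t\geq 0:\Phi^{\e}(t,\cdot)\notin\mathscr{N}\}$. Stopping at $\sigma^{\e}\wedge\tau_1$ gives uniform-in-$\e$ control on the metric $\mathsf{g}^{\Phi^{\e}}$, its inverse, the normal $\mathsf{n}^{\Phi^{\e}}$, the density $|\det\mathrm{Jac}\Phi^{\e}|$ defining $\mathbf{T}^{\Phi^{\e}}$, and the spectral data of the associated Dirichlet-to-Neumann operator.

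Apply Dynkin's formula to $F(z,\Phi)=\Phi(x)$ for fixed $x\in\partial\bUz$ to obtain
\[
\Phi^{\e}(t,x)=\Phi^{\e}(0,x)+\int_{0}^{t}A^{\e}(s,x)\,\d s+M^{\e}(t,x),
\]
where $A^{\e}(s,x):=\int_{\partial\bUz}\mathbf{T}_{\mathbf{z}^{\e}(s)}^{\Phi^{\e}(s),\delta}(\d y)\mathscr{K}(x,y)\mathsf{n}^{\Phi^{\e}(s)}(y)$ and $M^{\e}(\cdot,x)$ is a pure-jump martingale whose jumps have size $O(\e)$ and arrive at rate $\e^{-1}$, so its predictable quadratic variation on $[0,\tau_1]$ is $O(\e)$. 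Since $\mathscr{K}$ is smooth and $\mathsf{n}^{\Phi}$ is uniformly $\mathscr{C}^{9}$ on $\mathscr{N}$, these bounds extend to any spatial derivative of $M^{\e}$ up to order ten; a Burkholder-Davis-Gundy estimate combined with a finite $\mathscr{C}^{10}$-net over $x$ then yields $\sup_{t\leq\tau_1\wedge\sigma^{\e}}\|M^{\e}(t,\cdot)\|_{\mathscr{C}^{10}}\to 0$ in probability.

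The heart of the proof is to replace the empirical drift $A^{\e}(s,\cdot)$ by
\[
A^{\mathrm{hom}}(\Phi^{\e}(s),x):=\int_{\partial\bUz}\mathbf{T}^{\Phi^{\e}(s)}(\d y)\mathscr{K}(x,y)\mathsf{n}^{\Phi^{\e}(s)}(y),
\]
the right-hand side of \eqref{eq:limit}. I partition $[0,\tau_1]$ into blocks $[t_k,t_{k+1})$ of mesoscopic length $h=h(\e)$ with $\e/\delta\ll h\ll 1$ (e.g.~$h=\sqrt{\e/\delta}$), and on each block freeze the interface at $\bar\Phi:=\Phi^{\e}(t_k)$; this introduces only an $O(h)$ drift error by continuity of $A^{\e}$ in $\Phi$ over $\mathscr{N}$. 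With $\bar\Phi$ frozen, $\mathbf{z}^{\e}(\cdot)$ on the block is a jump Markov chain at rate $\e^{-1}$ whose transition kernel $\mathbf{T}_{\cdot}^{\bar\Phi,\delta}$ is the boundary trace at local time $\delta$ of reflecting Brownian motion on $(\bUz,\mathsf{g}^{\bar\Phi})$. As noted in the excerpt, its invariant measure is exactly $\mathbf{T}^{\bar\Phi}$, and classical Dirichlet-to-Neumann spectral gap estimates (uniform for $\bar\Phi\in\mathscr{N}$) give equilibration on a local-time scale of order one, i.e.~an $\e$-time scale of order $\e/\delta\ll h$. Solving the associated Poisson equation and using the resulting martingale identity on $\int_{t_k}^{t_{k+1}}\{A^{\e}(s,\cdot)-A^{\mathrm{hom}}(\bar\Phi,\cdot)\}\,\d s$, then summing over $k$, yields
\[
\sup_{t\leq\tau_1\wedge\sigma^{\e}}\Big\|\int_0^t A^{\e}(s,\cdot)\,\d s-\int_0^t A^{\mathrm{hom}}(\Phi^{\e}(s),\cdot)\,\d s\Big\|_{\mathscr{C}^{10}}\longrightarrow 0
\]
in probability.

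Combining the two inputs, $\Phi^{\e}(t,\cdot)=\Phi^{\mathrm{init}}+\int_0^t A^{\mathrm{hom}}(\Phi^{\e}(s),\cdot)\,\d s+o(1)$ in $\mathscr{C}^{10}$ on $[0,\tau_1\wedge\sigma^{\e}]$, with the error $o(1)$ in probability. The map $\Phi\mapsto A^{\mathrm{hom}}(\Phi,\cdot)$ is Lipschitz from $\mathscr{N}$ into $\mathscr{C}^{10}$, since $|\det\mathrm{Jac}\Phi|$ is bounded above and away from zero on $\mathscr{N}$ and all dependencies are smooth in $\Phi$ and its first derivatives; subtracting the fixed-point equation for $\Phi^{\mathrm{hom}}$ and applying Gronwall to $\|\Phi^{\e}(t)-\Phi^{\mathrm{hom}}(t)\|_{\mathscr{C}^{10}}$ gives the desired convergence on $[0,\tau_1\wedge\sigma^{\e}]$, and since $\Phi^{\mathrm{hom}}$ lies well inside $\mathscr{N}$ on $[0,\tau_1]$ this automatically forces $\sigma^{\e}>\tau_1$ with probability tending to one. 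The principal obstacle is the averaging step: uniform-in-$\Phi$ spectral gap and Poisson-equation regularity for the boundary trace chain in a topology strong enough to control $\mathscr{C}^{10}$-derivatives of the drift, effective as $\delta\to 0$ and under the mesoscopic block size $h\gg\e/\delta$. This is exactly where the stochastic differential geometry of the family $\{\mathsf{g}^{\Phi}\}_{\Phi\in\mathscr{N}}$ on the fixed manifold $\bUz$ must be used in earnest, and replaces the separation-of-scales hypothesis of \cite{DGHS}.
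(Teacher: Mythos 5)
Your overall architecture — Dynkin decomposition, BDG-type martingale bound, block averaging with mesoscopic block size $h$ satisfying $\e\delta^{-1}\ll h\ll1$, Poisson-equation/ergodic step, then Gronwall plus a stopping time to force $\sigma^{\e}>\tau_1$ — matches the paper's strategy. The martingale estimate is handled there via Morrey's Sobolev inequality rather than a finite $\mathscr{C}^{10}$-net, and the ergodic step via Kipnis--Varadhan rather than a direct Poisson-equation martingale identity; these are interchangeable techniques and not substantive differences.

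There is, however, a genuine gap in the averaging step, and it sits exactly where the paper has to work hardest. You write that after freezing the interface at $\bar\Phi=\Phi^{\e}(t_k)$ in the drift, ``$\mathbf{z}^{\e}(\cdot)$ on the block is a jump Markov chain at rate $\e^{-1}$ whose transition kernel $\mathbf{T}_{\cdot}^{\bar\Phi,\delta}$ is the boundary trace\ldots on $(\bUz,\mathsf{g}^{\bar\Phi})$.'' This is false: freezing $\Phi$ in the integrand $A^{\e}$ does not change the dynamics of the particle, whose transition kernel at time $s$ is $\mathbf{T}_{\cdot}^{\Phi^{\e}(s),\delta}$ for the \emph{current} random interface, so $\mathbf{z}^{\e}$ on the block is a time-inhomogeneous (and non-Markov in $z$ alone) process. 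The ``$O(h)$ drift error by continuity'' only justifies replacing $\Phi^{\e}(s)$ by $\bar\Phi$ inside the test function $A^{\e}$ (this is Lemma \ref{lemma:freeze}); it does not justify replacing the particle itself by the frozen-metric chain. Over a block of length $h$ the particle makes $\sim\e^{-1}h$ jumps, each with an $O(h)$ kernel discrepancy, so a crude total-variation union bound gives $O(\e^{-1}h^2)$, which need not vanish under $\delta/\e\to\infty$ alone (it fails, e.g., for $\delta\sim\e^{3/4}$ with your $h=\sqrt{\e/\delta}$). Closing this is precisely the content of Proposition \ref{prop:coupling}: one must construct a frozen-metric process $\wt{\mathbf{z}}^{\e}(s,\cdot)$ and couple it pathwise to $\mathbf{z}^{\e}$, which the paper does by writing both as reflecting \abbr{SDE}s driven by the same Brownian noise in collar coordinates, comparing the local times of the two processes via harmonic test functions and Hopf's lemma, and controlling the divergence between the boundary trace clocks $\tau_k$ and $\wt\tau_k$. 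None of this infrastructure appears in your sketch, and without it the claim that the block empirical average converges to $\E^{z,\bar\Phi,\mathrm{inv}}$ is unjustified. Your closing paragraph does gesture at ``the principal obstacle'' but misdiagnoses it as a spectral-gap/regularity issue for a single frozen chain, when in fact the spectral-gap part is standard (Lemma \ref{lemma:ergodic} via Kipnis--Varadhan); the hard part is the pathwise coupling of the true particle to that chain.
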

The condition $\delta(\e)\gg\e$ is essentially the assumption of working in the ``homogenization" or ``averaging" setting; we explain this point more near the end of the proof sketch below.

Let us clarify the claim in Theorem \ref{theorem:thm2} about $\tau_{\mathrm{sol}}^{\e},\tau_{\mathrm{sol}}$. In principle, the blow-up time for $t\mapsto\Phi^{\e}(t,\cdot)$ may be different than $\tau_{\mathrm{sol}}$ (and even vanishing as $\e\to0$). Theorem \ref{theorem:thm2} says this does not happen (with high probability). Also, the assumption of a collar of length 3 is not important. We only need a collar with some positive length that is independent of $\e$. (The choice of 3 is just convenient.)

Let us compare Theorem \ref{theorem:thm2} to the results of \cite{NST0,NST}. These papers, which restrict to growth in $\R^{2}$, derive inflating discs as scaling limits for the models therein, though variants of the model in \cite{NST0} can have more complicated shape theorems which may exhibit similar blow-up. Moreover, in \cite{NST0}, there is a regularization-scale parameter analogous to $\delta(\e)$.
\subsubsection{Proof sketch for Theorem \ref{theorem:thm2}}
By the Ito formula for $F(z,\Phi)=\Phi$ (before time $\tau_{\mathrm{sol}}^{\e}$), we have
\begin{align}
\Phi^{\e}(t,x) \ &= \ \Phi^{\e}(0,x) + \e^{-1}\int_{0}^{t}\d s\int_{\partial \bUz}\mathbf{T}_{\mathbf{z}^{\e}(s)}^{\Phi^{\e}(s,\cdot),\delta}(\d y)\left(\mathscr{T}^{\e,y}\Phi^{\e}(s,x)-\Phi^{\e}(s,x)\right) + M^{\e}(t,x) \nonumber \\
&= \ \Phi^{\e}(0,x) + \int_{0}^{t}\d s\int_{\partial \bUz}\mathbf{T}_{\mathbf{z}^{\e}(s)}^{\Phi^{\e}(s,\cdot),\delta}(\d y)\mathscr{K}(x,y)\mathsf{n}^{\Phi^{\e}(s,\cdot)}(y) + M^{\e}(t,x). \label{eq:itointro}
\end{align}
The function $M^{\e}$ is a martingale with respect to the canonical filtration of $t\mapsto(\mathbf{z}^{\e}(t),\Phi^{\e}(t,\cdot))$; explicitly, it equals the following compensated Poisson process (which records jumps of $t\mapsto\Phi^{\e}(t,\cdot)$):
\begin{align}
M^{\e}(t,x)=\sum_{\tau_{i}\leq t}\e\mathscr{K}(x,\mathbf{z}^{\e}(\tau_{i}))\mathsf{n}^{\Phi^{\e}(\tau_{i}^{-},\cdot)}(\mathbf{z}^{\e}(\tau_{i})) - \int_{0}^{t}\d s\int_{\partial \bUz}\mathbf{T}_{\mathbf{z}^{\e}(s)}^{\Phi^{\e}(s,\cdot),\delta}(\d y)\mathscr{K}(x,y)\mathsf{n}^{\Phi^{\e}(s,\cdot)}(y). \label{eq:phimg}
\end{align}
Above, the sum is over all jump times $\tau_{i}$ of the system until time $t$. The assumption that $\delta\times\e^{-1}\to\infty$ says that the speed of $\mathbf{z}^{\e}$ is much faster than the speed of the interface growth. Thus, by the time that the growth has evolved by $\mathrm{o}(1)$, the law of the particle $\mathbf{z}^{\e}$ is close to the {{}normalized surface measure on the interface, i.e. the measure induced by the metric $\mathsf{g}^{\Phi^{\e}(s,\cdot)}|_{\partial\bUz}$}. Standard ergodic theory then lets us replace $\mathbf{T}^{\Phi^{\e}(s,\cdot),\delta}_{\mathbf{z}^{\e}(s)}(\d y)$ in \eqref{eq:itointro} with {{}said measure, which is just $\mathbf{T}^{\Phi^{\e}(s,\cdot)}(\d y)$ from \eqref{eq:limit}}. So, $\Phi^{\e}$ and $\Phi^{\mathrm{hom}}$ solve the same \abbr{PDE} (modulo $\mathrm{o}(1)$ terms) with the same initial data. It remains to show stability of \eqref{eq:limit} under perturbations; this more or less follows by well-posedness of \eqref{eq:limit} (see Theorem \ref{theorem:thm1}).

The assumption $\delta\times\e^{-1}\to\infty$ appears optimal; we cannot generally allow $\delta$ to vanish any faster. (Indeed, if $\delta\times\e^{-1}=\mathrm{O}(1)$, the particle and growth move at the same speed, so a homogenization principle seems out of reach and possibly even  false.) Finally, the assumption that $\delta\to0$ is purely for technical convenience. It is not necessary, but if $\delta\not\to0$, the necessary analysis is essentially already addressed in \cite{DGHS}.
\subsection{Extension to global-in-time results and more about blow-up}\label{subsection:extensionglobal}
Theorems \ref{theorem:thm1} and \ref{theorem:thm2} only hold locally in time, because the geometry of the interface can certainly change in finite time; {{}see Theorem \ref{theorem:thm1b}}. However, the growth set itself is still well-defined as a union of compact subsets. Thus, its global-in-time evolution can, in principle, be studied as well. Put more carefully, the issue is not that $\Phi^{\e}$ or $\Phi^{\mathrm{hom}}$ blow up in $\mathscr{C}^{\sck}(\partial\bUz,\R^{\d})$, but that their inverses do; {{}again, see Theorem \ref{theorem:thm1b}}. {{}If the image of $\Phi^{\mathrm{hom}}$ at the blow-up time $\tau_{\mathrm{sol}}$ is a smooth codimension $1$ sub-manifold in $\R^{\d}$, then similar to the discussion after Theorem \ref{theorem:thm1b}, we can prove Theorem \ref{theorem:thm2} but with $\Phi^{\mathrm{hom}}(\tau_{\mathrm{sol}},\partial\bUz)$ as the new initial subset instead of $\partial\bUz$. This would give us a scaling limit result on the time-interval $\tau_{\mathrm{sol}}+[0,\tau)$ for some deterministic $\tau>0$, which we can then glue to the scaling limit on $[0,\tau_{\mathrm{sol}})$. Ultimately, this gives us a notion of scaling limit past time $\tau_{\mathrm{sol}}$. 

If $\Phi^{\mathrm{hom}}(\tau_{\mathrm{sol}},\partial\bUz)\subseteq\R^{\d}$ is \emph{not} a smooth codimension $1$ sub-manifold, then one can regularize it. Precisely, look at points on $\partial\bUz$ where $\Phi^{\mathrm{hom}}(\tau_{\mathrm{sol}},\cdot)$ is not an immersion, and perturb $\mathrm{Jac}\Phi^{\mathrm{hom}}(\tau_{\mathrm{sol}},\cdot)$ around said points to make it globally invertible. (For example, if $\Phi^{\mathrm{hom}}(\tau_{\mathrm{sol}},\partial\bUz)\subseteq\R^{\d}$ has a self-intersection, we can ``fatten" it in a smooth fashion so that this modification of $\Phi^{\mathrm{hom}}(\tau_{\mathrm{sol}},\partial\bUz)$ is a smooth codimension $1$ sub-manifold.) See also \cite{DNS} for an example where the authors regularize blow-ups in the Stefan heat flow problem by means of a stochastic particle system.}

Although general initial data to \eqref{eq:limit} can lead to finite blow-up time, we {{}will see} in Section \ref{section:examplesection} that convex initial data and special kernels $\mathscr{K}$ lead to solutions with infinite lifetime. It may be the case that if the initial set $\bUz$ is convex, then the solution to \eqref{eq:limit} has infinite lifetime for more kernels satisfying milder assumptions. Infinite lifetime is probably still false for general kernels, however.
\subsection{Organization of the paper}\label{subsection:organization}
{{}Theorems \ref{theorem:thm1} and \ref{theorem:thm1b} are proved in Section \ref{section:flowproofs}. Theorem \ref{theorem:thm2} is proved in Section \ref{section:scalinglimit}. These proofs rely on an important homogenization estimate (Proposition \ref{prop:hom}), which is stated in Section \ref{section:hom} and proved in Section \ref{section:homproof}. For ease of reading, we defer certain points regarding reflecting \abbr{SDE}s to Appendix \ref{section:appendix}. We also defer the proof of a technically involved result (Proposition \ref{prop:coupling}) to Section \ref{section:couplingproof}.}
%
%
%
\section{Some examples}\label{section:examplesection}
We now give examples to illustrate Theorem \ref{theorem:thm1b} and what the flow \eqref{eq:limit} looks like. We give two examples below for which we can compute solutions explicitly via symmetry. We also give a less concrete but intuitive example which shows that generically, the solution to \eqref{eq:limit} can be quite complicated, and that simple solutions are the exception, not the rule.
\subsection{A concrete example}
Suppose that $\bUz$ is the unit ball centered at the origin in $\R^{\d}$, and $\Phi^{\mathrm{hom}}(0,\cdot)$ is the identity (so the initial data for the growth model is the unit ball). Also, suppose $\mathscr{K}$ is the heat kernel on the sphere $\partial\bUz$ (at any positive time). In this case, we claim that $\Phi^{\mathrm{hom}}(t,x)=\Phi^{\mathrm{hom}}(0,x)+t\mathbf{n}(x)$ solves \eqref{eq:limit}, where $\mathbf{n}(x)$ is the unit outward normal at $x\in\partial\bUz$. In words, if we plug the unit ball into the growth equation \eqref{eq:limit}, then its evolution is given by space-time homogeneous ``inflation". To see this claim, note that this choice of $\Phi^{\mathrm{hom}}(t,\cdot)$ has constant Jacobian, so that $\mathbf{T}^{\Phi^{\mathrm{hom}}(t,\cdot)}(\d y)$ is the uniform measure on the sphere {{}$\partial\mathbf{U}$}. Also, for this choice of $\Phi^{\mathrm{hom}}(t,\cdot)$, the normal vectors in \eqref{eq:limit} are just the unit outward radial vectors on $\partial\bUz$ (since for all $t$, the graph of $\Phi^{\mathrm{hom}}(t,\cdot)$ is still a sphere). The fact that $\Phi^{\mathrm{hom}}(t,x)-\Phi^{\mathrm{hom}}(0,x)$ is parallel to $\mathbf{n}(x)$ now follows since we are averaging unit outward normals at $y\in\partial\bUz$ with respect to a probability density on $\partial\bUz$ that is invariant under reflection about any plane connecting $x$ and its antipode. The fact that the projection of $\Phi^{\mathrm{hom}}(t,x)-\Phi^{\mathrm{hom}}(0,x)$ onto $\mathbf{n}(x)$ has coefficient $t$ follows because the total volume of the double integral on the \abbr{RHS} of \eqref{eq:limit} is $t$ (since $\mathscr{K}$ is a heat kernel on the sphere $\partial\bUz$).

More generally, one can take the initial data $\bUz$ for the growth model to be a convex, compact subset of $\R^{\d}$ that is closed under $x\mapsto-x$, i.e. the sub-level set of a norm on $\R^{\d}$. If $\mathscr{K}$ is an appropriate kernel that is invariant under transformations that preserve this norm, then the growth is given by sub-level sets of the same norm (but with a growing level set parameter).

Let us give a probabilistic interpretation. The evolution of $\Phi^{\e}$ is described by adding small bumps to the boundary $\partial\bUz$ at places where the boundary local time of a Brownian motion equals a fixed increment $\delta>0$. Assuming that the fluctuations in this random interface process average out as $\e\to0$, so that the small-$\e$ limit of $\Phi^{\e}$ is deterministic, it now becomes at least intuitively clear that if $\partial\bUz$ is the unit sphere in $\R^{\d}$, then the interface at time $t$ is still a sphere in $\R^{\d}$, just by the rotational symmetry of Brownian motion. 

\begin{figure}[h!]
\includegraphics[width=0.3\textwidth]{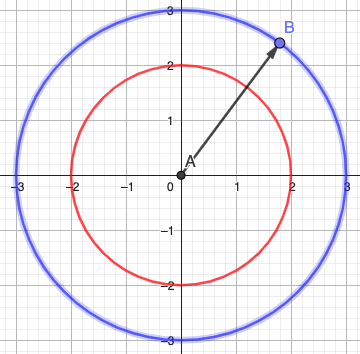}
\caption{The first concrete example of solution $\Phi^{\mathrm{hom}}$. The red circle inside is the initial data. The blue circle is the solution at time $1$, after which the circle inflates from radius 2 to radius 3.}
\label{figure:2}
\end{figure}
\subsection{Another concrete example}
{{}Now, let $\bUz$ be an annulus centered at the origin, so that $\partial\bUz$ is the disjoint union of two spheres $\mathbb{S}_{1}$ and $\mathbb{S}_{2}$ of radius $R,r$, respectively, where $R>r>0$. Let $\mathscr{K}(x,y)$ the heat kernel on the sphere $\mathbb{S}_{j}$ for $x,y$ in the same sphere $\mathbb{S}_{j}$ and $j=1,2$, and set $\mathscr{K}(x,y)=0$ if $x\in\mathbb{S}_{j}$ and $y\in\mathbb{S}_{\ell}$ for $j\neq\ell$. By construction, the two spheres inflate independently from one another since $\mathscr{K}(x,y)=0$ if $x\in\mathbb{S}_{j}$ and $y\in\mathbb{S}_{\ell}$ for $j\neq\ell$. In particular, we have $\Phi^{\mathrm{hom}}(t,x)=x+c_{j}t\mathbf{n}(x)$ for $x\in\mathbb{S}_{j}$ and $j=1,2$, where $c_{1},c_{2}>0$ are some speeds determined by the ratio of the surface area of the outer sphere and the surface of the inner sphere. (These speeds are relevant because the flow \eqref{eq:limit} modulates the speed of growth by the total surface area, since $\mathbf{T}^{\Phi^{\mathrm{hom}}(s,\cdot)}$ therein is normalized to be a probability measure.) In words, the bigger sphere $\mathbb{S}_{1}$ inflates outwards as in the previous example. On the other hand, the inner sphere $\mathbb{S}_{2}$ shrinks and collapses to the origin. Indeed, the unit outward normal vector field on the inner sphere $\mathbb{S}_{2}$ points towards the origin. Thus, the solution $\Phi^{\mathrm{hom}}(t,x)=x+c_{2}t\mathbf{n}(x)$, if we restrict to $x\in\mathbb{S}_{2}$, gives us $\Phi(t,x)=0$ for all $x\in\mathbb{S}_{2}$ when $t=rc_{2}^{-1}$. At this blow-up time, the growth model is a ball, which is not homotopic to the annulus.}
\subsection{The less concrete but more complicated example}
{{}Take two disjoint balls $\mathbb{B}_{1},\mathbb{B}_{2}$ separated by a small distance $\vartheta>0$. For $x,y$ in the same sphere $\partial\mathbb{B}_{j}$ and $j=1,2$, we define $\mathscr{K}(x,y)$ to be the heat kernel on $\partial\mathbb{B}_{j}$ like in the previous two examples. For $x,y$ in different spheres, set $\mathscr{K}(x,y)=0$. As in the previous example, both spheres inflate independently, and they will touch at some time that is short if $\vartheta$ is small. Now, connect $\mathbb{B}_{1},\mathbb{B}_{2}$ by a thin tube to get a simply connected horseshoe-type shape. Assuming that the distance $\vartheta$ is small enough, the two balls will again meet. Thus, the growth goes from a simply connected horseshoe-type shape to a set that is not simply connected; see Figure \ref{figure:4}. (This example is similar to the model in \cite{LL}.)}
\begin{figure}[h!]
\includegraphics[width=0.4\textwidth]{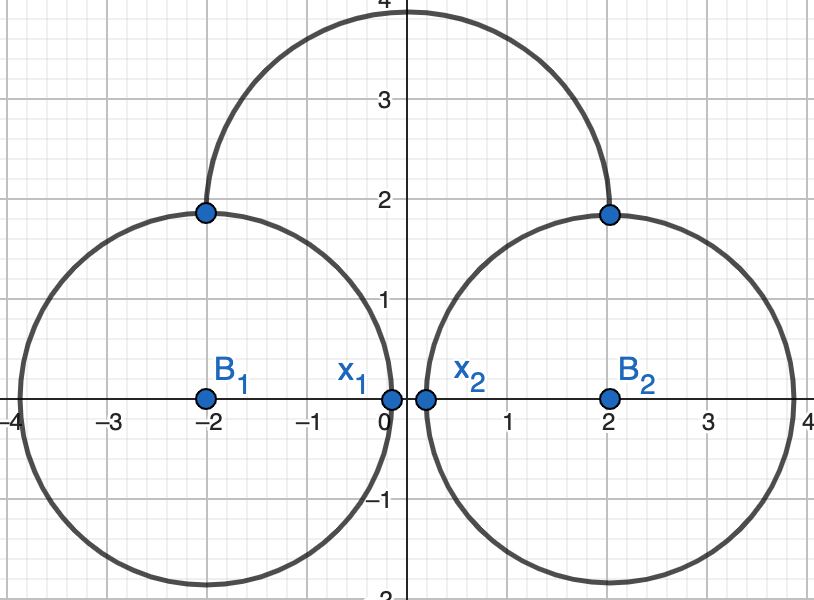}
\caption{A horseshoe-type shape (where the width of the connecting tube is thought as positive but small). This grows according to \eqref{eq:limit} by inflating $\mathbb{B}_{1},\mathbb{B}_{2}$ until the points $x_{1},x_{2}$ meet, so the set goes from being simply connected to having nontrivial fundamental group.}
\label{figure:4}
\end{figure}
%
%
%
\section{Homogenization}\label{section:hom}
The purpose of this section is to state the necessary homogenization ingredient that we briefly described after Theorem \ref{theorem:thm2}. It is the replacement of ``fluxes" $\mathsf{F}^{\e}\to\mathsf{F}^{\e,\mathrm{eff}}$, where
\begin{align}
\mathsf{F}^{\e}(t,x) \ &:= \ \int_{0}^{t}\d s\int_{\partial \bUz}\mathbf{T}_{\mathbf{z}^{\e}(s)}^{\Phi^{\e}(s,\cdot),\delta}(\d y)\mathscr{K}(x,y)\mathsf{n}^{\Phi^{\e}(s,\cdot)}(y) \\
\mathsf{F}^{\e,\mathrm{eff}}(t,x) \ &:= \ \int_{0}^{t}\d s \int_{\partial \bUz}\mathbf{T}^{\Phi^{\e}(s,\cdot)}(\d y)\mathscr{K}(x,y)\mathsf{n}^{\Phi^{\e}(s,\cdot)}(y).
\end{align}
We first establish some useful notation to be used throughout the paper.
\begin{enumerate}
\item For any $k=(k_{1},\ldots,k_{\d-1})$, set $\partial^{k}=\partial_{\mathbf{e}_{1}}^{k_{1}}\ldots\partial_{\mathbf{e}_{\d-1}}^{k_{\d-1}}$, where $\mathbf{e}_{1},\ldots,\mathbf{e}_{\d-1}$ is our fixed orthonormal frame for $\partial\bUz$, and $\partial^{k_{i}}_{\mathbf{e}_{i}}$ means the $k_{i}$-th derivative in the $\mathbf{e}_{i}$ direction.
\item For any $\tau\geq0$ and $\Phi\in\mathscr{C}([0,\tau],\mathscr{C}^{N}_{\simeq}(\partial\bUz,\R^{\d}))$, define the following where the second supremum is over multi-indices $k=(k_{1},\ldots,k_{\d-1})$, and where $|k|=|k_{1}|+\ldots+|k_{\d-1}|$:
\begin{align}
\|\Phi\|_{\tau,N} \ &:= \ \sup_{0\leq t\leq\tau}\sup_{0\leq|k|\leq N}\sup_{x\in\partial\bUz}|\partial^{k}\Phi(t,x)| \nonumber\\
\langle\Phi\rangle_{\tau,N} \ &:= \ \|\Phi\|_{\tau,N}+\sup_{i,j=1,\ldots,\d-1}\|(\mathrm{Jac}\Phi)_{ij}^{-1}\|_{\tau,N-1}. \nonumber
\end{align}
For convenience, we will write $\|\Phi\|_{\tau}:=\|\Phi\|_{\tau,10}$ and $\langle\Phi\rangle_{\tau}:=\langle\Phi\rangle_{\tau,10}$. It will also be convenient to define time-independent norms. For any $\Phi:\partial\bUz\to\R^{\d}$, set $\Phi(0,\cdot)=\Phi(\cdot)$; with this and the previous notation, we set $\|\Phi\|_{\mathscr{C}^{N}}:=\|\Phi\|_{0,N}$ and $\langle\Phi\rangle_{\mathscr{C}^{N}}:=\langle\Phi\rangle_{0,N}$.
\end{enumerate}
\begin{prop}\label{prop:hom}
Fix any stopping time $0\leq\tau\lesssim1$. There exist $b(\e),p(\e)\to0$ as $\e\to0$ so that with probability $\geq1-p(\e)$, we have $\|\mathsf{F}^{\e}-\mathsf{F}^{\e,\mathrm{eff}}\|_{\tau}\lesssim b(\e)i(\langle\Phi^{\e}\rangle_{\tau})$, where $i(\cdot)\geq1$ is an increasing, continuous, and deterministic function.
\end{prop}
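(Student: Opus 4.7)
Define
\[
H^{\Phi}(z,x):=\int_{\partial\bUz}\bigl[\mathbf{T}^{\Phi,\delta}_{z}(\d y)-\mathbf{T}^{\Phi}(\d y)\bigr]\mathscr{K}(x,y)\mathsf{n}^{\Phi}(y),
\]
so that $\mathsf{F}^{\e}(t,x)-\mathsf{F}^{\e,\mathrm{eff}}(t,x)=\int_{0}^{t}H^{\Phi^{\e}(s,\cdot)}(\mathbf{z}^{\e}(s),x)\,\d s$. The key observation is that, as noted after Definition \ref{definition:interface}, $\mathbf{T}^{\Phi}$ is the invariant measure of the discrete-time Markov chain on $\partial\bUz$ with one-step kernel $\mathbf{T}^{\Phi,\delta}_{z}(\d y)$, so $H^{\Phi}(\cdot,x)$ is mean-zero against $\mathbf{T}^{\Phi}$. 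Let $\mathscr{A}^{\Phi}f(z):=\int_{\partial\bUz}\mathbf{T}^{\Phi,\delta}_{z}(\d y)[f(y)-f(z)]$ be the chain's (discrete) generator, and let $u^{\Phi}(\cdot,x)$ be the (mean-zero) corrector, i.e. the solution of the Poisson equation $\mathscr{A}^{\Phi}u^{\Phi}(\cdot,x)=H^{\Phi}(\cdot,x)$.

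\textbf{A priori bounds on the corrector.} The transition operator $I+\mathscr{A}^{\Phi}$ is exactly the time-$\delta$ semigroup $e^{-\delta\mathcal{N}^{\Phi}}$ of the Dirichlet-to-Neumann operator $\mathcal{N}^{\Phi}$ of the Riemannian manifold $(\bUz,\mathsf{g}^{\Phi})$ (as in \cite{Hsu0,EO}), so on mean-zero functions $\mathscr{A}^{\Phi}=-\delta\mathcal{N}^{\Phi}+\mathrm{O}(\delta^{2})$. Since $\mathcal{N}^{\Phi}$ is a self-adjoint elliptic pseudodifferential operator of order $1$ on the compact manifold $\partial\bUz$ whose kernel consists of constants, it has a positive spectral gap $\lambda_{1}(\Phi)$ depending continuously on $\mathsf{g}^{\Phi}$; hence $\lambda_{1}(\Phi)\gtrsim c(\langle\Phi\rangle_{\mathscr{C}^{10}})$ for some continuous $c>0$. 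Combined with elliptic regularity on $\mathcal{N}^{\Phi}$, this yields
\[
\sup_{|k|\le 10}\bigl(\|\partial_{x}^{k}u^{\Phi}\|_{L^{\infty}}+\|\partial_{x}^{k}\nabla_{\Phi}u^{\Phi}\|_{L^{\infty}}\bigr)\lesssim \delta^{-1}i_{1}(\langle\Phi\rangle_{\mathscr{C}^{10}})
\]
for a continuous increasing $i_{1}$. Here $\nabla_{\Phi}$ is the Fr\'echet derivative along bump directions $\mathscr{K}(\cdot,y)\mathsf{n}^{\Phi}(y)$; the bound on $\nabla_{\Phi}u^{\Phi}$ comes from differentiating the Poisson equation and using that $\nabla_{\Phi}\mathscr{A}^{\Phi}$ is itself of order $\delta$, cancelling one factor of $\delta^{-1}$ from $(\mathscr{A}^{\Phi})^{-1}$.

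\textbf{Dynkin decomposition and martingale control.} Applying the Ito formula to $t\mapsto u^{\Phi^{\e}(t,\cdot)}(\mathbf{z}^{\e}(t),x)$ and expanding the generator of Definition \ref{definition:interface} gives $\mathscr{L}^{\e}u^{\Phi}(z,x)=\e^{-1}H^{\Phi}(z,x)+\mathrm{R}^{\e}(z,\Phi,x)$, where
\[
\mathrm{R}^{\e}(z,\Phi,x):=\e^{-1}\int\mathbf{T}^{\Phi,\delta}_{z}(\d y)\bigl[u^{\mathscr{T}^{\e,y}\Phi}(y,x)-u^{\Phi}(y,x)\bigr].
\]
A first-order Taylor expansion in the $\mathrm{O}(\e)$-sized bump $\mathscr{T}^{\e,y}\Phi-\Phi=\e\mathscr{K}(\cdot,y)\mathsf{n}^{\Phi}(y)$ gives $|\mathrm{R}^{\e}|\lesssim\|\nabla_{\Phi}u^{\Phi}\|\lesssim\delta^{-1}i_{1}(\langle\Phi\rangle)$. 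Rearranging Dynkin's identity,
\[
\int_{0}^{t}H^{\Phi^{\e}(s,\cdot)}(\mathbf{z}^{\e}(s),x)\,\d s=\e\bigl[u^{\Phi^{\e}(0,\cdot)}(\mathbf{z}^{\e}(0),x)-u^{\Phi^{\e}(t,\cdot)}(\mathbf{z}^{\e}(t),x)\bigr]+\e\!\int_{0}^{t}\mathrm{R}^{\e}\,\d s+\e M^{\e}(t,x),
\]
where $M^{\e}$ is the compensated Poisson martingale from the jumps of $u^{\Phi^{\e}(\cdot,\cdot)}(\mathbf{z}^{\e}(\cdot),x)$. The first two terms on the \abbr{RHS} are of order $\e\delta^{-1}i_{1}(\langle\Phi^{\e}\rangle_{\tau})$, which vanishes under the hypothesis $\e/\delta\to 0$ (together with $\tau\lesssim 1$). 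For the martingale: since trace-chain jumps $|y-z|$ are of order $\delta$ in law (the scale of the Dirichlet-to-Neumann semigroup at time $\delta$), and $u^{\Phi}$ is $\mathrm{O}(\delta^{-1}i_{1})$-Lipschitz, the predictable quadratic variation at time $\tau$ is bounded by $\e^{-1}\tau\cdot(\delta^{-1}i_{1})^{2}\cdot\delta^{2}=\e^{-1}\tau\,i_{1}^{2}$. Doob's maximal inequality then gives $\sup_{t\le\tau}\e|M^{\e}(t,x)|\lesssim\sqrt{\e\tau}\,i_{1}(\langle\Phi^{\e}\rangle_{\tau})$ with probability $\ge 1-p(\e)$. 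Performing the analogous argument for each spatial derivative $\partial_{x}^{k}$ with $|k|\le 10$ (the corrector bounds pass cleanly under $\partial_{x}^{k}$ since $x$ is only a parameter in the Poisson equation), and adding a union bound over an $\e$-dependent time grid via the easy Lipschitz-in-$t$ bound on $\mathsf{F}^{\e}-\mathsf{F}^{\e,\mathrm{eff}}$, yields $\|\mathsf{F}^{\e}-\mathsf{F}^{\e,\mathrm{eff}}\|_{\tau}\lesssim b(\e)\,i(\langle\Phi^{\e}\rangle_{\tau})$ with $b(\e):=\max(\e/\delta,\sqrt{\e})\to 0$.

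\textbf{Main obstacle.} The technical crux is the a priori estimate on $u^{\Phi}$ and $\nabla_{\Phi}u^{\Phi}$ with constants depending only on $\langle\Phi\rangle_{\mathscr{C}^{10}}$. Achieving this requires (i) a uniform-in-$\Phi$ lower bound on the spectral gap of $\mathcal{N}^{\Phi}$ under smooth perturbations of the metric $\mathsf{g}^{\Phi}$, via continuity of the spectrum of elliptic self-adjoint operators; (ii) uniform small-$\delta$ asymptotics $\mathscr{A}^{\Phi}=-\delta\mathcal{N}^{\Phi}+\mathrm{O}(\delta^{2})$ together with sharp control of the trace-chain jump law; and (iii) a chain-rule estimate for $\nabla_{\Phi}\mathbf{T}^{\Phi,\delta}$, for which one should use the explicit \abbr{SDE} representation of $\mathbf{x}^{\Phi,z}$ promised in Section \ref{section:couplingproof} and a stochastic calculus of variations argument. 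Packaging these geometric inputs with the claimed dependence on $\langle\Phi\rangle_{\mathscr{C}^{10}}$ alone is precisely the stochastic differential geometry content the paper advertises as distinguishing it from the abstract analytic methods of \cite{DGHS}.
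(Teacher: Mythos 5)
Your corrector/Dynkin approach is a genuinely different route from the paper's, which proceeds by time-averaging $\mathsf{F}^{\e}$ on a mesoscale $\tau(\e)$ (Lemma \ref{lemma:timeaverage}), freezing the metric $\Phi^{\e}(s+r,\cdot)\mapsto\Phi^{\e}(s,\cdot)$ (Lemma \ref{lemma:freeze}), coupling $\mathbf{z}^{\e}$ to an auxiliary frozen-metric particle $\wt{\mathbf{z}}^{\e}$ (Proposition \ref{prop:coupling}, the whole of Section \ref{section:couplingproof}), and finally applying the Kipnis--Varadhan inequality on the time-homogeneous trace chain (Lemma \ref{lemma:ergodic}). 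You collapse all four steps into one Dynkin decomposition via a Poisson-equation corrector $u^{\Phi}$ for the embedded chain, absorbing the time-dependence of the metric into the remainder $\mathrm{R}^{\e}$ through $\nabla_{\Phi}u^{\Phi}$. If the a priori bounds you assert were available, this would be cleaner; and the two methods are not unrelated, since Kipnis--Varadhan is an $H^{-1}$-estimate in disguise and you are just making the corrector explicit.

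Two caveats. First, a quantitative slip: the trace-chain jump is $\mathrm{O}(\delta)$ only in a median sense, not in second moment. The time-$\delta$ Dirichlet-to-Neumann semigroup has a Cauchy/Poisson-type kernel with polynomial tails, so on the compact boundary $\E|y-z|^{2}\sim\delta$, not $\delta^{2}$. Your predictable quadratic variation is therefore $\e^{-1}\tau\,\delta^{-1}i_{1}^{2}$ rather than $\e^{-1}\tau\, i_{1}^{2}$, and the martingale contribution to $\e M^{\e}$ is $\sqrt{\e\tau/\delta}\,i_{1}$, not $\sqrt{\e\tau}\,i_{1}$; this still vanishes under $\delta/\e\to\infty$, so the conclusion survives, but $b(\e)$ should read $\max(\e/\delta,\sqrt{\e/\delta})$. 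Second, and more substantively, the corrector estimates you defer to the ``main obstacle'' paragraph — uniform spectral gap of $\mathcal{N}^{\Phi}$, uniform-in-$\delta$ control of $\mathscr{A}^{\Phi}$, and above all the bound on $\nabla_{\Phi}u^{\Phi}$ obtained by differentiating the trace kernel $\mathbf{T}^{\Phi,\delta}_{z}$ in the metric — are exactly the ``Ito calculus for \abbr{SDE}s with singular boundary local time coefficients'' that the paper spends its most technical section (Section \ref{section:couplingproof}) developing via a coupling. You have not sidestepped the geometric difficulty; you have relocated it into the Fr\'echet derivative of the corrector, and establishing that derivative with constants depending only on $\langle\Phi\rangle_{\mathscr{C}^{10}}$ is of comparable weight to Proposition \ref{prop:coupling}. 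One further subtlety: the mean-zero normalization of $u^{\Phi}$ is taken against $\mathbf{T}^{\Phi}$, which itself varies with $\Phi$, so differentiating the Poisson equation produces extra projection terms beyond the two pieces you name.
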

As we alluded to in the introduction, the proof of Proposition \ref{prop:hom} is the key technical heart to this paper. We defer it to after proofs of Theorems \ref{theorem:thm1} and \ref{theorem:thm2}. In particular, we assume it for now, use it, and then prove it.
%
%
%
\section{Proofs of Theorem \ref{theorem:thm1} {{}and Theorem \ref{theorem:thm1b}}}\label{section:flowproofs}
{{}Before we proceed, we clarify that several of the results and proofs in the rest of this paper use continuous and increasing functions $i(\cdot)\geq1$ (and similar objects). These functions will reflect the dependence of our estimates on the geometry of the image of $\partial\bUz$ under some $\Phi\in\mathscr{C}^{10}_{\simeq}(\partial\bUz,\R^{\d})$. The exact choices of function $i(\cdot)$ will not be important, and it will be allowed to vary a finite number of times.}
\subsection{Preliminary regularity estimates}
We will start with continuity of $\mathsf{n}^{\Phi}$ as a function of $\Phi$. Beyond facilitating a fixed-point argument to solve \eqref{eq:limit}, it will also be important in controlling the microscopic process and thus proving Theorem \ref{theorem:thm2} as well.
\begin{lemma}\label{lemma:thm11}
Fix $\Phi_{1},\Phi_{2}\in\mathscr{C}^{10}_{\simeq}(\partial \bUz,\R^{\d})$. For some $i(\cdot)\geq1$ increasing and continuous, we have
\begin{align}
\|\mathsf{n}^{\Phi_{1}}-\mathsf{n}^{\Phi_{2}}\|_{\mathscr{C}^{0}} \ \leq \ i(\langle\Phi_{1}\rangle_{\mathscr{C}^{1}})\|\Phi_{1}-\Phi_{2}\|_{\mathscr{C}^{1}} \quad\mathrm{and}\quad \|\mathsf{n}^{\Phi_{1}}\|_{\mathscr{C}^{1}} \ \lesssim \ i(\langle\Phi_{1}\rangle_{\mathscr{C}^{10}}).
\end{align}
\end{lemma}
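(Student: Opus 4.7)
The plan is to exhibit $\mathsf{n}^{\Phi}(y)$ as a smooth rational function of the first derivatives of $\Phi$ at $y$, whose denominator is controlled by the inverse Jacobian, and then estimate numerator and denominator separately.

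Set $J_{i}(y) := \partial_{\mathbf{e}_{i}}\Phi(y) \in \R^{\d}$ for $i = 1, \ldots, \d-1$; these span the $(\d-1)$-dimensional tangent space to $\Phi(\partial\bUz)$ at $\Phi(y)$, so the unit outward normal admits the generalized cross-product representation
\begin{align}
\mathsf{n}^{\Phi}(y) \ = \ \frac{v^{\Phi}(y)}{|v^{\Phi}(y)|}, \qquad v^{\Phi}(y)_{k} \ := \ \sigma\,\det\bigl[J_{1}(y),\,\ldots,\,J_{\d-1}(y),\,e_{k}\bigr],
\end{align}
where $e_{k} \in \R^{\d}$ is the $k$-th standard basis vector and $\sigma \in \{\pm 1\}$ is an orientation sign chosen so that the normal points outward; this sign depends continuously on $\Phi$ (since the compact component bounded by $\Phi(\partial\bUz)$ varies continuously in $\Phi$) and can be taken constant in any small $\mathscr{C}^{1}$-neighborhood of a fixed $\Phi$. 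The map $\partial\Phi \mapsto v^{\Phi}$ is polynomial. Moreover, writing $G_{ij} = \langle J_{i}, J_{j}\rangle$ for the induced Gram matrix, standard multilinear algebra gives $|v^{\Phi}|^{2} = \det G = |\det\mathrm{Jac}\Phi|^{2}$ (as $\mathrm{Jac}\Phi$ is the $R$-factor in a $QR$-decomposition of the $\d\times(\d-1)$ matrix with columns $J_{i}$). Consequently
\begin{align}
|v^{\Phi}(y)|^{-1} \ = \ \bigl|\det(\mathrm{Jac}\Phi(y))^{-1}\bigr|,
\end{align}
which is polynomial in the entries of $(\mathrm{Jac}\Phi)^{-1}$, hence bounded by an increasing continuous function of $\langle\Phi\rangle_{\mathscr{C}^{1}}$.

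For the first estimate, we split into cases. If $\|\Phi_{1}-\Phi_{2}\|_{\mathscr{C}^{1}}$ exceeds a suitable inverse power of $\langle\Phi_{1}\rangle_{\mathscr{C}^{1}}$, the conclusion is immediate since $|\mathsf{n}^{\Phi_{1}} - \mathsf{n}^{\Phi_{2}}| \leq 2$ and we are free to let $i$ grow fast enough to swallow this regime. In the complementary regime we have $\langle\Phi_{2}\rangle_{\mathscr{C}^{1}} \lesssim \langle\Phi_{1}\rangle_{\mathscr{C}^{1}}$, and $\sigma$ coincides for $\Phi_{1}$ and $\Phi_{2}$. Then apply the pointwise identity
\begin{align}
\frac{v_{1}}{|v_{1}|} - \frac{v_{2}}{|v_{2}|} \ = \ \frac{v_{1}-v_{2}}{|v_{1}|} \ + \ v_{2}\cdot\frac{|v_{2}| - |v_{1}|}{|v_{1}|\,|v_{2}|}
\end{align}
to $v_{i} = v^{\Phi_{i}}(y)$. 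The polynomial nature of $v^{\Phi}$ in $\partial\Phi$ yields $|v^{\Phi_{1}}(y) - v^{\Phi_{2}}(y)| \leq P(\|\Phi_{1}\|_{\mathscr{C}^{1}}, \|\Phi_{2}\|_{\mathscr{C}^{1}})\|\Phi_{1}-\Phi_{2}\|_{\mathscr{C}^{1}}$ for some polynomial $P$, and combining with the lower bound on $|v^{\Phi_{i}}|$ from the previous paragraph gives the desired inequality.

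For the second estimate, differentiate $\mathsf{n}^{\Phi_{1}} = v^{\Phi_{1}}/|v^{\Phi_{1}}|$ along $\mathbf{e}_{\ell}$ via product and chain rules. The result is a rational expression whose numerator is polynomial in $\partial\Phi_{1}$ and $\partial^{2}\Phi_{1}$ (hence bounded by a polynomial in $\|\Phi_{1}\|_{\mathscr{C}^{2}}$) and whose denominator is a positive power of $|v^{\Phi_{1}}|$ (hence bounded below in terms of $\|(\mathrm{Jac}\Phi_{1})^{-1}\|_{\mathscr{C}^{0}}$). All these quantities are controlled by an increasing continuous function of $\langle\Phi_{1}\rangle_{\mathscr{C}^{10}}$. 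The main obstacle is essentially bookkeeping rather than any conceptual difficulty: the chief subtlety is verifying that the determinant-based formula for $|v^{\Phi}|$ agrees with $|\det\mathrm{Jac}\Phi|$ as the latter is defined via Gram--Schmidt in Definition \ref{definition:limit}, so that $|v^{\Phi}|^{-1}$ can legitimately be bounded using $(\mathrm{Jac}\Phi)^{-1}$ as it appears in $\langle\Phi\rangle_{\mathscr{C}^{1}}$.
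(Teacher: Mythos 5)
Your proposal is correct, but it follows a genuinely different route from the paper's. You represent $\mathsf{n}^{\Phi}$ explicitly as the normalized generalized cross product $v^{\Phi}/|v^{\Phi}|$ of the columns of the derivative matrix, verify $|v^{\Phi}| = |\det\mathrm{Jac}\Phi|$ via $\det G = \det(R^{\top}R)$ in a $QR$ factorization, and then estimate the difference of two unit vectors using the elementary identity $\tfrac{v_1}{|v_1|}-\tfrac{v_2}{|v_2|}=\tfrac{v_1-v_2}{|v_1|}+v_2\tfrac{|v_2|-|v_1|}{|v_1||v_2|}$; the orientation sign $\sigma$ is handled by a case split that matches the paper's use of an $\mathscr{C}^1$-threshold $\upsilon$. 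The paper instead works with the implicit defining relation $\mathsf{n}^{\Phi_k}\cdot\mathrm{Jac}\Phi_k\,\mathbf{e}_\ell = 0$: it first bounds the tangential projection of $\mathsf{n}^{\Phi_1}-\mathsf{n}^{\Phi_2}$ directly from that relation, then controls the normal component $\alpha = \mathsf{n}^{\Phi_1}\cdot\mathsf{n}^{\Phi_2}$ by expanding $\mathsf{n}^{\Phi_2}$ in the frame $\{\mathsf{n}^{\Phi_1},\mathbf{f}^{\Phi_1}_\ell\}$, and for the gradient bound differentiates the orthogonality relation rather than a formula. The two approaches buy roughly the same thing. Your explicit-formula version reduces everything to rational-function estimates in $\partial\Phi$, at the cost of the orientation bookkeeping for $\sigma$ and of checking that $|v^{\Phi}|$ agrees with $|\det\mathrm{Jac}\Phi|$ as defined via Gram--Schmidt (you flag this correctly, and it does hold since $\det$ of a matrix of coordinates relative to an orthonormal basis is basis-independent up to sign). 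The paper's implicit version avoids fixing a formula for the normal but must argue separately about the tangential and normal components and must invoke the inverse function theorem to keep $\alpha$ away from $-1$. Both require the same $\upsilon$-threshold case split, and both give the same dependence on $\langle\Phi_1\rangle_{\mathscr{C}^{1}}$, so the choice is largely a matter of taste.
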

\begin{proof}
Recall $\mathrm{Jac}\Phi$ is the Jacobian matrix of $\Phi$. By definition, for $y\in\partial \bUz$ and $k=1,2$ and $1\leq\ell\leq\d-1$,
\begin{align}
\mathsf{n}^{\Phi_{k}}(y)\cdot[\mathrm{Jac}\Phi_{k}](y)\mathbf{e}_{\ell}(y) \ = \ 0, \label{eq:thm11normalequation}
\end{align}
where we recall $\mathbf{e}_{1},\ldots,\mathbf{e}_{\d-1}$ is the fixed orthonormal tangent frame for $\partial \bUz$. From this, we compute
\begin{align}
\left\{\mathsf{n}^{\Phi_{1}}(y)-\mathsf{n}^{\Phi_{2}}(y)\right\}\cdot[\mathrm{Jac}\Phi_{1}](y)\mathbf{e}_{\ell}(y) \ = \ -\mathsf{n}^{\Phi_{2}}(y)\cdot\left\{[\mathrm{Jac}\Phi_{1}](y)-[\mathrm{Jac}\Phi_{2}](y)\right\}\mathbf{e}_{\ell}(y). \label{eq:thm111}
\end{align}
The \abbr{RHS} of \eqref{eq:thm111} is $\mathrm{O}(\|\Phi_{1}-\Phi_{2}\|_{\mathscr{C}^{1}})$ since $\mathsf{n}^{\Phi_{2}}$ and $\mathbf{e}_{\ell}$ are unit length. Note this upper bound is independent of $y\in\partial \bUz$ and $\ell$. Also note the tangent space of $\Phi_{1}(\partial\bUz)$ at $\Phi(y)$ has $\{[\mathrm{Jac}\Phi_{1}](y)\mathbf{e}_{\ell}(y)\}_{\ell}$ as a basis. Thus, the projection of $\mathsf{n}^{\Phi_{1}}(y)-\mathsf{n}^{\Phi_{2}}(y)$ onto the tangent space of $\Phi_{1}(\partial \bUz)$ at $\Phi(y)$ has Euclidean length $i(\langle\Phi_{1}\rangle_{\mathscr{C}^{1}})\mathrm{O}(\|\Phi_{1}-\Phi_{2}\|_{\mathscr{C}^{1}})$ (the extra factor $i(\|\Phi_{1}^{-1}\|_{\mathscr{C}^{1}})$ comes from the fact that $[\mathrm{Jac}\Phi_{1}](y)\mathbf{e}_{\ell}(y)$ is not an orthonormal basis of the aforementioned tangent space; we must perform the Gram-Schmidt procedure and re-scale by $\langle\Phi_{1}\rangle_{\mathscr{C}^{1}}$-dependent factors). In words, we just said that $\mathsf{n}^{\Phi_{i}}(y)$ is normal to the tangent space of $\Phi_{i}(\partial\bUz)$ at $\Phi_{i}(y)$ and perturbed in $\Phi_{i}$. We must now control the projection of $\mathsf{n}^{\Phi_{1}}(y)-\mathsf{n}^{\Phi_{2}}(y)$ on the span of $\mathsf{n}^{\Phi_{1}}(y)$.

Letting $\mathbf{f}^{\Phi_{1}}_{\ell}$ be an orthonormal tangent frame for $\Phi_{1}(\partial \bUz)$, we now write
\begin{align}
\mathsf{n}^{\Phi_{2}}(y) \ &= \ \alpha\mathsf{n}^{\Phi_{1}}(y) + {\sum}_{\ell}\alpha^{\Phi_{1}}_{\ell}(y)\mathbf{f}^{\Phi_{1}}_{\ell}(y) \nonumber \\
\mathrm{where}\qquad  &\alpha:=\mathsf{n}^{\Phi_{1}}(y)\cdot\mathsf{n}^{\Phi_{2}}(y), \qquad 
 \alpha_{\ell}^{\Phi_{1}}(y):=\mathbf{f}_{\ell}^{\Phi_{1}}(y)\cdot\mathsf{n}^{\Phi_{2}}(y).
\end{align}
By construction of $\mathbf{f}^{\Phi_{1}}$ vectors as being normal to $\mathsf{n}^{\Phi_{1}}$, the sum over $\ell$ on the \abbr{RHS} equals the projection of $\mathsf{n}^{\Phi_{2}}(y)-\mathsf{n}^{\Phi_{1}}(y)$ onto the tangent space of $\Phi_{1}(\partial\bUz)$ at $\Phi(y)$. Thus, we have already proven that its length is $\leq i(\langle\Phi_{1}\rangle_{\mathscr{C}^{1}})\mathrm{O}(\|\Phi_{1}-\Phi_{2}\|_{\mathscr{C}^{1}})$. Let us now take the dot product of both sides with $\mathsf{n}^{\Phi_{2}}(y)$. Because $\mathsf{n}^{\Phi_{2}}(y)$ is unit length, the \abbr{LHS} gives 1, while the sum on the \abbr{RHS} gives $i(\langle\Phi_{1}\rangle_{\mathscr{C}^{1}})\mathrm{O}(\|\Phi_{1}-\Phi_{2}\|_{\mathscr{C}^{1}})$. The first term on the \abbr{RHS} gives $\alpha^{2}$. Thus, we have $|\alpha^{2}-1|\lesssim i(\langle\Phi_{1}\rangle_{\mathscr{C}^{1}})\mathrm{O}(\|\Phi_{1}-\Phi_{2}\|_{\mathscr{C}^{1}})$, and it now suffices to show that $\alpha$ is uniformly bounded away from $-1$ (and thus $\mathsf{n}^{\Phi_{1}}(y)$ and $\mathsf{n}^{\Phi_{2}}(y)$ have the same orientation). To this end, we note that by the inverse function theorem, because $\Phi\in\mathscr{C}^{10}_{\simeq}(\partial\bUz,\R^{\d})$, there exists $\upsilon>0$ depending only on $\langle\Phi_{1}\rangle_{\mathscr{C}^{1}}$ and $\partial\bUz$ so that if $\|\Phi_{1}-\Phi_{2}\|_{\mathscr{C}^{1}}\leq\upsilon$, then $\mathsf{n}^{\Phi_{1}}(y)\cdot\mathsf{n}^{\Phi_{2}}(y)\geq0$. (In words, the direction of the unit outward normal to $\Phi_{1}(\partial\bUz)$ at any point cannot ``drastically flip direction" after a small perturbation of $\Phi_{1}$.) Now, if $\|\Phi_{1}-\Phi_{2}\|_{\mathscr{C}^{1}}\leq\upsilon$ for this choice of $\upsilon$, then $\alpha=\mathsf{n}^{\Phi_{1}}(y)\cdot\mathsf{n}^{\Phi_{2}}(y)\geq0$ is clearly bounded uniformly away from $-1$, and thus the first estimate follows. If $\|\Phi_{1}-\Phi_{2}\|_{\mathscr{C}^{1}}\geq\upsilon$, then the first estimate follows immediately anyway by possibly modifying $i(\cdot)$. Indeed, in this case,
\begin{align}
\|\mathsf{n}^{\Phi_{1}}-\mathsf{n}^{\Phi_{2}}\|_{\mathscr{C}^{0}} \ \lesssim \ 1 \ = \ \upsilon\times\upsilon^{-1} \ \leq \ \upsilon^{-1}\|\Phi_{1}-\Phi_{2}\|_{\mathscr{C}^{1}},
\end{align}
and $\upsilon^{-1}$ depends only on $\langle\Phi_{1}\rangle_{\mathscr{C}^{10}}$. 

We now prove the second bound claimed in the lemma. Recall \eqref{eq:thm11normalequation}. As $[\mathrm{Jac}\Phi_{1}](y)$ is invertible, the set $\{[\mathrm{Jac}\Phi_{1}](y)\mathbf{e}_{\ell}(y)\}_{\ell}$ has $(\d-1)$-dimensional span. Therefore, \eqref{eq:thm11normalequation} has a unique solution $\mathsf{n}^{\Phi_{1}}$ that is unit-length and outwards pointing. Now, note
\begin{align}
0 \ = \ \partial\{\mathsf{n}^{\Phi_{1}}(y)\cdot[\mathrm{Jac}\Phi_{1}](y)\mathbf{e}_{\ell}(y)\} \ = \ \partial\mathsf{n}^{\Phi_{1}}(y) \cdot [\mathrm{Jac}\Phi_{1}](y)\mathbf{e}_{\ell}(y) + \mathsf{n}^{\Phi_{1}}(y)\cdot\partial\{[\mathrm{Jac}\Phi_{1}](y)\mathbf{e}_{\ell}(y)\},
\end{align}
where $\partial$ is with respect to the frame $\{\mathbf{e}_{\ell}\}_{\ell}$. This controls the projection of $\partial\mathsf{n}^{\Phi_{1}}$ onto $[\mathrm{Jac}\Phi_{1}](y)\mathbf{e}_{\ell}(y)$ by $\mathrm{O}(\|\Phi_{1}\|_{\mathscr{C}^{2}})$ (and therefore projection of $\partial\mathsf{n}^{\Phi_{1}}$ onto the tangent space spanned by $[\mathrm{Jac}\Phi_{1}](y)\mathbf{e}_{\ell}(y)$ by $i(\langle\Phi_{1}\rangle_{\mathscr{C}^{10}})$). It now suffices to note that the projection of $\partial\mathsf{n}^{\Phi_{1}}$ onto $\mathsf{n}^{\Phi_{1}}$ is zero, namely that $\partial\mathsf{n}^{\Phi_{1}}\cdot\mathsf{n}^{\Phi_{1}}=0$ (this can be seen by applying $\partial$ to $\mathsf{n}^{\Phi_{1}}\cdot\mathsf{n}^{\Phi_{1}}=1$). 
\end{proof}
We will now do (almost) the same thing but for {{}normalized surface} measures $\mathbf{T}^{\Phi}$ instead of normal vectors. (The proof of the following is elementary, modulo a couple of details, so we only explain said details.)
\begin{lemma}\label{lemma:thm12}
Take $\Phi_{1},\Phi_{2}\in\mathscr{C}^{10}_{\simeq}(\partial \bUz,\R^{\d})$. We have the following (with $\mathbf{T}^{\Phi}$ and $\langle\cdot,\cdot\rangle_{\mathscr{C}^{10}}$ from \eqref{eq:tequation} and \emph{Definition \ref{definition:limit}}, respectively):
\begin{align}
\int_{\partial \bUz}|\mathbf{T}^{\Phi_{1}}(\d y)-\mathbf{T}^{\Phi_{2}}(\d y)| \ &\lesssim_{} i(\langle\Phi_{1}\rangle_{\mathscr{C}^{10}},\langle\Phi_{2}\rangle_{\mathscr{C}^{10}})\langle\Phi_{1},\Phi_{2}\rangle_{\mathscr{C}^{10}}.
\end{align}
Above, the function $i(\cdot,\cdot)\geq1$ is jointly continuous and increasing in both inputs.
\end{lemma}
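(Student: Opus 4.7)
The plan is to work with the explicit density formula. Writing $f_{\Phi}(y) := |\det\mathrm{Jac}\Phi(y)|$ and $Z_{\Phi} := \int_{\partial\bUz} f_{\Phi}(w)\,\d\Sigma_{\partial\bUz}(\d w)$, one has $\mathbf{T}^{\Phi}(\d y) = Z_{\Phi}^{-1} f_{\Phi}(y)\,\d\Sigma_{\partial\bUz}(\d y)$. I will prove (a) a pointwise Lipschitz estimate $|f_{\Phi_{1}}(y) - f_{\Phi_{2}}(y)| \lesssim_{i(\langle\Phi_{1}\rangle_{\mathscr{C}^{1}},\langle\Phi_{2}\rangle_{\mathscr{C}^{1}})} \|\Phi_{1}-\Phi_{2}\|_{\mathscr{C}^{1}}$ uniformly in $y$; (b) a uniform positive lower bound $Z_{\Phi} \gtrsim i(\langle\Phi\rangle_{\mathscr{C}^{1}})^{-1}$; (c) the corresponding Lipschitz estimate $|Z_{\Phi_{1}} - Z_{\Phi_{2}}| \lesssim |\partial\bUz|\sup_{y}|f_{\Phi_{1}}(y)-f_{\Phi_{2}}(y)|$. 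Combining these three via the identity
\begin{align}
\frac{f_{\Phi_{1}}(y)}{Z_{\Phi_{1}}} - \frac{f_{\Phi_{2}}(y)}{Z_{\Phi_{2}}} \ = \ \frac{f_{\Phi_{1}}(y)-f_{\Phi_{2}}(y)}{Z_{\Phi_{1}}} + f_{\Phi_{2}}(y)\cdot\frac{Z_{\Phi_{2}}-Z_{\Phi_{1}}}{Z_{\Phi_{1}}Z_{\Phi_{2}}}
\end{align}
and integrating over $\partial\bUz$ against $\d\Sigma_{\partial\bUz}$ yields the claimed bound, since $\|\Phi_{1}-\Phi_{2}\|_{\mathscr{C}^{1}} \leq \langle\Phi_{1},\Phi_{2}\rangle_{\mathscr{C}^{10}}$.

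For step (a): by Definition \ref{definition:limit}, the entries of $\mathrm{Jac}\Phi(y)$ are polynomial combinations of the derivatives of $\Phi$ along $\mathbf{e}_{1},\ldots,\mathbf{e}_{\d-1}$, after a Gram-Schmidt step whose inputs are smooth in these same derivatives. Thus $(\Phi,y) \mapsto \det\mathrm{Jac}\Phi(y)$ is a polynomial of degree $\d-1$ in such derivatives, and a telescoping bound in the matrix entries of $\mathrm{Jac}\Phi_{1}$ and $\mathrm{Jac}\Phi_{2}$ gives $|\det\mathrm{Jac}\Phi_{1}(y) - \det\mathrm{Jac}\Phi_{2}(y)| \lesssim i(\|\Phi_{1}\|_{\mathscr{C}^{1}},\|\Phi_{2}\|_{\mathscr{C}^{1}})\|\Phi_{1}-\Phi_{2}\|_{\mathscr{C}^{1}}$. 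The absolute value is $1$-Lipschitz, so the same bound holds with $|\det|$ in place of $\det$. For step (b): the pointwise identity $|\det\mathrm{Jac}\Phi(y)|\cdot|\det(\mathrm{Jac}\Phi)^{-1}(y)| = 1$, combined with Hadamard's inequality applied to $(\mathrm{Jac}\Phi)^{-1}$ and the built-in bound $\|(\mathrm{Jac}\Phi)^{-1}_{ij}\|_{\mathscr{C}^{0}} \leq \langle\Phi\rangle_{\mathscr{C}^{1}}$, yields $f_{\Phi}(y) \geq c(\langle\Phi\rangle_{\mathscr{C}^{1}})^{-(\d-1)} > 0$, hence the same lower bound (times $|\partial\bUz|$) for $Z_{\Phi}$. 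Step (c) is then immediate from (a).

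The main detail to be careful about is the interaction between the absolute value $|\det|$ and the possible change of sign of $\det\mathrm{Jac}\Phi$. A priori, $\det\mathrm{Jac}\Phi_{1}(y)$ and $\det\mathrm{Jac}\Phi_{2}(y)$ could carry opposite signs. This is harmless for the pointwise Lipschitz bound in step (a) since $|t| \mapsto t$ is $1$-Lipschitz regardless. But it means that we cannot avoid the absolute value, and one must pass through the lower bound (b) to get a useful two-sided estimate on $f_{\Phi}(y)$ rather than on $\det\mathrm{Jac}\Phi(y)$ itself. This is the only place where the full strength of the stronger norm $\langle\cdot\rangle_{\mathscr{C}^{10}}$ (specifically, that it controls $(\mathrm{Jac}\Phi)^{-1}$) is used; once (b) is in hand, the rest is routine manipulation of normalized densities and does not present any real obstacle. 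As in the proof of Lemma \ref{lemma:thm11}, one can trivially pass from small perturbations to all perturbations by enlarging the function $i(\cdot,\cdot)$, since when $\langle\Phi_{1},\Phi_{2}\rangle_{\mathscr{C}^{10}}$ exceeds any fixed constant the total-variation bound $\int|\mathbf{T}^{\Phi_{1}} - \mathbf{T}^{\Phi_{2}}| \leq 2$ is sharper than what the lemma claims.
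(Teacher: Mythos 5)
Your proposal is correct and takes essentially the same route as the paper: decompose the density difference via the quotient rule, bound the numerator difference using smoothness of $\Phi\mapsto|\det\mathrm{Jac}\Phi|$, and bound the normalization factor from below using control on $(\mathrm{Jac}\Phi)^{-1}$. The paper's two-sentence proof leaves almost all of this implicit; you have simply spelled out the details (notably Hadamard's inequality for the lower bound in step (b)). One minor remark: since $\mathrm{Jac}\Phi(y)$ is by construction the upper-triangular factor in the QR decomposition of the image of the orthonormal frame, $\det\mathrm{Jac}\Phi(y)$ is automatically positive, so the sign concern you flag at the end is vacuous (though correctly handled); and the final "pass to large $\langle\Phi_1,\Phi_2\rangle$" step is superfluous, as steps (a)--(c) already give a global estimate once the constants are allowed to depend on $\langle\Phi_i\rangle_{\mathscr{C}^{10}}$.
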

\begin{proof}
By an elementary calculation, the \abbr{LHS} of the proposed estimate is
\begin{align*}
&\lesssim\int_{\partial\bUz}\frac{|\det\mathrm{Jac}\Phi_{1}(y)-\det\mathrm{Jac}\Phi_{2}(y)|}{\int_{\partial\bUz}|\det\mathrm{Jac}\Phi_{1}(x)|\d\Sigma_{\partial\bUz}(x)}\d\Sigma_{\partial\bUz}(y)\\
&+\int_{\partial\bUz}|\det\mathrm{Jac}\Phi_{2}(y)|\left|\frac{1}{\int_{\partial\bUz}|\det\mathrm{Jac}\Phi_{1}(x)|\d\Sigma_{\partial\bUz}(x)}-\frac{1}{\int_{\partial\bUz}|\det\mathrm{Jac}\Phi_{2}(x)|\d\Sigma_{\partial\bUz}(x)}\right|\d\Sigma_{\partial\bUz}(y).
\end{align*}
Since $\det\mathrm{Jac}\Phi$ is polynomial of first derivatives of $\Phi$, the result would follow immediately if we knew that $|\det\mathrm{Jac}\Phi_{i}(x)|\gtrsim_{\langle\Phi_{i}\rangle_{\mathscr{C}^{10}}}1$, namely that it is uniformly bounded below depending only on $\langle\Phi_{i}\rangle_{\mathscr{C}^{10}}$. Since $\langle\Phi_{i}\rangle_{\mathscr{C}^{10}}$ controls the inverse of $\mathrm{Jac}\Phi_{i}$, this is clear.
\end{proof}
\subsection{Proof of Theorem \ref{theorem:thm1}}
Our construction of solutions to \eqref{eq:limit} will be done by contraction mapping. (In a nutshell, the main point is that differentiating the \abbr{RHS} of the equation \eqref{eq:limit} only hits the smooth kernel $\mathscr{K}$, not the solution itself. Hence, there is no ``loss of regularity". {{}Moreover, the initial data $\Phi^{\mathrm{hom}}(0,\cdot)$ is assumed to be injective, so the space we will build a contraction mapping on, which is a small ball in $\mathscr{C}^{10}_{\simeq}(\partial\bUz,\R^{\d})$-norm around $\Phi^{\mathrm{hom}}(0,\cdot)$, will be complete with respect to the $\mathscr{C}^{10}_{\simeq}(\partial\bUz,\R^{\d})$-norm and consist only of maps that are injective.}) For $\tau\geq0$, let us define the following map for any $\Phi\in\mathscr{C}([0,\tau],\mathscr{C}^{10}_{\simeq}(\partial \bUz,\R^{\d}))$:
\begin{align}
\mathfrak{S}\Phi(t,x)  \ = \ \Phi(0,x) + \int_{0}^{t}\d s\int_{\partial \bUz}\mathbf{T}^{\Phi(s,\cdot)}(\d y)\mathscr{K}(x,y)\mathsf{n}^{\Phi(s,\cdot)}(y).
\end{align}
We now define the space on which we will build a contraction mapping. {{}Let $\mathscr{B}(\Phi^{\mathrm{hom}}(0,\cdot),r)$ be the closed ball of radius $r>0$ in $\mathscr{C}^{10}_{\simeq}(\partial \bUz,\R^{\d})$-norm centered at $\Phi^{\mathrm{hom}}(0,\cdot)$, where $r>0$ is small enough so that every element in this ball is diffeomorphic onto its image. (In particular, $\mathscr{B}(\Phi^{\mathrm{hom}}(0,\cdot),r)$ is complete with respect to the $\mathscr{C}^{10}_{\simeq}(\partial \bUz,\R^{\d})$-norm.) Also, for any $\tau\geq0$, set $\mathscr{X}(\tau)=\{\Phi\in\mathscr{C}([0,\tau],\mathscr{B}(\Phi^{\mathrm{hom}}(0,\cdot),r)):\Phi(0,\cdot)=\Phi^{\mathrm{hom}}(0,\cdot)\}$.} We first show that $\mathfrak{S}$ defines a map $\mathscr{X}(\tau)\to\mathscr{X}(\tau)$ if $\tau>0$ is small enough (but depending only on $\Phi^{\mathrm{hom}}$ and $\bUz$). Via the triangle inequality, for $\Phi\in\mathscr{X}(\tau)$ (with $\tau>0$ to be determined shortly), we have the following (with $\|\|_{s}$-norms defined before Proposition \ref{prop:hom}):
\begin{align}
\|\mathfrak{S}\Phi-\Phi^{\mathrm{hom}}(0,\cdot)\|_{\tau} \ &\leq \ \int_{0}^{\tau}\d s \|\int_{\partial \bUz}\mathbf{T}^{\Phi(s,\cdot)}(\d y)\mathscr{K}(\cdot,y)\mathsf{n}^{\Phi(s,\cdot)}(y)\|_{s} \nonumber \\
&\leq \ \int_{0}^{\tau}\d s \int_{\partial \bUz}\mathbf{T}^{\Phi(s,\cdot)}(\d y)\|\mathscr{K}(\cdot,y)\mathsf{n}^{\Phi(s,\cdot)}(y)\|_{s} \ \lesssim \ \tau,
\end{align}
where the last estimate follows by regularity of the kernel $\mathscr{K}(\cdot,\cdot)$. Fix $\upsilon>0$ sufficiently small but depending only on $\Phi^{\mathrm{hom}}(0,\cdot)$ and $\partial \bUz$ such that if $\Psi\in\mathscr{C}^{10}(\partial \bUz,\R^{\d})$ and $\|\Psi-\Phi^{\mathrm{hom}}(0,\cdot)\|_{\mathscr{C}^{10}}\leq\upsilon$, then $\Psi$ is invertible and $\langle\Psi,\Phi^{\mathrm{hom}}(0,\cdot)\rangle_{\mathscr{C}^{10}}\leq r$. (The existence of such $\upsilon>0$ follows from the inverse function theorem.) Take $\tau>0$ depending only on $\Phi^{\mathrm{hom}}(0,\cdot)$ and $\partial \bUz$ so that $\|\mathfrak{S}\Phi-\Phi^{\mathrm{hom}}(0,\cdot)\|_{\tau'}\leq\upsilon$ for all $\tau'\leq\tau$, which we can do because of the previous display. For any such $\tau'$, we deduce $\mathfrak{S}\Phi\in\mathscr{X}(\tau')$ and $\mathfrak{S}$ defines a continuous map $\mathscr{X}(\tau')\to\mathscr{X}(\tau')$.

We now show that for a possibly smaller $\tau>0$ (but still depending only on $\Phi^{\mathrm{hom}}(0,\cdot)$ and $\partial \bUz$), the map $\mathfrak{S}:\mathscr{X}(\tau)\to\mathscr{X}(\tau)$ is a strict contraction. Take $\Phi_{1},\Phi_{2}\in\mathscr{X}(\tau)$; recall this implies $\Phi_{1}(0,\cdot)=\Phi_{2}(0,\cdot)=\Phi^{\mathrm{hom}}(0,\cdot)$. By definition of $\mathfrak{S}$,
\begin{align}
\mathfrak{S}\Phi_{1}(t,x) - \mathfrak{S}\Phi_{2}(t,x) \ = \ &\int_{0}^{t}\d s \int_{\partial \bUz}\{\mathbf{T}^{\Phi_{1}(s,\cdot)}(\d y)-\mathbf{T}^{\Phi_{2}(s,\cdot)}(\d y)\}\mathscr{K}(x,y)\mathsf{n}^{\Phi_{1}(s,\cdot)}(y) \nonumber  \\
+ \ &\int_{0}^{t}\d s \int_{\partial \bUz}\mathbf{T}^{\Phi_{2}(s,\cdot)}(\d y)\mathscr{K}(x,y)\{\mathsf{n}^{\Phi_{1}(s,\cdot)}(y)-\mathsf{n}^{\Phi_{2}(s,\cdot)}(y)\}.
\end{align}
Thus, by the triangle inequality, regularity of the kernel $\mathscr{K}(\cdot,\cdot)$, Lemma \ref{lemma:thm11}, and Lemma \ref{lemma:thm12}, we have
\begin{align}
\|\mathfrak{S}\Phi_{1}-\mathfrak{S}\Phi_{2}\|_{\tau} \ \leq \ &\int_{0}^{\tau}\d s \int_{\partial \bUz}|\mathbf{T}^{\Phi_{1}(s,\cdot)}(\d y)-\mathbf{T}^{\Phi_{2}(s,\cdot)}(\d y)|\|\mathscr{K}(\cdot,y)\mathsf{n}^{\Phi_{1}(s,\cdot)}(y)\|_{\mathscr{C}^{10}} \nonumber \\
+ \ &\int_{0}^{\tau}\d s \int_{\partial \bUz}\mathbf{T}^{\Phi_{2}(s,\cdot)}(\d y)\|\mathscr{K}(\cdot,y)\|_{\mathscr{C}^{10}}|\mathsf{n}^{\Phi_{1}(s,\cdot)}(y)-\mathsf{n}^{\Phi_{2}(s,\cdot)}(y)|\nonumber  \\
\lesssim \ &i(\langle\Phi_{1}\rangle_{\tau},\langle\Phi_{2}\rangle_{\tau})\langle\Phi_{1},\Phi_{2}\rangle_{\tau}\times\tau.
\end{align}
Recall that if $\tau>0$ is sufficiently small depending only on $\Phi^{\mathrm{hom}}(0,\cdot)$ and $\partial \bUz$, then $\langle\Phi_{1}\rangle_{\tau}+\langle\Phi_{2}\rangle_{\tau}\lesssim_{\tau}1$. Thus, for such $\tau>0$, we can forget $i(\langle\Phi_{1}\rangle_{\tau},\langle\Phi_{2}\rangle_{\tau})$ by changing the implied constant. In particular, given $\varrho>0$, the previous display implies existence of $\tau>0$ such that $\|\mathfrak{S}\Phi_{1}-\mathfrak{S}\Phi_{2}\|_{\tau}\leq\varrho\langle\Phi_{1},\Phi_{2}\rangle_{\tau}$. By the inverse function theorem, this also implies $\langle\mathfrak{S}\Phi_{1},\mathfrak{S}\Phi_{2}\rangle_{\tau}\lesssim\wt{i}(\langle\Phi_{1}\rangle_{\tau},\langle\Phi_{2}\rangle_{\tau})\times\varrho\langle\Phi_{1},\Phi_{2}\rangle_{\tau}$, where $\wt{i}(\cdot,\cdot)\geq1$ is jointly continuous and increasing. Again, because $\langle\Phi_{1}\rangle_{\tau}+\langle\Phi_{2}\rangle_{\tau}\lesssim_{\tau}1$, we deduce $\langle\mathfrak{S}\Phi_{1},\mathfrak{S}\Phi_{2}\rangle_{\tau}\lesssim\varrho\langle\Phi_{1},\Phi_{2}\rangle_{\tau}$. We now choose $\varrho>0$ sufficiently small (and $\tau>0$ accordingly) so that $\langle\mathfrak{S}\Phi_{1},\mathfrak{S}\Phi_{2}\rangle_{\tau}<\frac12\langle\Phi_{1},\Phi_{2}\rangle_{\tau}$. So, $\mathfrak{S}:\mathscr{X}(\tau)\to\mathscr{X}(\tau)$ is a proper contraction for $\tau>0$ depending only on $\Phi^{\mathrm{hom}}(0,\cdot)$ and $\bUz$. Existence and uniqueness until such time $\tau$ is now a consequence of Banach's fixed point theorem. This shows the existence of a lifetime (for well-posedness) that is strictly positive. \qed
\subsection{Proof of Theorem \ref{theorem:thm1b}}
{{}We first show that $\Phi^{\mathrm{hom}}(t,\cdot)$ converges in $\mathscr{C}^{10}(\partial\bUz,\R^{\d})$ as $t\to\tau_{\mathrm{sol}}^{-}$. Fix $\rho>0$ small. For any $t_{1},t_{2}\in[\tau_{\mathrm{sol}}-\rho,\tau_{\mathrm{sol}})$ such that $t_{1}\leq t_{2}$, we have the following by \eqref{eq:limit}:
\begin{align*}
\|\Phi^{\mathrm{hom}}(t_{1},\cdot)-\Phi^{\mathrm{hom}}(t_{2},\cdot)\|_{\mathscr{C}^{10}(\partial\bUz,\R^{\d})}&\lesssim\|\int_{t_{1}}^{t_{2}}\int_{\partial\bUz}\mathbf{T}^{\Phi^{\mathrm{hom}}(s,\cdot)}(\d y)\mathscr{K}(\cdot,y)\mathsf{n}^{\Phi^{\mathrm{hom}}(s,\cdot)}(y)\|_{\mathscr{C}^{10}(\partial\bUz,\R^{\d})}\\
&\lesssim\int_{t_{1}}^{t_{2}}\int_{\partial\bUz}\mathbf{T}^{\Phi^{\mathrm{hom}}(s,\cdot)}(\d y)\|\mathscr{K}(\cdot,y)\mathsf{n}^{\Phi^{\mathrm{hom}}(s,\cdot)}(y)\|_{\mathscr{C}^{10}(\partial\bUz,\R^{\d})},
\end{align*}
where the second line follows by the triangle inequality and that $\mathbf{T}^{\Phi^{\mathrm{hom}}(s,\cdot)}(\d y)$ is a positive measure independent of the variable with respect to which we take $\mathscr{C}^{10}(\partial\bUz,\R^{\d})$-norms. Since $\mathscr{K}(\cdot,y)$ is smooth uniformly in $y$, and since all normal vectors here are length-one, we have
\begin{align*}
\int_{t_{1}}^{t_{2}}\int_{\partial\bUz}\mathbf{T}^{\Phi^{\mathrm{hom}}(s,\cdot)}(\d y)\|\mathscr{K}(\cdot,y)\mathsf{n}^{\Phi^{\mathrm{hom}}(s,\cdot)}(y)\|_{\mathscr{C}^{10}(\partial\bUz,\R^{\d})}\lesssim\int_{t_{1}}^{t_{2}}\int_{\partial\bUz}\mathbf{T}^{\Phi^{\mathrm{hom}}(s,\cdot)}(\d y)=t_{2}-t_{1}\leq\rho,
\end{align*}
where the last two statements follow by the fact that $\mathbf{T}^{\Phi^{\mathrm{hom}}(s,\cdot)}(\d y)$ is a probability measure on $\partial\bUz$ and that $\tau_{\mathrm{sol}}\leq t_{1}\leq t_{2}\leq\tau_{\mathrm{sol}}-\rho$. Thus, for any such $t_{1},t_{2}$, we deduce $\|\Phi^{\mathrm{hom}}(t_{1},\cdot)-\Phi^{\mathrm{hom}}(t_{2},\cdot)\|_{\mathscr{C}^{10}(\partial\bUz,\R^{\d})}\lesssim\rho$. Because $\rho>0$ is arbitrary, this shows that the sequence $\{\Phi^{\mathrm{hom}}(t,\cdot)\}_{t}$ is Cauchy as $t\to\tau_{\mathrm{sol}}^{-}$ in $\mathscr{C}^{10}(\partial\bUz,\R^{\d})$. By completeness of $\mathscr{C}^{10}(\partial\bUz,\R^{\d})$, the first claim about convergence in Theorem \ref{theorem:thm1b} follows.

We now show that $\Phi^{\mathrm{hom}}(\tau_{\mathrm{sol}},\cdot)$ is not an immersion. We assume that $\Phi^{\mathrm{hom}}(\tau_{\mathrm{sol}},\cdot)$  is injective; otherwise there is nothing to prove. Thus, we are left to show that $\det\mathrm{Jac}\Phi^{\mathrm{hom}}(\tau_{\mathrm{sol}},y)=0$ for some $y\in\partial\bUz$. For any $t\in[0,\tau_{\mathrm{sol}})$, the adjugate formula for the inverse of a matrix gives us
\begin{align*}
[\mathrm{Jac}\Phi^{\mathrm{hom}}(t,x)]^{-1}_{ij}=[\det\mathrm{Jac}\Phi^{\mathrm{hom}}(t,x)]^{-1}\mathfrak{P}_{ij}(\mathrm{Jac}\Phi^{\mathrm{hom}}(t,x)), \quad i,j=1,\ldots,d-1,
\end{align*}
where $\mathfrak{P}_{ij}(\mathrm{Jac}\Phi^{\mathrm{hom}}(t,x))$ is some degree $d-1$ polynomial of the entries of $\mathrm{Jac}\Phi^{\mathrm{hom}}(t,x)$ that depends on the indices $i,j$. By the product rule, the $k$-th order derivatives of $[\mathrm{Jac}\Phi^{\mathrm{hom}}(t,x)]^{-1}_{ij}$ have the form
\begin{align}
[\det\mathrm{Jac}\Phi^{\mathrm{hom}}(t,x)]^{-k-1}\mathfrak{P}_{ij;k}(\Phi^{\mathrm{hom}}(t,x)),\label{eq:thm1bkth}
\end{align}
where $\mathfrak{P}_{ij;k}(\Phi^{\mathrm{hom}}(t,x))$ is a polynomial that depends on the $\ell$-th derivatives of $\Phi^{\mathrm{hom}}(t,x)$ for $\ell\in\{1,\ldots,k+1\}$. Now, because $\tau_{\mathrm{sol}}<\infty$, and because $\Phi^{\mathrm{hom}}(t,\cdot)$ is a smooth injection on $\partial\bUz$ for all $t\in[0,\tau_{\mathrm{sol}}]$, the only way $\tau_{\mathrm{sol}}$ can be a blow-up time is if \eqref{eq:thm1bkth} diverges in $\mathscr{C}^{0}(\partial\bUz,\R)$-norm as $t\to\tau_{\mathrm{sol}}^{-}$ for some $k\in\{0,\ldots,9\}$. (Indeed, the $k$-th order derivatives of $\Phi^{\mathrm{hom}}(t,x)$ are uniformly bounded on $(t,x)\in[0,\tau_{\mathrm{sol}}]\times\partial\bUz$ by the first part of Theorem \ref{theorem:thm1b}. Thus, if the $\mathscr{C}^{10}_{\simeq}(\partial\bUz,\R^{\d})$-norm of $\Phi^{\mathrm{hom}}(t,\cdot)$ blows up at $\tau_{\mathrm{sol}}^{-}$, then at least one of the entries of $[\mathrm{Jac}\Phi^{\mathrm{hom}}(t,x)]^{-1}$ must have $\mathscr{C}^{9}(\partial\bUz,\R)$-norm diverging as $t\to\tau_{\mathrm{sol}}^{-}$; see Definition \ref{definition:limit}.) However, by the first part of Theorem \ref{theorem:thm1b} and the fact that $\mathfrak{P}_{ij;k}(\Phi^{\mathrm{hom}}(t,x))$ depends only on the $\ell$-th derivatives of $\Phi^{\mathrm{hom}}(t,x)$ for $\ell\in\{1,\ldots,k+1\}$, we have
\begin{align*}
\sup_{t\in[0,\tau_{\mathrm{sol}}]}\|\mathfrak{P}_{ij;k}(\Phi^{\mathrm{hom}}(t,\cdot))\|_{\mathscr{C}^{0}(\partial\bUz,\R)}<\infty, \quad k\in\{0,\ldots,9\}.
\end{align*}
Thus, in order for \eqref{eq:thm1bkth} to diverge in $\mathscr{C}^{0}(\partial\bUz,\R)$-norm for some $k\in\{0,\ldots,9\}$ as $t\to\tau_{\mathrm{sol}}^{-}$, we must have that $\det\mathrm{Jac}\Phi^{\mathrm{hom}}(t,x_{t})\to0$ along a sequence $x_{t}$ as $t\to\tau_{\mathrm{sol}}^{-}$. Since $\partial\bUz$ is compact, we can find a convergent subsequence in $\{x_{t}\}_{t}$ whose limit is the point $y$ we are looking for, so the proof is complete. \qed}
%
%
%
\section{Proof of Theorem \ref{theorem:thm2}}\label{section:scalinglimit}
\subsection{Preliminary martingale estimate}
We start by controlling the martingale term in \eqref{eq:itointro}. This requires a combination of standard martingale bounds with a Sobolev inequality.
\begin{lemma}\label{lemma:mg}
Fix any stopping time $0\leq\tau\lesssim1$. We have $\|M^{\e}\|_{\tau}\lesssim\e^{1/3}$ with probability at least $1-\mathrm{O}(\e^{1/6})$.
\end{lemma}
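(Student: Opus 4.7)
The plan is to bound moments of $\|M^\e\|_\tau$ pointwise in $x$ and multi-index $k$ via the Burkholder--Davis--Gundy (BDG) inequality, then upgrade to the uniform-in-$x$ $\mathscr{C}^{10}$-control encoded in $\|\cdot\|_\tau$ using a standard Sobolev embedding on the compact manifold $\partial\bUz$, and finish with Markov's inequality.

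The key observation is that $\mathsf{n}^{\Phi^\e(s,\cdot)}(y)$ is independent of $x$, so for each multi-index $k$ with $|k|\leq 11$, the spatial derivative $\partial^k_x$ applied to \eqref{eq:phimg} only hits the smooth kernel $\mathscr{K}(x,y)$. Hence $\partial^k_x M^\e(\cdot,x)$ is again a compensated Poisson martingale whose individual jumps have magnitude at most $\e\|\mathscr{K}\|_{\mathscr{C}^{11}}$ (using $|\mathsf{n}^{\Phi}|=1$), and whose jump count $N_t$ is Poisson of rate $\e^{-1}$. Consequently the quadratic variation satisfies the pathwise bound $[\partial^k_x M^\e(\cdot,x),\partial^k_x M^\e(\cdot,x)]_\tau\lesssim \e^{2}N_\tau$. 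Applying BDG to the stopped martingale, together with $\E[N_\tau^{p/2}]\lesssim \e^{-p/2}$ for $\tau\lesssim 1$, gives for any $p\geq 2$ and uniformly over $x\in\partial\bUz$ and $|k|\leq 11$,
\[
\E\Bigl[\sup_{t\leq\tau}|\partial^k_x M^\e(t,x)|^{p}\Bigr]\ \lesssim_p\ \e^{p/2}.
\]

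Next I would invoke the Sobolev embedding $W^{11,p}(\partial\bUz)\hookrightarrow\mathscr{C}^{10}(\partial\bUz)$, which holds on the smooth compact $(\d-1)$-dimensional manifold $\partial\bUz$ for any $p>\d-1$. This yields $\|M^\e\|_\tau\lesssim \sup_{t\leq\tau}\|M^\e(t,\cdot)\|_{W^{11,p}}$. Swapping $\sup_{t}$ and $\int_{\partial\bUz}$ by the trivial inequality $\sup_t\int\leq\int\sup_t$ and using Fubini,
\[
\E\bigl[\|M^\e\|_\tau^{p}\bigr]\ \lesssim\ \sum_{|k|\leq 11}\int_{\partial\bUz}\E\Bigl[\sup_{t\leq\tau}|\partial^k_x M^\e(t,x)|^{p}\Bigr]\,\d\Sigma_{\partial\bUz}(x)\ \lesssim\ \e^{p/2}.
\]
Markov's inequality then gives $\mathbb{P}(\|M^\e\|_\tau>\e^{1/3})\lesssim \e^{p/6}$, and choosing any fixed $p\geq 2\vee(\d-1)$ (say $p=\d+2$) produces a failure probability of order $\e^{1/6}$ or better, as claimed.

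I do not expect a serious obstacle: the whole argument is morally a compound-Poisson second-moment calculation (capturing the heuristic that $O(\e^{-1})$ centered increments of magnitude $O(\e)$ accumulate fluctuation of size $O(\e^{1/2})$), dressed up with Sobolev embedding to promote pointwise control in $x$ to uniform control. The only mild technicality is confirming that the frame-based norm $\|\cdot\|_\tau$ of Section 2 is comparable, up to a multiplicative constant, to a standard manifold Sobolev norm on $\partial\bUz$; this is routine because iterated Lie derivatives $\partial^{k_1}_{\mathbf{e}_1}\cdots\partial^{k_{\d-1}}_{\mathbf{e}_{\d-1}}$ along a fixed smooth orthonormal frame span the same space of $m$-th order differential operators as iterated covariant derivatives, modulo lower-order terms, on any smooth compact manifold.
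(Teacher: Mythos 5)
Your proof is correct and follows essentially the same route as the paper: differentiate the compensated Poisson martingale in $x$, apply BDG with the pathwise bound $[\partial^k M^\e]_\tau\lesssim\e^2 N_\tau$ and Poisson moment bounds to get $\E\sup_{t\leq\tau}|\partial^k M^\e(t,x)|^p\lesssim_p\e^{p/2}$ pointwise in $x$, upgrade to uniform control via Morrey/Sobolev embedding on $\partial\bUz$, and conclude by Markov. The only difference is cosmetic (you work with $p$-th moments where the paper works with $2p$-th, and you make the Sobolev exponent $p>\d-1$ and derivative count $11$ explicit where the paper leaves them abstract as $p(\d),N(\d)$).
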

\begin{proof}
We apply $\partial^{k}$ to \eqref{eq:phimg} (for $k\geq0$ fixed). This gives the following, where $\partial^{k}$ is always in $x$:
\begin{align}
\partial^{k}M^{\e}(t,x) \ &= \ \sum_{0\leq\tau_{i}\leq t}\e\partial^{k}\mathscr{K}(x,\mathbf{z}^{\e}(\tau_{i}))\mathsf{n}^{\Phi^{\e}(\tau_{i}^{-},\cdot)}(\mathbf{z}^{\e}(\tau_{i}))- \int_{0}^{t}\d s\int_{\partial \bUz}\mathbf{T}_{\mathbf{z}^{\e}(s)}^{\Phi^{\e}(s,\cdot),\delta}(\d y)\partial^{k}\mathscr{K}(x,y)\mathsf{n}^{\Phi^{\e}(s,\cdot)}(y). \nonumber
\end{align}
Given any $x\in\partial \bUz$, note the process $t\mapsto\partial^{k}M^{\e}(t,x)$ is a martingale with respect to the canonical filtration. This can be seen from the above display (or the observation that $\partial^{k}$ is a linear operator, and martingales are closed under linear combinations). Thus, we can estimate its time-maximal process with the Burkholder-Davis-Gundy inequality. Precisely, defining $\mathfrak{m}_{\e}^{\partial,k}(\tau,x)=\sup_{0\leq t\leq\tau}|\partial^{k}M^{\e}(t,x)|$, for any $p\geq1$, we have the following (pointwise-in-$x$) moment estimate:
\begin{align}
\E|\mathfrak{m}_{\e}^{\partial,k}(\tau,x)|^{2p} \ &\lesssim_{p} \ \E\left[{\sum}_{\tau_{i}}\e^{2}|\partial^{k}\mathscr{K}(x,\mathbf{z}^{\e}(\tau_{i}))\mathsf{n}^{\Phi^{\e}(\tau_{i}^{-},\cdot)}(\mathbf{z}^{\e}(\tau_{i}))|^{2}\right]^{p}.
\end{align}
Again, recall $|\partial^{k}\mathscr{K}(x,\mathbf{z}^{\e}(\tau_{i}))\mathsf{n}^{\Phi^{\e}(\tau_{i}^{-},\cdot)}(\mathbf{z}^{\e}(\tau_{i}))|\lesssim_{k}1$ by regularity of $\mathscr{K}$ and boundedness of the normal. Also, the number of jumps $\tau_{i}\leq\tau$ has Poisson distribution of speed $\mathrm{O}(\e^{-1})$. Since $\tau\lesssim1$, this gives $\E|\#\{\tau_{i}\leq\tau\}|^{p}\lesssim_{p}\e^{-p}$, and therefore we have
\begin{align}
\E\left[{\sum}_{\tau_{i}}\e^{2}|\partial^{k}\mathscr{K}(x,\mathbf{z}^{\e}(\tau_{i}))\mathsf{n}^{\Phi^{\e}(\tau_{i}^{-},\cdot)}(\mathbf{z}^{\e}(\tau_{i}))|^{2}\right]^{p} \ \lesssim_{k,p} \ \e^{2p}\E|\#\{\tau_{i}\leq\tau\}|^{p} \ \lesssim_{p} \ \e^{p}.
\end{align}
Combining the previous two displays yields $\E|\mathfrak{m}_{\e}^{\partial,k}(\tau,x)|^{2p}\lesssim_{k,p}\e^{p}$. To bound $\|M^{\e}\|_{\tau}$, we use Morrey's Sobolev inequality. Recalling the definition of $\|\|_{\tau}$, this gives the following estimate (for some positive integers $N=N(\d)$ and $p=p(\d)$)
\begin{align}
\E\|M^{\e}\|_{\tau}^{2p} \ &\lesssim \ \E\sup_{0\leq t\leq\tau}\sup_{0\leq|k|\leq N}\int_{\partial \bUz}\Sigma_{\partial \bUz}(\d x) |\partial^{k}M^{\e}(t,x)|^{2p} \nonumber \\
&\leq \ \sum_{0\leq|k|\leq N}\int_{\partial \bUz}\Sigma_{\partial \bUz}(\d x) \E|\mathfrak{m}_{\e}^{\partial,k}(\tau,x)|^{2p} \ \lesssim_{p} \ \e^{p}.
\end{align}
The second bound above follows from controlling the supremum over $0\leq|k|\leq N$ by a sum, and by the triangle inequality and Fubini theorem to move the supremum over $0\leq t\leq\tau$ inside the integral. It remains to use the Chebyshev inequality.
\end{proof}
\subsection{Proof of Theorem \ref{theorem:thm2}}
First, let us establish some notation. Fix any $\tau\lesssim1$ that satisfies $\tau<\tau_{\mathrm{sol}}$, and define $\wt{\tau}:=\tau\wedge\wt{\tau}_{\mathrm{sol}}^{\e}$, where $\wt{\tau}_{\mathrm{sol}}^{\e}$ is the following ``pseudo-explosion time" for $\Phi^{\e}$ (which uses $\langle\rangle_{t}$-type norms that were built before Proposition \ref{prop:hom}):
\begin{align}
\wt{\tau}_{\mathrm{sol}}^{\e} \ &:= \ \inf\left\{t\geq0: \langle\Phi^{\e}\rangle_{t}\geq \tfrac12+\langle\Phi^{\mathrm{hom}}\rangle_{t}\right\}.
\end{align}
We emphasize $\wt{\tau}_{\mathrm{sol}}^{\e}$ is a stopping time since $\Phi^{\mathrm{hom}}$ is deterministic (and therefore adapted for trivial reasons). Set $\mathcal{E}:=\mathcal{E}(1)\cap\mathcal{E}(2)$, where $\mathcal{E}(1):=\{\|\mathsf{F}^{\e}-\mathsf{F}^{\e,\mathrm{eff}}\|_{\wt{\tau}}\lesssim b(\e)i(\|\Phi^{\e}\|_{\wt{\tau}})\}$ and $\mathcal{E}(2):=\{\|M^{\e}\|_{\wt{\tau}}\lesssim\e^{1/3}\}$ with $i(\cdot)$ and $b(\e)$ from Proposition \ref{prop:hom}. By Proposition \ref{prop:hom} and Lemma \ref{lemma:mg}, we know $\mathbf{P}(\mathcal{E}^{C})\leq\mathbf{P}(\mathcal{E}(1)^{C})+\mathbf{P}(\mathcal{E}(2)^{C})\to0$ as $\e\to0$. (For our application of Proposition \ref{prop:hom}, we are using the fact that before $\wt{\tau}_{\mathrm{sol}}^{\e}$, we have control on $\langle\Phi^{\e}\rangle_{t}$ in terms of $\langle\Phi^{\mathrm{hom}}\rangle_{t}$, which is bounded in terms of $\tau$ since $\wt{\tau}_{\mathrm{sol}}^{\e}\leq\tau<\tau_{\mathrm{sol}}$.) Therefore, it suffices to condition on $\mathcal{E}$ for $0\leq\tau<\tau_{\mathrm{sol}}$ to be determined. By definition, we have 
\begin{align}
\Phi^{\e}(t,x)-\Phi^{\mathrm{hom}}(t,x) \ = \ &\int_{0}^{t}\d s\int_{\partial \bUz}\{\mathbf{T}^{\Phi^{\e}(s,\cdot)}(\d y)-\mathbf{T}^{\Phi^{\mathrm{hom}}(s,\cdot)}(\d y)\}\mathscr{K}(x,y)\mathsf{n}^{\Phi^{\e}(s,\cdot)}(y) \nonumber \\
+ \ &\int_{0}^{t}\d s\int_{\partial \bUz}\mathbf{T}^{\Phi^{\mathrm{hom}}(s,\cdot)}(\d y)\mathscr{K}(x,y)\{\mathsf{n}^{\Phi^{\e}(s,\cdot)}(y)-\mathsf{n}^{\Phi^{\mathrm{hom}}(s,\cdot)}(y)\} \nonumber \\
+ \ &\mathbf{F}^{\e}(t,x)-\mathbf{F}^{\e,\mathrm{eff}}(t,x)+M^{\e}(t,x).
\end{align}
We conditioned on $\mathcal{E}$, so the last line is small in $\|\|_{\wt{\tau}}$. By Lemma \ref{lemma:thm11}, Lemma \ref{lemma:thm12}, the triangle inequality, and regularity of $\mathscr{K}$,
\begin{align*}
&\|\Phi^{\e}-\Phi^{\mathrm{hom}}\|_{\wt{\tau}} \\
\lesssim \ &\int_{0}^{\wt{\tau}}\d s \int_{\partial \bUz}|\mathbf{T}^{\Phi^{\e}(s,\cdot)}(\d y)-\mathbf{T}^{\Phi^{\mathrm{hom}}(s,\cdot)}(\d y)|\|\mathscr{K}(\cdot,y)\mathsf{n}^{\Phi^{\e}(s,\cdot)}(y)\|_{\mathscr{C}^{10}} + b(\e)i(\|\Phi^{\e}\|_{\wt{\tau}}) + \e^{1/3}\\
+ \ &\int_{0}^{\wt{\tau}}\d s \int_{\partial \bUz}\mathbf{T}^{\Phi^{\mathrm{hom}}(s,\cdot)}(\d y)\|\mathscr{K}(\cdot,y)\|_{\mathscr{C}^{10}}|\mathsf{n}^{\Phi^{\e}(s,\cdot)}(y)-\mathsf{n}^{\Phi^{\mathrm{hom}}(s,\cdot)}(y)|\\
\lesssim \ &i(\langle\Phi_{1}\rangle_{\tau},\langle\Phi_{2}\rangle_{\tau})\times \int_{0}^{\wt{\tau}}\d s \int_{\partial \bUz}\mathbf{T}^{\Phi^{\mathrm{hom}}(s,\cdot)}(\d y)\|\Phi^{\e}-\Phi^{\mathrm{hom}}\|_{s}+ b(\e)i(\langle\Phi^{\e}\rangle_{\wt{\tau}}) + \e^{1/3} \\
\lesssim \ &\int_{0}^{\wt{\tau}}\d s \|\Phi^{\e}-\Phi^{\mathrm{hom}}\|_{s}+b(\e)+\e^{1/3},
\end{align*}
where the last bound follows from the fact that for $s\leq\wt{\tau}$, we have uniform control on $\Phi^{\e}$ and its inverse (because we have such control for $\Phi^{\mathrm{hom}}$ and its inverse). By combining the previous display with the Gronwall inequality, we obtain
\begin{align}
\|\Phi^{\e}-\Phi^{\mathrm{hom}}\|_{\wt{\tau}} \ \lesssim_{\Phi^{\mathrm{hom}}} \ b(\e)+\e^{1/3}, \label{eq:thm22}
\end{align}
where the implied constant depends continuously on $\wt{\tau}$ and $\Phi^{\mathrm{hom}}$. Because $\langle\Phi^{\mathrm{hom}}\rangle_{\tau}\lesssim1$, by the inverse function theorem, we can find $\upsilon>0$ sufficiently small such that $\|\Phi-\Phi^{\mathrm{hom}}\|_{\wt{\tau}}\leq\upsilon$ implies $\langle\Phi,\Phi^{\mathrm{hom}}\rangle_{\wt{\tau}}\leq1/99$. We now choose $\e$ sufficiently small depending only on $\Phi^{\mathrm{hom}}$ so that the \abbr{RHS} of \eqref{eq:thm22}, including the implied constant, is less than $\upsilon$. For any such $\e$, we have 
\begin{align}
\langle\Phi^{\e},\Phi^{\mathrm{hom}}\rangle_{\wt{\tau}} \ \leq \ \frac{1}{99}. \label{eq:thm23}
\end{align}
We now claim that \eqref{eq:thm23} implies $\wt{\tau}=\tau\wedge\wt{\tau}_{\mathrm{sol}}^{\e}=\tau$. Indeed, if not, then \eqref{eq:thm23} would hold upon replacing $\wt{\tau}$ by $\wt{\tau}_{\mathrm{sol}}^{\e}$. But this contradicts the definition of $\wt{\tau}_{\mathrm{sol}}^{\e}$. Thus, we deduce \eqref{eq:thm23} holds if we replace $\wt{\tau}$ by $\tau$. This implies $\langle\Phi^{\e}\rangle_{\tau}\leq\langle\Phi^{\mathrm{hom}}\rangle_{\tau}+1/99\lesssim1$. In particular, we get $\tau_{\mathrm{sol}}^{\e}\wedge\tau=\tau$, where $\tau_{\mathrm{sol}}^{\e}$ is the true $\mathscr{C}^{10}_{\simeq}$-explosion time for $\Phi^{\e}$, and this shows the first claim of Theorem \ref{theorem:thm2}. To show the second claim, since $\wt{\tau}=\tau$ (for $\e$ sufficiently small), we know that \eqref{eq:thm22} holds for $\wt{\tau}$ replaced by $\tau$, so $\|\Phi^{\e}-\Phi^{\mathrm{hom}}\|_{\tau}\to0$ as $\e\to0$ in probability. Since $\tau<\tau_{\mathrm{sol}}$, we know $\langle\Phi^{\mathrm{hom}}\rangle_{\tau}\lesssim_{\tau}1$. So we can again use the inverse function theorem to deduce that $\|\Phi^{\e}-\Phi^{\mathrm{hom}}\|_{\tau}\to0$ implies $\langle\Phi^{\e},\Phi^{\mathrm{hom}}\rangle_{\tau}\to0$ in probability as $\e\to0$. This finishes the proof, since $\tau<\tau_{\mathrm{sol}}$ was arbitrary. \qed
%
%
%
\section{Proof of Proposition \ref{prop:hom}}\label{section:homproof}
The remainder of this paper (before the appendix) is dedicated to proving Proposition \ref{prop:hom}. We will do so in two sections. The current section provides a list of the necessary ingredients with a proof of all but one of these ingredients. The only ingredient that we do not prove in this section (Proposition \ref{prop:coupling}) has a technically involved proof that, itself, requires several steps. This proof is the purpose of the next section. In any case, assuming Proposition \ref{prop:coupling}, we then prove Proposition \ref{prop:hom} at the end of this section.
\subsection{Time average}
The first step for showing Proposition \ref{prop:hom} amounts to replacing the integrand in $\mathsf{F}^{\e}$ (see before Proposition \ref{prop:hom}) by its time-average on a time-scale $\tau(\e)$ that satisfies $\tau(\e)\to0$ and $\e^{-1}\delta(\e)\tau(\e)\to\infty$, both as $\e\to0$. (Note that because $\delta(\e)\e^{-1}\to\infty$ by assumption, such $\tau(\e)$ exists. In fact, this is exactly the utility of the assumption $\delta(\e)\e^{-1}\to\infty$.) The point of $\tau(\e)\to0$ is to control the error in this replacement-by-time-average; indeed, on time-scales $\tau(\e)\ll1$, the interface is moving on scales of $\mathrm{o}(1)$. The point of $\e^{-1}\delta(\e)\tau(\e)\to\infty$ will be explained prior to the final step (Lemma \ref{lemma:ergodic}).
\begin{lemma}\label{lemma:timeaverage}
Suppose $\tau(\e)>0$ satisfies $\tau(\e)\to0$ and $\e^{-1}\delta(\e)\tau(\e)\to\infty$ as $\e\to0$. Now, define
\begin{align}
\mathsf{F}^{\e,\mathrm{av}}(t,x) \ := \ \mathbf{1}_{t\geq\tau(\e)}\int_{0}^{t-\tau(\e)}\d s \ \tau(\e)^{-1}\int_{0}^{\tau(\e)}\d r \int_{\partial\bUz}\mathbf{T}^{\Phi^{\e}(s+r,\cdot),\delta}_{\mathbf{z}^{\e}(s+r)}(\d y)\mathscr{K}(x,y)\mathsf{n}^{\Phi^{\e}(s+r,\cdot)}(y).
\end{align}
For any possibly random $\tau\geq0$, we have the deterministic bound $\|\mathsf{F}^{\e}-\mathsf{F}^{\e,\mathrm{av}}\|_{\tau}\lesssim\tau(\e)$. Similarly, we have the deterministic bound $\|\mathsf{F}^{\e,\mathrm{eff}}-\mathsf{F}^{\e,\mathrm{eff},\mathrm{av}}\|_{\tau}\lesssim\tau(\e)$, where $\mathsf{F}^{\e,\mathrm{eff}}$ is from before \emph{Proposition \ref{prop:hom}}, and
\begin{align}
\mathsf{F}^{\e,\mathrm{eff},\mathrm{av}}(t,x) \ := \ \mathbf{1}_{t\geq\tau(\e)}\int_{0}^{t-\tau(\e)}\d s \ \tau(\e)^{-1}\int_{0}^{\tau(\e)}\d r \int_{\partial\bUz}\mathbf{T}^{\Phi^{\e}(s+r,\cdot)}(\d y)\mathscr{K}(x,y)\mathsf{n}^{\Phi^{\e}(s+r,\cdot)}(y).
\end{align}
\end{lemma}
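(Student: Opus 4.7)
}

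The key observation is that both $\mathsf{F}^{\e}$ and $\mathsf{F}^{\e,\mathrm{av}}$ are time-integrals (respectively, averaged time-integrals) of the same integrand
\begin{align}
\mathsf{g}^{\e}(s,x) \ := \ \int_{\partial\bUz}\mathbf{T}^{\Phi^{\e}(s,\cdot),\delta}_{\mathbf{z}^{\e}(s)}(\d y)\mathscr{K}(x,y)\mathsf{n}^{\Phi^{\e}(s,\cdot)}(y), \nonumber
\end{align}
and the only $x$-dependence in $\mathsf{g}^{\e}$ sits inside the smooth kernel $\mathscr{K}$. Since $\mathbf{T}^{\Phi^{\e}(s,\cdot),\delta}_{\mathbf{z}^{\e}(s)}$ is a probability measure and $\mathsf{n}^{\Phi^{\e}(s,\cdot)}$ is unit length, differentiating under the integral and using $\|\partial^{k}_{x}\mathscr{K}\|_{\mathscr{C}^{0}}\lesssim_{k}1$ for all $k$ gives the deterministic pointwise bound $\sup_{s,x}|\partial^{k}_{x}\mathsf{g}^{\e}(s,x)|\lesssim_{k}1$. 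Thus, for $\|\cdot\|_{\tau,N}$ and $\|\cdot\|_{\tau}$ as defined before Proposition \ref{prop:hom}, only the time-variable structure of the difference matters.

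Next I carry out an elementary Fubini computation. For $t\geq\tau(\e)$, changing the order of integration and shifting $u=s+r$ yields
\begin{align}
\mathsf{F}^{\e,\mathrm{av}}(t,x) \ = \ \tau(\e)^{-1}\int_{0}^{\tau(\e)}\d r\int_{r}^{t-\tau(\e)+r}\mathsf{g}^{\e}(u,x)\,\d u,\nonumber
\end{align}
so that
\begin{align}
\mathsf{F}^{\e}(t,x)-\mathsf{F}^{\e,\mathrm{av}}(t,x) \ = \ \tau(\e)^{-1}\int_{0}^{\tau(\e)}\d r\left[\int_{0}^{r}\mathsf{g}^{\e}(u,x)\,\d u+\int_{t-\tau(\e)+r}^{t}\mathsf{g}^{\e}(u,x)\,\d u\right]. \nonumber
\end{align}
Both inner integrals are over intervals of length at most $\tau(\e)$, so applying the uniform bound on $\mathsf{g}^{\e}$ (and its $x$-derivatives) immediately gives $\|\mathsf{F}^{\e}-\mathsf{F}^{\e,\mathrm{av}}\|_{\tau}\lesssim\tau(\e)$. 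For $t<\tau(\e)$, we have $\mathsf{F}^{\e,\mathrm{av}}(t,\cdot)\equiv 0$ by the indicator, and $\|\mathsf{F}^{\e}(t,\cdot)\|_{\mathscr{C}^{10}}\leq t\sup_{s}\|\mathsf{g}^{\e}(s,\cdot)\|_{\mathscr{C}^{10}}\lesssim\tau(\e)$. Combining both regimes yields the first deterministic bound.

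The proof of $\|\mathsf{F}^{\e,\mathrm{eff}}-\mathsf{F}^{\e,\mathrm{eff},\mathrm{av}}\|_{\tau}\lesssim\tau(\e)$ is identical upon replacing $\mathbf{T}^{\Phi^{\e}(s,\cdot),\delta}_{\mathbf{z}^{\e}(s)}(\d y)$ by $\mathbf{T}^{\Phi^{\e}(s,\cdot)}(\d y)$ throughout; indeed $\mathbf{T}^{\Phi^{\e}(s,\cdot)}$ is also a probability measure, so the uniform-in-$(s,x)$ bound on the corresponding integrand (and its $x$-derivatives) still holds, and the Fubini telescoping is unchanged. I expect no real obstacle here — the entire argument is a deterministic calculation that does not require the hypotheses $\tau(\e)\to0$ or $\e^{-1}\delta(\e)\tau(\e)\to\infty$ (which will instead be used in the later ergodic-replacement step), only the pointwise $\mathscr{C}^{10}_{x}$ bound on $\mathsf{g}^{\e}$ coming from smoothness of $\mathscr{K}$ and the fact that $\mathbf{T}$ is normalized.
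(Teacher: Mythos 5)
Your proof is correct and takes essentially the same route as the paper's: both argue via Fubini and a change of variables that the difference $\mathsf{F}^{\e}-\mathsf{F}^{\e,\mathrm{av}}$ reduces to an $r$-average of time-integrals over a difference set of length $\lesssim\tau(\e)$, bounded by the uniform $\mathscr{C}^{10}_{x}$-control on the integrand coming from smoothness of $\mathscr{K}$ and normalization of $\mathbf{T}$. The only cosmetic difference is that the paper first peels off the $[t-\tau(\e),t]$ piece of $\mathsf{F}^{\e}$ and then compares the remainder to $\mathsf{F}^{\e,\mathrm{av}}$ via symmetric differences $[0,t-\tau(\e)]\triangle[r,t-\tau(\e)+r]$, whereas you compare $\mathsf{F}^{\e}$ directly, arriving at the explicit difference set $[0,r]\cup[t-\tau(\e)+r,t]$ of length exactly $\tau(\e)$; this is marginally cleaner but mathematically identical. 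You also correctly observe (as the paper does in a remark) that the hypotheses on $\tau(\e)$ are not used at this stage.
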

\begin{remark}
Lemma \ref{lemma:timeaverage} is true for any choice of $\tau(\e)\geq0$. We placed conditions on our choice of $\tau(\e)$ in Lemma \ref{lemma:timeaverage} because we will only ever be interested in this case.
\end{remark}
\begin{proof}
The proof for both bounds follows the same argument, so we give details for the first. Take $0\leq t\leq\tau(\e)$. By construction, we have $\mathsf{F}^{\e,\mathrm{av}}(t,\cdot)=0$. By regularity of $\mathscr{K}$, we also have
\begin{align}
\|\mathsf{F}^{\e}(t,\cdot)\|_{\mathscr{C}^{10}} \ \leq \ \int_{0}^{t}\d s\int_{\partial\bUz}\mathbf{T}^{\Phi^{\e}(s,\cdot),\delta}_{\mathbf{z}^{\e}(s)}(\d y)\|\mathscr{K}(\cdot,y)\|_{\mathscr{C}^{10}}|\mathsf{n}^{\Phi^{\e}(s,\cdot)}(y)| \ \lesssim \ t \ \leq \ \tau(\e). \label{eq:timeaverage1}
\end{align}
Therefore, for $0\leq t\leq\tau(\e)$, we have $\|\mathsf{F}^{\e}(t,\cdot)-\mathsf{F}^{\e,\mathrm{av}}(t,\cdot)\|_{\mathscr{C}^{10}}\lesssim\tau(\e)$. So, it suffices to take $\tau\geq\tau(\e)$ and $\tau(\e)\leq t\leq\tau$. In this case, we first rewrite $\mathsf{F}^{\e}(t,x)$ by decomposing the time-integration domain as follows (for any $x\in\partial\bUz$):
\begin{align}
\mathsf{F}^{\e}(t,x) \ &= \ \left(\int_{0}^{t-\tau(\e)}+\int_{t-\tau(\e)}^{t}\right)\d s \int_{\partial\bUz}\mathbf{T}^{\Phi^{\e}(s,\cdot),\delta}_{\mathbf{z}^{\e}(s)}(\d y)\mathscr{K}(x,y)\mathsf{n}^{\Phi^{\e}(s,\cdot)}(y) \ \nonumber \\
& =: \ \mathsf{F}^{\e,1}(t,x)+\mathsf{F}^{\e,2}(t,x). 
\end{align}
Because the length of $[t-\tau(\e),t]$ is equal to $\tau(\e)$, the derivation of \eqref{eq:timeaverage1} also shows $\|\mathsf{F}^{\e,2}(t,\cdot)\|_{\mathscr{C}^{10}}\lesssim\tau(\e)$. So, it suffices to show the deterministic estimate $\|\mathsf{F}^{\e,1}(t,\cdot)-\mathsf{F}^{\e,\mathrm{av}}(t,\cdot)\|_{\mathscr{C}^{10}}\lesssim\tau(\e)$ in order to complete this proof. To this end, we compute via Fubini and change-of-variables to deduce that $\mathsf{F}^{\e,1}(t,x)-\mathsf{F}^{\e,\mathrm{av}}(t,x)$ is equal to
\begin{align}
&\int_{0}^{t-\tau(\e)}\d s \ \tau(\e)^{-1}\int_{0}^{\tau(\e)}\d r \int_{\partial\bUz}\mathscr{K}(x,y)\left\{\mathbf{T}^{\Phi^{\e}(s,\cdot),\delta}_{\mathbf{z}^{\e}(s)}(\d y)\mathsf{n}^{\Phi^{\e}(s,\cdot)}(y)-\mathbf{T}^{\Phi^{\e}(s+r,\cdot),\delta}_{\mathbf{z}^{\e}(s+r)}(\d y)\mathsf{n}^{\Phi^{\e}(s+r,\cdot)}(y)\right\} \nonumber \\
= \ &\tau(\e)^{-1}\int_{0}^{\tau(\e)}\d r \int_{0}^{t-\tau(\e)}\d s \int_{\partial\bUz}\mathscr{K}(x,y)\left\{\mathbf{T}^{\Phi^{\e}(s,\cdot),\delta}_{\mathbf{z}^{\e}(s)}(\d y)\mathsf{n}^{\Phi^{\e}(s,\cdot)}(y)-\mathbf{T}^{\Phi^{\e}(s+r,\cdot),\delta}_{\mathbf{z}^{\e}(s+r)}(\d y)\mathsf{n}^{\Phi^{\e}(s+r,\cdot)}(y)\right\}
\nonumber  \\
= \ &\tau(\e)^{-1}\int_{0}^{\tau(\e)}\d r \left(\int_{0}^{t-\tau(\e)}-\int_{r}^{t-\tau(\e)+r}\right)\d s \int_{\partial\bUz}\mathbf{T}^{\Phi^{\e}(s,\cdot),\delta}_{\mathbf{z}^{\e}(s)}(\d y)\mathscr{K}(x,y)\mathsf{n}^{\Phi^{\e}(s,\cdot)}(y).
\end{align}
Given any $0\leq r\leq\tau(\e)$, we know that the symmetric difference between the domains $[0,t-\tau(\e)]$ and $[r,t-\tau(\e)+r]$ has length $\lesssim r\leq\tau(\e)$. Thus, the derivation of \eqref{eq:timeaverage1} also shows that the last line in the previous display is an average of terms whose $\|\|_{\mathscr{C}^{10}}$-norm is $\lesssim\tau(\e)$. Therefore, the desired estimate $\|\mathsf{F}^{\e,1}(t,\cdot)-\mathsf{F}^{\e,\mathrm{av}}(t,\cdot)\|_{\mathscr{C}^{10}}\lesssim\tau(\e)$ follows.
\end{proof}
\subsection{Freezing the metric}
Again, the proof of Proposition \ref{prop:hom} is based on showing relaxation of the particle process $t\mapsto\mathbf{z}^{\e}(t)$ to its invariant measure. A (technical) obstacle to proving something of this type, however, is that the metric which determines the invariant measure of $\mathbf{z}^{\e}$ is changing in time. We now start to resolve this difficulty. In particular, we bound the error in replacing $\Phi^{\e}(s+r,\cdot)\mapsto\Phi^{\e}(s,\cdot)$ in the definition of $\mathsf{F}^{\e,\mathrm{av}}$ in Lemma \ref{lemma:timeaverage}. (Thus, we bound the cost in freezing the metric that determines the next bump of the process $\Phi^{\e}$. We note $\mathbf{z}^{\e}(s+r)$ is still defined with respect to an evolving metric; replacing it by a particle with frozen metric is the goal of the next step Proposition \ref{prop:coupling}.)

We first provide a general estimate for $z\mapsto\mathbf{T}^{\Phi,\delta}_{z}$ for general $\Phi,z$. This will be useful beyond the current step. (We emphasize that the following is \emph{not} an honest continuity statement because of the $\alpha(\e)$ term in the upper bound. It only states that the next jump of $\mathbf{z}^{\e}$ should be close to its current position, because its next jump lands where the boundary local time of a Brownian motion has accumulated $\delta(\e)=\mathrm{o}(1)$. In fact, this is exactly the utility of the assumption $\delta(\e)=\mathrm{o}(1)$. Continuity in $z$ is probably true without this assumption, but its proof would be more complicated, and this case is already likely treatable via methods of \cite{DGHS}.)
\begin{lemma}\label{lemma:freezeprelim}
Fix $\Phi_{1},\Phi_{2}\in\mathscr{C}^{10}_{\simeq}(\partial\bUz,\R^{\d})$. There exists $\alpha(\e)\to0$ as $\e\to0$ so that uniformly over $z_{1},z_{2}\in\partial\bUz$, we have the following deterministic upper bound, in which $i(\cdot,\cdot)\geq1$ is jointly continuous and increasing in both entries:
\begin{align}
\|\int_{\partial\bUz}\left(\mathbf{T}^{\Phi_{1},\delta}_{z_{1}}(\d y)-\mathbf{T}^{\Phi_{2},\delta}_{z_{2}}(\d y)\right)\mathscr{K}(\cdot,y)\mathsf{n}^{\Phi_{1}}(y)\|_{\mathscr{C}^{10}} \ \lesssim \ \left[|z_{1}-z_{2}|+\alpha(\e)\right]\times i(\langle\Phi_{1}\rangle_{\mathscr{C}^{10}},\langle\Phi_{2}\rangle_{\mathscr{C}^{10}}). 
\end{align}
\end{lemma}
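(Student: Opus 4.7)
The strategy is to compare both random targets to their starting points, exploiting the hypothesis $\delta(\e)\to0$. Add and subtract Dirac masses to split
\begin{align}
\mathbf{T}^{\Phi_{1},\delta}_{z_{1}}(\d y)-\mathbf{T}^{\Phi_{2},\delta}_{z_{2}}(\d y) \ = \ \bigl[\mathbf{T}^{\Phi_{1},\delta}_{z_{1}}(\d y)-\delta_{z_{1}}(\d y)\bigr] + \bigl[\delta_{z_{1}}(\d y)-\delta_{z_{2}}(\d y)\bigr] + \bigl[\delta_{z_{2}}(\d y)-\mathbf{T}^{\Phi_{2},\delta}_{z_{2}}(\d y)\bigr].
\end{align}
The middle piece pairs with a smooth test function to give a contribution of size $|z_{1}-z_{2}|$, and the two outer pieces should each give $\mathrm{O}(\alpha(\e))$ with $\alpha(\e)\to0$, because $\mathbf{T}^{\Phi_{i},\delta}_{z_{i}}$ concentrates on $z_{i}$ as $\delta(\e)\to0$.

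The $\mathscr{C}^{10}$ norm in $x$ requires no extra work: the derivatives $\partial^{k}_{x}$ with $|k|\leq 10$ commute with the $y$-integration and land only on the kernel $\mathscr{K}(x,y)$. Thus, for each multi-index $k$, I reduce to controlling, uniformly in $x$, an expression of the form
\begin{align}
\int_{\partial\bUz}\bigl(\mathbf{T}^{\Phi_{1},\delta}_{z_{1}}-\mathbf{T}^{\Phi_{2},\delta}_{z_{2}}\bigr)(\d y)\, f_{x}(y), \qquad f_{x}(y):=\partial^{k}_{x}\mathscr{K}(x,y)\,\mathsf{n}^{\Phi_{1}}(y),
\end{align}
where by smoothness of $\mathscr{K}$ and Lemma \ref{lemma:thm11}, $\|f_{x}\|_{\mathscr{C}^{1}(\partial\bUz)}\lesssim i(\langle\Phi_{1}\rangle_{\mathscr{C}^{10}})$ uniformly in $x$. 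The middle piece then contributes $|f_{x}(z_{1})-f_{x}(z_{2})|\lesssim|z_{1}-z_{2}|\,\|f_{x}\|_{\mathscr{C}^{1}}$, which is of the desired form. The outer pieces contribute at most
\begin{align}
\|f_{x}\|_{\mathscr{C}^{1}}\cdot\bigl(\mathbf{E}\,|\mathbf{x}^{\Phi_{1},z_{1}}(\tau^{\Phi_{1},z_{1},\delta})-z_{1}|\;+\;\mathbf{E}\,|\mathbf{x}^{\Phi_{2},z_{2}}(\tau^{\Phi_{2},z_{2},\delta})-z_{2}|\bigr),
\end{align}
so the entire lemma reduces to the single estimate
\begin{align}
\mathbf{E}\,\bigl|\mathbf{x}^{\Phi,z}(\tau^{\Phi,z,\delta})-z\bigr| \ \lesssim \ \alpha(\e)\cdot i(\langle\Phi\rangle_{\mathscr{C}^{10}}),\qquad \alpha(\e)\to0.
\end{align}

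The heuristic for this last bound is the following. In local Fermi-type coordinates around $z$ adapted to $\partial\bUz$, the reflecting Brownian motion for $\mathsf{g}^{\Phi}$ behaves, to leading order, like a half-space reflecting Brownian motion with diffusion coefficients controlled by $\mathrm{Jac}\Phi$ and its inverse (hence the $i(\langle\Phi\rangle_{\mathscr{C}^{10}})$ factor). For such a half-space model, the boundary local time at time $t$ scales like $\sqrt{t}$; thus the first time $\tau^{\Phi,z,\delta}$ at which it reaches $\delta$ is of order $\delta^{2}$, and the tangential (and normal) displacement up to $\tau^{\Phi,z,\delta}$ is of order $\delta$. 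This gives $\alpha(\e)\asymp\delta(\e)\to0$ by the hypothesis $\delta(\e)\to0$. Rigorously, I would run the Itô SDE representation of $\mathbf{x}^{\Phi,z}$ (the same SDE used in Lemma \ref{lemma:sde} and Section \ref{section:couplingproof}) and estimate the position increment via a BDG-type bound for the Brownian term together with a direct $L^{1}$ bound on the local-time increment, using that only times up to $\tau^{\Phi,z,\delta}$ are considered.

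The main obstacle is exactly this last estimate: one must convert the local-time constraint $\mathbf{L}(\tau^{\Phi,z,\delta})=\delta$ into a quantitative bound on the elapsed time and on the resulting displacement, uniformly in the starting point $z$ and with all geometric constants controlled by $\langle\Phi\rangle_{\mathscr{C}^{10}}$. Everything else (derivatives in $x$, the triangle inequality splitting, and the Lipschitz bound on the middle term) is routine once this key RBM estimate is in hand.
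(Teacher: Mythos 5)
Your Dirac-mass splitting $\mathbf{T}^{\Phi_1,\delta}_{z_1}-\mathbf{T}^{\Phi_2,\delta}_{z_2}=(\mathbf{T}^{\Phi_1,\delta}_{z_1}-\delta_{z_1})+(\delta_{z_1}-\delta_{z_2})+(\delta_{z_2}-\mathbf{T}^{\Phi_2,\delta}_{z_2})$ is algebraically identical to the paper's decomposition (integrating your three pieces against $f_x$ reproduces \eqref{eq:freezeprelim1a}--\eqref{eq:freezeprelim1c} exactly), and your reduction to the single estimate $\mathbf{E}|\mathbf{x}^{\Phi,z}(\tau^{\Phi,z,\delta})-z|\lesssim\alpha(\e)\,i(\langle\Phi\rangle_{\mathscr{C}^{10}})$ with $\alpha(\e)\to0$ is precisely where the paper lands as well. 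The handling of $x$-derivatives and the Lipschitz bound via Lemma \ref{lemma:thm11} are also the same.

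However, your heuristic justification of the key estimate and in particular the claimed rate $\alpha(\e)\asymp\delta(\e)$ is wrong, and the error is substantive. The local half-space model you invoke has as boundary trace a $(\d-1)$-dimensional (isotropic) Cauchy process: the inverse boundary local time of one-dimensional reflecting Brownian motion is a stable-$\tfrac12$ subordinator, and subordinating an independent Brownian motion by it produces a $1$-stable process. A Cauchy process at time $\delta$ has \emph{infinite} first moment in every dimension, so in the half-space model the expectation $\mathbf{E}|\mathbf{x}(\tau^\delta)-z|$ does not scale like $\delta$ --- it diverges. On the compact domain $\bUz$ the expectation is of course finite (the excursions are cut off by the diameter), but because of the heavy $1/|y|^{\d}$ tail of the trace kernel, the true rate is degraded by a logarithm and is not captured by the BDG-type bound you sketch. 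The paper avoids quantifying the rate entirely: it observes that $t\mapsto\mathbf{x}^{\Phi,z}(\tau^{\Phi,z,t})$ is a strong Feller process generated by the Dirichlet-to-Neumann operator, so its transition kernel is stochastically continuous uniformly over $z\in\partial\bUz$ (compact) and over $\langle\Phi\rangle_{\mathscr{C}^{10}}$ bounded; since $\delta(\e)\to0$, this gives $\alpha(\e)\to0$ qualitatively, which is all the lemma requires. You correctly flagged this step as the main obstacle, but the route you propose to close it (SDE plus BDG yielding $\alpha\asymp\delta$) would need to be replaced by the softer Feller/compactness argument or by an estimate on the heavy-tailed trace kernel that accounts for the truncation at the domain's diameter.
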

\begin{proof}
{{}Recall that $\mathbf{T}^{\Phi,\delta}_{z}(\d y)$ is a probability measure on $\partial\bUz$.} Thus, we can rewrite
\begin{align}
\int_{\partial\bUz}\left(\mathbf{T}^{\Phi_{1},\delta}_{z_{1}}(\d y)-\mathbf{T}^{\Phi_{2},\delta}_{z_{2}}(\d y)\right) & \mathscr{K}(\cdot,y)\mathsf{n}^{\Phi_{1}}(y) \  \  \nonumber \\
= & \int_{\partial\bUz}\mathbf{T}^{\Phi_{1},\delta}_{z_{1}}(\d y)\left\{\mathscr{K}(\cdot,y)\mathsf{n}^{\Phi_{1}}(y)-\mathscr{K}(\cdot,z_{1})\mathsf{n}^{\Phi_{1}}(z_{1})\right\} \label{eq:freezeprelim1a}\\
& - \ \int_{\partial\bUz}\mathbf{T}^{\Phi_{2},\delta}_{z_{2}}(\d y)\left\{\mathscr{K}(\cdot,y)\mathsf{n}^{\Phi_{1}}(y)-\mathscr{K}(\cdot,z_{2})\mathsf{n}^{\Phi_{1}}(z_{2})\right\}\label{eq:freezeprelim1b}\\
& + \ \mathscr{K}(\cdot,z_{1})\mathsf{n}^{\Phi_{1}}(z_{1})-\mathscr{K}(\cdot,z_{2})\mathsf{n}^{\Phi_{1}}(z_{2}).\label{eq:freezeprelim1c}
\end{align}
By Lemma \ref{lemma:thm11}, we deduce $\|\eqref{eq:freezeprelim1c}\|_{\mathscr{C}^{10}}\lesssim i(\langle\Phi_{1}\rangle_{\mathscr{C}^{10}})|z_{1}-z_{2}|$ for an increasing and continuous $i(\cdot)\geq1$. (We clarify that the $\mathscr{C}^{10}$-norm in this bound is with respect to the first input of $\mathscr{K}$, so we only need to control the first derivative of $\mathsf{n}^{\Phi_{1}}$, which Lemma \ref{lemma:thm11} does, and not its first 11 derivatives, for example. In particular, we only need smoothness of $\mathscr{K}$. Also, technically, the metric $|z_{1}-z_{2}|$ should be replaced by the geodesic distance coming from the Euclidean hyper-surface metric on $\partial\bUz$, which is related to the $\mathsf{g}^{\Phi_{1}}$-metric through the factor $i(\langle\Phi_{1}\rangle_{\mathscr{C}^{10}})$. But these metrics are comparable up to a factor depending only on $\partial\bUz$, since $\partial\bUz$ is a smooth hyper-surface by assumption.) By the same token, for \eqref{eq:freezeprelim1a}-\eqref{eq:freezeprelim1b}, we have (for $z=z_{1},z_{2}$)
\begin{align}
\|\int_{\partial\bUz}\mathbf{T}^{\Phi_{1},\delta}_{z}(\d y)\left\{\mathscr{K}(\cdot,y)\mathsf{n}^{\Phi_{1}}(y)-\mathscr{K}(\cdot,z)\mathsf{n}^{\Phi_{1}}(z)\right\}\|_{\mathscr{C}^{10}} \ \lesssim \ i(\langle\Phi_{1}\rangle_{\mathscr{C}^{10}})\int_{\partial\bUz}\mathbf{T}^{\Phi_{1},\delta}_{z}(\d y)|y-z|.
\end{align}
Indeed, the $\mathscr{C}^{10}$-norm on the \abbr{LHS} is with respect to the first input of $\mathscr{K}$ and is unrelated to the integration variable, so the previous display follows first by triangle inequality and then by the previous paragraph. Now, recall that $\mathbf{T}^{\Phi_{1},\delta}_{z}(\d y)$ is the transition kernel for $z\mapsto\mathbf{x}^{\Phi_{1},z}(\tau^{\Phi_{1},z,\delta})$ (see Definition \ref{definition:interface}). The \abbr{RHS} of the previous display is therefore the expectation of $|\mathbf{x}^{\Phi_{1},z}(\tau^{\Phi_{1},z,\delta})-z|$. We now note that the continuous-time process $t\mapsto\mathbf{x}^{\Phi_{1},z}(\tau^{\Phi_{1},z,t})$ is a strong Feller process whose generator is the Dirichlet-to-Neumann operator that depends only on the first two derivatives of $\Phi_{1}$ and $\Phi_{1}^{-1}$ (the latter of which exists because $\Phi_{1}$ is invertible by assumption); see \cite{ARP,EO,Hsu1} for these facts. Therefore, as $|z|+|\mathbf{x}^{\Phi_{1},z}|\lesssim1$ (they belong to a fixed compact set $\bUz$) and since $\delta\to0$ as $\e\to0$ by assumption, we deduce that the \abbr{RHS} of the previous display vanishes as $\e\to0$ with a rate depending only on $\langle\Phi_{1}\rangle_{\mathscr{C}^{10}}$. This finishes the proof.
\end{proof}
Equipped with Lemmas \ref{lemma:thm11} and \ref{lemma:freezeprelim}, we can now justify $\mathbf{T}^{\Phi^{\e}(s+r,\cdot)}_{\mathbf{z}^{\e}(s+r)}\mapsto\mathbf{T}^{\Phi^{\e}(s,\cdot)}_{\mathbf{z}^{\e}(s+r)}$ in the definition of $\mathsf{F}^{\e,\mathrm{av}}$ in Lemma \ref{lemma:timeaverage}.
\begin{lemma}\label{lemma:freeze}
Recall $\tau(\e)$ and $\mathsf{F}^{\e,\mathrm{av}}$ from \emph{Lemma \ref{lemma:timeaverage}}. Fix any stopping time $0\leq\tau\lesssim1$. Now, for $t\geq0$ and $x\in\partial\bUz$, define
\begin{align}
\wt{\mathsf{F}}^{\e,\mathrm{av}}(t,x) \ := \ \mathbf{1}_{t\geq\tau(\e)}\int_{0}^{t-\tau(\e)}\d s \ \tau(\e)^{-1}\int_{0}^{\tau(\e)}\d r \int_{\partial\bUz}\mathbf{T}^{\Phi^{\e}(s,\cdot),\delta}_{\mathbf{z}^{\e}(s+r)}(\d y)\mathscr{K}(x,y)\mathsf{n}^{\Phi^{\e}(s,\cdot)}(y).
\end{align}
There exists increasing and continuous $i(\cdot)\geq1$ and $\alpha(\e)\to0$ as $\e\to0$ such that 
\begin{align}
\E[\|\mathsf{F}^{\e,\mathrm{av}}-\wt{\mathsf{F}}^{\e,\mathrm{av}}\|_{\tau}i(\langle\Phi^{\e}\rangle_{\tau})^{-1}]\lesssim\tau(\e)+\alpha(\e).
\end{align}
\end{lemma}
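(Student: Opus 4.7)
The plan is to bound the difference integrand pointwise in $(s,r,x)$ via the preliminary lemmas already proved, and then integrate. Set
\begin{align*}
D^{\e}_{s,r}(x) \ := \ &\int_{\partial\bUz}\mathbf{T}^{\Phi^{\e}(s+r,\cdot),\delta}_{\mathbf{z}^{\e}(s+r)}(\d y)\mathscr{K}(x,y)\mathsf{n}^{\Phi^{\e}(s+r,\cdot)}(y) \\
- \ &\int_{\partial\bUz}\mathbf{T}^{\Phi^{\e}(s,\cdot),\delta}_{\mathbf{z}^{\e}(s+r)}(\d y)\mathscr{K}(x,y)\mathsf{n}^{\Phi^{\e}(s,\cdot)}(y),
\end{align*}
so that $(\mathsf{F}^{\e,\mathrm{av}}-\wt{\mathsf{F}}^{\e,\mathrm{av}})(t,x)$ equals $\mathbf{1}_{t\geq\tau(\e)}$ times the double average of $D^{\e}_{s,r}(x)$ over $(s,r)\in[0,t-\tau(\e)]\times[0,\tau(\e)]$ (normalized by $\tau(\e)^{-1}$). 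I would then split $D^{\e}_{s,r}$ as a piece in which only the $\mathbf{T}$-measure changes (still paired with $\mathsf{n}^{\Phi^{\e}(s+r,\cdot)}$) plus a piece in which only the normal $\mathsf{n}^{\Phi}$ changes (paired with $\mathbf{T}^{\Phi^{\e}(s,\cdot),\delta}_{\mathbf{z}^{\e}(s+r)}$). Applying Lemma \ref{lemma:freezeprelim} with $z_{1}=z_{2}=\mathbf{z}^{\e}(s+r)$ bounds the first piece by $\alpha(\e)\,i(\langle\Phi^{\e}\rangle_{\tau})$ in $\mathscr{C}^{10}$-norm, and applying Lemma \ref{lemma:thm11} together with smoothness of $\mathscr{K}$ and the fact that $\mathbf{T}^{\Phi,\delta}_{z}$ is a probability measure bounds the second piece by $i(\langle\Phi^{\e}\rangle_{\tau})\|\Phi^{\e}(s+r,\cdot)-\Phi^{\e}(s,\cdot)\|_{\mathscr{C}^{1}}$. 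This yields
\begin{align*}
\|D^{\e}_{s,r}\|_{\mathscr{C}^{10}} \ \lesssim \ i(\langle\Phi^{\e}\rangle_{\tau})\left[\alpha(\e)+\|\Phi^{\e}(s+r,\cdot)-\Phi^{\e}(s,\cdot)\|_{\mathscr{C}^{1}}\right].
\end{align*}

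The next step is to control $\E\|\Phi^{\e}(s+r,\cdot)-\Phi^{\e}(s,\cdot)\|_{\mathscr{C}^{1}}$ via the jump structure of $\Phi^{\e}$. By Definition \ref{definition:interface}, the process $\Phi^{\e}$ jumps at the deterministic Poisson rate $\e^{-1}$, and each jump adds a bump of the form $\e\mathscr{K}(\cdot,y)\mathsf{n}^{\Phi^{\e}}(y)$, whose $\mathscr{C}^{1}$-norm is uniformly $\lesssim \e$ (here using $|\mathsf{n}^{\Phi^{\e}}|\equiv 1$ and the smoothness of $\mathscr{K}$). Thus $\|\Phi^{\e}(s+r,\cdot)-\Phi^{\e}(s,\cdot)\|_{\mathscr{C}^{1}}\lesssim\e N^{\e}_{s,r}$, where $N^{\e}_{s,r}\sim\mathrm{Poisson}(r/\e)$ is the number of jumps on $(s,s+r]$. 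Taking expectation yields $\E\|\Phi^{\e}(s+r,\cdot)-\Phi^{\e}(s,\cdot)\|_{\mathscr{C}^{1}}\lesssim r\leq\tau(\e)$.

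To conclude, since $\|\mathsf{F}^{\e,\mathrm{av}}-\wt{\mathsf{F}}^{\e,\mathrm{av}}\|_{\tau}\leq\int_{0}^{\tau}\d s\,\tau(\e)^{-1}\int_{0}^{\tau(\e)}\d r\,\|D^{\e}_{s,r}\|_{\mathscr{C}^{10}}$ (from the triangle inequality in $\mathscr{C}^{10}$, noting the cumulative time-integral is monotone in $t$), and since $i(\langle\Phi^{\e}\rangle_{s+r})\leq i(\langle\Phi^{\e}\rangle_{\tau})$ pointwise for $s+r\leq\tau$ by monotonicity of $i$, I can pull $i(\langle\Phi^{\e}\rangle_{\tau})$ outside the time integrals, divide by it, take expectation, and apply Fubini together with $\tau\lesssim 1$ to obtain $\E[\|\mathsf{F}^{\e,\mathrm{av}}-\wt{\mathsf{F}}^{\e,\mathrm{av}}\|_{\tau}\,i(\langle\Phi^{\e}\rangle_{\tau})^{-1}]\lesssim\tau(\e)+\alpha(\e)$. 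The analytic content is entirely carried by Lemmas \ref{lemma:freezeprelim} and \ref{lemma:thm11} together with the elementary Poisson jump bound; the main obstacle is really just careful bookkeeping so that the various $i$-factors collapse into a single increasing function of $\langle\Phi^{\e}\rangle_{\tau}$, as claimed.
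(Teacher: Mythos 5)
Your proof is correct and follows essentially the same route as the paper's: the same split into a $\mathbf{T}$-piece (controlled by Lemma \ref{lemma:freezeprelim} with $z_1=z_2$) and an $\mathsf{n}$-piece (controlled by Lemma \ref{lemma:thm11} plus the Poisson jump bound on $\E\|\Phi^\e(s+r,\cdot)-\Phi^\e(s,\cdot)\|$). The one small point worth making explicit: since $\tau$ is a stopping time, the interchange of expectation and $\int_0^\tau$ should be justified by first extending the $s$-integral to a deterministic $\mathfrak{t}\gtrsim\tau$ using nonnegativity of $\|D^\e_{s,r}\|_{\mathscr{C}^{10}}$, which is what the paper does and what your invocation of $\tau\lesssim 1$ is implicitly appealing to.
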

\begin{proof}
By construction, we know $\mathsf{F}^{\e,\mathrm{av}}(t,\cdot)-\wt{\mathsf{F}}^{\e,\mathrm{av}}(t,\cdot)=0$ if $0\leq t\leq\tau(\e)$, so take $\tau(\e)\leq t\leq\tau$. We first write
\begin{align}
\mathsf{F}^{\e,\mathrm{av}}(t,x)-\wt{\mathsf{F}}^{\e,\mathrm{av}}(t,x) \ = \ \mathsf{E}^{\e,1}(t,x) + \mathsf{E}^{\e,2}(t,x),
\end{align}
where $\mathsf{E}^{\e,1}$ (resp. $\mathsf{E}^{\e,2}$) comes from replacing $\Phi^{\e}(s+r,\cdot)\mapsto\Phi^{\e}(s,\cdot)$ in the $\mathbf{T}$-term (resp. $\mathsf{n}$-term):
\begin{align*}
\mathsf{E}^{\e,1}(t,x) \ := \ &\int_{0}^{t-\tau(\e)}\d s \ \tau(\e)^{-1}\int_{0}^{\tau(\e)}\d r \int_{\partial\bUz}\left\{\mathbf{T}^{\Phi^{\e}(s+r,\cdot),\delta}_{\mathbf{z}^{\e}(s+r)}(\d y)-\mathbf{T}^{\Phi^{\e}(s,\cdot),\delta}_{\mathbf{z}^{\e}(s+r)}(\d y)\right\}\mathscr{K}(x,y)\mathsf{n}^{\Phi^{\e}(s+r)}(y) \\
\mathsf{E}^{\e,2}(t,x) \ := \ &\int_{0}^{t-\tau(\e)}\d s \ \tau(\e)^{-1}\int_{0}^{\tau(\e)}\d r \int_{\partial\bUz}\mathbf{T}^{\Phi^{\e}(s,\cdot),\delta}_{\mathbf{z}^{\e}(s+r)}(\d y)\mathscr{K}(x,y)\left\{\mathsf{n}^{\Phi^{\e}(s+r,\cdot)}(y)-\mathsf{n}^{\Phi^{\e}(s,\cdot)}(y)\right\}.
\end{align*}
To control $\mathsf{E}^{\e,1}$, we use regularity of $\mathscr{K}$ and Lemma \ref{lemma:freezeprelim}. This gives, for increasing and continuous $i(\cdot,\cdot),\wt{i}(\cdot)\geq1$, that
\begin{align*}
\|\mathsf{E}^{\e,1}(t,\cdot)\|_{\mathscr{C}^{10}} \ \lesssim \ \alpha(\e)\int_{0}^{t-\tau(\e)}
\!\!
\d s \ \tau(\e)^{-1}\int_{0}^{\tau(\e)} \!\! \d r \ i(\langle\Phi^{\e}(s+r,\cdot)\rangle_{\mathscr{C}^{10}},\langle\Phi^{\e}(s,\cdot)\rangle_{\mathscr{C}^{10}}) \ \lesssim_{} \ \alpha(\e)\times i(\langle\Phi^{\e}\rangle_{\tau}).
\end{align*}
We now control $\mathsf{E}^{\e,2}$, this time by using regularity of $\mathscr{K}$ and Lemma \ref{lemma:thm11}. We deduce
\begin{align}
&\|\mathsf{E}^{\e,2}(t,\cdot)\|_{\mathscr{C}^{10}} \nonumber\\
&\lesssim_{} \ \int_{0}^{t-\tau(\e)}\d s \ \tau(\e)^{-1}\int_{0}^{\tau(\e)}\d r \int_{\partial\bUz}\mathbf{T}^{\Phi^{\e}(s,\cdot),\delta}_{\mathbf{z}^{\e}(s+r)}(\d y)\|\mathscr{K}(\cdot,y)\|_{\mathscr{C}^{10}}|\mathsf{n}^{\Phi^{\e}(s+r,\cdot)}(y)-\mathsf{n}^{\Phi^{\e}(s,\cdot)}(y)| \nonumber \\
&\lesssim \ \int_{0}^{t-\tau(\e)}\d s \ \tau(\e)^{-1}\int_{0}^{\tau(\e)}\d r \left\{\int_{\partial\bUz}\mathbf{T}^{\Phi^{\e}(s,\cdot)}_{\mathbf{z}^{\e}(s+r)}(\d y)\right\}\times i(\langle\Phi^{\e}(s+r,\cdot)\rangle_{\mathscr{C}^{10}})\|\Phi^{\e}(s+r,\cdot)-\Phi^{\e}(s,\cdot)\|_{\mathscr{C}^{10}} \nonumber\\
&\lesssim \ i(\langle\Phi^{\e}\rangle_{\tau})\int_{0}^{t-\tau(\e)}\d s \ \tau(\e)^{-1}\int_{0}^{\tau(\e)}\d r \|\Phi^{\e}(s+r,\cdot)-\Phi^{\e}(s,\cdot)\|_{\mathscr{C}^{10}}. \nonumber
\end{align}
We extend $[0,t-\tau(\e)]$ to $[0,\mathfrak{t}-\tau(\e)]$ for some deterministic $\tau\leq\mathfrak{t}\lesssim1$ in the last line for an upper bound. This gives an estimate on $\|\mathsf{E}^{\e,2}(t,\cdot)\|_{\mathscr{C}^{10}}$ uniformly in $\tau(\e)\leq t\leq\tau$, thus on $\|\mathsf{E}^{\e,2}\|_{\tau}$. So by the Markov inequality, Fubini, and noting that $\Phi^{\e}$ is a jump process whose jumps are explicitly given by $\e\mathscr{K}$-sized bumps at jump locations of $\mathbf{z}^{\e}$, we deduce the following estimate in which $i(\cdot)\geq1$ is continuous and increasing, and where the sum in the second line below is over all jump times $s\leq\tau_{i}\leq s+\tau(\e)$:
\begin{align}
\E[\|\mathsf{E}^{\e,2}\|_{\tau}i(\langle\Phi^{\e}\rangle_{\tau})^{-1}] \ \lesssim \ &\int_{0}^{\mathfrak{t}-\tau(\e)}\d s \ \tau(\e)^{-1}\int_{0}^{\tau(\e)}\d r \E\|\Phi^{\e}(s+r,\cdot)-\Phi^{\e}(s,\cdot)\|_{\mathscr{C}^{10}} \nonumber \\
\lesssim \ &\int_{0}^{\mathfrak{t}-\tau(\e)}\d s \ \tau(\e)^{-1}\int_{0}^{\tau(\e)}\d r \E\Big\{\sum_{s\leq\tau_{i}\leq s+\tau(\e)}\e\|\mathscr{K}(\cdot,\mathbf{z}^{\e}(\tau_{i}))\|_{\mathscr{C}^{10}}\Big\} 
\nonumber \\
\lesssim \ &\int_{0}^{\mathfrak{t}-\tau(\e)}\d s \ \tau(\e)^{-1}\int_{0}^{\tau(\e)}\d r \ \e\times\E|\#\{s\leq\tau_{i}\leq s +\tau(\e)\}| \ \lesssim \ \tau(\e).
\end{align}
(The third line follows by regularity of $\mathscr{K}$ and by noting that the number of jump times $s\leq\tau_{i}\leq s+\tau(\e)$ is Poisson with speed $\e^{-1}\tau(\e)$.) Combining this with our $\mathsf{E}^{\e,1}$ estimate and the first display of this proof finishes the argument.
\end{proof}
\subsection{Coupling}
Let us explain the goal of this step. Take a time-interval $\mathbb{I}$ of length $\tau(\e)$. We want to replace the particle process $t\mapsto\mathbf{z}^{\e}(t)$ (for $t\in\mathbb{I}$) by a process $t\mapsto\wt{\mathbf{z}}^{\e}(\inf\mathbb{I},t)$ (also for $t\in\mathbb{I}$) that is defined in the same way as in Definition \ref{definition:interface} but upon replacing the time-dependent metric $\mathsf{g}^{\Phi^{\e}(t,\cdot)}$ by the frozen metric $\mathsf{g}^{\Phi^{\e}(\inf\mathbb{I},\cdot)}$. (More precisely, we want to build such a process $\wt{\mathbf{z}}^{\e}$ and couple it to $\mathbf{z}^{\e}$ in a way that, at least with high probability, stays within $\mathrm{o}(1)$ to $\mathbf{z}^{\e}$.)
\begin{prop}\label{prop:coupling}
{{}There exists $\tau(\e)>0$ which satisfies $\tau(\e)\to0$ and $\e^{-1}\delta(\e)\tau(\e)\to\infty$ as $\e\to0$, as well as a continuous and increasing function $i(\cdot)\geq1$ such that the following is true.

Take any stopping time $0\leq\tau\lesssim1$ and $(t,x)\in[0,\tau-\tau(\e)]\times\partial\bUz$. Take any $0\leq s\leq t-\tau(\e)$. There is a process $r\mapsto\wt{\mathbf{z}}^{\e}(s,r)$ that satisfies the following constraints. First, we have the estimate
\begin{align}
\sup_{0\leq r\leq\tau(\e)}\E[|\mathbf{z}^{\e}(s+r)-\wt{\mathbf{z}}^{\e}(s,r)|\times i(\langle\Phi^{\e}\rangle_{\tau})^{-1}] \ \leq \ \omega(\e), \label{eq:couplingI}
\end{align}
where $\omega(\e)\to0$ as $\e\to0$. Second, we require the process $r\mapsto\wt{\mathbf{z}}^{\e}(s,r)$ to be a continuous-time Markov process valued in $\partial\bUz$ whose infinitesimal generator is given by 
\begin{align}
\mathscr{L}^{\e,s}F(z) \ := \ \e^{-1}\int_{\partial\bUz}\mathbf{T}^{\Phi^{\e}(s,\cdot),\delta}_{z}(\d y)\left\{F(y)-F(z)\right\}.
\end{align}
(Above, $\Phi^{\e}(s,\cdot)$ is the interface process at the fixed time $s$.) This finishes the constraints for $\wt{\mathbf{z}}^{\e}$. Moreover, we have $\E[\|\wt{\mathsf{F}}^{\e,\mathrm{av}}-\mathsf{H}^{\e}\|_{\tau}i(\langle\Phi^{\e}\rangle_{\tau})^{-2}]\lesssim\zeta(\e)$, where $\zeta(\e)\to0$ and
\begin{align}
\mathsf{H}^{\e}(t,x) \ := \ \mathbf{1}_{t\geq\tau(\e)}\int_{0}^{t-\tau(\e)}\d s\ \tau(\e)^{-1}\int_{0}^{\tau(\e)}\d r \int_{\partial\bUz}\mathbf{T}^{\Phi^{\e}(s,\cdot),\delta}_{\wt{\mathbf{z}}^{\e}(s,r)}(\d y)\mathscr{K}(x,y)\mathsf{n}^{\Phi^{\e}(s,\cdot)}(y).
\end{align}
}
\end{prop}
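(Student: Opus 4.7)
The plan is to build $r\mapsto\wt{\mathbf{z}}^{\e}(s,r)$ by a synchronous coupling to $\mathbf{z}^{\e}(s+\cdot)$ at the level of the inner reflecting Brownian excursions that generate each Poisson jump, and then transfer the resulting pointwise coupling bound to a functional bound on $\wt{\mathsf{F}}^{\e,\mathrm{av}}-\mathsf{H}^{\e}$ via Lemma \ref{lemma:freezeprelim} with a common frozen metric. Concretely, on an enlarged probability space I will drive $\mathbf{z}^{\e}$ by a rate-$\e^{-1}$ Poisson clock with times $\tau_1<\tau_2<\dots$ and i.i.d.\ driving standard Brownian motions $\{\mathbf{W}_i\}$; at each $\tau_i$, the inner reflecting BM on $\bUz$ is run via the SDE of Section \ref{section:couplingproof} with metric $\mathsf{g}^{\Phi^{\e}(\tau_i^-,\cdot)}$ starting from $\mathbf{z}^{\e}(\tau_i^-)$ and stopped at boundary local time $\delta$. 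Define $\wt{\mathbf{z}}^{\e}(s,\cdot)$ using the same Poisson clock and the same driving BMs $\mathbf{W}_i$, but with the inner reflecting BMs run under the frozen metric $\mathsf{g}^{\Phi^{\e}(s,\cdot)}$ starting at $\wt{\mathbf{z}}^{\e}(s,\tau_i^--s)$. By construction, $\wt{\mathbf{z}}^{\e}(s,\cdot)$ is, conditional on $\mathcal{F}_s$, a Markov process with the stated generator $\mathscr{L}^{\e,s}$.

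\textbf{Per-jump discrepancy and summation.} Set $d_i:=|\mathbf{z}^{\e}(s+t_i)-\wt{\mathbf{z}}^{\e}(s,t_i)|$ with $t_i:=\tau_i-s$. At the $(i+1)$-th jump the two coupled inner reflecting BMs share the driving BM $\mathbf{W}_{i+1}$ but satisfy SDEs whose coefficients come from the metrics $\mathsf{g}^{\Phi^{\e}(\tau_{i+1}^-,\cdot)}$ and $\mathsf{g}^{\Phi^{\e}(s,\cdot)}$; since $\Phi^{\e}$ is compound Poisson with $\mathscr{C}^{10}$-jumps of size $\e$, these two metrics differ by $\lesssim\e\cdot i$ in $\mathscr{C}^{10}$. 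I expect a Gronwall estimate on the SDE over the random inner time needed to accumulate local time $\delta$, combined with the geometric fact that boundary-trace jumps have characteristic Euclidean size $O(\delta)$, to yield $\mathbf{E}[d_{i+1}\mid\mathcal{F}_{\tau_{i+1}^-}]\leq d_i+C\delta\cdot\|\Phi^{\e}(\tau_{i+1}^-,\cdot)-\Phi^{\e}(s,\cdot)\|_{\mathscr{C}^{10}}$, with $C$ depending continuously on $\langle\Phi^{\e}\rangle_{\tau}$. Summing over $N_\e:=\#\{i:\tau_i\in[s,s+\tau(\e)]\}$, which is Poisson with mean $\e^{-1}\tau(\e)$, then gives $\mathbf{E}[d_{N_\e}]\lesssim\e\delta\,\mathbf{E}[N_\e^2]\lesssim\delta\tau(\e)^2/\e$ up to a continuous factor in $\langle\Phi^{\e}\rangle_{\tau}$. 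I will then choose $\tau(\e)$ satisfying both $\e^{-1}\delta\tau(\e)\to\infty$ (required by Lemma \ref{lemma:timeaverage}) and $\delta\tau(\e)^2/\e\to0$; these are compatible precisely under the hypothesis $\e\ll\delta$, yielding \eqref{eq:couplingI} with $\omega(\e):=\delta\tau(\e)^2/\e$.

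\textbf{Functional bound and main obstacle.} By construction, $\wt{\mathsf{F}}^{\e,\mathrm{av}}-\mathsf{H}^{\e}$ equals the $(s,r)$-double-average over $[0,t-\tau(\e)]\times[0,\tau(\e)]$ of $\int_{\partial\bUz}[\mathbf{T}^{\Phi^{\e}(s,\cdot),\delta}_{\mathbf{z}^{\e}(s+r)}(\d y)-\mathbf{T}^{\Phi^{\e}(s,\cdot),\delta}_{\wt{\mathbf{z}}^{\e}(s,r)}(\d y)]\mathscr{K}(x,y)\mathsf{n}^{\Phi^{\e}(s,\cdot)}(y)$, so applying Lemma \ref{lemma:freezeprelim} with $\Phi_1=\Phi_2=\Phi^{\e}(s,\cdot)$ gives the pointwise $\mathscr{C}^{10}$-bound $[|\mathbf{z}^{\e}(s+r)-\wt{\mathbf{z}}^{\e}(s,r)|+\alpha(\e)]\cdot i(\langle\Phi^{\e}(s,\cdot)\rangle_{\mathscr{C}^{10}})$ on the integrand. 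Taking $\|\cdot\|_{\tau}$, multiplying by $i(\langle\Phi^{\e}\rangle_{\tau})^{-2}$ (one power of $i^{-1}$ absorbs the factor coming out of Lemma \ref{lemma:freezeprelim}, the other is available inside the expectation so that \eqref{eq:couplingI} applies directly), and taking expectations delivers the advertised bound with $\zeta(\e):=\omega(\e)+\alpha(\e)\to0$. The hard part will be the per-jump SDE coupling: because the two boundary local times evolve at different rates under the two metrics, the inner BMs reach local time $\delta$ at different random inner times, so the synchronous coupling must be analyzed either under a common time change matching accumulated local times, or via a simultaneous Itô calculus carrying both local-time clocks. Moreover, obtaining the sharp $O(\delta)$ prefactor on the metric-discrepancy term (rather than a weaker $O(1)$ bound, which would force $\delta\gg\e^{1/2}$) requires the geometric input that a boundary-trace jump accumulating only $\delta$ of local time has characteristic Euclidean displacement $O(\delta)$; this sharpness is what makes the two constraints on $\tau(\e)$ compatible under the minimal hypothesis $\delta\gg\e$.
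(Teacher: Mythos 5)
Your final paragraph (applying Lemma \ref{lemma:freezeprelim} with $\Phi_1=\Phi_2=\Phi^{\e}(s,\cdot)$ and spending one factor of $i^{-1}$ on the multiplicative constant there and the other on \eqref{eq:couplingI}) is exactly how the paper closes the argument, and your identification of the generator $\mathscr{L}^{\e,s}$ as the correct constraint for $\wt{\mathbf{z}}^{\e}$ is right. The gap is entirely in how you propose to prove \eqref{eq:couplingI}.

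Your coupling is a genuinely different construction from the paper's, and this matters. You couple jump-by-jump: at each Poisson ringing $\tau_i$ you draw a fresh i.i.d.\ driving Brownian motion $\mathbf{W}_i$ and run two independent-in-$i$ inner reflecting diffusions (one per metric) to local time $\delta$. The paper instead builds a \emph{single} continuous-time reflecting diffusion $t\mapsto\mathbf{x}^{\e}(s,t)$ on the whole window $[0,\e^{-1}\delta(\e)\tau(\e)B(\e)]$ with a time-dependent metric $\mathsf{g}^{\Psi^{\e}_s(t,\cdot)}$, runs a companion $\wt{\mathbf{x}}^{\e}(s,\cdot)$ with the frozen metric under the \emph{same} driving BM $\mathbf{b}$, and realizes $\mathbf{z}^{\e}(s+\cdot),\wt{\mathbf{z}}^{\e}(s,\cdot)$ as Poisson clocks sampling $\mathbf{x}^{\e},\wt{\mathbf{x}}^{\e}$ at multiples $k\delta$ of their local times $\mathbf{L}^s,\mathbf{L}^{s,\sim}$. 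This choice is what makes the paper's three lemmas tractable: Lemma \ref{lemma:coupling1} bounds $\langle\Phi^{\e}(s,\cdot),\Psi^{\e}_s(t,\cdot)\rangle_{\mathscr{C}^{10}}$ via an It\^o/Dirichlet-problem argument, Lemma \ref{lemma:coupling2} bounds $|\mathbf{x}^{\e}-\wt{\mathbf{x}}^{\e}|$ by a chart-localized Gronwall iterated only over a slowly growing number $N(\e)$ of chart switches (not over $\e^{-1}\tau(\e)$ jumps), and Lemma \ref{lemma:coupling3} directly compares the local-time stopping times $\tau_k,\wt{\tau}_k$ via another It\^o argument combined with a smoothed-local-time approximation from \cite{BCS}. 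There is never a per-jump restart, so the mismatched-clock problem appears exactly once (Lemma \ref{lemma:coupling3}) rather than at every jump.

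Your per-jump estimate $\E[d_{i+1}\mid\mathcal{F}_{\tau_{i+1}^-}]\leq d_i+C\delta\,\eta_i$ is the crux, and as written it is not established and looks too strong. Even with $\eta_i=0$, two reflecting diffusions started $d_i>0$ apart and synchronously coupled do not in general stay within $d_i$ after an excursion: the normal component does (absolute-value coupling), but the tangential coefficients are position-dependent, so a Gronwall bound gives at best $(1+C\,\E\tau^{\delta})d_i\approx(1+C\delta)d_i$. Compounding this over $N_\e\sim\e^{-1}\tau(\e)$ jumps yields a factor $\exp\{C\,\e^{-1}\delta\tau(\e)\}$, which diverges precisely because of the constraint $\e^{-1}\delta\tau(\e)\to\infty$ that you and Lemma \ref{lemma:timeaverage} both need. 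Your displayed bound $\E[d_{N_\e}]\lesssim\delta\tau(\e)^2/\e$ omits this factor entirely; if it is reinstated, the algebra can still be closed (take $\e^{-1}\delta\tau(\e)$ to diverge logarithmically in $\delta/\e$), but you would need to prove a genuine per-jump Gronwall with an explicit small exponent, and that in turn requires handling the mismatched local-time clocks per jump and an exponential-moment bound on the inner excursion time $\tau^{\delta}$, none of which you do. Separately, the claim that a boundary-trace jump has ``characteristic Euclidean displacement $O(\delta)$'' is not available: the trace process is locally Cauchy-like, so while typical jumps are $O(\delta)$ the displacement is heavy-tailed, and the paper's Lemma \ref{lemma:freezeprelim} only yields $\int\mathbf{T}^{\Phi,\delta}_z(\d y)|y-z|=\alpha(\e)\to0$ with no rate. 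What you actually need for your Gronwall prefactor is $\E\tau^{\Phi,z,\delta}\lesssim\delta$ (which the paper does prove, in Lemma \ref{lemma:sde} via a Dirichlet problem), not the displacement claim — so be careful which fact you are invoking.
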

Again, as noted at the beginning of this section, we will defer the proof of Proposition \ref{prop:coupling} to the next section. Let us comment briefly on it for now. As we already have used multiple times, on time scales $\lesssim\tau(\e)$, we expect the interface process to change by $\mathrm{o}(1)$. Thus, as long as the diffeomorphism class of the interface does not change (which is reflected in the $i(\langle\Phi^{\e}\rangle_{\tau})^{-1}$ factor used in the estimates in Proposition \ref{prop:coupling}), we expect that a reflecting Brownian motion with respect to the evolving metric and that with respect to the frozen metric to be close (under appropriate coupling). If the evolution of the metric is independent from that of the Brownian motion, this is an immediate consequence of parabolic regularity for the associated Kolmogorov equations. On the other hand, if there were no singular reflection/local time term in the \abbr{SDE}s for these reflecting Brownian motions, even in the case where the metric is adapted to the filtration generated by these Brownian motions, this would follow by Ito calculus. Hence, the key technical issue behind proving Proposition \ref{prop:coupling} ultimately boils down Ito calculus for \abbr{SDE}s with singular boundary local time coefficients. For this, we turn to a fairly geometric framework that is spelled out in the next section.
\subsection{Ergodic lemma}
So far, we have provided the necessary ingredients to replace $\mathsf{F}^{\e}$ (see before Proposition \ref{prop:hom}) by $\mathsf{H}^{\e}$ (see Proposition \ref{prop:coupling}). So, by Lemma \ref{lemma:timeaverage}, it is left to replace $\mathsf{H}^{\e}$ by $\mathsf{F}^{\e,\mathrm{eff},\mathrm{av}}$ (see Lemma \ref{lemma:timeaverage} for notation). For this, note the scale-$\tau(\e)$ integral in $\mathsf{H}^{\e}$ amounts to adding $\mathscr{K}$-shaped bumps at jump locations of the trace for a reflecting Brownian motion whose defining metric is independent of the integration-variable $r$ and dependent only on the variable $s$. Thus, we can establish homogenization for the $\d r$ integration therein and replace $\mathbf{T}^{\Phi^{\e}(s,\cdot),\delta}_{\wt{\mathbf{z}}^{\e}(s,r)}(\d y)\mapsto\mathbf{T}^{\Phi^{\e}(s,\cdot)}(\d y)$. This is the purpose of the next result.
\begin{lemma}\label{lemma:ergodic}
Recall $\mathsf{H}^{\e}$ from \emph{Proposition \ref{prop:coupling}}, and recall $\tau(\e)$ and $\mathsf{F}^{\e,\mathrm{eff},\mathrm{av}}$ from \emph{Lemma \ref{lemma:timeaverage}}. Fix a stopping time $0\leq\tau\lesssim1$. There exist $\ell(\e)\to0$ as $\e\to0$ and a continuous and increasing $i(\cdot)\geq1$ such that $\E[\|\mathsf{H}^{\e}-\mathsf{F}^{\e,\mathrm{eff},\mathrm{av}}\|_{\tau}i(\langle\Phi^{\e}\rangle_{\tau})^{-1}]\lesssim\ell(\e)$.
\end{lemma}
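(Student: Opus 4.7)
The plan is to introduce the intermediate
\[
\mathsf{G}^{\e}(t,x) := \mathbf{1}_{t\geq\tau(\e)}\int_{0}^{t-\tau(\e)}\d s\int_{\partial\bUz}\mathbf{T}^{\Phi^{\e}(s,\cdot)}(\d y)\mathscr{K}(x,y)\mathsf{n}^{\Phi^{\e}(s,\cdot)}(y),
\]
and decompose $\mathsf{H}^{\e}-\mathsf{F}^{\e,\mathrm{eff},\mathrm{av}} = (\mathsf{H}^{\e}-\mathsf{G}^{\e}) + (\mathsf{G}^{\e}-\mathsf{F}^{\e,\mathrm{eff},\mathrm{av}})$. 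Here $\mathsf{G}^{\e}$ is obtained from $\mathsf{H}^{\e}$ by replacing the random jump kernel $\mathbf{T}^{\Phi^{\e}(s,\cdot),\delta}_{\wt{\mathbf{z}}^{\e}(s,r)}(\d y)$ with its invariant measure $\mathbf{T}^{\Phi^{\e}(s,\cdot)}(\d y)$ (the invariant law of the trace process, recorded after Definition \ref{definition:interface}; the $r$-average then becomes trivial). The piece $\mathsf{G}^{\e}-\mathsf{F}^{\e,\mathrm{eff},\mathrm{av}}$ merely swaps $\Phi^{\e}(s,\cdot)$ for $\Phi^{\e}(s+r,\cdot)$ and is handled exactly as in Lemma \ref{lemma:freeze}: Lemmas \ref{lemma:thm11} and \ref{lemma:thm12} reduce it to $\E\|\Phi^{\e}(s+r,\cdot)-\Phi^{\e}(s,\cdot)\|_{\mathscr{C}^{10}}\lesssim\tau(\e)$, which follows because $\Phi^{\e}$ is a pure jump process of rate $\e^{-1}$ with jumps of $\mathscr{C}^{10}$-size $\lesssim\e$.

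The crux is $\mathsf{H}^{\e}-\mathsf{G}^{\e}$, which I will bound by a Poisson/Dynkin argument for $r\mapsto\wt{\mathbf{z}}^{\e}(s,r)$. Fix $s,x$ and define the mean-zero (under $\mathbf{T}^{\Phi^{\e}(s,\cdot)}$) function
\[
\mathsf{f}^{s,x}(z) := \int_{\partial\bUz}\bigl[\mathbf{T}^{\Phi^{\e}(s,\cdot),\delta}_{z}(\d y) - \mathbf{T}^{\Phi^{\e}(s,\cdot)}(\d y)\bigr]\mathscr{K}(x,y)\mathsf{n}^{\Phi^{\e}(s,\cdot)}(y),
\]
so that $(\mathsf{H}^{\e}-\mathsf{G}^{\e})(t,x) = \int_{0}^{t-\tau(\e)}\d s\,\tau(\e)^{-1}\int_{0}^{\tau(\e)}\mathsf{f}^{s,x}(\wt{\mathbf{z}}^{\e}(s,r))\,\d r$. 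Writing $\mathscr{L}^{\e,s}=\e^{-1}(P^{s}_{\delta}-I)$, with $P^{s}_{\delta}=\exp(-\delta\Lambda^{s})$ the $\delta$-step trace semigroup and $\Lambda^{s}$ the Dirichlet-to-Neumann operator for $\mathsf{g}^{\Phi^{\e}(s,\cdot)}$, the positive spectral gap of $\Lambda^{s}$ on mean-zero functions (bounded below uniformly in terms of $\langle\Phi^{\e}\rangle_{\tau}$) yields for $-\mathscr{L}^{\e,s}$ a spectral gap of order $\e^{-1}\delta$. Solving $\mathscr{L}^{\e,s}u^{s,x}=\mathsf{f}^{s,x}$ by a Neumann series combined with the $L^{\infty}$-smoothing of $P^{s}_{\delta}$ gives $\|u^{s,x}\|_{\infty}\lesssim\e\delta^{-1}\,i(\langle\Phi^{\e}\rangle_{\tau})$.

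Dynkin's formula then yields
\[
\tau(\e)^{-1}\!\int_{0}^{\tau(\e)}\!\!\mathsf{f}^{s,x}(\wt{\mathbf{z}}^{\e}(s,r))\,\d r = \tau(\e)^{-1}\bigl[u^{s,x}(\wt{\mathbf{z}}^{\e}(s,\tau(\e)))-u^{s,x}(\wt{\mathbf{z}}^{\e}(s,0))\bigr] - \tau(\e)^{-1}M^{s,x}_{\tau(\e)},
\]
with $M^{s,x}$ a pure-jump martingale of predictable bracket $\int_{0}^{\cdot}\Gamma^{\e,s}(u^{s,x})(\wt{\mathbf{z}}^{\e}(s,r))\,\d r$ and carré du champ $\Gamma^{\e,s}(u)(z)=\e^{-1}\!\int\mathbf{T}^{\Phi^{\e}(s,\cdot),\delta}_{z}(\d y)(u(y)-u(z))^{2}$. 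The bracket is $\leq 2\|u^{s,x}\|_{\infty}/\tau(\e)\lesssim\e/(\delta\tau(\e))\to 0$ by the standing hypothesis $\e^{-1}\delta(\e)\tau(\e)\to\infty$. For the martingale, reversibility of $\mathscr{L}^{\e,s}$ under $\mathbf{T}^{\Phi^{\e}(s,\cdot)}$ gives the sharp bound $\int\Gamma^{\e,s}(u^{s,x})\,\d\mathbf{T}^{\Phi^{\e}(s,\cdot)} = -2\langle u^{s,x},\mathscr{L}^{\e,s}u^{s,x}\rangle \leq 2\|u^{s,x}\|_{\infty}\|\mathsf{f}^{s,x}\|_{\infty}\lesssim\e/\delta$, so $\E|M^{s,x}_{\tau(\e)}|/\tau(\e)\lesssim\sqrt{\e/(\delta\tau(\e))}\to 0$. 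Integrating in $s\in[0,\tau-\tau(\e)]$, taking supremum over $x$ and $|k|\leq 10$ (harmless since $\partial^{k}_{x}$ differentiates only the smooth kernel $\mathscr{K}$), and absorbing all $\Phi$-regularity constants into $i(\langle\Phi^{\e}\rangle_{\tau})^{-1}$ yields the claim.

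The main obstacle is establishing the uniform-in-$s$ spectral gap for $\Lambda^{s}$ together with the $L^{\infty}$-Poisson estimate, with constants depending only on $\langle\Phi^{\e}\rangle_{\tau}$; this follows from standard elliptic / pseudodifferential theory on the smooth hypersurface $\partial\bUz$ once one tracks dependence on the metric $\mathsf{g}^{\Phi^{\e}(s,\cdot)}$. A secondary technical point is that $\wt{\mathbf{z}}^{\e}(s,0)$ is not a priori distributed as $\mathbf{T}^{\Phi^{\e}(s,\cdot)}$, so that the $\Gamma^{\e,s}$ bound above, which is stated under stationarity, must be converted to a bound under the actual law of the process. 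I will handle this either by arranging the coupling in Proposition \ref{prop:coupling} so that the initial law has $L^{\infty}$-bounded Radon-Nikodym density with respect to $\mathbf{T}^{\Phi^{\e}(s,\cdot)}$ (using the smooth transition kernel of the Dirichlet-to-Neumann heat semigroup and reversibility, which preserve the $L^{\infty}$-density along the flow), or by inserting a short burn-in $\tau_{0}(\e)\ll\tau(\e)$ still satisfying $\e^{-1}\delta\tau_{0}(\e)\to\infty$ whose contribution to the time average is negligible.
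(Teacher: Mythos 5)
Your argument is correct and is essentially the paper's proof, re-packaged via Dynkin's formula and an explicit Poisson-equation corrector rather than the Kipnis--Varadhan inequality. Both proofs isolate the same ergodic-average term (your $\mathsf{H}^{\e}-\mathsf{G}^{\e}$, the paper's $\mathsf{E}^{\e,3}$) and dispose of the remaining freeze/tail pieces by $\mathrm{O}(\tau(\e))$; both identify $\mathbf{T}^{\Phi^{\e}(s,\cdot)}$ as the invariant measure of $r\mapsto\wt{\mathbf{z}}^{\e}(s,r)$ and exploit the $\gtrsim\e^{-1}\delta(\e)$ spectral gap of its generator; and both handle the fact that $\wt{\mathbf{z}}^{\e}(s,0)$ is not stationary by a short burn-in of length $\e\delta^{-1}\ll\tau(\e)$ followed by a bounded change of density with respect to $\mathbf{T}^{\Phi^{\e}(s,\cdot)}$ (the paper invokes Theorem~1.1 of \cite{EO} for this). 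The one substantive technical difference is in the corrector estimate. The Kipnis--Varadhan bound (Appendix~1.6 of \cite{KL}, as the paper uses it) controls the stationary time-average by $\tau(\e)^{-1/2}\,|\langle\wt{\psi}_{x},(-\wt{\mathscr{L}}^{\e,s})^{-1}\wt{\psi}_{x}\rangle|^{1/2}$, and the required $L^{2}$-resolvent bound $\lesssim\e\delta^{-1}$ follows immediately from the spectral gap on mean-zero functions together with boundedness of $\wt{\psi}_{x}$. Your Dynkin route additionally needs the \emph{pointwise} estimate $\|u^{s,x}\|_{\infty}\lesssim\e\delta^{-1}$ to dominate the boundary term $\tau(\e)^{-1}[u^{s,x}(\wt{\mathbf{z}}^{\e}(s,\tau(\e)))-u^{s,x}(\wt{\mathbf{z}}^{\e}(s,0))]$. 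That bound does hold: your Neumann-series-plus-ultracontractivity argument works, and is helped by the structural fact that $\mathsf{f}^{s,x}=P^{s}_{\delta}\varphi-\int\varphi\,\d\mathbf{T}^{\Phi^{\e}(s,\cdot)}$ for the fixed smooth function $\varphi(\cdot)=\mathscr{K}(x,\cdot)\mathsf{n}^{\Phi^{\e}(s,\cdot)}(\cdot)$, so the sum splits into $\lesssim\delta^{-1}$ terms controlled in $L^{\infty}$ by Markovianity plus a geometrically decaying tail where one pays a single $\mathrm{O}(1)$ smoothing constant at unit time. Still, this is a genuinely heavier requirement than the purely $L^{2}$ resolvent estimate in the paper's route, and you correctly flag the uniform-in-$s$ spectral gap plus $L^{\infty}$-Poisson bound as the main remaining work. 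One cosmetic slip: the quantity you call ``the bracket'' and bound by $2\|u^{s,x}\|_{\infty}/\tau(\e)$ is actually the Dynkin boundary term, not the predictable quadratic variation; the latter is what you then control via the carr\'{e} du champ and reversibility, and that computation is correct.
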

\begin{proof}
First take $0\leq t\leq\tau(\e)$. In this case, we have $\mathsf{H}^{\e}(t,x)=0$ by construction. Moreover, by the proof of \eqref{eq:timeaverage1}, we have the short-time bound $\|\mathsf{F}^{\e,\mathrm{eff},\mathrm{av}}(t,\cdot)\|_{\mathscr{C}^{10}}\lesssim\tau(\e)$. These are deterministic, for the rest of this proof, so we are left to consider times $\tau(\e)\leq t\leq\tau\lesssim1$. In this case, we have the following calculation (which follows basically by definition):
\begin{align}
&\mathsf{H}^{\e}(t,x)-\mathsf{F}^{\e,\mathrm{eff},\mathrm{av}}(t,x) \nonumber\\
&= \ \int_{0}^{t-\tau(\e)}\d s \ \tau(\e)^{-1}\int_{0}^{\tau(\e)}\d r \int_{\partial\bUz}\left\{\mathbf{T}^{\Phi^{\e}(s,\cdot),\delta}_{\wt{\mathbf{z}}^{\e}(s,r)}(\d y)-\mathbf{T}^{\Phi^{\e}(s,\cdot)}(\d y)\right\}\mathscr{K}(x,y)\mathsf{n}^{\Phi^{\e}(s,\cdot)}(y) \nonumber\\
&- \ \int_{t-\tau(\e)}^{t}\d s \int_{\partial\bUz}\mathbf{T}^{\Phi^{\e}(s,\cdot)}(\d y)\mathscr{K}(x,y)\mathsf{n}^{\Phi^{\e}(s,\cdot)}(y) \ =: \ \mathsf{E}^{\e,3}(t,x)+\mathsf{E}^{\e,4}(t,x). 
\end{align}
The proof of \eqref{eq:timeaverage1} shows that $\|\mathsf{E}^{\e,4}(t,\cdot)\|_{\mathscr{C}^{10}}\lesssim\tau(\e)$, so we turn to $\mathsf{E}^{\e,3}$. In particular, to complete the proof, it suffices to show the following, in which we extend $\mathsf{E}^{\e,3}(t,x)$ to $0\leq t\leq\tau(\e)$ by zero:
\begin{align}
\E[\|\mathsf{E}^{\e,3}\|_{\tau}i(\langle\Phi^{\e}\rangle_{\tau})^{-1}] \ \lesssim \ \ell(\e). \label{eq:ergodic0}
\end{align}
For this, we first make a couple of important remarks. 
\begin{enumerate}
\item First, we claim that the invariant measure of $r\mapsto\wt{\mathbf{z}}^{\e}(s,r)$ is the measure $\mathbf{T}^{\Phi^{\e}(s,\cdot)}(\d y)$ from \eqref{eq:limit}. To see this, we recall from Proposition \ref{prop:coupling} that $r\mapsto\wt{\mathbf{z}}^{\e}(s,r)$ is a continuous-time Poisson jump process valued in $\partial\bUz$ with explicit generator. So, it suffices to show that for any $F\in\mathscr{C}^{\infty}(\partial\bUz,\R)$, we have 
\begin{align}
&\int_{\partial\bUz}\mathbf{T}^{\Phi^{\e}(s,\cdot)}(\d z)\mathscr{L}^{\e,s}F(z) \nonumber\\
&= \ \e^{-1}\int_{\partial\bUz}\int_{\partial\bUz}\mathbf{T}^{\Phi^{\e}(s,\cdot)}(\d z)\mathbf{T}^{\Phi^{\e}(s,\cdot),\delta}_{z}(\d y)F(y) - \e^{-1}\int_{\partial\bUz}\mathbf{T}^{\Phi^{\e}(s,\cdot)}(\d z)F(z) \ = \ 0. \nonumber
\end{align}
The first identity follows by construction of $\mathscr{L}^{\e,s}$ from Proposition \ref{prop:coupling}. To show the second identity, we recall $\mathbf{T}^{\Phi^{\e}(s,\cdot),\delta}_{z}(\d y)$ is the distribution of $\mathbf{x}^{\Phi^{\e}(s,\cdot),z}$ stopped at the first time its boundary local time hits $\delta$, assuming its initial data is $z\in\partial\bUz$. (We recall that $\mathbf{x}^{\Phi^{\e}(s,\cdot),z}$ from Definition \ref{definition:interface} as a reflecting Brownian motion on $\bUz$ with metric $\mathsf{g}^{\Phi^{\e}(s,\cdot)}$.) Of course, this is the same as the distribution of the process $t\mapsto\mathbf{x}^{\Phi^{\e}(s,\cdot),z}(\tau^{\Phi^{\e}(s,\cdot),z,t})$ for $t=\delta$, where $\tau^{\Phi^{\e}(s,\cdot),z,t}$ from Definition \ref{definition:interface} is the first time the local time hits $t$. But this latter process has a generator given by the Dirichlet-to-Neumann operator, whose invariant measure is induced by the restriction $\mathsf{g}^{\Phi^{\e}(s,\cdot)}|_{\partial\bUz}$; see \cite{ARP,EO,Hsu0}. However, by change-of-variables, this is exactly $\mathbf{T}^{\Phi^{\e}(s,\cdot)}(\d y)$; see Definition \ref{definition:interface} and the paragraph after \eqref{eq:limit}. So, the law of $\mathbf{x}^{\Phi^{\e}(s,\cdot),z}$ stopped when its boundary local time hits $\delta$, assuming $z$ is distributed according to $\mathbf{T}^{\Phi^{\e}(s,\cdot)}(\d z)$, is given by $\mathbf{T}^{\Phi^{\e}(s,\cdot)}(\d y)$. Thus, the inner integral in the previous display (which is over the $z$-variable) equals $\mathbf{T}^{\Phi^{\e}(s,\cdot)}(\d y)$. The last identity in the previous display follows.
\item Not only can we compute the invariant measure of $r\mapsto\wt{\mathbf{z}}^{\e}(s,r)$ to be $\mathbf{T}^{\Phi^{\e}(s,\cdot)}(\d y)$, but we also know (the symmetric part) of its generator has a positive spectral gap of $\gtrsim\e^{-1}\delta(\e)$ that depends continuously only on $\langle\Phi^{\e}(s,\cdot)\rangle_{\mathscr{C}^{10}}$ (and on the geometry of $\partial\bUz$, though this point will not be important since $\partial\bUz$ is fixed). To see this, we note that $t\mapsto\mathbf{x}^{\Phi^{\e}(s,\cdot),z}(\tau^{\Phi^{\e}(s,\cdot),z,t})$ has a spectral gap that is uniformly positive (depending continuously on $\langle\Phi^{\e}(s,\cdot)\rangle_{\mathscr{C}^{10}}$). Indeed, this claim follows from \cite{ARP}. It then suffices to realize that $r\mapsto\wt{\mathbf{z}}^{\e}(s,r)$ is a continuous-time Poisson process whose jumps happen at speed $\e^{-1}$, where the jumps are sampled according to time-$\delta$ increments of $t\mapsto\mathbf{x}^{\Phi^{\e}(s,\cdot),z}(\tau^{\Phi^{\e}(s,\cdot),z,t})$.
\end{enumerate}
Let us proceed with the proof. First, by point (1) above, we can first write
\begin{align}
\int_{\partial\bUz}\mathbf{T}^{\Phi^{\e}(s,\cdot)}(\d y)\mathscr{K}(x,y)\mathsf{n}^{\Phi^{\e}(s,\cdot)}(y) \ = \ \E^{z,s,\mathrm{inv}}\int_{\partial\bUz}\mathbf{T}^{\Phi^{\e}(s,\cdot),\delta}_{z}(\d y)\mathscr{K}(x,y)\mathsf{n}^{\Phi^{\e}(s,\cdot)}(y),
\end{align}
where $\E^{z,s,\mathrm{inv}}$ is expectation with respect to $z\sim\mathbf{T}^{\Phi^{\e}(s,\cdot)}(\d y)$. (This identity follows from the fact that $\mathbf{T}^{\Phi^{\e}(s,\cdot)}(\d y)$ is invariant for the transition kernel $\mathbf{T}^{\Phi^{\e}(s,\cdot),\delta}_{z}(\d y)$.) This allows us to rewrite $\mathsf{E}^{\e,3}$ via something minus its invariant measure expectation, so
\begin{align}
\mathsf{E}^{\e,3}(t,x) \ = \ \int_{0}^{t-\tau(\e)}\d s \ \tau(\e)^{-1}\int_{0}^{\tau(\e)}\d r\left\{\psi_{x}(\wt{\mathbf{z}}^{\e}(s,r))-\E^{z,s,\mathrm{inv}}\psi_{x}\right\}, \label{eq:ergodic1}
\end{align}
where $\psi_{x}$ is given by the following function of $x,z\in\partial\bUz$:
\begin{align}
\psi_{x}(z) \ := \ \int_{\partial\bUz}\mathbf{T}^{\Phi^{\e}(s,\cdot),\delta}_{z}(\d y)\mathscr{K}(x,y)\mathsf{n}^{\Phi^{\e}(s,\cdot)}(y). \label{eq:ergodic1b}
\end{align}
At this point, bounding $\mathsf{E}^{\e,3}$ amounts to estimating the long-time average of a function that is centered with respect to the invariant measure. (It is long time because the speed of $\wt{\mathbf{z}}^{\e}$ is roughly $\e^{-1}\delta(\e)$ by point (2) above, and $\e^{-1}\delta(\e)\tau(\e)\to\infty$ by assumption.) The rest of this argument makes this precise. We claim that to prove \eqref{eq:ergodic1}, it suffices to instead prove the following pointwise bound for $\tau(\e)\leq t\lesssim1$ and $x\in\partial\bUz$:
\begin{align}
\E\left[\mathbf{1}_{t\leq\tau}|\mathsf{E}^{\e,3}(t,x)|\times i(\langle\Phi^{\e}\rangle_{\tau})^{-1}\right] \ \to_{\e\to0} \ 0. \label{eq:ergodic2}
\end{align}
To justify the sufficiency of \eqref{eq:ergodic2}, we point out that by regularity of the $\mathscr{K}$ kernel and boundedness of the normal vector $\mathsf{n}^{\Phi}$, we clearly have $\|\mathsf{E}^{\e,3}\|_{\mathfrak{t},N}\times i(\langle\Phi^{\e}\rangle_{\mathfrak{t}})^{-1}\leq\|\mathsf{E}^{\e,3}\|_{\mathfrak{t},N}\lesssim_{N,\mathfrak{t}}1$ uniformly in $\e$ for any $N$ and $\mathfrak{t}\lesssim1$. Therefore, by Arzela-Ascoli, we know that $\mathsf{E}^{\e,3}$ is tight in $\mathscr{C}([0,\mathfrak{t}],\mathscr{C}^{10}(\partial\bUz,\R^{\d}))$ for any $\mathfrak{t}\lesssim1$, and therefore so is the stopped process (given by replacing the time-variable $t$ by $t\wedge\tau$) since $\tau\lesssim1$ by assumption. But \eqref{eq:ergodic2} would show that every limit point of $i(\langle\Phi^{\e}\rangle_{\tau})^{-1}\mathsf{E}^{\e,3}(t\wedge\tau,x)$ is zero, and thus \eqref{eq:ergodic0} would hold. (Alternatively, the aforementioned a priori regularity of $\mathsf{E}^{\e,3}$ would let us approximate the the $\|\|_{\tau}$-norm on the \abbr{LHS} of \eqref{eq:ergodic0} by a supremum over $\eta^{-1}$-many points, where $\eta>0$ is fixed for now, up to an error that vanishes as $\eta\to0$. To estimate the supremum over $\eta^{-1}$-many space-time points, multiply \eqref{eq:ergodic2} by $\eta^{-1}$. This implies \eqref{eq:ergodic0} holds if we replace $\ell(\e)$ by something that vanishes as $\eta\to0$. But $\eta>0$ was arbitrary, so we can take it to vanish sufficiently slowly as we take $\e\to0$, for example.) We are therefore left to show \eqref{eq:ergodic2}. Because $i(\cdot)$ is increasing, we have the bound
\begin{align}
\text{\abbr{LHS}} \;\; \eqref{eq:ergodic2} \ \lesssim \ &\E\Big[\mathbf{1}_{t\leq\tau}\int_{0}^{t-\tau(\e)}\d s \ i(\langle\Phi^{\e}\rangle_{s})^{-1}\Big|\tau(\e)^{-1}\int_{0}^{\tau(\e)}\d r \left\{\psi_{x}(\wt{\mathbf{z}}^{\e}(s,r))-\E^{z,s,\mathrm{inv}}\psi_{x}\right\}\Big|\, \Big] \nonumber \\
\leq \ &\E\Big[\mathbf{1}_{t\lesssim1}\int_{0}^{t-\tau(\e)}\d s \ i(\langle\Phi^{\e}\rangle_{s})^{-1}\Big|\tau(\e)^{-1}\int_{0}^{\tau(\e)}\d r \left\{\psi_{x}(\wt{\mathbf{z}}^{\e}(s,r))-\E^{z,s,\mathrm{inv}}\psi_{x}\right\}\Big|\, \Big] \nonumber \\
\lesssim \ &\sup_{0\leq s\lesssim1}\E\Big[i(\langle\Phi^{\e}\rangle_{s})^{-1}\Big|\tau(\e)^{-1}\int_{0}^{\tau(\e)}\d r \left\{\psi_{x}(\wt{\mathbf{z}}^{\e}(s,r))-\E^{z,s,\mathrm{inv}}\psi_{x}\right\}\Big|\, \Big]. \label{eq:ergodic3}
\end{align}
(Indeed, the first bound follows by pulling the $i(\cdot)$-factor in the \abbr{LHS} of \eqref{eq:ergodic2} through the first integral in the \abbr{RHS} of \eqref{eq:ergodic1} and then replacing $\tau$ by $s$, which suffices for an upper bound since $i(\cdot)$ is increasing and since $s\leq t\leq\tau$ courtesy of the indicator function $\mathbf{1}_{t\leq\tau}$. The second bound follows because $\tau\lesssim1$, and the last follows by Fubini.) Let us now control \eqref{eq:ergodic3}. We first write
\begin{align}
&\tau(\e)^{-1}\int_{0}^{\tau(\e)}\d r \left\{\psi_{x}(\wt{\mathbf{z}}^{\e}(s,r))-\E^{z,s,\mathrm{inv}}\psi_{x}\right\} \nonumber\\
&= \ \tau(\e)^{-1}\left(\int_{0}^{\e\delta^{-1}}+\int_{\e\delta^{-1}}^{\tau(\e)}\right)\d r \left\{\psi_{x}(\wt{\mathbf{z}}^{\e}(s,r))-\E^{z,s,\mathrm{inv}}\psi_{x}\right\}. \nonumber
\end{align}
(We can do this since $\e\delta^{-1}\ll\tau(\e)$; see Lemma \ref{lemma:timeaverage} and Proposition \ref{prop:coupling}.) By $\e\delta^{-1}\ll\tau(\e)$ and uniform boundedness of $\psi_{x}(\cdot)$ (which comes from that of the kernel $\mathscr{K}$) we can easily bound the contribution on $[0,\e\delta^{-1}]$:
\begin{align}
\E\Big[i(\langle\Phi^{\e}\rangle_{s})^{-1}\Big|\tau(\e)^{-1}\int_{0}^{\e\delta^{-1}}\d r \left\{\psi_{x}(\wt{\mathbf{z}}^{\e}(s,r))-\E^{z,s,\mathrm{inv}}\psi_{x}\right\}\Big|\, \Big] \ \lesssim \ \tau(\e)^{-1}\e\delta^{-1} \ \to_{\e\to0} \ 0. \label{eq:ergodic4}
\end{align}
We move to the contribution on $[\e\delta^{-1},\tau(\e)]$. By conditioning, we clearly have the following preliminary bound:
\begin{align}
&\E\Big[i(\langle\Phi^{\e}\rangle_{s})^{-1}\Big|\tau(\e)^{-1}\int_{\e\delta^{-1}}^{\tau(\e)}\d r \left\{\psi_{x}(\wt{\mathbf{z}}^{\e}(s,r))-\E^{z,s,\mathrm{inv}}\psi_{x}\right\}\Big|\,\Big] \\
= \ &\E\Big[i(\langle\Phi^{\e}\rangle_{s})^{-1}\E_{s}\Big|\tau(\e)^{-1}\int_{\e\delta^{-1}}^{\tau(\e)}\d r \left\{\psi_{x}(\wt{\mathbf{z}}^{\e}(s,r))-\E^{z,s,\mathrm{inv}}\psi_{x}\right\}\Big|\,\Big],
\end{align}
where $\E_{s}$ means conditioning on all relevant filtrations at time $s$. Now, observe that in the second line, the integrand depends on $r\mapsto\wt{\mathbf{z}}^{\e}(s,r)$ only for $r\geq\e\delta^{-1}$, so the ``relevant initial data" therein is the law of $\wt{\mathbf{z}}^{\e}(s,r)$ at $r=\e\delta^{-1}$. But, by Theorem 1.1 in \cite{EO}, the law of $\wt{\mathbf{z}}^{\e}(s,r)$ at $r=\e\delta^{-1}$ has a Radon-Nikodym derivative with respect to the invariant measure $\mathbf{T}^{\Phi^{\e}(s,\cdot)}(\d y)$ which is bounded above (uniformly on $\partial\bUz$) by something that depends continuously on $\langle\Phi^{\e}(s,\cdot)\rangle_{\mathscr{C}^{10}}\leq\langle\Phi^{\e}\rangle_{s}$. (Put in words, the Radon-Nikodym derivative for the law of $\wt{\mathbf{z}}^{\e}(s,r)$ at $r=\e\delta^{-1}$ with respect to $\mathbf{T}^{\Phi^{\e}(s,\cdot)}(\d y)$ is, by standard Poisson concentration bounds, bounded from above by the Radon-Nikodym derivative for the law of the stopped process $t\mapsto\mathbf{x}^{\Phi^{\e}(s,\cdot),z}(\tau^{\Phi^{\e}(s,\cdot),z,t})$ for $t\gtrsim\e\delta^{-1}$. However, the Kolmogorov forward equation for the stopped process is a smoothing equation as shown in Theorem 1.1 in \cite{EO} since it comes from a diffusion, and the resulting heat kernel bounds depend only on the first, say seven, derivatives of the metric $\mathsf{g}^{\Phi^{\e}(s,\cdot)}|_{\partial\bUz}$ and its inverse, e.g. $\langle\Phi^{\e}(s,\cdot)\rangle_{\mathscr{C}^{10}}$.) Ultimately, if we include a change-of-measure factor that depends continuously on $\langle\Phi^{\e}\rangle_{s}$, in the previous display, we can replace the initial law at time $r=\e\delta^{-1}$ by the invariant measure:
\begin{align}
&\E_{s}\Big|\tau(\e)^{-1}\int_{\e\delta^{-1}}^{\tau(\e)}\d r \left\{\psi_{x}(\wt{\mathbf{z}}^{\e}(s,r))-\E^{z,s,\mathrm{inv}}\psi_{x}\right\}\Big| \nonumber\\
&\mapsto \E^{z,s,\mathrm{inv}}\Big|\tau(\e)^{-1}\int_{0}^{\tau(\e)-\e\delta^{-1}}\d r \left\{\psi_{x}(\wt{\mathbf{z}}^{\e}(s,r))-\E^{z,s,\mathrm{inv}}\psi_{x}\right\}\Big| \nonumber
\end{align}
where, to be clear, on the \abbr{RHS}, we are now sampling the initial law of $\wt{\mathbf{z}}^{\e}(s,r)$ at $r=0$ given by $\E^{z,s,\mathrm{inv}}$. It now suffices to show the following estimate, in which $i_{2}(\cdot)\geq1$ is a possibly different but still increasing and continuous function that we can choose:
\begin{align}
\sup_{0\leq s\lesssim1}\E\Big[i_{2}(\langle\Phi^{\e}\rangle_{s})^{-1}\E^{z,s,\mathrm{inv}}\Big|\tau(\e)^{-1}\int_{0}^{\tau(\e)-\e\delta^{-1}}\d r \left\{\psi_{x}(\wt{\mathbf{z}}^{\e}(s,r))-\E^{z,s,\mathrm{inv}}\psi_{x}\right\}\Big|\,\Big] \ \to_{\e\to0} \ 0. \label{eq:ergodic5}
\end{align}
To this end, recall from point (2) earlier that the process $\wt{\mathbf{z}}^{\e}(s,\cdot)$ has a uniformly positive spectral gap that is $\gtrsim\e^{-1}\delta(\e)$ (times something depending continuously on the underlying metric and thus on $\langle\Phi^{\e}\rangle_{s}$). Thus, we can bound the inner expectation in \eqref{eq:ergodic5} via the Kipnis-Varadhan inequality (see Appendix 1.6 of \cite{KL}). In words, this says that the time integrand in \eqref{eq:ergodic5} has zero mean with respect to the invariant measure of $\wt{\mathbf{z}}^{\e}(s,\cdot)$, so it is fluctuating. Hence, its time-average on any time-scale $\leq\tau(\e)$ can be controlled, in second moment, by $\tau(\e)^{-1}$ times a Brownian motion of speed $\e^{1/2}\delta(\e)^{-1/2}$ at time $\tau(\e)$; this is $\lesssim\e^{1/2}\delta(\e)^{-1/2}\tau(\e)^{-1/2}$. (The speed of this Brownian motion is controlled by the aforementioned spectral gap.) Precisely, Appendix 1.6 of \cite{KL} implies that the $\E^{z,s,\mathrm{inv}}$-term in \eqref{eq:ergodic5} is bounded above by $\tau(\e)^{-1/2}|\E^{z,s,\mathrm{inv}}[\wt{\psi}_{x}(\wt{\mathscr{L}}^{\e})^{-1}\wt{\psi}_{x}]|^{1/2}$, where $\wt{\psi}_{x}(\mathbf{z}):=\psi_{x}(\mathbf{z})-\E^{z,s,\mathrm{inv}}\psi_{x}$, and $\wt{\mathscr{L}}^{\e}$ is the generator for $\wt{\mathbf{z}}^{\e}(s,\cdot)$. (The factor of $\tau(\e)^{-1/2}$, as opposed to the factor of $\tau(\e)^{1/2}$ in Appendix 1.6 of \cite{KL}, is from dividing by $\tau(\e)$ in $\E^{z,s,\mathrm{inv}}$ in \eqref{eq:ergodic5}.) Because $\wt{\psi}_{x}$ is both uniformly bounded and orthogonal to the null-space of $\wt{\mathscr{L}}^{\e}$ by construction, and because $\wt{\mathscr{L}}^{\e}$ has a spectral gap of $\gtrsim\e^{-1}\delta(\e)$, to get an upper bound on $|\E^{z,s,\mathrm{inv}}[\wt{\psi}_{x}(\wt{\mathscr{L}}^{\e})^{-1}\wt{\psi}_{x}]|$, we can replace $\wt{\mathscr{L}}^{\e}$ with multiplication by $\e\delta(\e)^{-1}$. So, $\tau(\e)^{-1/2}|\E^{z,s,\mathrm{inv}}[\wt{\psi}_{x}(\wt{\mathscr{L}}^{\e})^{-1}\wt{\psi}_{x}]|^{1/2}\lesssim\e^{1/2}\delta(\e)^{-1/2}\tau(\e)^{-1/2}$. (The implied constants here depend on $\langle\Phi^{\e}\rangle_{s}$; they are dominated by $i_{2}(\langle\Phi^{\e}\rangle_{s})^{-1}$ on the LHS of \eqref{eq:ergodic5}. Let us also clarify that Appendix 1.6 of \cite{KL} requires $\wt{\mathbf{z}}^{\e}(s,\cdot)$ to be stationary, hence the step before \eqref{eq:ergodic5}.) As noted in Lemma \ref{lemma:timeaverage}, we have $\e^{1/2}\delta(\e)^{-1/2}\tau(\e)^{-1/2}\to0$. This gives \eqref{eq:ergodic5}. As noted before \eqref{eq:ergodic5}, this finishes the proof.
\end{proof}
\subsection{Proof of Proposition \ref{prop:hom}}
We combine Lemmas \ref{lemma:timeaverage}, \ref{lemma:freeze}, and \ref{lemma:ergodic} with Proposition \ref{prop:coupling} to get 
\begin{align}
\E\left[\|\mathsf{F}^{\e}-\mathsf{F}^{\e,\mathrm{eff}}\|_{\tau}\times i(\langle\Phi^{\e}\rangle_{\tau})^{-1}\right] \ \lesssim \ \tau(\e)+ \omega(\e)+ \ell(\e) \ \to_{\e\to0} \ 0.
\end{align}
Proposition \ref{prop:hom} then follows by the Markov inequality. \qed
%
%
%
\section{Proof of Proposition \ref{prop:coupling}}\label{section:couplingproof}
\subsection{Preliminary notation}
Throughout this section, we fix a time $s\geq0$. Next, by $\tau(\e)$, we mean a positive time that satisfies the constraints in Lemma \ref{lemma:timeaverage}. (It may change during our calculation, but it always depends only on $\delta(\e),\e$ to satisfy the constraints in Lemma \ref{lemma:timeaverage}.) Also, the function $i(\cdot)\geq1$ may change from line to line, but it is always increasing and continuous.

We start by introducing an \abbr{SDE} representation for the reflecting Brownian motion $\mathbf{x}^{\Phi,z}$ from Definition \ref{definition:interface} that defines the transition kernel for $\mathbf{z}^{\e}$. (First, recall notation from Definition \ref{definition:interface}.)
\begin{enumerate}
\item Given any $x\in\partial\bUz$, let $\mathbf{n}(x)$ be the unit inward normal vector at $x$. Next, for any $z\in\partial\bUz$, let $\mathbf{x}^{\Phi,z}$ be Brownian motion on $\bUz$ (with unit normal reflection on the boundary $\partial\bUz$) with respect to the metric $\mathsf{g}^{\Phi}$ and with initial data $\mathbf{x}^{\Phi,z}(0)=z\in\partial\bUz$. Thus, we have the \abbr{SDE}
\begin{align}
\d\mathbf{x}^{\Phi,z}(t) \ = \ {{}\d\mathbf{w}^{\Phi}}+\mathbf{n}(\mathbf{x}^{\Phi,z}(t))\d\mathbf{L}(t),
\end{align}
in which $\mathbf{L}(t)$ denotes the boundary local time of $\mathbf{x}^{\Phi,z}$, and $\mathbf{w}^{\Phi}$ is a Brownian motion on $\R^{\d}$ with respect to any fixed smooth extension of $\mathsf{g}^{\Phi}$ from $\bUz$ to $\R^{\d}$. Precisely, given a standard Euclidean Brownian motion $\mathbf{b}$ on $\R^{\d}$, we have the following \abbr{SDE} representation for $\mathbf{w}^{\Phi}$:
\begin{align}
{{}\d\mathbf{w}^{\Phi}} \ = \ \mathbf{A}^{\Phi}(\mathbf{x}^{\Phi,z}(t))\d\mathbf{b}(t)+\mathbf{m}^{\Phi}(\mathbf{x}^{\Phi,z}(t))\d t.
\end{align}
The matrix $\mathbf{A}^{\Phi}(\mathbf{x})$ is a symmetric $\d\times\d$-matrix satisfying $\mathbf{A}^{\Phi}(\mathbf{x})^{2}=(\mathsf{g}^{\Phi}(\mathbf{x}))^{-1}$ for all $\mathbf{x}\in\bUz$. (It is defined uniquely up to sign. Since Brownian motion $\mathbf{b}$ is invariant-in-law under changing sign, whatever choice we make for $\mathbf{A}^{\Phi}$ gives the same \abbr{SDE}.) The drift is given by
\begin{align}
\mathbf{m}^{\Phi}(\mathbf{x})_{j} \ = \ |\mathsf{g}^{\Phi}(\mathbf{x})|^{-\frac12}\sum_{i=1,\ldots,\d}\partial_{i}\left\{|\mathsf{g}^{\Phi}(\mathbf{x})|^{\frac12}(\mathsf{g}^{\Phi}(\mathbf{x}))^{-1}_{ij}\right\},
\end{align}
where $j\in\{1,\ldots,\d\}$, and $|\mathsf{g}^{\Phi}(\mathbf{x})|$ is the absolute value of the determinant of $\mathsf{g}^{\Phi}(\mathbf{x})$, and $\partial_{i}$ is differentiation with respect to the $i$-th Euclidean standard basis vector. (See Lemma \ref{lemma:sde} for well-posedness of the \abbr{SDE} for $\mathbf{x}^{\Phi,z}$ if $\Phi\in\mathscr{C}^{10}_{\simeq}(\partial\bUz,\R^{\d})$.) We clarify that $(\mathsf{g}^{\Phi}(\mathbf{x}))^{-1}_{ij}$ is the $(i,j)$-entry of the inverse matrix $(\mathsf{g}^{\Phi}(\mathbf{x}))^{-1}$. Now, take $\delta=\delta(\e)$ and recall the stopping time
\begin{align}
\tau^{\Phi,z,\delta} \ := \ \inf\{t\geq0:\mathbf{L}(t)=\delta\}.
\end{align}
\end{enumerate}
It turns out to be convenient to also introduce the following notation that we relate back to the processes $\mathbf{z}^{\e}$ and $\wt{\mathbf{z}}^{\e}$ in Proposition \ref{prop:coupling}. (In particular, this notation will help us construct the proposed process $\wt{\mathbf{z}}^{\e}$ therein.)
\begin{enumerate}
\item First, let us define $t\mapsto\wt{\mathbf{x}}^{\e}(s,t)$ to be a reflecting Brownian motion on $\bUz$ with respect to the metric $\mathsf{g}^{\Phi^{\e}(s,\cdot)}$ defined by the \emph{fixed} interface $\Phi^{\e}(s,\cdot)$. (By fixed, we also implicitly mean that we are conditioning on $\Phi^{\e}(s,\cdot)$ throughout this section.) In \abbr{SDE} terms, if $t\mapsto\mathbf{b}(t)$ is a standard Euclidean Brownian motion on $\R^{\d}$, we have (with notation explained after)
\begin{align}
\d\wt{\mathbf{x}}^{\e}(s,t) \ = \ \mathbf{A}^{\Phi^{\e}(s,\cdot)}(\wt{\mathbf{x}}^{\e}(s,t))\d\mathbf{b}(t) + \mathbf{m}^{\Phi^{\e}(s,\cdot)}(\wt{\mathbf{x}}^{\e}(s,t))\d t + \mathbf{n}(\wt{\mathbf{x}}^{\e}(s,t))\d\mathbf{L}^{s,\sim}(t).
\end{align}
The term $\mathbf{L}^{s,\sim}(t)$ denotes the boundary local time of $\wt{\mathbf{x}}^{\e}(s,r)$ accumulated between $r=0$ and $r=t$. We consider this \abbr{SDE} for times $t\geq0$, and we consider the initial data $\wt{\mathbf{x}}^{\e}(s,0)=\mathbf{z}^{\e}(s)$, where $\mathbf{z}^{\e}(s)$ is the location of the ``original particle" (from Definition \ref{definition:interface}) at time $s$ (which we condition on).
\item Let us now define a joint process $t\mapsto(\mathbf{x}^{\e}(s,t),\Psi_{s}^{\e}(t,\cdot))\in\Omega$ (until a possibly finite explosion time, which we omit for now and address shortly in Remark \ref{remark:coupling}). To define this joint process, with the same Brownian motion $\mathbf{b}$ as in point (1), we set the initial condition $\mathbf{x}^{\e}(s,0)=\wt{\mathbf{x}}^{\e}(s,0)=\mathbf{z}^{\e}(s)$ and otherwise let $\mathbf{x}^{\e}$ solve the \abbr{SDE}
\begin{align}
\d\mathbf{x}^{\e}(s,t) \ = \ \mathbf{A}^{\Psi^{\e}_{s}(t,\cdot)}(\mathbf{x}^{\e}(s,t))\d\mathbf{b}(t) + \mathbf{m}^{\Psi^{\e}_{s}(t,\cdot)}(\mathbf{x}^{\e}(s,t))\d t + \mathbf{n}(\mathbf{x}^{\e}(s,t))\d\mathbf{L}^{s}(t).
\end{align}
The term $\mathbf{L}^{s}(t)$ is the boundary local time but now for $r\mapsto\mathbf{x}^{\e}(s,r)$ between $r=0$ and $r=t$. (The superscript $s$ is kept to emphasize that it is the local time accumulated ``starting at time $s$", unlike the local time from Definition \ref{definition:interface}.) To specify the interface $t\mapsto\Psi^{\e}_{s}(t,\cdot)$, we give the following Poisson process representation:
\begin{align}
\Psi^{\e}_{s}(t,w) \ = \ \Phi^{\e}(s,w)+\sum_{0\leq\tau_{k}\leq t}\e\mathscr{K}(w,\mathbf{x}^{\e}(s,\tau_{k}))\mathsf{n}^{\Psi^{\e}_{s}(\tau_{k}^{-},\cdot)}(\mathbf{x}^{\e}(s,\tau_{k})),
\end{align}
where for any non-negative integer $k$, we define $\tau_{k}$ as the following stopping time for level sets of the local time:
\begin{align}
\tau_{k} \ := \ \inf\{t\geq0: \mathbf{L}^{s}(t)=k\delta(\e)\}.
\end{align}
In words, the interface $\Psi^{\e}_{s}(t,\cdot)$ grows according to $\e\mathscr{K}$-shaped bumps that are centered at points where the local time hits a new multiple of $\delta=\delta(\e)$. This is different than $\Phi^{\e}(s+t,\cdot)$ itself. Indeed, bumps in $\Psi^{\e}_{s}(t,\cdot)$ are added at the actual stopping times $\tau_{k}$ for the diffusion process $\mathbf{x}^{\e}(s,t)$ itself, whereas the bumps in $\Phi^{\e}(s+t,\cdot)$ are added at ringings of auxiliary Poisson clocks. (However, we do have the identity $\Phi^{\e}(s+t,\cdot)=\Psi^{\e}_{s}(\tau_{k(t)},\cdot)$, at least under an appropriate coupling, where $k(t)$ is a Poisson random variable of intensity $\e^{-1}t$. Because it would be inconvenient to always carry around an inverse function of $t\mapsto\tau_{k(t)}$, we introduced the auxiliary notation $\Psi^{\e}_{s}(t,\cdot)$.) Finally, it will be convenient to define versions of $\tau_{k}$ but for $\wt{\mathbf{x}}^{\e}$:
\begin{align}
\wt{\tau}_{k} \ := \ \inf\{t\geq0:\mathbf{L}^{s,\sim}(t)=k\delta(\e)\}.
\end{align}
\item We refer to Lemma \ref{lemma:sde} for global existence and uniqueness to the above \abbr{SDE}s and finiteness of the stopping times $\tau_{k}$ and $\wt{\tau}_{k}$ (for deterministic $k\geq0$), at least until explosion of $\langle\Psi^{\e}_{s}(t,\cdot)\rangle_{\mathscr{C}^{10}}$.
\end{enumerate}
Let us clarify briefly the previous constructions. As we alluded after the construction of $\tau_{k}$, the ``original" particle $t\mapsto\mathbf{z}^{\e}(s+t)$ is a continuous-time Poisson process with speed $\e^{-1}$ whose embedded Markov chain is $k\mapsto\mathbf{x}^{\e}(s,\tau_{k})$. \emph{By the same token, the process $t\mapsto\wt{\mathbf{z}}(s,t)$ in Proposition \ref{prop:coupling} may be realized as a continuous-time Poisson process of speed $\e^{-1}$ whose embedded chain is $k\mapsto\wt{\mathbf{x}}^{\e}(s,\wt{\tau}_{k})$.} In particular, by coupling the underlying Poisson clocks of $t\mapsto\mathbf{z}^{\e}(s+t),\wt{\mathbf{z}}(s,t)$, the proof of Proposition \ref{prop:coupling} will amount to comparing $t\mapsto\mathbf{x}^{\e}(s,t)$ and $t\mapsto\wt{\mathbf{x}}^{\e}(s,t)$ and $k\mapsto\tau_{k},\wt{\tau}_{k}$, both with high probability. We clarify that the speed of the Poisson clocks in the $\mathbf{z}^{\e},\wt{\mathbf{z}}^{\e}$ processes have speed $\e^{-1}$, so we must scale time by $\e^{-1}$ when analyzing $\mathbf{x}^{\e},\wt{\mathbf{x}}^{\e}$ processes below. (In particular, we will be interested in $\mathbf{x}^{\e},\wt{\mathbf{x}}^{\e}$ for time scales of order $\e^{-1}\tau(\e)\delta(\e)$ for the sake of Proposition \ref{prop:coupling}. Indeed, $\mathbf{z}^{\e},\wt{\mathbf{z}}^{\e}$ sample $\mathbf{x}^{\e},\wt{\mathbf{x}}^{\e}$ at time-$\delta(\e)$ increments with speed $\e^{-1}$, and we are interested in $\mathbf{z}^{\e},\wt{\mathbf{z}}^{\e}$ for times $\leq\tau(\e)$. Actually, for technical reasons, we will need to look at time scales just above $\e^{-1}\tau(\e)\delta(\e)$; see the $B(\e)$ constant in Lemma \ref{lemma:coupling1}, for example.)
\subsection{Comparing $\Phi^{\e}(s,\cdot)$ and $\Psi^{\e}_{s}(t,\cdot)$}
In order to make the comparisons that we have just explained, we will need to compare the underlying metrics defining $t\mapsto\mathbf{x}^{\e}(s,t)$ and $t\mapsto\wt{\mathbf{x}}^{\e}(s,t)$. Roughly speaking, we first know that $\Psi^{\e}_{s}(t,\cdot)-\Phi^{\e}(s,\cdot)$ is $\mathrm{O}(\e)$ times the number of $\tau_{k}$ we see before time $\tau(\e)\ll\e^{-1}\delta(\e)$ (this last asymptotic follows by constraints in Lemma \ref{lemma:timeaverage}). However, $\tau_{k}\leq\e^{-1}\delta(\e)$ if and only if $\mathbf{L}^{s}(\e^{-1}\delta(\e))\geq k\delta(\e)$, so we expect $k\ll\e^{-1}$ if $\tau_{k}\ll\e^{-1}\delta(\e)$. (Otherwise, the local time becomes super-linear.) Multiplying by $\mathrm{O}(\e)$ then bounds $\Psi^{\e}_{s}(t,\cdot)-\Phi^{\e}(s,\cdot)$ by $\mathrm{o}(1)$. To make this precise, we use the Ito formula for a particular $\mathsf{g}^{\Psi^{\e}_{s}(t,\cdot)}$-harmonic function on $\bUz$ and use of the corresponding elliptic theory, a common thread for this section. Before we state the result, recall $\langle,\rangle_{\mathscr{C}^{N}}$ from Definition \ref{definition:limit}.
\begin{lemma}\label{lemma:coupling1}
There exist $\alpha(\e),\gamma(\e)\to0$ and $B(\e)\to\infty$ as $\e\to0$, each depending only on $\e$, so that with probability at least $1-\gamma(\e)$, we have the following for increasing, continuous, and deterministic $i(\cdot)\geq1$:
\begin{align}
\sup_{0\leq t\leq\e^{-1}\tau(\e)\delta(\e)B(\e)}\langle\Phi^{\e}(s,\cdot),\Psi^{\e}_{s}(t,\cdot)\rangle_{\mathscr{C}^{10}} \ \lesssim \ \alpha(\e)\times i(\langle\Phi^{\e}(s,\cdot)\rangle_{\mathscr{C}^{10}}). \label{eq:coupling1I}
\end{align}
\end{lemma}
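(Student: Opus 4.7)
The strategy is a bootstrap argument that controls the deviation $\Psi^{\e}_s(t,\cdot) - \Phi^{\e}(s,\cdot)$ by directly counting bumps: on the time scale $T := \e^{-1}\tau(\e)\delta(\e)B(\e) \ll \e^{-1}$, at most $\mathrm{o}(\e^{-1})$ bumps can accumulate, and each bump has $\mathscr{C}^{10}$-size $\lesssim\e$. Introduce the stopping time
\begin{align*}
\sigma \ := \ \inf\{t \geq 0 : \langle \Psi^{\e}_{s}(t,\cdot), \Phi^{\e}(s,\cdot)\rangle_{\mathscr{C}^{10}} \geq 1\}.
\end{align*}
By the Poisson representation of $\Psi^{\e}_{s}$, on the event $\{t \leq \sigma\}$ we may combine Lemma \ref{lemma:thm11} (to bound the normals $\mathsf{n}^{\Psi^{\e}_{s}(\tau_{k}^{-},\cdot)}$) with smoothness of $\mathscr{K}$ to obtain
\begin{align*}
\|\Psi^{\e}_{s}(t,\cdot) - \Phi^{\e}(s,\cdot)\|_{\mathscr{C}^{10}} \ \lesssim \ \e \, \mathcal{N}(t) \cdot i(\langle \Phi^{\e}(s,\cdot)\rangle_{\mathscr{C}^{10}}),
\end{align*}
where $\mathcal{N}(t) := \#\{k \geq 1 : \tau_{k} \leq t\}$. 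Since the $\tau_{k}$ are level sets of boundary local time, $\mathcal{N}(t) \leq \mathbf{L}^{s}(t)/\delta(\e)$, and the problem reduces to controlling $\mathbf{L}^{s}(T \wedge \sigma)$.

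For this, fix a smooth $F : \bUz \to \R$ with $\nabla F \cdot \mathbf{n} = 1$ on $\partial\bUz$; such $F$ exists by using the collar structure (e.g.\ a smooth extension of $x \mapsto \mathrm{dist}(x,\partial\bUz)$ off $\mathbf{C}_{1}$). Applying Ito's formula to $F(\mathbf{x}^{\e}(s,t))$ using the \abbr{SDE} representation of $\mathbf{x}^{\e}(s,\cdot)$ gives
\begin{align*}
\mathbf{L}^{s}(t) \ = \ F(\mathbf{x}^{\e}(s,t)) - F(\mathbf{z}^{\e}(s)) - M(t) - \int_{0}^{t} \mathcal{D}^{\Psi^{\e}_{s}(r,\cdot)} F(\mathbf{x}^{\e}(s,r)) \, \d r,
\end{align*}
where $M$ is a continuous local martingale with quadratic variation controlled by $\int_{0}^{t} |\mathbf{A}^{\Psi^{\e}_{s}(r,\cdot)} \nabla F|^{2} \d r$, and $\mathcal{D}^{\Psi} F$ is the second-order part of the generator, which depends polynomially on up to two derivatives of $\mathsf{g}^{\Psi}$. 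On $\{t \leq \sigma\}$, all coefficients are bounded by $i(\langle \Phi^{\e}(s,\cdot)\rangle_{\mathscr{C}^{10}})$. Burkholder-Davis-Gundy then gives $\sup_{r \leq t \wedge \sigma} |M(r)| \lesssim t^{1/2} \cdot i(\langle \Phi^{\e}(s,\cdot)\rangle_{\mathscr{C}^{10}})$ with failure probability $\gamma(\e) \to 0$ (chosen suitably), and hence $\sup_{r \leq t \wedge \sigma} \mathbf{L}^{s}(r) \lesssim (1+t) \cdot i(\langle \Phi^{\e}(s,\cdot)\rangle_{\mathscr{C}^{10}})$ on the complementary event.

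Plugging in $t = T = \e^{-1}\tau(\e)\delta(\e)B(\e)$ (with $B(\e) \geq 1$) yields $\mathcal{N}(T \wedge \sigma) \lesssim \e^{-1}\tau(\e)B(\e) \cdot i(\cdots)$, and hence $\sup_{r \leq T \wedge \sigma} \|\Psi^{\e}_{s}(r,\cdot) - \Phi^{\e}(s,\cdot)\|_{\mathscr{C}^{10}} \lesssim \tau(\e) B(\e) \cdot i(\langle \Phi^{\e}(s,\cdot)\rangle_{\mathscr{C}^{10}})$. Since $\tau(\e) \to 0$, we choose $B(\e) \to \infty$ slowly enough that $\alpha(\e) := \tau(\e) B(\e) \to 0$, and then use matrix perturbation (together with uniform invertibility of $\mathrm{Jac}\,\Phi^{\e}(s,\cdot)$ encoded in $\langle \Phi^{\e}(s,\cdot)\rangle_{\mathscr{C}^{10}}$) to upgrade the $\mathscr{C}^{10}$-bound to the $\langle\cdot,\cdot\rangle_{\mathscr{C}^{10}}$-bound \eqref{eq:coupling1I}. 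To close the bootstrap, observe that once $\alpha(\e) \cdot i(\langle \Phi^{\e}(s,\cdot)\rangle_{\mathscr{C}^{10}}) < 1$ (which holds for small $\e$), the bound forces $\langle \Psi^{\e}_{s}(r,\cdot), \Phi^{\e}(s,\cdot)\rangle_{\mathscr{C}^{10}}$ to stay strictly below the threshold throughout $[0,T]$, so $\sigma > T$ on the high-probability event and the estimate holds on all of $[0,T]$. The main obstacle is the circular dependence through $\mathsf{n}^{\Psi^{\e}_{s}}$ and through the diffusion coefficients $\mathbf{A}^{\Psi^{\e}_{s}}, \mathbf{m}^{\Psi^{\e}_{s}}$; the stopping time $\sigma$ decouples this loop, and the bootstrap converts all a priori control into the honest bound claimed.
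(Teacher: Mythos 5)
Your overall strategy --- use It\^{o}'s formula applied to an auxiliary function with nonvanishing normal derivative on $\partial\bUz$ to bound the boundary local time $\mathbf{L}^s(T)$, then convert that into a count of bumps and thus into a bound on $\|\Psi^\e_s(t,\cdot)-\Phi^\e(s,\cdot)\|_{\mathscr{C}^{10}}$ --- is essentially the paper's strategy. The paper's auxiliary function is the (time-dependent) solution of $\Delta_{\mathsf{g}^{\Psi^\e_s(t,\cdot)}}\mathscr{U}(t,\cdot)=1$, $\mathscr{U}|_{\partial\bUz}=0$; your use of a fixed smooth extension of $x\mapsto\mathrm{dist}(x,\partial\bUz)$ is a little simpler (you avoid the time-dependent PDE and the Hopf lemma lower bound, at the cost of having a non-constant generator term $\mathcal{D}^{\Psi}F$, which you control by the a priori bound on $\langle\Psi^\e_s(r,\cdot)\rangle_{\mathscr{C}^{10}}$ before the stopping time). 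Both are reasonable.

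However, there is a genuine gap in the bootstrap closure. You define $\sigma=\inf\{t:\langle\Psi^\e_s(t,\cdot),\Phi^\e(s,\cdot)\rangle_{\mathscr{C}^{10}}\geq 1\}$ and conclude the argument by claiming that since $\alpha(\e)\,i(\langle\Phi^\e(s,\cdot)\rangle_{\mathscr{C}^{10}})<1$ for small $\e$, the bound forces $\sigma>T$. But $\langle\Phi^\e(s,\cdot)\rangle_{\mathscr{C}^{10}}$ is a random quantity determined by the process before time $s$, and is not a priori bounded; so $i(\langle\Phi^\e(s,\cdot)\rangle_{\mathscr{C}^{10}})$ can be larger than $\alpha(\e)^{-1}$ on an event whose probability you do not control. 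The lemma requires $\gamma(\e)$ to depend only on $\e$ (i.e.\ the estimate must hold uniformly over realizations of $\Phi^\e(s,\cdot)$), so you cannot simply absorb this into the failure probability. When $\alpha(\e)\,i(\cdots)\geq 1$ the bootstrap does not close: after $\sigma$ you have no a priori control, the estimate $\lesssim\alpha(\e)\,i(\cdots)$ only holds on $[0,T\wedge\sigma]$, and nothing prevents the deviation from blowing up once the threshold $1$ is crossed. The paper sidesteps this entirely by stopping on bump \emph{count} rather than deviation magnitude: it sets $m=\e^{-1}\tau(\e)B(\e)^2\,i(\langle\Phi^\e(s,\cdot)\rangle_{\mathscr{C}^{10}})$ and stops at $\tau_{m+1}$, which gives a priori control on $\langle\Psi^\e_s(r,\cdot)\rangle_{\mathscr{C}^{10}}$ as an explicit function of $\langle\Phi^\e(s,\cdot)\rangle_{\mathscr{C}^{10}}$ (without a smallness constraint), and then uses Markov's inequality to show the number of bumps before $T$ is $<m$ with probability $1-\mathrm{o}(1)$, uniformly in $\langle\Phi^\e(s,\cdot)\rangle_{\mathscr{C}^{10}}$. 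No self-consistency condition is needed. To repair your argument you should either adopt the paper's $\tau_{m+1}$-stopping with $m$ scaled by $i(\langle\Phi^\e(s,\cdot)\rangle_{\mathscr{C}^{10}})$, or track the dependence of the bootstrap threshold on $\langle\Phi^\e(s,\cdot)\rangle_{\mathscr{C}^{10}}$ explicitly and verify that the resulting self-consistency inequality holds for all values of that quantity --- which is not automatic.
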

\begin{remark}\label{remark:coupling}
Recall the \abbr{SDE} for $t\mapsto\mathbf{x}^{\e}(s,t)$ only holds until explosion of $\Psi^{\e}_{s}(t,\cdot)$. Lemma \ref{lemma:coupling1} implies that explosion does not happen before time $\e^{-1}\delta(\e)\tau(\e)B(\e)$ with high probability (assuming the initial interface function $\Phi^{\e}(s,\cdot)$ is a diffeomorphism).
\end{remark}
\begin{proof}
We start by claiming the following preliminary estimate:
\begin{align}
\sup_{0\leq t\leq\e^{-1}\tau(\e)\delta(\e)B(\e)}
\langle\Phi^{\e}(s,\cdot),& \Psi^{\e}_{s}(t,\cdot)\rangle_{\mathscr{C}^{10}} \nonumber \\ 
& \lesssim 
  i(\langle\Phi^{\e}(s,\cdot)\rangle_{\mathscr{C}^{10}})\times\sup_{0\leq t\leq\e^{-1}\tau(\e)\delta(\e)B(\e)}\|\Phi^{\e}(s,\cdot)-\Psi^{\e}_{s}(t,\cdot)\|_{\mathscr{C}^{10}}. \label{eq:coupling1I1}
\end{align}
Indeed, \eqref{eq:coupling1I1} follows by the inverse function theorem and classical perturbation theory for matrices. For example, we have the identity 
\begin{align}
[\mathrm{Jac}\Phi^{\e}(s,\cdot)]^{-1}-[\mathrm{Jac}\Psi^{\e}_{s}(t,\cdot)]^{-1}=[\mathrm{Jac}\Phi^{\e}(s,\cdot)]^{-1}\{\mathrm{Jac}\Psi^{\e}_{s}(t,\cdot)-\mathrm{Jac}\Phi^{\e}(s,\cdot)\}[\mathrm{Jac}\Psi^{\e}_{s}(t,\cdot)]^{-1}.\nonumber
\end{align}
Now, differentiate and use the Leibniz rule to bound the first $9$ derivatives of $[\mathrm{Jac}\Phi^{\e}(s,\cdot)]^{-1}-[\mathrm{Jac}\Psi^{\e}_{s}(t,\cdot)]^{-1}$.

Let us bound the supremum on the \abbr{RHS} of \eqref{eq:coupling1I1}. By construction of $\Psi^{\e}_{s}(t,\cdot)$ from the previous subsection and regularity of $\mathscr{K}$, we obtain the following estimate for said supremum by summing all $\e\mathscr{K}$-bumps before time $\e^{-1}\tau(\e)\delta(\e)B(\e)$:
\begin{align}
&\sup_{0\leq t\leq\e^{-1}\tau(\e)\delta(\e)B(\e)}\|\Phi^{\e}(s,\cdot)-\Psi^{\e}_{s}(t,\cdot)\|_{\mathscr{C}^{10}} \label{eq:coupling1I2}\\
&\lesssim \ \sum_{0\leq\tau_{k}\leq\e^{-1}\tau(\e)\delta(\e)B(\e)}\e\|\mathscr{K}\|_{\mathscr{C}^{10}} \ \lesssim \ \e|\#\{\tau_{k}\leq\e^{-1}\tau(\e)\delta(\e)B(\e)\}|. \nonumber
\end{align}
We now claim that there exist $\alpha(\e),\gamma(\e)\to0$ as $\e\to0$ such that $\e|\#\{\tau_{k}\leq\e^{-1}\tau(\e)\delta(\e)B(\e)\}|\lesssim\alpha(\e)\times i(\langle\Phi^{\e}(s,\dot)\rangle_{\mathscr{C}^{10}})$ with probability at least $1-\gamma(\e)$ (assuming we take $B(\e)\to\infty$ sufficiently slow). Assuming this estimate, then with probability at least $1-\gamma(\e)$, the previous two displays would yield the proposed estimate \eqref{eq:coupling1I} immediately (for a different choice of $i(\cdot)$). Thus, it suffices to prove this claim. To this end, we claim that for any integer $m\geq0$, we have the logical implications:
\begin{align}
\#\{\tau_{k}\leq\e^{-1}\tau(\e)\delta(\e)B(\e)\}\geq m \ &\Rightarrow \ \tau_{m}\leq\{\e^{-1}\tau(\e)\delta(\e)B(\e)\}\wedge\tau_{m+1} \nonumber\\
&\Rightarrow \ \mathbf{L}^{s}(\{\e^{-1}\tau(\e)\delta(\e)B(\e)\}\wedge\tau_{m+1})\geq m\delta. \nonumber
\end{align}
(In words, if we see more than $m$-many jumps before time $\e^{-1}\tau(\e)\delta(\e)B(\e)$, then the local time at time $\e^{-1}\tau(\e)\delta(\e)B(\e)$ would exceed $m\delta$ by definition of the jumps. Taking minimum with $\tau_{m+1}$ is a technical convenience, which is allowed since $\tau_{m}\leq\tau_{m+1}$ with probability 1.) Now take $m=\e^{-1}\tau(\e)B(\e)^{2}\times i(\langle\Phi^{\e}(s,\cdot)\rangle_{\mathscr{C}^{10}})$ (with $B(\e)$ to be chosen later). By the above and Markov,
\begin{align}
&\mathbf{P}\left\{|\#\{\tau_{k}\leq\e^{-1}\tau(\e)\delta(\e)B(\e)\}|\geq m\right\} \nonumber\\
&\lesssim \ \mathbf{P}\left\{\mathbf{L}^{s}(\{\e^{-1}\tau(\e)\delta(\e)B(\e)\}\wedge\tau_{m+1})\geq m\delta\right\} \nonumber \\
&\lesssim \ m^{-1}\delta^{-1}\E\mathbf{L}^{s}(\{\e^{-1}\tau(\e)\delta(\e)B(\e)\}\wedge\tau_{m+1}) \nonumber\\
&\lesssim \ \e\delta(\e)^{-1}\tau(\e)^{-1}B(\e)^{-2}\times i(\langle\Phi^{\e}(s,\cdot)\rangle_{\mathscr{C}^{10}})^{-1}\E\mathbf{L}^{s}(\{\e^{-1}\tau(\e)\delta(\e)B(\e)\}\wedge\tau_{m+1}); \label{eq:coupling1I3}
\end{align}
we recall that the expectations $\E$ condition on the filtration at time $s$ (which is why $\langle\Phi^{\e}(s,\cdot)\rangle_{\mathscr{C}^{10}}$ can be taken outside $\E$). Note that $\tau(\e)B(\e)^{2}\to0$ if we choose $1\ll B(\e)\lesssim\tau(\e)^{-1/3}$, for example. Thus, if we can obtain that $\eqref{eq:coupling1I3}\leq\gamma(\e)$ for $\gamma(\e)\to0$, then by the above display, we would get $\e|\#\{\tau_{k}\leq\e^{-1}\tau(\e)\delta(\e)B(\e)\}|\lesssim\tau(\e)B(\e)^{2}\times i(\langle\Phi^{\e}(s,\dot)\rangle_{\mathscr{C}^{10}})$ with probability at least $1-\gamma(\e)$. The claim from the paragraph after \eqref{eq:coupling1I2} would therefore hold for $\alpha(\e)=\tau(\e)B(\e)^{2}$. Thus, it suffices to show that 
\begin{align}
\e\delta(\e)^{-1}\tau(\e)^{-1}B(\e)^{-2}\times i(\langle\Phi^{\e}(s,\cdot)\rangle_{\mathscr{C}^{10}})^{-1}\E\mathbf{L}^{s}(\{\e^{-1}\tau(\e)\delta(\e)B(\e)\}\wedge\tau_{m+1}) \ \leq \ \gamma(\e) \quad\mathrm{for}\quad \gamma(\e)\to0. \label{eq:coupling1I4}
\end{align}
Consider the solution $\mathscr{U}$ to the following Dirichlet problem, in which $\Delta_{\mathsf{g}}$ means Laplacian with respect to a metric $\mathsf{g}$ on $\bUz$:
\begin{align}
\Delta_{\mathsf{g}^{\Psi^{\e}_{s}(t,\cdot)}}\mathscr{U}(t,x) = 1 \quad\mathrm{and}\quad \mathscr{U}(t,x)|_{\partial\bUz}=0. \label{eq:coupling1I4b}
\end{align}
(The time-dependence of $\mathscr{U}$ comes only from the time-dependence of the underlying metric. In particular, the Laplacian is only in $x$.) The existence of a $\mathscr{C}^{3}_{x}$ solution to this \abbr{PDE} is guaranteed by classical elliptic regularity. (The $\mathscr{C}^{3}_{x}$-regularity of $\mathscr{U}$ holds for all times $t\geq0$ such that $\Psi^{\e}_{s}(t,\cdot)\in\mathscr{C}^{10}_{\simeq}(\partial\bUz,\R^{\d})$. In particular, we will only consider $\mathscr{U}$ before explosion of $\Psi^{\e}_{s}(t,\cdot)$, exactly like the \abbr{SDE} for $t\mapsto\mathbf{x}^{\e}(s,t)$. The following analysis still holds upon stopping everything before said explosion time, since this explosion time is a stopping time.) Note that $\mathscr{U}$ is adapted to the filtration of the joint process $t\mapsto(\mathbf{x}^{\e}(s,t),\Psi^{\e}_{s}(t,\cdot))$. Thus, we can apply the Ito formula to $\mathscr{U}$. Before doing so, for convenience only, let us set $\tau_{\mathrm{stop}}=\{\e^{-1}\tau(\e)\delta(\e)B(\e)\}\wedge\tau_{m+1}$. We claim
\begin{align}
\E\mathscr{U}(\tau_{\mathrm{stop}},\mathbf{x}^{\e}(s,\tau_{\mathrm{stop}}))&=\E\int_{0}^{\tau_{\mathrm{stop}}}\{\partial_{r}+\Delta_{\mathsf{g}^{\Psi_{s}^{\e}}(s,\cdot)}\}\mathscr{U}(r,\mathbf{x}^{\e}(s,r))\d r \nonumber\\
&+ \E\int_{0}^{\tau_{\mathrm{stop}}}\{\mathbf{n}\cdot\grad\mathscr{U}(r,\cdot)\}(\mathbf{x}^{\e}(s,r))\d\mathbf{L}^{s}(r). \nonumber
\end{align}
Technically, we have to include $\E\mathscr{U}(0),\mathbf{x}^{\e}(s,0))$ on the \abbr{RHS} of this previous display. However, $\mathbf{x}^{\e}(s,0)\in\partial\bUz$ by assumption in the previous subsection, and $\mathscr{U}$ vanishes on $\partial\bUz$, so this ``missing" contribution is zero. We also claim that we similarly have $\int \d r\partial_{r}\mathscr{U}(r,\mathbf{x}^{\e}(s,r))=0$ (where $\partial_{r}$ acts only on the first input of $\mathscr{U}$). Indeed, $\mathscr{U}$ is piecewise constant in $r$ since the metric $\mathsf{g}^{\Psi^{\e}_{s}(r,\cdot)}$ is, and therefore the $\d r$-integral of $\partial_{r}\mathscr{U}(r,\mathbf{x}^{\e}(s,r))$ is given by jumps of the type $\mathscr{U}(r,\mathbf{x}^{\e}(s,r))-\mathscr{U}(r^{-},\mathbf{x}^{\e}(s,r))$ that are supported at jump times of $\mathsf{g}^{\Psi_{s}^{\e}(r,\cdot)}$. But the jump times only happen when $\mathbf{x}^{\e}(s,r)\in\partial\bUz$, so the contribution of $\partial_{r}$ above is zero. Thus, by using the defining \abbr{PDE} for $\mathscr{U}$ and rearranging the previous display, we deduce the following estimate in which $\|\|_{\mathscr{L}^{\infty}_{t,x}}$ denotes the $\mathscr{L}^{\infty}$-norm for $0\leq t\leq\e^{-1}\tau(\e)\delta(\e)B(\e)\wedge\tau_{m+1}$ and $x\in\bUz$:
\begin{align}
-\E\int_{0}^{\tau_{\mathrm{stop}}}\{\mathbf{n}\cdot\grad\mathscr{U}(r,\cdot)\}(\mathbf{x}^{\e}(s,r))\d\mathbf{L}^{s}(r) \ \leq \ \E\|\mathscr{U}\|_{\mathscr{L}^{\infty}_{t,x}} + \E\tau_{\mathrm{stop}}. \label{eq:coupling1I5}
\end{align}
(The $\E$ outside the $\mathscr{U}$-norm is necessary because the $\mathscr{L}^{\infty}_{t,x}$-norm is only with respect to $(t,x)$-variables, and $\mathscr{U}$ is random outside these variables.) We now invoke \emph{deterministic} elliptic regularity estimates. First, we know $\|\mathscr{U}\|_{\mathscr{L}^{\infty}_{t,x}}\lesssim i_{1}(\|\langle\Psi^{\e}_{s}(t,\cdot)\rangle_{\mathscr{C}^{10}}\|_{\mathscr{L}^{\infty}_{t}})$ for some increasing and continuous $i_{1}(\cdot)\geq1$. (Indeed, the point here is that for any time $t$, the bounds on $\mathscr{U}(t,\cdot)$ are determined by $\Psi^{\e}_{s}(t,\cdot)$; now take a supremum over $t$.) Moreover, by the Hopf lemma, we know $-\mathbf{n}(z)\cdot\grad\mathscr{U}(r,z)\geq i_{2}(\langle\Psi^{\e}_{s}(r,\cdot)\rangle_{\mathscr{C}^{10}})^{-1}$ for all $z\in\partial\bUz$ for another increasing, continuous $i_{2}(\cdot)\geq1$. (In words, the normal derivative of $\mathscr{U}$ is strictly positive, and it depends continuously on at most 3 derivatives of the metric. For example, if $\mathscr{U}$ solves the Euclidean Dirichlet problem for $\bUz$ a compact interval, then $\mathscr{U}$ is a upwards-facing parabola, for which the Hopf lemma is clear.) Therefore, from \eqref{eq:coupling1I5}, we deduce
\begin{align}
\E\left\{\int_{0}^{\tau_{\mathrm{stop}}}i_{2}(\langle\Psi_{s}^{\e}(r,\cdot)\rangle_{\mathscr{C}^{10}})^{-1}\d\mathbf{L}^{s}(r)\right\} \ \lesssim \ \E\left\{i_{1}(\|\langle\Psi^{\e}_{s}(t,\cdot)\rangle_{\mathscr{C}^{10}}\|_{\mathscr{L}^{\infty}_{t}})\right\}+\E\tau_{\mathrm{stop}}. \label{eq:coupling1I6}
\end{align}
We would now like to replace $\Psi^{\e}_{s}$ in \eqref{eq:coupling1I6} by $\Phi^{\e}(s,\cdot)$, because the latter is deterministic (since we have conditioned on it). The mechanism for this amounts to the beginning of this argument, namely \eqref{eq:coupling1I1}-\eqref{eq:coupling1I2}. The benefit in the current situation, however, is that we are restrict to $r\leq\tau_{m+1}$ in \eqref{eq:coupling1I6}. In particular, we have a priori control on the number of $\e\mathscr{K}$-bumps we add. Let us make this precise. For any $0\leq r\leq\tau_{\mathrm{stop}}\leq\tau_{m+1}$, by the reasoning that gave \eqref{eq:coupling1I1}-\eqref{eq:coupling1I2}, we have 
\begin{align*}
&\langle\Psi_{s}^{\e}(r,\cdot)\rangle_{\mathscr{C}^{10}} \\
&\lesssim \ \langle\Phi^{\e}(s,\cdot)\rangle_{\mathscr{C}^{10}}+\langle\Psi_{s}^{\e}(r,\cdot),\Phi^{\e}(s,\cdot)\rangle_{\mathscr{C}^{10}} \ \lesssim \ \langle\Phi^{\e}(s,\cdot)\rangle_{\mathscr{C}^{10}}+i(\langle\Phi^{\e}(s,\cdot)\rangle_{\mathscr{C}^{10}})\|\Psi_{s}^{\e}(r,\cdot)-\Phi^{\e}(s,\cdot)\|_{\mathscr{C}^{10}} \\
&\lesssim \ \langle\Phi^{\e}(s,\cdot)\rangle_{\mathscr{C}^{10}}+i(\langle\Phi^{\e}(s,\cdot)\rangle_{\mathscr{C}^{10}})\times\e|\#\{\tau_{k}\leq\tau_{m+1}\}| \ \lesssim \ \langle\Phi^{\e}(s,\cdot)\rangle_{\mathscr{C}^{10}}+i(\langle\Phi^{\e}(s,\cdot)\rangle_{\mathscr{C}^{10}})\e[m+1] \\
&\lesssim \ \langle\Phi^{\e}(s,\cdot)\rangle_{\mathscr{C}^{10}}+i(\langle\Phi^{\e}(s,\cdot)\rangle_{\mathscr{C}^{10}})^{2}\times\tau(\e)B(\e)^{2} \ \lesssim \ \langle\Phi^{\e}(s,\cdot)\rangle_{\mathscr{C}^{10}}+i(\langle\Phi^{\e}(s,\cdot)\rangle_{\mathscr{C}^{10}})^{2}.
\end{align*}
(The last line follows by recalling $m=\e^{-1}\tau(\e)B(\e)^{2}$ from right before \eqref{eq:coupling1I3} and that $\tau(\e)B(\e)^{2}\ll1$ from right after \eqref{eq:coupling1I3}.) Therefore, upon adjusting $i_{1},i_{2}$ in \eqref{eq:coupling1I6} (but still continuous, increasing, deterministic, and independent of $\e$), we deduce
\begin{align}
\E\left\{\int_{0}^{\tau_{\mathrm{stop}}}i_{2}(\langle\Phi^{\e}(s,\cdot)\rangle_{\mathscr{C}^{10}})^{-1}\d\mathbf{L}^{s}(r)\right\} \ \lesssim \ \E\left\{i_{1}(\langle\Phi^{\e}(s,\cdot)\rangle_{\mathscr{C}^{10}})\right\}+\E\tau_{\mathrm{stop}}. \label{eq:coupling1I7}
\end{align}
Observe that on the \abbr{LHS} of \eqref{eq:coupling1I7}, the factor $i_{2}$ is independent of the $r$-integration variable. So, we can pull it out, and we are left with the local time itself. In particular, \eqref{eq:coupling1I7} gives
\begin{align}
\E\mathbf{L}^{s}(\tau_{\mathrm{stop}}) \ \lesssim \ i_{1}(\langle\Phi^{\e}(s,\cdot)\rangle_{\mathscr{C}^{10}})i_{2}(\langle\Phi^{\e}(s,\cdot)\rangle_{\mathscr{C}^{10}})+\E\tau_{\mathrm{stop}}. \label{eq:coupling1I8}
\end{align}
(We have also dropped the first expectation on the \abbr{RHS} because it depends only on the deterministic $\Phi^{\e}(s,\cdot)$.) Now, observe that $\tau_{\mathrm{stop}}\leq\e^{-1}\tau(\e)\delta(\e)B(\e)$ by construction (see the paragraph after \eqref{eq:coupling1I4b}). Thus, if we choose $i(\cdot)\geq1$ in the \abbr{LHS} of \eqref{eq:coupling1I4} to dominate $i_{1}\times i_{2}$ on the \abbr{RHS} of \eqref{eq:coupling1I8}, we deduce the following estimate from \eqref{eq:coupling1I8}
\begin{align}
\mathrm{LHS}\eqref{eq:coupling1I4} \ \lesssim \ \e\delta(\e)^{-1}\tau(\e)^{-1}B(\e)^{-2} + B(\e)^{-1}. \label{eq:copuling1I9}
\end{align}
By assumption, we know that $\e\delta(\e)^{-1}\tau(\e)^{-1}\to0$ (see Lemma \ref{lemma:timeaverage}). Taking $B(\e)\to\infty$ in any fashion therefore gives \eqref{eq:coupling1I4}. As noted immediately before \eqref{eq:coupling1I4}, this also completes the proof.
\end{proof}
\subsection{Estimating $t\mapsto|\mathbf{x}^{\e}(s,t)-\wt{\mathbf{x}}^{\e}(s,t)|$}
The proof of the following has two main ideas. For $\Phi\in\mathscr{C}^{10}_{\simeq}(\partial\bUz,\R^{\d})$, the metric $\mathsf{g}^{\Phi}$ is Euclidean after restricting to $\bUz\setminus\mathbf{C}_{1}$ by Definition \ref{definition:interface}. Therefore, $\mathbf{x}^{\e},\wt{\mathbf{x}}^{\e}$ are the same standard Brownian motion away from the collar $\mathbf{C}_{1}$, and the distance between them is constant as long as both live outside of said collar. Second, under the collar coordinates $\mathbf{C}_{1}\simeq\partial\bUz\times[0,1]$, their $[0,1]$-components evolve as the same one-dimensional reflecting Euclidean Brownian motion; see Definition \ref{definition:interface}. This makes estimating their difference easy. On the other hand, the ``horizontal" $\partial\bUz$-components evolve according to \abbr{SDE}s on a \emph{boundary-free domain} defined by metrics that are close with high probability due to Lemma \ref{lemma:coupling1}, which can be analyzed by the usual localization trick for \abbr{SDE}s on manifolds. Of course, there are obstacles to be wary of, such as making sure that both $\mathbf{x}^{\e},\wt{\mathbf{x}}^{\e}$ can be put in the same coordinate chart with high probability.
\begin{lemma}\label{lemma:coupling2}
There exist $\alpha(\e),\gamma(\e)\to0$ and $B(\e)\to\infty$ as $\e\to0$ and $\tau(\e)$ satisfying constraints in \emph{Lemma \ref{lemma:timeaverage}}, all of which depend only on $\e$, so that with probability at least $1-\gamma(\e)$, we have the following estimate:
\begin{align}
\sup_{0\leq t\leq\e^{-1}\delta(\e)\tau(\e)B(\e)}|\mathbf{x}^{\e}(s,t)-\wt{\mathbf{x}}^{\e}(s,t)| \ \lesssim \ \alpha(\e)\times i(\langle\Phi^{\e}(s,\cdot)\rangle_{\mathscr{C}^{10}}). \label{eq:coupling2I}
\end{align}
\end{lemma}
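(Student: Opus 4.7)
The plan is to condition on the high-probability event from \emph{Lemma \ref{lemma:coupling1}}, on which the metric $\mathsf{g}^{\Psi^{\e}_{s}(t,\cdot)}$ driving $\mathbf{x}^{\e}$ is uniformly close (within $\alpha(\e)\cdot i$) to the frozen metric $\mathsf{g}^{\Phi^{\e}(s,\cdot)}$ driving $\wt{\mathbf{x}}^{\e}$, and then to run a synchronous \abbr{SDE} comparison for $D(t):=\mathbf{x}^{\e}(s,t)-\wt{\mathbf{x}}^{\e}(s,t)$. Ito's formula applied to $|D(t)|^{2}$ produces: a martingale that \abbr{BDG} handles; drift/diffusion differences which decompose into a ``coefficient part'' of size $\alpha(\e)\cdot i$ (by smoothness of $\Phi\mapsto\mathbf{A}^{\Phi},\mathbf{m}^{\Phi}$) plus a ``position part'' of size $|D(t)|$ (by the Lipschitz constants); and the apparently singular local-time terms $\mathbf{n}(\mathbf{x}^{\e})\,d\mathbf{L}^{s}-\mathbf{n}(\wt{\mathbf{x}}^{\e})\,d\mathbf{L}^{s,\sim}$. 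Once the last piece is tamed, a standard Gronwall estimate and supremum bound over the horizon $T(\e):=\e^{-1}\delta(\e)\tau(\e)B(\e)$ yield \eqref{eq:coupling2I}; choosing $B(\e)\to\infty$ slowly enough (and $\tau(\e)$ correspondingly, which is permitted by \emph{Lemma \ref{lemma:timeaverage}}) absorbs the Gronwall amplification $e^{CT(\e)}$ into the final factor $i(\langle\Phi^{\e}(s,\cdot)\rangle_{\mathscr{C}^{10}})$.

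The crux is to neutralize the local-time terms by exploiting the geometry of \emph{Definition \ref{definition:interface}}. Outside the collar $\mathbf{C}_{1/2}$ both metrics coincide with the Euclidean metric, so both \abbr{SDE}s reduce to $d\mathbf{x}=d\mathbf{b}$ and $D(t)$ is \emph{constant} on any interval during which both processes lie in $\bUz\setminus\mathbf{C}_{1/2}$. On the other hand, within $\mathbf{C}_{1}$ the metric is a product $\mathsf{g}^{\mathrm{inter},\Phi}\wedge d\mu^{\mathrm{Euc}}$ in collar coordinates $(x^{\mathrm{hor}},r)\in\partial\bUz\times[0,1]$, with the $r$-direction carrying the Euclidean metric \emph{uniformly in $\Phi$}. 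I would select the matrix square root $\mathbf{A}^{\Phi}$ to respect this block decomposition (legitimate, as $\mathbf{A}^{\Phi}$ is determined only up to sign); then the $r$-equation for each process is a $\Phi$-independent one-dimensional reflecting Brownian \abbr{SDE} driven by the common normal component of $\mathbf{b}$. Since both start at $r=0$, their vertical trajectories coincide identically throughout every collar excursion, which forces $\mathbf{L}^{s}(t)=\mathbf{L}^{s,\sim}(t)$ and synchronizes the collar entry/exit times. The local-time contribution thus collapses to $[\mathbf{n}(\mathbf{x}^{\e})-\mathbf{n}(\wt{\mathbf{x}}^{\e})]\,d\mathbf{L}^{s}$, which is controlled by a Lipschitz constant of the normal (cf.\ \emph{Lemma \ref{lemma:thm11}}) times $|D(t)|$ and hence Gronwall-absorbable. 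The horizontal coordinates then satisfy a \emph{boundary-free} \abbr{SDE} on the closed manifold $\partial\bUz$, where the usual manifold \abbr{SDE} comparison estimate directly applies.

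The main obstacle is the rigorous consistency of this coupling: the driving Brownian motion $\mathbf{b}$ is prescribed in Euclidean coordinates, while the block decomposition in the preceding paragraph is $\Phi$-dependent. One has to verify that the normal-to-$\partial\bUz$ component of $\mathbf{b}$ can nonetheless be identified unambiguously for both processes, that it alone drives the $r$-equation under the block-diagonal choice of $\mathbf{A}^{\Phi}$, and that the bulk and collar regimes patch together across the (possibly many) collar re-entry times on $[0,T(\e)]$. A supplementary but necessary technical input is an a priori estimate showing that the total accumulated local time $\mathbf{L}^{s}(T(\e))$ stays bounded by $\delta(\e)\tau(\e)B(\e)^{2}\cdot i$ with probability $1-\gamma(\e)$; this is available by re-using the Dirichlet-function argument of \emph{Lemma \ref{lemma:coupling1}}, and it is what makes the $|D|$-Gronwall iteration close on the desired time scale.
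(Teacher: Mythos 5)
Your overall strategy shares the paper's key structural insight---exploiting the product structure of the collar metric, choosing a block-compatible square root for $\mathbf{A}^{\Phi}$, and treating the normal coordinate as a one-dimensional reflecting Brownian motion---but two of the steps you assert as exact identities are in fact only inequalities, and the gap is serious. The claim that ``their vertical trajectories coincide identically throughout every collar excursion'' is false. Both processes do start with $\mathbf{y}^{\perp}(0)=\wt{\mathbf{y}}^{\perp}(0)=0$, but once they leave the collar the Euclidean difference $D(t)=\mathbf{x}^{\e}(s,t)-\wt{\mathbf{x}}^{\e}(s,t)$ is a constant \emph{vector}, and since $\partial\bUz$ is curved this vector has a nonzero normal component upon re-entry; the two distances to $\partial\bUz$ are then unequal, the collar re-entry times are not synchronized, and in particular $\mathbf{L}^{s}\neq\mathbf{L}^{s,\sim}$ as processes. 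Your collapse of the local-time terms to $[\mathbf{n}(\mathbf{x}^{\e})-\mathbf{n}(\wt{\mathbf{x}}^{\e})]\,d\mathbf{L}^{s}$ therefore does not happen; that the local times cannot be identified is precisely why the separate estimate of $|\tau_{k}-\wt{\tau}_{k}|$ in Lemma~\ref{lemma:coupling3} is required later. What does hold is only the \emph{non-expansion} $|\mathbf{y}^{\perp}(t)-\wt{\mathbf{y}}^{\perp}(t)|\leq|\mathbf{y}^{\perp}(\sigma)-\wt{\mathbf{y}}^{\perp}(\sigma)|$ within a single excursion, coming from $\mathbf{y}^{\perp}=|\mathbf{b}^{\perp}+\mathrm{const}|$ and the reverse triangle inequality, with the right-hand side re-seeded (not reset to zero) at each re-entry.

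Because of this, a single global Ito--Gronwall estimate for $|D(t)|^{2}$ does not close, and not only because the local-time collapse fails: even granting that collapse, you would be left integrating $|D(t)|$ against the singular measure $d\mathbf{L}^{s}$, which Gronwall does not absorb without separate quantitative control of $\mathbf{L}^{s}$ at every scale. The paper avoids touching the local-time terms in the Ito/Gronwall step entirely, by splitting into: (i) the bulk region, where both metrics are Euclidean and $D(t)$ is constant; (ii) the vertical coordinate in the collar, handled by the non-expansion above with no stochastic calculus; (iii) the horizontal coordinate in the collar, a boundary-free manifold \abbr{SDE} on $\partial\bUz$ where Ito isometry and Gronwall do apply, with coefficient error $\alpha(\e)\cdot i(\cdot)$ from Lemma~\ref{lemma:coupling1}. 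These local estimates are then iterated over a chart-switching sequence $\sigma_{0}<\sigma_{1}<\cdots<\sigma_{N(\e)}$ (each switch forces one particle to travel a fixed distance, so that $N(\e)$ is controllable via the H\"older bound of Lemma~\ref{lemma:sde}), and the parameters $\tau(\e),B(\e),N(\e)$ are tuned so that the Gronwall amplification $\exp\{C\,T(\e)^{2}N(\e)\}$ and the cumulative re-seeding of the vertical separation both remain $\mathrm{o}(1)$. You correctly flag the chart-consistency obstacle and the need for an a priori local-time bound; but the iteration over charts and the replacement of ``exact coincidence'' by ``non-expansion plus re-seeding'' are the ingredients your sketch is missing.
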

\begin{proof}
We start with some preliminary considerations. In this proof, we will take $i_{1}(\cdot)$ to be continuous, increasing, deterministic, and satisfy $i_{1}(\mathrm{a})\geq1+|\mathrm{a}|$. We will also take $\mu(\e)\to0$ sufficiently slow (to be determined by the end of this argument). First, we claim it suffices to assume the following estimate (in that we condition on time $s$ and restrict to the following time-$s$-measurable event), where $i(\cdot)$ is the function from Lemma \ref{lemma:coupling1}:
\begin{align}
i_{1}(\langle\Phi^{\e}(s,\cdot)\rangle_{\mathscr{C}^{10}})+i(\langle\Phi^{\e}(s,\cdot)\rangle_{\mathscr{C}^{10}}) \ \leq \ \mu(\e)^{-1}. \label{eq:coupling2I1}
\end{align}
Indeed, if not, then because $\mathbf{x}^{\e}(s,\cdot),\wt{\mathbf{x}}^{\e}(s,\cdot)$ belong to a fixed compact set $\bUz$, we have
\begin{align}
\sup_{0\leq t\leq\e^{-1}\delta(\e)\tau(\e)B(\e)}|\mathbf{x}^{\e}(s,t)-\wt{\mathbf{x}}^{\e}(s,t)| \ \lesssim \ 1 \ = \ \mu(\e)\mu(\e)^{-1} \ \lesssim \ \mu(\e) i_{2}(\langle\Phi^{\e}(s,\cdot)\rangle_{\mathscr{C}^{10}}),
\end{align}
from which \eqref{eq:coupling2I} clearly follows. (The point of this bound on $\langle\Phi^{\e}(s,\cdot)\rangle_{\mathscr{C}^{10}}$ is to provide a priori estimates for the underlying geometry. We will not use it until near the end of the proof.) Let us now define
\begin{align}
\tau^{\mathrm{sep}} \ &:= \ \inf\{t\geq0: |\mathbf{x}^{\e}(s,t)-\wt{\mathbf{x}}^{\e}(s,t)|\geq\tfrac{1}{999}\}\wedge\e^{-1}\tau(\e)\delta(\e)B(\e)\\
\tau^{\mathrm{geo}} \ &:= \ \inf\{t\geq0: \langle\Phi^{\e}(s,\cdot),\Psi^{\e}_{s}(t,\cdot)\rangle_{\mathscr{C}^{10}}\geq\alpha(\e)\times i(\langle\Phi^{\e}(s,\cdot)\rangle_{\mathscr{C}^{10}})\}\wedge\e^{-1}\tau(\e)\delta(\e)B(\e).
\end{align}
We clarify that $1/999$ is just a choice of small but $\e$-independent constant. Both $\alpha(\e)$ and $i(\cdot)$ in $\tau^{\mathrm{geo}}$ are from Lemma \ref{lemma:coupling1}. Lastly, $B(\e)\to\infty$ sufficiently slow (to be determined by the end of this argument). Set the stopping time $\tau^{\mathrm{pre}}:=\tau^{\mathrm{sep}}\wedge\tau^{\mathrm{geo}}$. We claim the stopped processes $t\mapsto\mathbf{x}^{\e}(s,t\wedge\tau^{\mathrm{pre}}),\wt{\mathbf{x}}^{\e}(s,t\wedge\tau^{\mathrm{pre}})$ admit the following localized description. (In what follows, by ``horizontal", we always mean the $\partial\bUz$-coordinate.)
\begin{enumerate}
\item First, we introduce the necessary geometry and local coordinate charts. Let $\mathbf{C}_{\ell}$ (for $0\leq\ell\leq3$) denote the collar of $\bUz$ of length $\ell$. (See right before Theorem \ref{theorem:thm2} for what this means. Also, by assumption in Theorem \ref{theorem:thm2}, since we restrict to $\ell\leq3$, such a collar exists.) Second, let $\{\mathbf{O}^{\partial,i}\}_{i}$ be a finite collection (whose size depends only on $\bUz$) of open subsets of $\partial\bUz$ (with respect to the subspace topology on $\partial\bUz$) such that given any $\mathbf{x}_{1},\mathbf{x}_{2}\in\mathbf{C}_{3}$ satisfying $|\mathbf{x}_{1}-\mathbf{x}_{2}|\leq1/999$, we can find some $\mathbf{O}^{\partial,i}$ so $\mathbf{x}_{1},\mathbf{x}_{2}\in\mathbf{O}^{i}:=\mathbf{O}^{\partial,i}\times[0,3]\subseteq\mathbf{C}_{3}$, and the distance between horizontal projections of $\mathbf{x}_{1},\mathbf{x}_{2}$ in $\partial\bUz$ from the boundary of $\mathbf{O}^{\partial,i}$ is at least $1/999$. (In words, if one of $\mathbf{x}^{\e},\wt{\mathbf{x}}^{\e}$ is in the collar, before time $\tau^{\mathrm{pre}}\leq\tau^{\mathrm{sep}}$, we can put them in the same coordinate chart, and their escaped speed from said chart is bounded uniformly in $\e$.) It will be convenient for forthcoming analysis to additionally assume, upon possibly ``shrinking" $\mathbf{O}^{\partial,i}$, that $\mathbf{O}^{\partial,i}$ is diffeomorphic to a subset of $\R^{\d-1}$ (so that it admits Euclidean coordinates).
\item We now give the aforementioned local description. We set $\sigma_{0}=0$, and define $\sigma_{k}$ inductively (for integer $k\geq1$). Fix $k\geq0$. Suppose, first, that $\mathbf{x}^{\e}(s,\sigma_{k}\wedge\tau^{\mathrm{pre}})\in\mathbf{C}_{3/2}$. By definition of $\tau^{\mathrm{pre}}$, we know that $\mathbf{x}^{\e}(s,\sigma_{k}\wedge\tau^{\mathrm{pre}}),\wt{\mathbf{x}}^{\e}(s,\sigma_{k}\wedge\tau^{\mathrm{pre}})\in\mathbf{C}_{2}$. Now, take $\mathbf{O}^{\partial,i}$ such that the distance between the horizontal projections of $\mathbf{x}^{\e}(s,\sigma_{k}\wedge\tau^{\mathrm{pre}}),\wt{\mathbf{x}}^{\e}(s,\sigma_{k}\wedge\tau^{\mathrm{pre}})$ onto $\partial\bUz$ and the boundary of $\mathbf{O}^{\partial,i}$ are each $\geq1/999$. In this case, we define $\sigma_{k+1}$ to be the first time that $\mathbf{x}^{\e}(s,\sigma_{k+1}\wedge\tau^{\mathrm{pre}})\not\in\mathbf{O}^{i}$ or $\wt{\mathbf{x}}^{\e}(s,\sigma_{k+1}\wedge\tau^{\mathrm{pre}})\not\in\mathbf{O}^{i}$. (Thus, $\sigma_{k+1}$ is the first time one of the particles leaves the coordinate chart, which by construction, forces one of the particles to travel distance $1/999$. We clarify that the choice of $\mathbf{O}^{\partial,i}$ is \emph{not} canonical; this is not an issue.)
\item Suppose instead that $\mathbf{x}^{\e}(s,\sigma_{k}\wedge\tau^{\mathrm{pre}})\not\in\mathbf{C}_{3/2}$. Since we work before time $\tau^{\mathrm{pre}}$, we know that $\mathbf{x}^{\e}(s,\sigma_{k}\wedge\tau^{\mathrm{pre}}),\wt{\mathbf{x}}^{\e}(s,\sigma_{k}\wedge\tau^{\mathrm{pre}})\not\in\mathbf{C}_{7/6}$. In this case, let $\sigma_{k+1}$ be the first time that $\mathbf{x}^{\e}(s,\sigma_{k}\wedge\tau^{\mathrm{pre}})\in\mathbf{C}_{9/8}$ or $\wt{\mathbf{x}}^{\e}(s,\sigma_{k}\wedge\tau^{\mathrm{pre}})\in\mathbf{C}_{9/8}$. We emphasize that this forces at least one of the particles to travel distance $1/999$.
\end{enumerate}
Now, let $N(\e)\to\infty$ be a term that diverges sufficiently slowly. (We specify it by the end of this argument. It depends only on $\e^{-1}\delta(\e)\tau(\e)B(\e)$, the time-scale on which we study $\mathbf{x}^{\e},\wt{\mathbf{x}}^{\e}$.) Also define the stopping time $\tau^{\mathrm{switch}}:=\sigma_{N(\e)}\wedge\e^{-1}\tau(\e)\delta(\e)B(\e)$. (This gives a priori control on the number of times we change coordinate charts, which we eventually bound via the fact that each coordinate chart requires a Brownian motion to travel distance $\geq1/999$. This is where dependence of $N(\e)$ on $\e^{-1}\delta(\e)\tau(\e)B(\e)$ is manifest.) Finally, define the stopping time $\tau^{\mathrm{st}}:=\tau^{\mathrm{switch}}\wedge\tau^{\mathrm{pre}}$. Now, some local calculations.
\begin{enumerate}
\item Fix $k\geq0$. Suppose $\mathbf{x}^{\e}(s,\sigma_{k}\wedge\tau^{\mathrm{st}})\not\in\mathbf{C}_{3/2}$. By construction, for $t\in[\sigma_{k}\wedge\tau^{\mathrm{st}},\sigma_{k+1}\wedge\tau^{\mathrm{st}}]$, we have $\mathbf{x}^{\e}(s,t),\wt{\mathbf{x}}^{\e}(s,t)\not\in\mathbf{C}_{1}$. By definition (see the beginning of this section and Definition \ref{definition:interface}), the defining metrics for $\mathbf{x}^{\e}(s,\cdot),\wt{\mathbf{x}}^{\e}(s,\cdot)$ are both the standard Euclidean metric. Thus, for $t\in[\sigma_{k}\wedge\tau^{\mathrm{st}},\sigma_{k+1}\wedge\tau^{\mathrm{st}}]$, we know $\mathbf{x}^{\e}(s,t),\wt{\mathbf{x}}^{\e}(s,t)$ are the same standard Brownian motion. It follows immediately that the distance between these particles is constant in $t\in[\sigma_{k}\wedge\tau^{\mathrm{st}},\sigma_{k+1}\wedge\tau^{\mathrm{st}}]$:
\begin{align}
|\mathbf{x}^{\e}(s,t)-\wt{\mathbf{x}}^{\e}(s,t)| \ = \ |\mathbf{x}^{\e}(s,\sigma_{k}\wedge\tau^{\mathrm{st}})-\wt{\mathbf{x}}^{\e}(s,\sigma_{k}\wedge\tau^{\mathrm{st}})|. \label{eq:coupling2I2}
\end{align}
\item Now, suppose that $\mathbf{x}^{\e}(s,\sigma_{k}\wedge\tau^{\mathrm{st}})\in\mathbf{C}_{3/2}$. For $t\in[\sigma_{k}\wedge\tau^{\mathrm{st}},\sigma_{k+1}\wedge\tau^{\mathrm{st}}]$, write $\mathbf{x}^{\e}(s,t)$ in its coordinates $(\mathbf{y}^{\mathrm{hor}}(t),\mathbf{y}^{\perp}(t))$ under the collar $\mathbf{O}^{i}\simeq\mathbf{O}^{\partial,i}\times[0,1]$. We claim that for $t\in[\sigma_{k}\wedge\tau^{\mathrm{st}},\sigma_{k+1}\wedge\tau^{\mathrm{st}}]$, the coordinate $\mathbf{y}^{\perp}(t)$ is a one-dimensional reflecting Brownian motion on $[0,\infty)$, and the other solves the following \abbr{SDE} (written in Euclidean coordinates for $\mathbf{O}^{\partial,i}$):
\begin{align}
\d\mathbf{y}^{\mathrm{hor}}(t) \ = \ \mathbf{A}^{\mathrm{hor},\Psi^{\e}_{s}(t,\cdot)}(\mathbf{y}^{\perp}(t),\mathbf{y}^{\mathrm{hor}}(t))\d\mathbf{b}^{\mathrm{hor}}(t)+\mathbf{m}^{\mathrm{hor},\Psi^{\e}_{s}(t,\cdot)}(\mathbf{y}^{\perp}(t),\mathbf{y}^{\mathrm{hor}}(t))\d t.
\end{align}
Let us clarify the notation in the previous display. The term $\mathbf{b}^{\mathrm{hor}}(t)$ is a standard $\R^{\d-1}$-valued Brownian motion. The term $\mathbf{A}^{\mathrm{hor},\Psi^{\e}_{s}(t,\cdot)}(\cdot,\cdot)$ is a $(\d-1)\times(\d-1)$-dimensional matrix. It is smooth in all of its entries (and in $\Psi^{\e}_{s}(t,\cdot)$ with respect to the $\langle\rangle_{\mathscr{C}^{10}}$-metric) with derivatives depending only on $\bUz$ (or more precisely, the transition maps for our Euclidean coordinates of $\mathbf{O}^{\partial,i}$ sets, and thus independent of $\e$). Lastly, the term $\mathbf{m}^{\mathrm{hor},\Psi^{\e}_{s}(t,\cdot)}$ is valued in $\R^{\d-1}$, and it enjoys the same smoothness properties as $\mathbf{A}^{\mathrm{hor},\Psi^{\e}_{s}(t,\cdot)}$. Let us now clarify where the above \abbr{SDE}s come from. Under the collar $\mathbf{C}_{1}\simeq\partial\bUz\times[0,1]$, we recall that the defining metric for $\mathbf{x}^{\e}(s,\cdot)$ is Euclidean in the $[0,1]$-factor; see Definition \ref{definition:interface}. Therefore, the coordinate $\mathbf{y}^{\perp}(t)$ evolves according to a Euclidean Brownian motion in one spatial dimension. Moreover, the point $0\in[0,1]$ corresponds to $\mathbf{x}^{\e}(s,\cdot)$ hitting $\partial\bUz$. Since the $[0,1]$-factor corresponds to the direction of reflection, the $\mathbf{y}^{\perp}(t)$ coordinate has a standard reflection rule for Brownian motions at $0\in[0,1]$. Thus, $\mathbf{y}^{\perp}(t)$ is, indeed, a standard one-dimensional reflecting Brownian motion on $[0,\infty)$ (that we stop if it hits $3$ by construction of $\sigma_{k+1}$). To get the $\mathbf{y}^{\mathrm{hor}}(t)$ \abbr{SDE}, recall from Definition \ref{definition:interface} that the metric for the $\partial\bUz$-factor is a smooth function of $\mathsf{g}^{\Psi^{\e}_{s}(t,\cdot)}$ and the $[0,1]$-coordinate (since this coordinate is the interpolation parameter between $\mathsf{g}^{\Psi^{\e}_{s}(t,\cdot)}$ and Euclidean metric on $\partial\bUz$). So, the $\mathbf{y}^{\mathrm{hor}}(t)$ \abbr{SDE} follows by the fact that the coefficients for Brownian motion on a manifold are smooth with respect to the metric, its inverse, and its first two derivatives (see the beginning of Section \ref{section:couplingproof} for explicit formulas). We clarify the $\mathbf{y}^{\mathrm{hor}}(t)$ \abbr{SDE} has no reflection term because the reflection of $\mathbf{x}^{\e}(s,\cdot)$ is orthogonal to $\partial\bUz$. By the same token, we know that for $t\in[\sigma_{k}\wedge\tau^{\mathrm{st}},\sigma_{k+1}\wedge\tau^{\mathrm{st}}]$, the collar coordinates of $\wt{\mathbf{x}}^{\e}(s,t)$ are given by $(\wt{\mathbf{y}}^{\mathrm{hor}}(t),\wt{\mathbf{y}}^{\perp}(t))$, which satisfy the following evolution. Because the metric for $\wt{\mathbf{y}}^{\perp}(t)$ is still Euclidean, we know $\wt{\mathbf{y}}^{\perp}(t)$ is also a standard reflecting one-dimensional Brownian motion on $[0,\infty)$. Similarly, we have 
\begin{align}
\d\wt{\mathbf{y}}^{\mathrm{hor}}(t) \ = \ \mathbf{A}^{\mathrm{hor},\Phi^{\e}(s,\cdot)}(\wt{\mathbf{y}}^{\perp}(t),\wt{\mathbf{y}}^{\mathrm{hor}}(t))\d\mathbf{b}^{\mathrm{hor}}(t)+\mathbf{m}^{\mathrm{hor},\Phi^{\e}(s,\cdot)}(\wt{\mathbf{y}}^{\perp}(t),\wt{\mathbf{y}}^{\mathrm{hor}}(t))\d t,
\end{align}
where $\mathbf{A},\mathrm{m}$ are the same as in the $\mathbf{y}^{\mathrm{hor}}$ \abbr{SDE}, just evaluated at different interface function and points. (The Brownian motions in these horizontal \abbr{SDE}s are the same because the Brownian motions driving $\mathbf{x}^{\e}(s,\cdot),\wt{\mathbf{x}}^{\e}(s,\cdot)$ are the same.)

We now bound $(\mathbf{y}^{\mathrm{hor}}(t),\mathbf{y}^{\perp}(t))-(\wt{\mathbf{y}}^{\mathrm{hor}}(t),\wt{\mathbf{y}}^{\perp}(t))$. Write $\mathbf{y}^{\perp}(t)=|\mathbf{b}^{\perp}(t-\sigma_{k}\wedge\tau^{\mathrm{st}})+\mathbf{y}^{\perp}(\sigma_{k}\wedge\tau^{\mathrm{st}})|$ and $\wt{\mathbf{y}}^{\perp}(t)=|\mathbf{b}^{\perp}(t-\sigma_{k}\wedge\tau^{\mathrm{st}})+\wt{\mathbf{y}}^{\perp}(\sigma_{k}\wedge\tau^{\mathrm{st}})|$, where $\mathbf{b}^{\perp}$ is a standard one-dimensional Brownian motion on $\R$. (We are implicitly using that $\mathbf{y}^{\perp},\wt{\mathbf{y}}^{\perp}$ are driven by the same Brownian motions; indeed, the Brownian motions for $\mathbf{x}^{\e}(s,\cdot),\wt{\mathbf{x}}^{\e}(s,\cdot)$ are the same by construction at the beginning of this section. We also using crucially that a one-dimensional reflecting Brownian motion on $[0,\infty)$ is the absolute value of standard Brownian motion on $\R$.) By this and the triangle inequality, we get
\begin{align}
|\mathbf{y}^{\perp}(t)-\wt{\mathbf{y}}^{\perp}(t)| \ \leq \ |\mathbf{y}^{\perp}(\sigma_{k}\wedge\tau^{\mathrm{st}})-\wt{\mathbf{y}}^{\perp}(\sigma_{k}\wedge\tau^{\mathrm{st}})|. \label{eq:coupling2I3}
\end{align}
(This is true for all $t\in[\sigma_{k}\wedge\tau^{\mathrm{st}},\sigma_{k+1}\wedge\tau^{\mathrm{st}}]$, since it is for these times that we have the $\mathbf{b}^{\perp}$ representation.) On the other hand, we have the following \abbr{SDE} for $\mathbf{y}^{\mathrm{hor}}(t)-\wt{\mathbf{y}}^{\mathrm{hor}}(t)$ for all $t\in[\sigma_{k}\wedge\tau^{\mathrm{st}},\sigma_{k+1}\wedge\tau^{\mathrm{st}}]$:
\begin{align}
\d\{\mathbf{y}^{\mathrm{hor}}(t)-\wt{\mathbf{y}}^{\mathrm{hor}}(t)\} \ = \ &\left\{\mathbf{A}^{\mathrm{hor},\Phi^{\e}(s,\cdot)}(\mathbf{y}^{\perp}(t),\mathbf{y}^{\mathrm{hor}}(t))-\mathbf{A}^{\mathrm{hor},\Phi^{\e}(s,\cdot)}(\wt{\mathbf{y}}^{\perp}(t),\wt{\mathbf{y}}^{\mathrm{hor}}(t))\right\}\d\mathbf{b}^{\mathrm{hor}}(t) \label{eq:coupling2I4a}\\
+ \ &\left\{\mathbf{A}^{\mathrm{hor},\Psi^{\e}_{s}(t,\cdot)}(\mathbf{y}^{\perp}(t),\mathbf{y}^{\mathrm{hor}}(t))-\mathbf{A}^{\mathrm{hor},\Phi^{\e}(s,\cdot)}(\mathbf{y}^{\perp}(t),\mathbf{y}^{\mathrm{hor}}(t))\right\}\d\mathbf{b}^{\mathrm{hor}}(t)\label{eq:coupling2I4b}\\
+ \  &\left\{\mathbf{m}^{\mathrm{hor},\Phi^{\e}(s,\cdot)}(\mathbf{y}^{\perp}(t),\mathbf{y}^{\mathrm{hor}}(t))-\mathbf{m}^{\mathrm{hor},\Phi^{\e}(s,\cdot)}(\wt{\mathbf{y}}^{\perp}(t),\wt{\mathbf{y}}^{\mathrm{hor}}(t))\right\}\d t \label{eq:coupling2I4c}\\
+ \ &\left\{\mathbf{m}^{\mathrm{hor},\Psi^{\e}_{s}(t,\cdot)}(\mathbf{y}^{\perp}(t),\mathbf{y}^{\mathrm{hor}}(t))-\mathbf{m}^{\mathrm{hor},\Phi^{\e}(s,\cdot)}(\mathbf{y}^{\perp}(t),\mathbf{y}^{\mathrm{hor}}(t))\right\}\d t.\label{eq:coupling2I4d}
\end{align}
As $\sigma_{k},\sigma_{k+1},\tau^{\mathrm{st}}$ are all stopping times, we can control the martingale terms in \eqref{eq:coupling2I4a}-\eqref{eq:coupling2I4b} via Ito calculus. By smoothness of $\mathbf{A},\mathbf{m}$, we know the coefficients in \eqref{eq:coupling2I4a} and \eqref{eq:coupling2I4c} are deterministically bounded (in absolute value) by $|\mathbf{y}^{\perp}(t)-\wt{\mathbf{y}}^{\perp}(t)|+|\mathbf{y}^{\mathrm{hor}}(t)-\wt{\mathbf{y}}^{\mathrm{hor}}(t)|$ (up to a factor depending only on $\langle\Phi^{\e}(s,\cdot)\rangle$). Similarly, the coefficients in \eqref{eq:coupling2I4b} and \eqref{eq:coupling2I4d} are bounded (up to $\mathrm{O}(1)$ factor) by $\langle\Phi^{\e}(s,\cdot),\Psi^{\e}_{s}(t,\cdot)\rangle_{\mathscr{C}^{10}}$, which itself is $\lesssim\alpha(\e)\times i(\langle\Phi^{\e}(s,\cdot)\rangle_{\mathscr{C}^{10}})$ since we work before $\tau^{\mathrm{st}}$. (See the beginning of this proof for the definition of $\tau^{\mathrm{geo}}$.) To state the resulting estimate via Ito calculus, let us first introduce some notation. First, let $\E_{k}$ denote expectation conditioning on the filtration at time $\sigma_{k}\wedge\tau^{\mathrm{st}}$ (which lets us still use Ito calculus by the strong Markov property for Ito \abbr{SDE}s). Also, let us define the following maximal processes for $t\geq\sigma_{k}\wedge\tau^{\mathrm{st}}$:
\begin{align}
\mathfrak{m}_{\mathrm{hor},k}(t) \ &:= \ \sup_{\sigma_{k}\wedge\tau^{\mathrm{st}}\leq r\leq t\wedge\sigma_{k+1}\wedge\tau^{\mathrm{st}}}|\mathbf{y}^{\mathrm{hor}}(r)-\wt{\mathbf{y}}^{\mathrm{hor}}(r)| \\
\mathfrak{m}_{k}(t) \ &:= \ \sup_{\sigma_{k}\wedge\tau^{\mathrm{st}}\leq r\leq t\wedge\sigma_{k+1}\wedge\tau^{\mathrm{st}}}\left\{|\mathbf{y}^{\mathrm{hor}}(r)-\wt{\mathbf{y}}^{\mathrm{hor}}(r)|+|\mathbf{y}^{\perp}(r)-\wt{\mathbf{y}}^{\perp}(r)|\right\} \\
\mathfrak{m}_{\mathbf{x},k}(t) \ &:= \ \sup_{\sigma_{k}\wedge\tau^{\mathrm{st}}\leq r\leq t\wedge\sigma_{k+1}\wedge\tau^{\mathrm{st}}}|\mathbf{x}^{\e}(s,r)-\wt{\mathbf{x}}^{\e}(s,r)|.
\end{align}
Now, by the Ito isometry, \eqref{eq:coupling2I4a}-\eqref{eq:coupling2I4d}, and the paragraph immediately after, we have the following for all $t\geq\sigma_{k}\wedge\tau^{\mathrm{st}}$:
\begin{align}
\E_{k}|\mathfrak{m}_{\mathrm{hor},k}(t)|^{2} \ \lesssim \ &\E_{k}\Big\{\left[1+|t-\sigma_{k}\wedge\tau^{\mathrm{st}}|\right]\int_{\sigma_{k}\wedge\tau^{\mathrm{st}}}^{t}|\mathfrak{m}_{\mathrm{hor},k}(s)|^{2}\d s\Big\}\label{eq:coupling2I5a}\\
+ \ &\E_{k}\Big\{\left[1+|t-\sigma_{k}\wedge\tau^{\mathrm{st}}|\right]\int_{\sigma_{k}\wedge\tau^{\mathrm{st}}}^{t}|\mathbf{y}^{\perp}(s)-\wt{\mathbf{y}}^{\perp}(s)|^{2}\d s\Big\}\label{eq:coupling2I5b}\\
+ \ &\E_{k}\Big\{\left[1+|t-\sigma_{k}\wedge\tau^{\mathrm{st}}|\right]^{2}\alpha(\e)^{2}i(\langle\Phi^{\e}(s,\cdot)\rangle_{\mathscr{C}^{10}})^{2}\Big\}. \label{eq:coupling2I5c}
\end{align}
By combining \eqref{eq:coupling2I3} and \eqref{eq:coupling2I5a}-\eqref{eq:coupling2I5c}, we get, again for all $t\geq\sigma_{k}\wedge\tau^{\mathrm{st}}$:
\begin{align}
\E_{k}|\mathfrak{m}_{k}(t)|^{2} \ \lesssim \ &\E_{k}\Big\{\left[1+|t-\sigma_{k}\wedge\tau^{\mathrm{st}}|\right]\int_{\sigma_{k}\wedge\tau^{\mathrm{st}}}^{t}|\mathfrak{m}_{k}(s)|^{2}\d s\Big\}\label{eq:coupling2I6a}\\
+ \ &\E_{k}\Big\{\left[1+|t-\sigma_{k}\wedge\tau^{\mathrm{st}}|\right]^{2}\alpha(\e)^{2}i(\langle\Phi^{\e}(s,\cdot)\rangle_{\mathscr{C}^{10}})^{2}\Big\}. \label{eq:coupling2I6b}
\end{align}
Note $\sigma_{k}\wedge\tau^{\mathrm{st}}\leq\tau^{\mathrm{st}}\leq\e^{-1}\tau(\e)\delta(\e)B(\e)$ 
\abbr{wp1}.
So, for all $\sigma_{k}\wedge\tau^{\mathrm{st}}\leq t\leq\e^{-1}\delta(\e)\tau(\e)B(\e)$, we get
\begin{align}
 \E_{k}|& \mathfrak{m}_{k}(t)|^{2} \  \lesssim \ \nonumber \\
& \e^{-1}\tau(\e)\delta(\e)B(\e)\int_{\sigma_{k}\wedge\tau^{\mathrm{st}}}^{t}\E_{k}|\mathfrak{m}_{k}(s)|^{2}\d s + \e^{-2}\tau(\e)^{2}\delta(\e)^{2}B(\e)^{2}\alpha(\e)^{2}i(\langle\Phi^{\e}(s,\cdot)\rangle_{\mathscr{C}^{10}})^{2}. \label{eq:coupling2I7}
\end{align}
By taking $t=\e^{-1}\delta(\e)\tau(\e)B(\e)$ and using the Gronwall inequality, we deduce the estimate
\begin{align}
&\E_{k}|\mathfrak{m}_{k}(\sigma_{k+1}\wedge\tau^{\mathrm{st}})|^{2} \ = \ \E_{k}|\mathfrak{m}_{k}(\e^{-1}\delta(\e)\tau(\e)B(\e))|^{2} \nonumber \\
&\lesssim \ \exp\left\{\e^{-2}\tau(\e)^{2}\delta(\e)^{2}B(\e)^{2}\right\}\left\{|\mathfrak{m}_{k}(\sigma_{k}\wedge\tau^{\mathrm{st}})|^{2}+\e^{-2}\tau(\e)^{2}\delta(\e)^{2}B(\e)^{2}\alpha(\e)^{2}i(\langle\Phi^{\e}(s,\cdot)\rangle_{\mathscr{C}^{10}})^{2}\right\}, \label{eq:coupling2I8}
\end{align}
where the first identity above follows because $\mathfrak{m}_{k}(t)$ stops at time $\sigma_{k+1}\wedge\tau^{\mathrm{st}}$, so $\mathfrak{m}_{k}(t)=\mathfrak{m}_{k}(t\wedge\sigma_{k+1}\wedge\tau^{\mathrm{st}})$ with probability 1 for any $t\geq\tau^{\mathrm{st}}$, and in \eqref{eq:coupling2I8}, there is no $\E_{k}$ because \eqref{eq:coupling2I8} is measurable with respect to the time-$\sigma_{k}\wedge\tau^{\mathrm{st}}$ filtration. For the last step in this bullet point, let us make two observations. First note $\mathfrak{m}_{\mathbf{x},k}(t)\lesssim\mathfrak{m}_{k}(t)\lesssim\mathfrak{m}_{\mathbf{x},k}(t)$ for all $t$ with deterministic implied constants. (Indeed, the $\ell^{2}$ distance between vectors $(\mathbf{y}^{\mathrm{hor}}(t),\mathbf{y}^{\perp}(t))$ and $(\wt{\mathbf{y}}^{\mathrm{hor}}(t),\wt{\mathbf{y}}^{\perp}(t))$ and the distance between $\mathbf{x}^{\e}(s,t),\wt{\mathbf{x}}^{\e}(s,t)$ are not the same, but they differ up to multiplicative factors that are bounded above and below away from 0 and that depend only on our coordinate charts for $\mathbf{O}^{\partial,i}$.) Second, by the law of iterated expectation, we can put $\E$ in front of $|\mathfrak{m}_{k}(\sigma_{k+1}\wedge\tau^{\mathrm{st}})|^{2}$ and $|\mathfrak{m}_{k}(\sigma_{k}\wedge\tau^{\mathrm{st}})|^{2}$ in the previous display (where $\E$ still conditions on $\Phi^{\e}(s,\cdot)$). Third, we can replace $\mathfrak{m}_{\mathbf{x},k}(\sigma_{k}\wedge\tau^{\mathrm{st}})$ by $\mathfrak{m}_{\mathbf{x},k-1}(\sigma_{k}\wedge\tau^{\mathrm{st}})$ for the sake of an upper bound, assuming $k\geq1$. (Indeed, the latter $\mathfrak{m}_{\mathbf{x},k-1}(\sigma_{k}\wedge\tau^{\mathrm{st}})$ is a supremum over a set of values that includes $\mathfrak{m}_{\mathbf{x},k}(\sigma_{k}\wedge\tau^{\mathrm{st}})$.) Therefore, we ultimately deduce the following for any $k\geq1$:
\begin{align}
\E|\mathfrak{m}_{\mathbf{x},k}(\sigma_{k+1}\wedge & \tau^{\mathrm{st}})|^{2} \ \lesssim \ 
\exp\left\{\e^{-2}\tau(\e)^{2}\delta(\e)^{2}B(\e)^{2}\right\}\E|\mathfrak{m}_{\mathbf{x},k-1}(\sigma_{k}\wedge\tau^{\mathrm{st}})|^{2} \label{eq:coupling2I9a}\\
+ \ &\exp\left\{\e^{-2}\tau(\e)^{2}\delta(\e)^{2}B(\e)^{2}\right\}\e^{-2}\tau(\e)^{2}\delta(\e)^{2}B(\e)^{2}\alpha(\e)^{2}i(\langle\Phi^{\e}(s,\cdot)\rangle_{\mathscr{C}^{10}})^{2}. \label{eq:coupling2I9b}
\end{align}
\item By combining the previous two bullet points, we deduce that for any $k\geq1$, the estimate \eqref{eq:coupling2I9a}-\eqref{eq:coupling2I9b} is true regardless of where $\mathbf{x}^{\e}(s,\sigma_{k}\wedge\tau^{\mathrm{st}})$ is. (Indeed, it holds in the setting of point (2), and a sharper version \eqref{eq:coupling2I2} holds in the setting of point (1).) Now, by iterating the resulting bound, we deduce that for any $k\geq1$, we have the following for some constant $\eta>0$:
\begin{align}
&\E|\mathfrak{m}_{\mathbf{x},k}(\sigma_{k+1}\wedge\tau^{\mathrm{st}})|^{2} \nonumber\\
&\lesssim \ \eta^{k}\exp\left\{k\e^{-2}\tau(\e)^{2}\delta(\e)^{2}B(\e)^{2}\right\}\E|\mathfrak{m}_{\mathbf{x},0}(\sigma_{1}\wedge\tau^{\mathrm{st}})|^{2}\nonumber\\
&+ \ k\eta^{k}\exp\left\{k\e^{-2}\tau(\e)^{2}\delta(\e)^{2}B(\e)^{2}\right\}\e^{-2}\tau(\e)^{2}\delta(\e)^{2}B(\e)^{2}\alpha(\e)^{2}i(\langle\Phi^{\e}(s,\cdot)\rangle_{\mathscr{C}^{10}})^{2}. \nonumber
\end{align}
We then use \eqref{eq:coupling2I2} or \eqref{eq:coupling2I8} (depending on the location of $\mathbf{x}^{\e}(s,\sigma_{0}\wedge\tau^{\mathrm{st}})$) to bound $\E|\mathfrak{m}_{\mathbf{x},0}(\sigma_{1}\wedge\tau^{\mathrm{st}})|^{2}$. In the case that we use \eqref{eq:coupling2I2}, we get $\mathfrak{m}_{\mathbf{x},0}(\sigma_{1}\wedge\tau^{\mathrm{st}})=\mathfrak{m}_{\mathbf{x},0}(\sigma_{0}\wedge\tau^{\mathrm{st}})=\mathfrak{m}_{\mathbf{x},0}(0)=0$, since $\sigma_{0}=0$ and $\mathbf{x}^{\e}(s,0)=\wt{\mathbf{x}}^{\e}(s,0)$ in the setting where \eqref{eq:coupling2I2} would apply. In the case that we use \eqref{eq:coupling2I8}, we would get the following estimate:
\begin{align}
&\E|\mathfrak{m}_{\mathbf{x},0}(\sigma_{1}\wedge\tau^{\mathrm{st}})|^{2} \nonumber\\
&\lesssim \ \exp\left\{\e^{-2}\tau(\e)^{2}\delta(\e)^{2}B(\e)^{2}\right\}\left\{|\mathfrak{m}_{\mathbf{x},0}(\sigma_{0}\wedge\tau^{\mathrm{st}})|^{2}+\e^{-2}\tau(\e)^{2}\delta(\e)^{2}B(\e)^{2}\alpha(\e)^{2}i(\langle\Phi^{\e}(s,\cdot)\rangle_{\mathscr{C}^{10}})^{2}\right\} \nonumber\\
&= \ \exp\left\{\e^{-2}\tau(\e)^{2}\delta(\e)^{2}B(\e)^{2}\right\}\e^{-2}\tau(\e)^{2}\delta(\e)^{2}B(\e)^{2}\alpha(\e)^{2}i(\langle\Phi^{\e}(s,\cdot)\rangle_{\mathscr{C}^{10}})^{2}.
\end{align}
Note that the previous estimate is true in the case that we apply \eqref{eq:coupling2I2} instead of \eqref{eq:coupling2I8} as well (even though it is much less sharp). Therefore, in either case, from the previous estimate, we deduce the following:
\begin{align}
&\E|\mathfrak{m}_{\mathbf{x},k}(\sigma_{k+1}\wedge\tau^{\mathrm{st}})|^{2} \nonumber\\
&\lesssim \ k\eta^{k+1}\exp\left\{\{k+1\}\e^{-2}\tau(\e)^{2}\delta(\e)^{2}B(\e)^{2}\right\}\e^{-2}\tau(\e)^{2}\delta(\e)^{2}B(\e)^{2}\alpha(\e)^{2}i(\langle\Phi^{\e}(s,\cdot)\rangle_{\mathscr{C}^{10}})^{2}. \label{eq:coupling2I10}
\end{align}
We now claim that \eqref{eq:coupling2I10} implies the following for a possibly different but fixed constant $\eta>0$:
\begin{align}
&\E\Big\{\sup_{0\leq t\leq\e^{-1}\delta(\e)\tau(\e)B(\e)}|\mathbf{x}^{\e}(s,t\wedge\tau^{\mathrm{st}})-\wt{\mathbf{x}}^{\e}(s,t\wedge\tau^{\mathrm{st}})|^{2}\Big\} \label{eq:coupling2I11a}\\
&\lesssim \ N(\e)^{2}\eta^{N(\e)}\exp\left\{\e^{-2}\tau(\e)^{2}\delta(\e)^{2}B(\e)^{2}[N(\e)+1]\right\}\e^{-2}\tau(\e)^{2}\delta(\e)^{2}B(\e)^{2}\alpha(\e)^{2}i(\langle\Phi^{\e}(s,\cdot)\rangle_{\mathscr{C}^{10}})^{2}. \label{eq:coupling2I11b}
\end{align}
Indeed, note \eqref{eq:coupling2I11a} is unchanged if we replace $\e^{-1}\delta(\e)\tau(\e)B(\e)$ in the supremum therein by $\sigma_{N(\e)}$. This follows from the fact $\tau^{\mathrm{st}}\leq\sigma_{N(\e)}\wedge\e^{-1}\delta(\e)\tau(\e)B(\e)$ that holds by construction (see before \eqref{eq:coupling2I2}). So, \eqref{eq:coupling2I11a} is deterministically bounded by the sum over $0\leq k\leq N(\e)$ of $\mathrm{LHS}\eqref{eq:coupling2I10}$, at which point \eqref{eq:coupling2I11a}-\eqref{eq:coupling2I11b} follows by summing \eqref{eq:coupling2I10} over $0\leq k\leq N(\e)$. Now, let us choose $\tau(\e)\to0$ sufficiently fast so that $\e^{-2}\tau(\e)^{2}\delta(\e)^{2}\to\infty$ as slow as we want. Choose $B(\e),N(\e)\to\infty$ as slow as we want so that, recalling $\alpha(\e)\to0$, we have $\eqref{eq:coupling2I11b}\times i(\langle\Phi^{\e}(s,\cdot)\rangle_{\mathscr{C}^{10}})^{-2}\to0$ as $\e\to0$. (We clarify that the speed at which $\alpha(\e)\to0$ from Lemma \ref{lemma:coupling1} does not change if all we do is possibly lower $\tau(\e),B(\e)$. Also, all we require from $\tau(\e)$ is that it vanishes as $\e\to0$ and $\e^{_1}\tau(\e)\delta(\e)\to\infty$; this constraint is still satisfied; see Lemma \ref{lemma:timeaverage} and Proposition \ref{prop:coupling}.) Thus, by the Markov inequality and \eqref{eq:coupling2I11a}-\eqref{eq:coupling2I11b}, we deduce that with probability at least $1-\gamma_{1}(\e)$, for some $\gamma_{1}(\e)\to0$,
\begin{align}
\sup_{0\leq t\leq\e^{-1}\delta(\e)\tau(\e)B(\e)}|\mathbf{x}^{\e}(s,t\wedge\tau^{\mathrm{st}})-\wt{\mathbf{x}}^{\e}(s,t\wedge\tau^{\mathrm{st}})| \ \lesssim \ \alpha_{1}(\e)\times i(\langle\Phi^{\e}(s,\cdot)\rangle_{\mathscr{C}^{10}}), \label{eq:coupling2I12}
\end{align}
where $\alpha_{1}(\e)\to0$ as $\e\to0$.
\end{enumerate}
By \eqref{eq:coupling2I12}, to complete this proof, it now suffices to show that for some choice of $N(\e),B(\e)\to\infty$, we have
\begin{align}
\mathbf{P}\left\{\tau^{\mathrm{st}}<\e^{-1}\delta(\e)\tau(\e)B(\e)\right\} \ \leq \ \gamma_{2}(\e), \label{eq:coupling2I13}
\end{align}
for some $\gamma_{2}(\e)\to0$. To this end, recall $\tau^{\mathrm{st}}:=\tau^{\mathrm{sep}}\wedge\tau^{\mathrm{geo}}\wedge\sigma_{N(\e)}\wedge\e^{-1}\tau(\e)\delta(\e)B(\e)$. By union bound, we get
\begin{align}
\mathbf{P}\big\{\tau^{\mathrm{st}}  < \e^{-1}\delta(\e) & \tau(\e)B(\e)\big\} \ \leq \nonumber \\
& \mathbf{P}\left\{\tau^{\mathrm{geo}}<\e^{-1}\delta(\e)\tau(\e)B(\e)\right\} + \mathbf{P}\left\{\tau^{\mathrm{st}}=\tau^{\mathrm{sep}}<\e^{-1}\delta(\e)\tau(\e)B(\e)\right\}\label{eq:coupling2I14a}\\
+ \ &\mathbf{P}\left\{\tau^{\mathrm{st}}=\sigma_{N(\e)}<\e^{-1}\delta(\e)\tau(\e)B(\e)\right\}. \label{eq:coupling2I14b}
\end{align}
%
\begin{enumerate}
\item First, by Lemma \ref{lemma:coupling1}, the first term on the \abbr{RHS} of \eqref{eq:coupling2I14a} vanishes as $\e\to0$. To control the second term on the \abbr{RHS} of \eqref{eq:coupling2I14a}, note that on the event $\tau^{\mathrm{st}}=\tau^{\mathrm{sep}}<\e^{-1}\delta(\e)\tau(\e)B(\e)$, by the almost sure continuity of $\mathbf{x}^{\e}(s,\cdot),\wt{\mathbf{x}}^{\e}(s,\cdot)$ (see Lemma \ref{lemma:sde}), with probability 1 we have $|\mathbf{x}^{\e}(s,\tau^{\mathrm{st}})-\wt{\mathbf{x}}^{\e}(s,\tau^{\mathrm{st}})|=1/999$. But \eqref{eq:coupling2I12} says that this can only happen with probability $\mathrm{o}(1)$. (Indeed, as assumed at the beginning of this proof, we know $i(\langle\Phi^{\e}(s,\cdot)\rangle_{\mathscr{C}^{10}})\leq\mu(\e)^{-1}$; now take $\mu(\e)\to0$ sufficiently slowly depending on $\alpha_{1}(\e)$ so that the \abbr{RHS} of \eqref{eq:coupling2I12} vanishes and is thus $\ll1/999$. To clarify this, we recall that $\mu(\e)$ can be chosen arbitrarily, so said choice of $\mu(\e)$ is allowed; see the beginning of this argument.) In particular, we deduce that the \abbr{RHS} of \eqref{eq:coupling2I14a} vanishes as $\e\to0$. 
\item It remains to control \eqref{eq:coupling2I14b}, for some choice of $\tau(\e)$ that satisfies constraints of Lemma \ref{lemma:timeaverage} and some choice of $B(\e)\to\infty$ (sufficiently slowly). In words, we want the number of times we switch coordinate charts to be less than $N(\e)$, for $N(\e)\to\infty$ also sufficiently slowly. We first claim that if $\tau^{\mathrm{st}}=\sigma_{N(\e)}<\e^{-1}\delta(\e)\tau(\e)B(\e)$, the $\mathscr{C}^{1/3}_{t}$-Holder norm of either $t\mapsto\mathbf{x}^{\e}(s,t)$ or $t\mapsto\wt{\mathbf{x}}^{\e}(s,t)$ (for $0\leq t\leq\e^{-1}\delta(\e)\tau(\e)B(\e)$) must be $\gtrsim N(\e)^{1/3}\e^{1/3}\delta(\e)^{-1/3}\tau(\e)^{-1/3}B(\e)^{-1/3}$. Indeed, each time we switch coordinate charts, one of $\mathbf{x}^{\e},\wt{\mathbf{x}}^{\e}$ must travel distance of $1/999$. Doing so $N(\e)$-many times by time $\e^{-1}\delta(\e)\tau(\e)B(\e)$ means that at least once, we must have switched coordinate charts in time less than $N(\e)^{-1}\e^{-1}\delta(\e)\tau(\e)B(\e)$, so at least one of $\mathbf{x}^{\e},\wt{\mathbf{x}}^{\e}$ must have traveled distance $\gtrsim1$ in time at most $N(\e)^{-1}\e^{-1}\delta(\e)\tau(\e)B(\e)$. The claim now follows.

Now, recall we can take $\tau(\e)\to0$ in whatever fashion, as long as $\e^{-1}\delta(\e)\tau(\e)\to\infty$. So, we can take $\e^{-1}\delta(\e)\tau(\e)\to\infty$ as slow as we want. We can also take $B(\e)\to\infty$ as slow as we want. This means we can take $N(\e)\to\infty$ as slow as we want while still having the divergence $N(\e)^{1/3}\e^{1/3}\delta(\e)^{-1/3}\tau(\e)^{-1/3}B(\e)^{-1/3}\to\infty$ (if we choose $N(\e)$ to depend only on $\e,\delta(\e),\tau(\e),B(\e)$). In particular, there exists $H(\e)\to\infty$ such that if $\mathscr{E}$ denotes the event where the $\mathscr{C}^{1/3}_{t}$-Holder norm of either $t\mapsto\mathbf{x}^{\e}(s,t)$ or $t\mapsto\wt{\mathbf{x}}^{\e}(s,t)$ (for $0\leq t\leq\e^{-1}\delta(\e)\tau(\e)B(\e)$) exceeds $H(\e)$, then
\begin{align}
\mathbf{P}\left\{\tau^{\mathrm{st}}=\sigma_{N(\e)}<\e^{-1}\delta(\e)\tau(\e)B(\e)\right\} \ \leq \ \mathbf{P}\{\mathscr{E}\}. \label{eq:coupling2I15}
\end{align}
However, by Lemma \ref{lemma:sde}, we can take $\xi(\e)\to0$ such that with probability at least $1-\xi(\e)$, we have 
\begin{align}
 \sup_{\substack{r,t\in[0,\e^{-1}\delta(\e)\tau(\e)B(\e)]\\ r\neq t}} \frac{|\mathbf{x}^{\e}(s,r)-\mathbf{x}^{\e}(s,t)|}{|t-r|^{\frac13}} & \lesssim \nonumber \\
 \ h(\xi(\e)^{-1})\times & \sup_{0\leq t\leq\e^{-1}\delta(\e)\tau(\e)B(\e)} 
 j(\e^{-1}\delta(\e)\tau(\e)B(\e),\langle\Psi^{\e}_{s}(t,\cdot)\rangle_{\mathscr{C}^{10}}) \label{eq:coupling2I16a}\\
\sup_{\substack{r,t\in[0,\e^{-1}\delta(\e)\tau(\e)B(\e)]\\ r\neq t}}\frac{|\wt{\mathbf{x}}^{\e}(s,r)-\wt{\mathbf{x}}^{\e}(s,t)|}{|t-r|^{\frac13}} \ &\lesssim \ h(\xi(\e)^{-1})\times j(\e^{-1}\delta(\e)\tau(\e)B(\e),\langle\Phi^{\e}(s,\cdot)\rangle_{\mathscr{C}^{10}}).\label{eq:coupling2I16b}
\end{align}
Above, $h(\cdot)$ and $j(\cdot,\cdot)$ are increasing and continuous. Recall that we can take $\e^{-1}\delta(\e)\tau(\e)B(\e)\to\infty$ as slow as we want. Also, recall that $\langle\Phi^{\e}(s,\cdot)\rangle_{\mathscr{C}^{10}}$ is allowed to diverge, but as slowly as we want; see the beginning of this proof. Moreover, by Lemma \ref{lemma:timeaverage}, on an event of probability $1-\mathrm{o}(1)$, we can replace $\Psi^{\e}_{s}(t,\cdot)$ by $\Phi^{\e}(s,\cdot)$. Thus, if we choose $\xi(\e)\to0$ sufficiently slow, we can choose the \abbr{RHS} of the previous two displays to grow as slow as we want. But if the previous two displays hold, then we cannot be on the event $\mathscr{E}$. (Let us be clear about how $\e$-dependent constants relate to each other. On the event $\mathscr{E}$, the \abbr{LHS} of at least one of \eqref{eq:coupling2I16a}, \eqref{eq:coupling2I16b} must be $\geq H(\e)$, where $H(\e)=N(\e)^{1/3}\e^{1/3}\delta(\e)^{-1/3}\tau(\e)^{-1/3}B(\e)^{-1/3}$. But we have just argued that \eqref{eq:coupling2I16a}-\eqref{eq:coupling2I16b} hold with probability $1-\mathrm{o}(1)$. We can now choose the \abbr{RHS} of \eqref{eq:coupling2I16a}-\eqref{eq:coupling2I16b} to grow as slow as we want while still being much smaller than $H(\e)$, because $H(\e)$ has the extra factor $N(\e)^{1/3}$ at our disposal that does not show up in \eqref{eq:coupling2I16a}-\eqref{eq:coupling2I16b}. Of course, we can still ultimately choose $N(\e)\to\infty$ as slow as we want if we choose appropriate $\delta(\e),\tau(\e),B(\e),\xi(\e)$.) Thus, we know $\mathbf{P}\{\mathscr{E}\}\to0$ as $\e\to0$. Combining this with \eqref{eq:coupling2I15} shows that $\eqref{eq:coupling2I14b}\to0$ as $\e\to0$. 
\end{enumerate}
The previous two points and \eqref{eq:coupling2I14a}-\eqref{eq:coupling2I14b} give \eqref{eq:coupling2I13} for some $\gamma_{2}(\e)\to0$. As noted before \eqref{eq:coupling2I13}, this finishes the proof.
\end{proof}
\subsection{Estimating $k\mapsto|\tau_{k}-\wt{\tau}_{k}|$}
Thus far, we have compared both the metrics $\mathsf{g}^{\Phi}$ (for $\Phi=\Phi^{\e}(s,\cdot),\Psi^{\e}_{s}(t,\cdot)$) and the processes $t\mapsto\mathbf{x}^{\e}(s,t),\wt{\mathbf{x}}^{\e}(s,t)$. As mentioned before Lemma \ref{lemma:coupling1}, we are left to compare the stopping times $k\mapsto\tau_{k},\wt{\tau}_{k}$. Intuitively, these stopping times depend on the boundary local times of $t\mapsto\mathbf{x}^{\e}(s,t),\wt{\mathbf{x}}^{\e}(s,t)$, which should depend continuously (in some sense) on $t\mapsto\mathbf{x}^{\e}(s,t),\wt{\mathbf{x}}^{\e}(s,t)$ themselves (as well as the difference $\Psi^{\e}_{s}(t,\cdot)-\Phi^{\e}(s,\cdot)$, which is controlled by Lemma \ref{lemma:coupling1}). To make this precise, we use another application of Ito to a $\mathsf{g}^{\Phi}$-harmonic function (for $\Phi=\Phi^{\e}(s,\cdot),\Psi^{\e}_{s}(t,\cdot)$) combined with elliptic regularity estimates for said function, similar to the proof of Lemma \ref{lemma:coupling1}. (Indeed, the proof of Lemma \ref{lemma:coupling1} uses this to derive an equation for local times whose regularity properties we can then study.)
\begin{lemma}\label{lemma:coupling3}
Recall $B(\e)$ from \emph{Lemmas \ref{lemma:coupling1}, \ref{lemma:coupling2}}. For any integer $k\geq0$, define the following stopping times:
\begin{align}
\tau_{k,\wedge} \ := \ \tau_{k}\wedge\e^{-1}\tau(\e)\delta(\e)B(\e) \quad\mathrm{and}\quad \wt{\tau}_{k,\wedge} \ := \ \wt{\tau}_{k}\wedge\e^{-1}\tau(\e)\delta(\e)B(\e).
\end{align}
Fix any $k\geq0$ and any $t(\e)\to\infty$ as $\e\to0$. By possibly decreasing $B(\e)$ from \emph{Lemmas \ref{lemma:coupling1}, \ref{lemma:coupling2}}, though still guaranteeing that $B(\e)\to\infty$ and that $B(\e)$ depends only on $\e$, there exist $\alpha(\e),\gamma(\e)\to0$ as $\e\to0$ such that with probability at least $1-\gamma(\e)$,
\begin{align}
\tau_{k,\wedge}+\wt{\tau}_{k,\wedge} \ &\lesssim \ t(\e) k\delta(\e)\times i(\langle\Phi^{\e}(s,\cdot)\rangle_{\mathscr{C}^{10}}) \label{eq:coupling3I} \\
|\tau_{k,\wedge}-\wt{\tau}_{k,\wedge}| \ &\lesssim \ \alpha(\e)\times i(\langle\Phi^{\e}(s,\cdot)\rangle_{\mathscr{C}^{10}}). \label{eq:coupling3II}
\end{align}
\end{lemma}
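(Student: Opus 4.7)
The plan is to follow the strategy of Lemma \ref{lemma:coupling1}, applying Ito's formula to two carefully chosen smooth test functions on $\bUz$ and exploiting elliptic regularity together with Lemmas \ref{lemma:coupling1}--\ref{lemma:coupling2}. Throughout I will work on the high-probability event on which those two lemmas hold, so that for $t\leq\e^{-1}\tau(\e)\delta(\e)B(\e)$ both $\langle\Phi^{\e}(s,\cdot),\Psi^{\e}_{s}(t,\cdot)\rangle_{\mathscr{C}^{10}}$ and $|\mathbf{x}^{\e}(s,t)-\wt{\mathbf{x}}^{\e}(s,t)|$ are $\lesssim\alpha(\e)\,i(\langle\Phi^{\e}(s,\cdot)\rangle_{\mathscr{C}^{10}})$; all Ito applications will be further stopped at the geometric stopping time $\tau^{\mathrm{geo}}$ from the proof of Lemma \ref{lemma:coupling2} so these bounds hold pathwise on the stopped interval.

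For \eqref{eq:coupling3I}, let $\phi\in\mathscr{C}^{\infty}(\bUz)$ solve the Dirichlet problem $\Delta_{\mathsf{g}^{\Phi^{\e}(s,\cdot)}}\phi=-1$ in $\bUz$ with $\phi|_{\partial\bUz}=0$. The maximum principle gives $\phi\geq 0$, and standard elliptic regularity together with the Hopf lemma bounds $\|\phi\|_{\infty}+\|\mathbf{n}\cdot\nabla\phi\|_{L^{\infty}(\partial\bUz)}\lesssim i(\langle\Phi^{\e}(s,\cdot)\rangle_{\mathscr{C}^{10}})$. Apply Ito's formula to $\phi$ along $\mathbf{x}^{\e}(s,\cdot)$, stop at $T\wedge\tau_{k}\wedge\tau^{\mathrm{geo}}$, take expectation, and use $\phi\geq 0$ together with $\Delta_{\mathsf{g}^{\Psi^{\e}_{s}}}\phi=-1+O(\alpha(\e))i$ on the stopped interval; this yields $\mathbf{E}(T\wedge\tau_{k}\wedge\tau^{\mathrm{geo}})\lesssim k\delta(\e)\cdot i$, the key mechanism being that the nonnegativity of $\phi$ converts the Ito identity into an upper bound on time in terms of the boundary integral $\int(\mathbf{n}\cdot\nabla\phi)\,\mathrm{d}\mathbf{L}^{s}\leq i\cdot k\delta(\e)$. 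Markov's inequality with $T:=t(\e)k\delta(\e)i$ then gives $\mathbf{P}(\tau_{k}\geq T)\leq 1/t(\e)\to 0$, and the same argument handles $\wt{\tau}_{k,\wedge}$ with the frozen metric (now without needing Lemma \ref{lemma:coupling1}).

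For \eqref{eq:coupling3II}, choose $\varphi\in\mathscr{C}^{\infty}(\bUz)$ with $\varphi|_{\partial\bUz}=0$ and $\mathbf{n}\cdot\nabla\varphi\equiv 1$ on $\partial\bUz$, constructed explicitly from the distance-to-boundary coordinate on the collar $\mathbf{C}_{1}\simeq\partial\bUz\times[0,1]$ via a smooth cutoff. Ito's formula applied separately to $\varphi(\mathbf{x}^{\e}(s,\cdot))$ and $\varphi(\wt{\mathbf{x}}^{\e}(s,\cdot))$ collapses each boundary integral to the corresponding local time, so subtracting gives
\begin{align*}
\mathbf{L}^{s}(t)-\mathbf{L}^{s,\sim}(t) \ = \ [\varphi(\mathbf{x}^{\e}(s,t))-\varphi(\wt{\mathbf{x}}^{\e}(s,t))] - \int_{0}^{t}(\Delta_{\mathsf{g}^{\Psi^{\e}_{s}}}-\Delta_{\mathsf{g}^{\Phi^{\e}(s,\cdot)}})\varphi\,\mathrm{d}r - (\mathrm{mg}_{1}(t)-\mathrm{mg}_{2}(t)).
\end{align*}
The three terms on the right are bounded using Lemma \ref{lemma:coupling2}, Lemma \ref{lemma:coupling1}, and BDG respectively (both martingales are driven by the same Brownian motion $\mathbf{b}$, so their difference has quadratic variation controlled by Lemmas \ref{lemma:coupling1}--\ref{lemma:coupling2}). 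Choosing $B(\e)\to\infty$ slowly enough to absorb the time-horizon factor yields $\sup_{t\leq\e^{-1}\tau(\e)\delta(\e)B(\e)}|\mathbf{L}^{s}(t)-\mathbf{L}^{s,\sim}(t)|\lesssim\Delta(\e)\cdot i$ with high probability, for some $\Delta(\e)\to 0$.

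Finally, I convert this local-time comparison into \eqref{eq:coupling3II}. Assume WLOG $\tau_{k,\wedge}\leq\wt{\tau}_{k,\wedge}$; on the event of \eqref{eq:coupling3I} we have $\mathbf{x}^{\e}(s,\tau_{k,\wedge})\in\partial\bUz$, hence by Lemma \ref{lemma:coupling2} $\mathrm{dist}(\wt{\mathbf{x}}^{\e}(s,\tau_{k,\wedge}),\partial\bUz)\lesssim\alpha(\e)i$, and also $\mathbf{L}^{s,\sim}(\tau_{k,\wedge})\geq k\delta(\e)-\Delta(\e)i$. By the strong Markov property, $\wt{\tau}_{k,\wedge}-\tau_{k,\wedge}$ is at most the time for the local time of $\wt{\mathbf{x}}^{\e}(s,\cdot)$ restarted at $\tau_{k,\wedge}$ to accumulate the residual amount $\leq\Delta(\e)i$. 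Rerunning the Ito argument with $\phi$ from the second paragraph -- but now with initial value $\phi(\wt{\mathbf{x}}^{\e}(s,\tau_{k,\wedge}))\lesssim\alpha(\e)i$ rather than $0$, since the restart point is only $O(\alpha(\e))i$-close to $\partial\bUz$ -- shows this expected hitting time is $\lesssim(\alpha(\e)+\Delta(\e))i^{2}$, so Markov with a slowly-divergent auxiliary sequence yields a high-probability bound vanishing as $\e\to 0$. The main obstacle is exactly this last conversion: boundary local time is singular (supported only when $\wt{\mathbf{x}}^{\e}\in\partial\bUz$), so there is no a priori Lipschitz relation between $|\mathbf{L}^{s}-\mathbf{L}^{s,\sim}|$ and $|\tau_{k,\wedge}-\wt{\tau}_{k,\wedge}|$; the bootstrap through the near-boundary control of $\wt{\mathbf{x}}^{\e}(s,\tau_{k,\wedge})$ from Lemma \ref{lemma:coupling2} is therefore essential, and the $\e$-dependent parameters $B(\e),\tau(\e),\alpha(\e),\Delta(\e)$ must be coordinated so that all the earlier asymptotic constraints inherited from Lemmas \ref{lemma:coupling1}--\ref{lemma:coupling2} remain satisfied.
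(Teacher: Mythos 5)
Your argument for \eqref{eq:coupling3I} uses the same mechanism as the paper (Ito on a Dirichlet problem, maximum/Hopf control of the boundary flux, then Markov), but with the \emph{frozen-metric} equation $\Delta_{\mathsf{g}^{\Phi^{\e}(s,\cdot)}}\phi=-1$ rather than the paper's time-dependent $\Delta_{\mathsf{g}^{\Psi^{\e}_{s}(t,\cdot)}}\mathscr{U}=1$. The time-dependent choice makes the drift in the Ito formula \emph{exactly} constant; your frozen version introduces a drift error of size $O(\alpha(\e)\,i^{2})$ over a time horizon $\lesssim T$, and absorbing this against $-T$ needs the a priori control $\alpha(\e)\,i^{2}<1/2$. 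That is not automatic — the paper handles the analogous nuisance in the proof of Lemma \ref{lemma:coupling2} by first restricting to $i\lesssim\mu(\e)^{-1}$ and dispatching the complement trivially; you should either add that reduction or simply adopt the time-dependent $\mathscr{U}$. For \eqref{eq:coupling3II} you take a genuinely different route: the paper introduces $\tau_{\mathrm{ap},1},\tau_{\mathrm{ap},2}$, applies Ito to $\mathscr{U}$ over $[\tau_{-},\tau_{+}]$ with a detailed case analysis of which of $\tau_{k},\wt\tau_{k}$ is smaller, and — to control $\tau_{\mathrm{ap},2}$ — invokes the smoothed-local-time approximation of \cite{BCS} plus a Taylor expansion in $|\mathbf{x}^{\e}-\wt{\mathbf{x}}^{\e}|$. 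You instead construct $\varphi$ with $\varphi|_{\partial\bUz}=0$ and $\mathbf{n}\cdot\grad\varphi\equiv1$ on $\partial\bUz$, so Ito applied to $\varphi$ along each process produces $\mathbf{L}^{s}$ and $\mathbf{L}^{s,\sim}$ \emph{exactly} in the boundary terms; subtracting gives a closed, smoothing-free identity for $\mathbf{L}^{s}-\mathbf{L}^{s,\sim}$, and you then convert local-time closeness into stopping-time closeness via a strong-Markov restart of $\wt{\mathbf{x}}^{\e}$ at $\tau_{k,\wedge}$ (using that $\wt{\mathbf{x}}^{\e}(s,\tau_{k,\wedge})$ is $O(\alpha(\e)\,i)$-close to $\partial\bUz$ by Lemma \ref{lemma:coupling2}, so $\phi$ starts near zero). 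Both proofs face the same parameter balancing: your drift and martingale errors are $\lesssim\alpha(\e)\,i$ times the horizon $\e^{-1}\tau(\e)\delta(\e)B(\e)$, which is exactly the product the paper must also kill after \eqref{eq:coupling3II9}. Your route is more direct and avoids the BCS citation, at the cost of needing (i) a cleaner statement of pathwise smallness of $\sup_{t}|\mathbf{x}^{\e}-\wt{\mathbf{x}}^{\e}|$ on the whole horizon (which Lemma \ref{lemma:coupling2} supplies but you should quote at the stopped time) and (ii) a spelled-out justification that $\tau_{k,\wedge}$, which is defined through $\mathbf{x}^{\e}$, is a legitimate stopping time for the filtration driving $\wt{\mathbf{x}}^{\e}$ (it is, since both SDEs are driven by the same $\mathbf{b}$). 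Neither point is a gap; both are places to be explicit when expanding.
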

\begin{remark}
The relevance of the time-scale $\e^{-1}\tau(\e)\delta(\e)$ for the $\mathbf{x}^{\e},\wt{\mathbf{x}}^{\e}$ processes can be seen by \eqref{eq:coupling3I} and recalling that we are sampling $k$ therein according to a Poisson process of speed $\e^{-1}\tau(\e)$ in Proposition \ref{prop:coupling}.
\end{remark}
\begin{proof}
We first prove \eqref{eq:coupling3I}. For this, we bound $\tau_{k,\wedge}$. The bound for $\wt{\tau}_{k,\wedge}$ follows by an identical argument but replacing $\Psi^{\e}_{s}(t,\cdot)$ and $\mathbf{x}^{\e}(s,\cdot)$ by $\Phi^{\e}(s,\cdot)$ and $\wt{\mathbf{x}}^{\e}(s,\cdot)$, respectively. First, as in the proof of Lemma \ref{lemma:coupling1}, let $\mathscr{U}$ solve
\begin{align}
\Delta_{\mathsf{g}^{\Psi^{\e}_{s}(t,\cdot)}}\mathscr{U}(t,x) = 1 \quad\mathrm{and}\quad \mathscr{U}(t,\cdot)|_{\partial\bUz} = 0. 
\end{align}
(As in the proof of Lemma \ref{lemma:coupling1}, we will stop $\Psi^{\e}_{s}(t,\cdot)$ and $\mathbf{x}^{\e}(s,t)$, and thus $\mathscr{U}$, before the explosion of $\Psi^{\e}_{s}(t,\cdot)$ in $\mathscr{C}^{10}_{\simeq}(\partial\bUz,\R^{\d})$.) We will want to use the Ito formula. However, we require first a technical cutoff in the form of an auxiliary stopping time. Define $\tau_{\mathrm{cut}}$ as the first non-negative time such that \eqref{eq:coupling1I} fails. Now, define $\tau_{k,\wedge,\mathrm{cut}}:=\tau_{k,\wedge}\wedge\tau_{\mathrm{cut}}$. We claim that by Ito,
\begin{align}
\E\mathscr{U}(\tau_{k,\wedge,\mathrm{cut}},\mathbf{x}^{\e}(s,\tau_{k,\wedge})) \ = \ \E\tau_{k,\wedge,\mathrm{cut}} + \E\int_{0}^{\tau_{k,\wedge,\mathrm{cut}}}\left\{\mathbf{n}\cdot\grad\mathscr{U}(r,\cdot)\right\}(\mathbf{x}^{\e}(s,r))\d\mathbf{L}^{s}(r). \label{eq:coupling3I1}
\end{align}
Indeed, there is no initial data term on the \abbr{RHS} because $\mathbf{x}^{\e}(s,0)\in\partial\bUz$ and $\mathscr{U}$ vanishes on $\partial\bUz$. The first term on the \abbr{RHS} of \eqref{eq:coupling3I1} comes from $\{\partial_{r}+\Delta_{\mathsf{g}^{\Psi^{\e}_{s}(r,\cdot)}}\}\mathscr{U}(r,\mathbf{x}^{\e}(s,r))$ as explained before \eqref{eq:coupling1I5}.

Now, we detour from the proof of Lemma \ref{lemma:coupling1}. First, we use elliptic regularity to get that $|\grad\mathscr{U}(t,x)|\lesssim i_{1}(\langle\Psi^{\e}_{s}(t,\cdot)\rangle_{\mathscr{C}^{10}})$ for continuous, increasing, and deterministic $i_{1}(\cdot)\geq1$. (The implied constant is independent of $t,x$.) But time $\tau_{k,\wedge,\mathrm{cut}}\leq\tau_{\mathrm{cut}}$, so $i_{1}(\langle\Psi^{\e}_{s}(t,\cdot)\rangle_{\mathscr{C}^{10}})\lesssim i(\langle\Phi^{\e}(s,\cdot)\rangle_{\mathscr{C}^{10}})$ for another continuous, increasing, and deterministic $i(\cdot)\geq1$. This bounds
\begin{align}
\E\int_{0}^{\tau_{k,\wedge,\mathrm{cut}}}\left\{\mathbf{n}\cdot\grad\mathscr{U}(r,\cdot)\right\}(\mathbf{x}^{\e}(s,r))\d\mathbf{L}^{s}(r) \ &\lesssim \ i(\langle\Phi^{\e}(s,\cdot)\rangle_{\mathscr{C}^{10}})\times \E\mathbf{L}^{s}(\tau_{k,\wedge,\mathrm{cut}}) \label{eq:coupling3I1b}\\
&\lesssim \ k\delta(\e)\times i(\langle\Phi^{\e}(s,\cdot)\rangle_{\mathscr{C}^{10}}), \nonumber
\end{align}
where the last bound follows because the local time is monotone non-decreasing and $\tau_{k,\wedge,\mathrm{cut}}\leq\tau_{k}$ and because $\mathbf{L}^{s}(\tau_{k})=k\delta(\e)$ just by definition of $\tau_{k}$. (Recall that $\E$ conditions on $\Phi^{\e}(s,\cdot)$, which is why there is no expectation on the far \abbr{RHS} of the above display.) We claim the \abbr{LHS} of \eqref{eq:coupling3I1} is $\leq0$. Indeed, this follows by the maximum principle applied to the defining \abbr{PDE} for $\mathscr{U}$. (For example, if the Laplacian therein were the Euclidean Laplacian and if $\bUz$ were a one-dimensional interval, then $\mathscr{U}$ would be a upward-facing parabola with zero boundary conditions.) From this and the previous two displays, we deduce
\begin{align}
\E\tau_{k,\wedge,\mathrm{cut}} \ \lesssim \ k\delta(\e)\times i(\langle\Phi^{\e}(s,\cdot)\rangle_{\mathscr{C}^{10}}). \label{eq:coupling3I2}
\end{align}
Therefore, by the Markov inequality, with probability at least $1-\gamma(\e)$, we have $\tau_{k,\wedge,\mathrm{cut}}\lesssim t(\e)k\delta(\e)\times i(\langle\Phi^{\e}(s,\cdot)\rangle_{\mathscr{C}^{10}})$ for any $t(\e)\to\infty$ (for $\gamma(\e)\to0$ depending only on $t(\e)$). It remains to obtain $\tau_{k,\wedge,\mathrm{cut}}=\tau_{k,\wedge}$ with probability at least $1-\gamma'(\e)$ (for some $\gamma'(\e)\to0$). But $\tau_{k,\wedge,\mathrm{cut}}=\tau_{k,\wedge}\wedge\tau_{\mathrm{cut}}$, where $\tau_{\mathrm{cut}}$ is the first time \eqref{eq:coupling1I} fails; see before \eqref{eq:coupling3I1}. Now use Lemma \ref{lemma:coupling1}.

We are left to prove \eqref{eq:coupling3II}. Define $\tau_{\mathrm{ap}}:=\tau_{\mathrm{ap},1}\wedge\tau_{\mathrm{ap},2}\wedge\tau_{\mathrm{cut}}$, where $\tau_{\mathrm{cut}}$ is the same as before, and
\begin{align}
\tau_{\mathrm{ap},1} \ &:= \ \inf\{t\geq0: |\mathbf{x}^{\e}(s,t)-\wt{\mathbf{x}}^{\e}(s,t)| \gtrsim \alpha(\e)\times i(\langle\Phi^{\e}(s,\cdot)\rangle_{\mathscr{C}^{10}})\}\wedge\e^{-1}\tau(\e)\delta(\e)B(\e)\\
\tau_{\mathrm{ap},2} \ &:= \ \inf\{t\geq0: |\mathbf{L}^{s}(t)-\mathbf{L}^{s,\sim}(t)| \gtrsim \beta(\e)\times i(\langle\Phi^{\e}(s,\cdot)\rangle_{\mathscr{C}^{10}})\}\wedge\e^{-1}\tau(\e)\delta(\e)B(\e).
\end{align}
In words, $\tau_{\mathrm{ap},1}$ gives an a priori estimate for the reflecting Brownian motions, and $\tau_{\mathrm{ap},2}$ gives one for their respective local times. Above, $\beta(\e)\to0$ will be determined at the end of this proof. All we need is that it vanishes as $\e\to0$ (as slowly as we want). Let  us also recall that $\alpha(\e)$ is from Lemma \ref{lemma:coupling3}, and $B(\e)$ is a constant to be chosen at the end of this argument that satisfies the constraints of Lemmas \ref{lemma:coupling1} and \ref{lemma:coupling2} (i.e. $B(\e)\to\infty$). Lastly, define $\tau_{+}:=(\tau_{k,\wedge}\vee\wt{\tau}_{k,\wedge})\wedge\tau_{\mathrm{ap}}$ and $\tau_{-}:=(\tau_{k,\wedge}\wedge\wt{\tau}_{k,\wedge})\wedge\tau_{\mathrm{ap}}$.

We now apply the Ito formula from time $r=\tau_{-}$ to time $r=\tau_{+}$ as in the proof of \eqref{eq:coupling3I}. For this, we are allowed to condition on the filtration at time $\tau_{-}$ by the strong Markov property. Thus, for now, assume that $\tau_{-}=\wt{\tau}_{k,\wedge}\wedge\tau_{\mathrm{ap}}$. (The only other option is $\tau_{-}=\tau_{k,\wedge}\wedge\tau_{\mathrm{ap}}$. In this case, just repeat the following analysis except replacing $\Psi^{\e}_{s}(t,\cdot)$ and $\mathbf{x}^{\e}(s,\cdot)$ by $\Phi^{\e}(s,\cdot)$ and $\wt{\mathbf{x}}^{\e}(s,\cdot)$, respectively.) Let $\mathscr{U}$ be the same \abbr{PDE} solution from the beginning of this proof. We have 
\begin{align}
\E\mathscr{U}(\tau_{+}, & \mathbf{x}^{\e}(s,\tau_{+})) \nonumber \\
& = \ \E\mathscr{U}(\tau_{-},\mathbf{x}^{\e}(s,\tau_{-})) + \E\{\tau_{+}-\tau_{-}\} + \E\int_{\tau_{-}}^{\tau_{+}}\{\mathbf{n}\cdot\grad\mathscr{U}(r,\cdot)\}(\mathbf{x}^{\e}(s,r))\d\mathbf{L}^{s}(r). \label{eq:coupling3II1}
\end{align}
\eqref{eq:coupling3II1} follows by the same reasoning as \eqref{eq:coupling3I1}; we have just included the initial data term at time $\tau_{-}$ on the \abbr{RHS} of \eqref{eq:coupling3II1}. Using the same reasoning that gives us \eqref{eq:coupling3I1b}, we can control the last term on the \abbr{RHS} of \eqref{eq:coupling3II1} via
\begin{align}
\E\int_{\tau_{-}}^{\tau_{+}}\{\mathbf{n}\cdot\grad\mathscr{U}(r,\cdot)\}(\mathbf{x}^{\e}(s,r))\d\mathbf{L}^{s}(r) \ \lesssim \ i(\langle\Phi^{\e}(s,\cdot)\rangle_{\mathscr{C}^{10}})\times\E|\mathbf{L}^{s}(\tau_{+})-\mathbf{L}^{s}(\tau_{-})|. \label{eq:coupling3II2}
\end{align}
(In words, because of elliptic regularity, we have control of $\grad\mathscr{U}(r,\cdot)$ in terms of the $\Psi^{\e}_{s}(r,\cdot)$ metric, which itself is controlled in terms of the $\Phi^{\e}(s,\cdot)$ metric before time $\tau_{+}\leq\tau_{\mathrm{cut}}$ by definition of $\tau_{\mathrm{cut}}$. Then, we are just integrating differentiated local time on $[\tau_{-},\tau_{+}]$.) On the other hand, we claim that for $i_{1}(\cdot)\geq1$ continuous, increasing, and deterministic,
\begin{align}
\E\mathscr{U}(\tau_{+},\mathbf{x}^{\e}(s,\tau_{+}))-\E\mathscr{U}(\tau_{-},\mathbf{x}^{\e}(s,\tau_{-})) \ \lesssim \ \alpha(\e)\times i_{1}(\langle\Phi^{\e}(s,\cdot)\rangle_{\mathscr{C}^{10}}). \label{eq:coupling3II3}
\end{align}
To see this, recall $\tau_{-}=\tau_{k,\wedge}\wedge\wt{\tau}_{k,\wedge}\wedge\tau_{\mathrm{ap}}$ and $\tau_{+}=(\tau_{k,\wedge}\vee\wt{\tau}_{k,\wedge})\wedge\tau_{\mathrm{ap}}$. If $\tau_{-}=\tau_{\mathrm{ap}}$, then $\tau_{\mathrm{ap}}\leq\tau_{k,\wedge},\wt{\tau}_{k,\wedge}$, so $\tau_{+}=\tau_{\mathrm{ap}}=\tau_{-}$, meaning the \abbr{LHS} of \eqref{eq:coupling3II3} is zero on this event. So, condition on $\tau_{-}=\tau_{k,\wedge}\wedge\wt{\tau}_{k,\wedge}=\tau_{k}\wedge\wt{\tau}_{k}\wedge\e^{-1}\delta(\e)\tau(\e)B(\e)$. By the same reasoning we have just given, if $\tau_{-}=\e^{-1}\delta(\e)\tau(\e)B(\e)$, then $\tau_{+}=\tau_{-}$ (since $\tau_{+}\leq\e^{-1}\delta(\e)\tau(\e)B(\e)$ by construction). Thus, let us condition on the event $\tau_{-}=\tau_{k}\wedge\wt{\tau}_{k}$. If $\tau_{-}=\tau_{k}$, then the second term on the \abbr{LHS} of \eqref{eq:coupling3II3} is zero, since $\mathbf{x}^{\e}(s,\tau_{k})\in\partial\bUz$ and $\mathscr{U}$ vanishes on $\partial\bUz$, both by construction. Moreover, recall from the proof of \eqref{eq:coupling3I} that $\mathscr{U}\leq0$ with probability 1. This shows that on the event $\tau_{-}=\tau_{k}$, the expectation gives us $\leq0$. It remains to condition on $\tau_{-}=\wt{\tau}_{k}$. In this case, we still know the first term on the \abbr{LHS} of \eqref{eq:coupling3II3} is $\leq0$. As for the second term on the \abbr{LHS} of \eqref{eq:coupling3II3}, we first replace $\mathbf{x}^{\e}(s,\wt{\tau}_{k})$ by $\wt{\mathbf{x}}^{\e}(s,\wt{\tau}_{k})$. Since we are on the event where $\wt{\tau}_{k}\leq\tau_{\mathrm{ap}}$ (as we have conditioned on $\wt{\tau}_{k}=\tau_{-}\leq{\tau}_{\mathrm{ap}}$, the last of which follows by construction of $\tau_{-}$), we know that $|\mathbf{x}^{\e}(s,\wt{\tau}_{k})-\wt{\mathbf{x}}^{\e}(s,\wt{\tau}_{k})|\lesssim\alpha(\e)\times i(\langle\Phi^{\e}(s,\cdot)\rangle_{\mathscr{C}^{10}})$ (see the definition of $\tau_{\mathrm{ap}}\leq\tau_{\mathrm{ap},1}$). So, because the gradients of $\mathscr{U}(t,\cdot)$ are controlled by $i(\langle\Psi^{\e}_{s}(t,\cdot)\rangle_{\mathscr{C}^{10}})\lesssim i_{2}(\langle\Phi^{\e}(t,\cdot)\rangle_{\mathscr{C}^{10}})$, which holds since we work before $\tau_{\mathrm{ap}}\leq\tau_{\mathrm{cut}}$, the expectation on the current event is $\lesssim\mathrm{\abbr{RHS}}\eqref{eq:coupling3II3}$ just by the mean-value theorem. Thus, \eqref{eq:coupling3II3} follows. Now, the previous three displays give
\begin{align}
\E\{\tau_{+}-\tau_{-}\} \ \lesssim \ i(\langle\Phi^{\e}(s,\cdot)\rangle_{\mathscr{C}^{10}})\times\E|\mathbf{L}^{s}(\tau_{+})-\mathbf{L}^{s}(\tau_{-})| + \alpha(\e)\times i_{1}(\langle\Phi^{\e}(s,\cdot)\rangle_{\mathscr{C}^{10}}). \label{eq:coupling3II3b}
\end{align}
First, note that we can drop the absolute value bars on the \abbr{RHS} of \eqref{eq:coupling3II3b}, since the local time is non-decreasing and $\tau_{-}\leq\tau_{+}$. So, for the first term on the \abbr{RHS} of \eqref{eq:coupling3II3b}, we can write
\begin{align}
\mathbf{L}^{s}(\tau_{+})-\mathbf{L}^{s}(\tau_{-}) \ = \ &\{\mathbf{L}^{s,\sim}(\tau_{-})-\mathbf{L}^{s}(\tau_{-})\}+\{\mathbf{L}^{s}(\tau_{+})-\mathbf{L}^{s,\sim}(\tau_{-})\}. \label{eq:coupling3II4}
\end{align}
Since $\tau_{+},\tau_{-}\leq\tau_{\mathrm{ap}}$, the first braced term on the \abbr{RHS} of \eqref{eq:coupling3II4} is $\lesssim\beta(\e)\times i_{3}(\langle\Phi^{\e}(s,\cdot)\rangle_{\mathscr{C}^{10}})$. For the second braced term, recall that $\tau_{-}=\tau_{k,\wedge}\wedge\wt{\tau}_{k,\wedge}\wedge\tau_{\mathrm{ap}}$ and $\tau_{+}=(\tau_{k,\wedge}\vee\wt{\tau}_{k,\wedge})\wedge\tau_{\mathrm{ap}}$. If $\tau_{-}\neq\wt{\tau}_{k}$, then $\tau_{-}=\tau_{\mathrm{ap}}$. (Indeed, we assumed after conditioning that $\tau_{-}=\wt{\tau}_{k,\wedge}\wedge\tau_{\mathrm{ap}}$. Now, if $\tau_{-}=\wt{\tau}_{k,\wedge}$ and $\tau_{-}\neq\wt{\tau}_{k}$, then $\tau_{-}=\e^{-1}\delta(\e)\tau(\e)B(\e)$; see the statement of Lemma \ref{lemma:coupling3}. Since $\tau_{-}\leq\tau_{\mathrm{ap}}\leq\e^{-1}\delta(\e)\tau(\e)B(\e)$ by assumption and construction, we get $\tau_{-}=\tau_{\mathrm{ap}}$.) But $\tau_{-}\leq\tau_{+}\leq\tau_{\mathrm{ap}}$, so on this event, we know $\tau_{-}=\tau_{+}$, and the second term on the \abbr{RHS} of \eqref{eq:coupling3II4} is zero on this event. Thus, let us condition on the event $\tau_{-}=\wt{\tau}_{k}$. In this case, we know $\mathbf{L}^{s,\sim}(\tau_{-})=k\delta(\e)$. On the other hand, since local time is non-decreasing and $\tau_{+}=(\tau_{k,\wedge}\vee\wt{\tau}_{k,\wedge})\wedge\tau_{\mathrm{ap}}\leq\tau_{k}$, we know $\mathbf{L}^{s}(\tau_{+})\leq\mathbf{L}^{s}(\tau_{k})=k\delta(\e)$. Thus, the second braced term on the \abbr{RHS} of \eqref{eq:coupling3II4} is $\leq0$. In particular, we deduce from \eqref{eq:coupling3II3b}, \eqref{eq:coupling3II4}, and this paragraph that 
\begin{align}
\E\{\tau_{+}-\tau_{-}\} \ \lesssim \ \beta(\e)\times i_{4}(\langle\Phi^{\e}(s,\cdot)\rangle_{\mathscr{C}^{10}}) + \alpha(\e)\times i_{1}(\langle\Phi^{\e}(s,\cdot)\rangle_{\mathscr{C}^{10}}).
\end{align}
Note $\tau_{+}-\tau_{-}=|\tau_{+}-\tau_{-}|$, since $\tau_{+}\geq\tau_{-}$. Thus, by \eqref{eq:coupling3II4} and the Markov inequality, we deduce that \eqref{eq:coupling3II} is true if we replace $\tau_{k,\wedge}-\wt{\tau}_{k,\wedge}$ by $\tau_{+}-\tau_{-}$. Since $\tau_{-}=\tau_{k,\wedge}\wedge\wt{\tau}_{k,\wedge}\wedge\tau_{\mathrm{ap}}$ and $\tau_{+}=(\tau_{k,\wedge}\vee\wt{\tau}_{k,\wedge})\wedge\tau_{\mathrm{ap}}$, in order to show \eqref{eq:coupling3II}, we are left to show
\begin{align}
\mathbf{P}\{\tau_{\mathrm{ap}}=\e^{-1}\tau(\e)\delta(\e)B(\e)\} \ \geq \ 1-\xi(\e), \label{eq:coupling3II5}
\end{align}
where $\xi(\e)\to0$ as $\e\to0$. (Indeed, on this event, taking minimum with $\tau_{\mathrm{ap}}$ does nothing, since $\tau_{k,\wedge},\wt{\tau}_{k,\wedge}\leq\e^{-1}\tau(\e)\delta(\e)B(\e)$ by construction; see the statement of Lemma \ref{lemma:coupling3}. So, we would deduce from \eqref{eq:coupling3II5} that $\tau_{k,\wedge}-\wt{\tau}_{k,\wedge}=\tau_{+}-\tau_{-}$ with probability $1-\mathrm{o}(1)$, meaning that if \eqref{eq:coupling3II} holds for $\tau_{+}-\tau_{-}$ in place of $\tau_{k,\wedge}-\wt{\tau}_{k,\wedge}$, then it holds as written \eqref{eq:coupling3II} on an event of slightly smaller but still $1-\mathrm{o}(1)$ probability, which is enough.) To show \eqref{eq:coupling3II5}, by a union bound and $\tau_{\mathrm{ap}}=\tau_{\mathrm{ap},1}\wedge\tau_{\mathrm{ap},2}\wedge\tau_{\mathrm{cut}}$, we have
\begin{align}
\mathbf{P}\{\tau_{\mathrm{ap}}<\e^{-1}\tau(\e)\delta(\e)B(\e)\} \ \leq \ &\mathbf{P}\{\tau_{\mathrm{ap},2}<\e^{-1}\tau(\e)\delta(\e)B(\e), \ \tau_{\mathrm{ap},2}\leq\tau_{\mathrm{ap},1}\wedge\tau_{\mathrm{cut}}\}\label{eq:coupling3II6}\\
+ \ &\mathbf{P}\{\tau_{\mathrm{ap},1}<\e^{-1}\tau(\e)\delta(\e)B(\e)\}+\mathbf{P}\{\tau_{\mathrm{cut}}<\e^{-1}\tau(\e)\delta(\e)B(\e)\}. 
\end{align}
As explained earlier in the proof of \eqref{eq:coupling3I}, Lemma \ref{lemma:coupling1} implies the last term in the second line is $\mathrm{o}(1)$ as $\e\to0$. The same is true by Lemma \ref{lemma:coupling2} for the first term in the second line. Thus, it suffices to control the \abbr{RHS} of \eqref{eq:coupling3II6}. First, we approximate the local times $\mathbf{L}^{s}$ and $\mathbf{L}^{s,\sim}$ by smoother versions. In particular, by Theorem 2.6 of \cite{BCS} (with modifications explained shortly), for any $\mathfrak{t}\geq0$ and $\beta_{1},\beta_{2},\beta_{3}>0$, there is a choice of $\beta_{0}=\beta_{0}(\beta_{1},\beta_{2},\beta_{3})$ such that with probability at least $1-\beta_{3}$,
\begin{align}
\sup_{0\leq t\leq\mathfrak{t}}|\mathbf{L}^{s}(t)-\mathbf{L}^{s,\beta_{0}}(t)|+\sup_{0\leq t\leq\mathfrak{t}}|\mathbf{L}^{s,\sim}(t)-\mathbf{L}^{s,\sim,\beta_{0}}(t)| \ \lesssim \ \beta_{1}+\beta_{2}\times\sup_{0\leq t\leq\mathfrak{t}} i(\mathfrak{t},\langle\Psi^{\e}_{s}(t,\cdot)\rangle_{\mathscr{C}^{10}}), \label{eq:coupling3II7}
\end{align}
where $i(\cdot,\cdot)$ is jointly continuous, increasing, and deterministic, and in what follows, $\Upsilon_{\beta_{0}}:\R^{\d}\to\R$ is smooth with derivatives depending continuous only on the parameter $\beta_{0}>0$:
\begin{align}
\mathbf{L}^{s,\beta_{0}}(t) \ := \ \int_{0}^{t}\Upsilon_{\beta_{0}}(\mathbf{x}^{\e}(s,r))\d r \quad\mathrm{and}\quad \mathbf{L}^{s,\sim,\beta_{0}}(t) \ := \ \int_{0}^{t}\Upsilon_{\beta_{0}}(\wt{\mathbf{x}}^{\e}(s,r))\d r.
\end{align}
In words, the local time $\mathbf{L}^{s}(t)$ is, at least morally, $\mathbf{L}^{s,\beta_{0}}(t)$ for $\beta_{0}=0$ (in which case $\Upsilon_{\beta_{0}=0}$ is a delta function on the boundary $\partial\bUz$). Thus, we are just smoothing the delta function a little. More precisely, for any scale of accuracy measured by $\beta_{1},\beta_{2}>0$, we can approximate the local time by a smoother version with derivatives depending continuously on $\beta_{1},\beta_{2}$. The error in this smoothing depends on the underlying geometry $\Psi^{\e}_{s}(t,\cdot)$ and deteriorates on the time-scale $\mathfrak{t}$, both in continuous fashion; this is where the function $i(\cdot,\cdot)$ comes from. (Technically, Theorem 2.6 in \cite{BCS} gives the previous claim but letting $\Upsilon_{\beta_{0}}$ be the normalized indicator of a small neighborhood of $\partial\bUz$ of width-scale $\beta_{0}$. This, of course, is not smooth. However, the proof of Theorem 2.6 in \cite{BCS} still gives the above claim with smooth $\Upsilon_{\beta_{0}}$ if we instead solve the equation $\Delta_{\mathsf{g}^{\Phi}}\mathscr{V}\approx\Upsilon_{\beta_{0}}$ with smooth $\Upsilon_{\beta_{0}}$ therein, where $\Phi=\Phi^{\e}(s,\cdot),\Psi^{\e}_{s}(t,\cdot)$ in this context. Indeed, \cite{BCS} does exactly this but for the aforementioned non-smooth $\Upsilon_{\beta_{0}}$.)

Let us fix $\beta_{0},\beta_{1},\beta_{2},\beta_{3}$ in the previous setting for now. We will decide on their values (and how they depend on $\e$ as $\e\to0$ at the end of this argument). Because $\tau_{\mathrm{ap},2}\leq\tau_{\mathrm{ap},1}$ by assumption of the event we are working on, via Taylor expansion, we have 
\begin{align}
\sup_{0\leq t\leq\mathfrak{t}}|\mathbf{L}^{s,\beta_{0}}(t)-\mathbf{L}^{s,\sim,\beta_{0}}(t)| \ &\lesssim \ i(\beta_{0}^{-1}) \int_{0}^{\mathfrak{t}}|\mathbf{x}^{\e}(s,r)-\wt{\mathbf{x}}^{\e}(s,r)|\d r \label{eq:coupling3II8}\\
&\lesssim \ \mathfrak{t}\times i(\beta_{0}^{-1})\times \alpha(\e) \times i(\langle\Phi^{\e}(s,\cdot)\rangle_{\mathscr{C}^{10}}). \nonumber
\end{align}
Combining \eqref{eq:coupling3II7} and \eqref{eq:coupling3II8} with the triangle inequality, we deduce the following with probability at least $1-\beta_{3}$:
\begin{align}
\sup_{0\leq t\leq\mathfrak{t}}|\mathbf{L}^{s}(t)-\mathbf{L}^{s,\sim}(t)| \ \lesssim \ \mathfrak{t}\times i(\beta_{0}^{-1})\times \alpha(\e) \times i(\langle\Phi^{\e}(s,\cdot)\rangle_{\mathscr{C}^{10}})+\beta_{1}+\beta_{2}\times\sup_{0\leq t\leq\mathfrak{t}} i(\mathfrak{t},\langle\Psi^{\e}_{s}(t,\cdot)\rangle_{\mathscr{C}^{10}}). \label{eq:coupling3II9}
\end{align}
Because $\tau_{\mathrm{ap},2}\leq\tau_{\mathrm{cut}}$ by assumption, we can replace $\Psi^{\e}_{s}(t,\cdot)$ on the \abbr{RHS} of \eqref{eq:coupling3II9} by $\Phi^{\e}(s,\cdot)$. We now choose the time-scale $\mathfrak{t}=\e^{-1}\delta(\e)\tau(\e)B(\e)$. We also let $\beta_{1},\beta_{2},\beta_{3}\to0$ as $\e\to0$ sufficiently slowly so that if we choose $B(\e)\to\infty$ sufficiently slowly as well, then $\mathfrak{t}\to\infty$ and $i(\beta_{0}^{-1})\to\infty$ sufficiently slowly and $\mathfrak{t}\times i(\beta_{0}^{-1})\times \alpha(\e)\to0$. (One can check immediately that the constant $\alpha(\e)$, which is from Lemma \ref{lemma:coupling2}, would only possibly depend on $B(\e)$ and is allowed to stay the same if all we do is make $B(\e)$ smaller. Thus, there exist such choices of $\beta_{1},\beta_{2},\beta_{3}\to0$ and $B(\e)\to\infty$.) We also choose $B(\e)\to\infty$ possibly even more slowly depending on $\beta_{2}$ such that the last term on the \abbr{RHS} of \eqref{eq:coupling3II9} is $=\mathrm{o}(1)\times i_{5}(\langle\Phi^{\e}(s,\cdot)\rangle_{\mathscr{C}^{10}})$. So, we deduce
\begin{align}
i_{6}(\langle\Phi^{\e}(s,\cdot)\rangle_{\mathscr{C}^{10}})^{-1}\times\sup_{0\leq t\leq\mathfrak{t}}|\mathbf{L}^{s}(t)-\mathbf{L}^{s,\sim}(t)| \ \lesssim \ \mathrm{o}(1)
\end{align}
with probability $1-\mathrm{o}(1)$. Choosing $\beta(\e)\to0$ to be the $\mathrm{o}(1)$ term on the \abbr{RHS} of this display shows that the lower bound defining $\tau_{\mathrm{ap},2}$ (see before \eqref{eq:coupling3II1}) is never realized except for on an event of probability $\mathrm{o}(1)$. Equivalently, the \abbr{RHS} of \eqref{eq:coupling3II6} vanishes as $\e\to0$. Therefore, \eqref{eq:coupling3II5} follows. As noted right before \eqref{eq:coupling3II5}, this completes the proof.
\end{proof}
\subsection{Proof of Proposition \ref{prop:coupling}}
Recall the stopping times $k\mapsto\tau_{k,\wedge}\wt{\tau}_{k,\wedge}$ from Lemma \ref{lemma:coupling3}. We introduce the following modification of $t\mapsto\mathbf{z}^{\e}(s+t),\wt{\mathbf{z}}^{\e}(s,t)$ (which are defined via $t\mapsto\mathbf{x}^{\e}(s,t),\wt{\mathbf{x}}^{\e}(s,t)$, respectively, at the beginning of this section). We let $t\mapsto\mathbf{w}(s,t)$ be the continuous-time Poisson process such that $\mathbf{w}(s,0)=\mathbf{x}^{\e}(s,0)=\mathbf{z}^{\e}(s)$, whose Poisson clocks have speed $\e^{-1}$, and whose embedded Markov chain is $k\mapsto\mathbf{x}^{\e}(s,\tau_{k,\wedge})$. Let $t\mapsto\wt{\mathbf{w}}(s,t)$ be the continuous-time Poisson process such that $\wt{\mathbf{w}}^{\e}(s,0)=\wt{\mathbf{x}}^{\e}(s,0)=\mathbf{x}^{\e}(s,0)=\mathbf{z}^{\e}(s,0)=\wt{\mathbf{z}}^{\e}(s,0)$, whose Poisson clocks have speed of $\e^{-1}$, and whose embedded Markov chain is given by $k\mapsto\wt{\mathbf{x}}^{\e}(s,\wt{\tau}_{k,\wedge})$. The clocks in all Poisson processes are the same (so $t\mapsto\mathbf{z}^{\e}(s+t),\wt{\mathbf{z}}^{\e}(s,t),\mathbf{w}(s,t),\wt{\mathbf{w}}(s,t)$ all jump at the same times).

We start with the following preliminary claim. There exists $\beta(\e)\to0$ as $\e\to0$ such that 
\begin{align}
\mathbf{P}\Big\{\sup_{0\leq t\leq\tau(\e)}|\mathbf{w}(s,t)-\mathbf{z}^{\e}(s+t)|+|\wt{\mathbf{w}}(s,t)-\wt{\mathbf{z}}^{\e}(s,t)| = 0\Big\} \ \geq \ 1-\beta(\e)\times i(\langle\Phi^{\e}(s,\cdot)\rangle_{\mathscr{C}^{10}}). \label{eq:coupling1}
\end{align}
In words, $t\mapsto\mathbf{w}(s,t),\mathbf{z}^{\e}(s+t)$ agree for all $0\leq t\leq\tau(\e)$ outside an event with probability at most $\beta(\e)\times i(\langle\Phi^{\e}(s,\cdot)\rangle_{\mathscr{C}^{10}})$. (And the same for $t\mapsto\wt{\mathbf{w}}(s,t),\wt{\mathbf{z}}^{\e}(s,t)$.) To see \eqref{eq:coupling1}, note that the embedded Markov chains for both $t\mapsto\mathbf{w}(s,t),\mathbf{z}^{\e}(s+t)$ are the same until we hit a number of steps $k$ such that $\tau_{k,\wedge}\neq\tau_{k}$. Equivalently (see Lemma \ref{lemma:coupling3}), we hit a number of steps $k$ in the embedded chain so that $\tau_{k,\wedge}=\e^{-1}\tau(\e)\delta(\e)B(\e)$, where $B(\e)\to\infty$ as $\e\to0$. First, according to the upper bound \eqref{eq:coupling3I}, we know that $\tau_{k,\wedge}\lesssim t(\e)k\delta(\e)\times i(\langle\Phi^{\e}(s,\cdot)\rangle_{\mathscr{C}^{10}})$ with probability at least $1-\gamma(\e)$ (where $\gamma(\e)\to0$ and $t(\e)\to\infty$ as slowly as we want as $\e\to0$). By the same reasoning for $t\mapsto\wt{\mathbf{w}}(s,t),\wt{\mathbf{z}}^{\e}(s,t)$, we ultimately deduce that the \abbr{LHS} of \eqref{eq:coupling1} is bounded below by $1-\mathbf{P}-\gamma(\e)$, where $\mathbf{P}$ is the probability that a Poisson random variable $\mathfrak{p}$ of intensity $\e^{-1}\tau(\e)$ (which counts ringings before time $\tau(\e)$) satisfies $t(\e)\mathfrak{p}\delta(\e)\times i(\langle\Phi^{\e}(s,\cdot)\rangle_{\mathscr{C}^{10}})\geq\e^{-1}\tau(\e)\delta(\e)B(\e)$, or equivalently, $\mathfrak{p}\geq\e^{-1}\tau(\e)B(\e)t(\e)^{-1}i(\langle\Phi^{\e}(s,\cdot)\rangle_{\mathscr{C}^{10}})^{-1}$. But $B(\e)\to\infty$, so if we take $t(\e)=B(\e)^{1/2}$, for example, we deduce that $\mathbf{P}\leq\beta(\e)\times i(\langle\Phi^{\e}(s,\cdot)\rangle_{\mathscr{C}^{10}})$ (since $\mathfrak{p}$ has intensity $\e^{-1}\tau(\e)$), where $\beta(\e)\to0$ as $\e\to0$. This completes the proof of \eqref{eq:coupling1}. Thus, we are left to compare $t\mapsto\mathbf{w}(s,t),\wt{\mathbf{w}}(s,t)$. Put more precisely, because all ``particle processes" live in the fixed compact set $\bUz$, we have the following in which $\tau\geq s+\tau(\e)$ is a stopping time (see the statement of Proposition \ref{prop:coupling}) and $\mathscr{E}$ is the event on the \abbr{LHS} of \eqref{eq:coupling1}:
\begin{align}
&\sup_{0\leq t\leq\tau(\e)}\E\left\{|\mathbf{z}^{\e}(s+t)-\wt{\mathbf{z}}^{\e}(s,t)|\times i(\langle\Phi^{\e}\rangle_{\tau})^{-1}\right\} \nonumber\\
\leq &\sup_{0\leq t\leq\tau(\e)}\E\left\{(\mathbf{1}_{\mathscr{E}}+\mathbf{1}_{\mathscr{E}^{C}})|\mathbf{z}^{\e}(s+t)-\wt{\mathbf{z}}^{\e}(s,t)|\times i(\langle\Phi^{\e}\rangle_{\tau})^{-1}\right\} \label{eq:coupling2a}\\
\lesssim &\sup_{0\leq t\leq\tau(\e)}\E\left\{\mathbf{1}_{\mathscr{E}}|\mathbf{z}^{\e}(s+t)-\wt{\mathbf{z}}^{\e}(s,t)|\times i(\langle\Phi^{\e}\rangle_{\tau})^{-1}\right\} + \beta(\e) \label{eq:coupling2b}\\
\leq &\sup_{0\leq t\leq\tau(\e)}\E\left\{|\mathbf{w}^{\e}(s,t)-\wt{\mathbf{w}}^{\e}(s,t)|\times i(\langle\Phi^{\e}\rangle_{\tau})^{-1}\right\}+\beta(\e). \label{eq:coupling2c}
\end{align}
To estimate the first term in \eqref{eq:coupling2c}, we first let $\mathfrak{p}_{t}$ be a Poisson random variable of intensity $\e^{-1}t$ for $0\leq t\leq\tau(\e)$. By what it means to be a Poisson process with embedded chain and because $i(\cdot)$ is increasing, we can write
\begin{align}
&\E\left\{|\mathbf{w}^{\e}(s,t)-\wt{\mathbf{w}}^{\e}(s,t)|\times i(\langle\Phi^{\e}\rangle_{\tau})^{-1}\right\} \nonumber\\
&\leq \sum_{k\geq0}\mathbf{P}\{\mathfrak{p}_{t}=k\} \E\left\{|\mathbf{x}^{\e}(s,\tau_{k,\wedge})-\wt{\mathbf{x}}^{\e}(s,\wt{\tau}_{k,\wedge})|\times i(\langle\Phi^{\e}(s,\cdot)\rangle_{\mathscr{C}^{10}})^{-1}\right\}. \label{eq:coupling3}
\end{align}
Next, we use the triangle inequality. For any $k\geq0$, we deduce that 
\begin{align}
&\E\left\{|\mathbf{x}^{\e}(s,\tau_{k,\wedge})-\wt{\mathbf{x}}^{\e}(s,\wt{\tau}_{k,\wedge})|\times i(\langle\Phi^{\e}(s,\cdot)\rangle_{\mathscr{C}^{10}})^{-1}\right\} \nonumber\\
&\leq \ \E\left\{|\mathbf{x}^{\e}(s,\tau_{k,\wedge})-\wt{\mathbf{x}}^{\e}(s,\tau_{k,\wedge})|\times i(\langle\Phi^{\e}(s,\cdot)\rangle_{\mathscr{C}^{10}})^{-1}\right\}\label{eq:coupling4a}\\
&+ \ \E\left\{|\wt{\mathbf{x}}^{\e}(s,\tau_{k,\wedge})-\wt{\mathbf{x}}^{\e}(s,\wt{\tau}_{k,\wedge})|\times i(\langle\Phi^{\e}(s,\cdot)\rangle_{\mathscr{C}^{10}})^{-1}\right\}. \label{eq:coupling4b}
\end{align}
Let $\mathscr{E}_{1}$ denote the event where $|\mathbf{x}^{\e}(s,\tau_{k,\wedge})-\wt{\mathbf{x}}^{\e}(s,\tau_{k,\wedge})|\lesssim\alpha(\e)\times i(\langle\Phi^{\e}(s,\cdot)\rangle_{\mathscr{C}^{10}})$. By construction in Lemma \ref{lemma:coupling3}, we know $\tau_{k,\wedge}\leq\e^{-1}\tau(\e)\delta(\e)B(\e)$ with probability 1. Thus, by Lemma \ref{lemma:coupling2}, we know $\mathbf{P}[\mathscr{E}_{1}]\geq1-\gamma(\e)$, where $\gamma(\e)\to0$ as $\e\to0$. Since $\mathbf{x}^{\e},\wt{\mathbf{x}}^{\e}$ are uniformly bounded (they belong to the compact set $\bUz$), a union bound argument gives
\begin{align}
\mathrm{\abbr{RHS}}\eqref{eq:coupling4a} \ \leq \ \E\left\{(\mathbf{1}_{\mathscr{E}_{1}}+\mathbf{1}_{\mathscr{E}_{1}^{C}})|\mathbf{x}^{\e}(s,\tau_{k,\wedge})-\wt{\mathbf{x}}^{\e}(s,\tau_{k,\wedge})|\times i(\langle\Phi^{\e}(s,\cdot)\rangle_{\mathscr{C}^{10}})^{-1}\right\} \ \lesssim \ \alpha(\e)+\gamma(\e). \label{eq:coupling5}
\end{align}
Let $\mathscr{E}_{2}$ denote the event where the $\mathscr{C}^{1/3}$-norm of $t\mapsto\wt{\mathbf{x}}^{\e}(s,t)$ is bounded above by $h(\e)\times\wt{i}(\langle\Phi^{\e}(s,\cdot)\rangle_{\mathscr{C}^{10}})$, where $h(\e)\to\infty$ as $\e\to0$ slowly (to be specified shortly), and $\wt{i}(\cdot)\geq1$ is an increasing, continuous function (specified shortly as well), and $0\leq t\leq\e^{-1}\tau(\e)\delta(\e)B(\e)$. By Lemma \ref{lemma:sde}, if we take $B(\e)\to\infty$ sufficiently slowly (depending on $h(\e)$), we know $\mathbf{P}[\mathscr{E}_{2}]\geq1-\upsilon(\e)$, where $\upsilon(\e)\to0$ as $\e\to0$. Moreover, by definition of $\mathscr{E}_{2}$, on the event $\mathscr{E}_{2}$, we know $|\wt{\mathbf{x}}^{\e}(s,\tau_{k,\wedge})-\wt{\mathbf{x}}^{\e}(s,\wt{\tau}_{k,\wedge})|\lesssim h(\e)|\tau_{k,\wedge}-\wt{\tau}_{k,\wedge}|^{1/3}$. Now, let $\mathscr{E}_{3}$ denote the event $|\tau_{k,\wedge}-\wt{\tau}_{k,\wedge}|\lesssim\alpha(\e)\times i(\langle\Phi^{\e}(s,\cdot)\rangle_{\mathscr{C}^{10}})$. By Lemma \ref{lemma:coupling3}, we know $\mathbf{P}[\mathscr{E}_{3}]\geq1-\gamma(\e)$, where $\gamma(\e)\to0$ as $\e\to0$. Using all of this, as well as the bound $|\wt{\mathbf{x}}^{\e}|\lesssim1$ (since $\wt{\mathbf{x}}^{\e}\in\bUz$), we ultimately deduce the following sequence of calculations if we choose $\wt{i}(\cdot)\geq1$ depending only on $i(\cdot)$:
\begin{align*}
\eqref{eq:coupling4b} \ \leq \ &\E\left\{(\mathbf{1}_{\mathscr{E}_{2}}+\mathbf{1}_{\mathscr{E}_{2}^{C}})|\wt{\mathbf{x}}^{\e}(s,\tau_{k,\wedge})-\wt{\mathbf{x}}^{\e}(s,\wt{\tau}_{k,\wedge})|\times i(\langle\Phi^{\e}(s,\cdot)\rangle_{\mathscr{C}^{10}})^{-1}\right\} \\
\lesssim \ &\E\left\{\mathbf{1}_{\mathscr{E}_{2}}|\wt{\mathbf{x}}^{\e}(s,\tau_{k,\wedge})-\wt{\mathbf{x}}^{\e}(s,\wt{\tau}_{k,\wedge})|\times i(\langle\Phi^{\e}(s,\cdot)\rangle_{\mathscr{C}^{10}})^{-1}\right\} + \upsilon(\e) \\
\lesssim \ &\E\left\{\mathbf{1}_{\mathscr{E}_{2}}(\mathbf{1}_{\mathscr{E}_{3}}+\mathbf{1}_{\mathscr{E}_{3}^{C}})|\wt{\mathbf{x}}^{\e}(s,\tau_{k,\wedge})-\wt{\mathbf{x}}^{\e}(s,\wt{\tau}_{k,\wedge})|\times i(\langle\Phi^{\e}(s,\cdot)\rangle_{\mathscr{C}^{10}})^{-1}\right\} + \upsilon(\e) \\
\lesssim \ &\E\left\{\mathbf{1}_{\mathscr{E}_{2}}\mathbf{1}_{\mathscr{E}_{3}}|\wt{\mathbf{x}}^{\e}(s,\tau_{k,\wedge})-\wt{\mathbf{x}}^{\e}(s,\wt{\tau}_{k,\wedge})|\times i(\langle\Phi^{\e}(s,\cdot)\rangle_{\mathscr{C}^{10}})^{-1}\right\} + \gamma(\e) + \upsilon(\e) \\
\lesssim \ &h(\e)\alpha(\e)^{1/3}+\gamma(\e)+\upsilon(\e).
\end{align*}
If we choose $h(\e)=\alpha(\e)^{-1/6}$ (recall from the previous paragraph that $h(\e)\to\infty$ is for our choosing), we deduce $\eqref{eq:coupling4b}\to0$ as $\e\to0$. (Again, whatever choice we make for $h(\e)$ may force us to choose $B(\e)\to\infty$ in Lemma \ref{lemma:coupling1} to happen more slowly, but this is not an issue.) Combining this with \eqref{eq:coupling2a}-\eqref{eq:coupling2c}, \eqref{eq:coupling3}, \eqref{eq:coupling4a}-\eqref{eq:coupling4b} and \eqref{eq:coupling5} yields \eqref{eq:couplingI}. 

We move to proving $\E[\|\wt{\mathsf{F}}^{\e,\mathrm{av}}-\mathsf{H}^{\e}\|_{\tau}i(\langle\Phi^{\e}\rangle_{\tau})^{-2}]\lesssim\zeta(\e)$ for $\zeta(\e)\to0$ as $\e\to0$. To this end, we start with a deterministic estimate. If $0\leq t\leq\tau(\e)$, then by construction, we have $\wt{\mathsf{F}}^{\e,\mathrm{av}}(t,\cdot)=\mathsf{H}^{\e}(t,\cdot)=0$. For $\tau(\e)\leq t\leq\tau$, we know $\wt{\mathsf{F}}^{\e,\mathrm{av}}(t,x)-\mathsf{H}^{\e}(t,x)$ is equal to
\begin{align}
\int_{0}^{t-\tau(\e)}\d s \ \tau(\e)^{-1}\int_{0}^{\tau(\e)}\d r \int_{\partial\bUz}\left\{\mathbf{T}^{\Phi^{\e}(s,\cdot),\delta}_{\mathbf{z}^{\e}(s+r)}(\d y)-\mathbf{T}^{\Phi^{\e}(s,\cdot),\delta}_{\wt{\mathbf{z}}^{\e}(s,r)}(\d y)\right\}\mathscr{K}(x,y)\mathsf{n}^{\Phi^{\e}(s,\cdot)}(y). \label{eq:coupling6}
\end{align}
Lemma \ref{lemma:freezeprelim} now implies the following estimate given \eqref{eq:coupling6}, where $\alpha(\e)\to0$ as $\e\to0$:
\begin{align*}
\|\wt{\mathsf{F}}^{\e,\mathrm{av}}(t,\cdot)-\mathsf{H}^{\e}(t,\cdot)& \|_{\mathscr{C}^{10}}&\\
\lesssim \ &\int_{0}^{t-\tau(\e)}\d s \ \tau(\e)^{-1}\int_{0}^{\tau(\e)}\d r \left\{|\mathbf{z}^{\e}(s+r)-\wt{\mathbf{z}}^{\e}(s,r)|+\alpha(\e)\right\}\times i(\langle\Phi^{\e}(s,\cdot)\rangle_{\mathscr{C}^{10}}) \\
\lesssim \ &i(\langle\Phi^{\e}\rangle_{\tau})\int_{0}^{t-\tau(\e)}\d s \ \tau(\e)^{-1}\int_{0}^{\tau(\e)}\d r \left\{|\mathbf{z}^{\e}(s+r)-\wt{\mathbf{z}}^{\e}(s,r)|+\alpha(\e)\right\}.
\end{align*}
We can extend the outer integration domain from $[0,t-\tau(\e)]$ to $[0,\mathfrak{t}]$ for some deterministic time $\mathfrak{t}\lesssim1$ (because $t\leq\tau\lesssim1$ by assumption). The resulting bound is independent of the $t$-variable in the previous display, so this would give an upper bound for $\|\wt{\mathsf{F}}^{\e,\mathrm{av}}-\mathsf{H}^{\e}\|_{\tau}$. Ultimately, by applying the triangle inequality and \eqref{eq:couplingI}, we deduce the following estimate (since $\mathfrak{t}\lesssim1$):
\begin{align}
\E[\|\wt{\mathsf{F}}^{\e,\mathrm{av}}-\mathsf{H}^{\e}\|_{\tau}i(\langle\Phi^{\e}\rangle_{\tau})^{-2}] \ &\lesssim \ \alpha(\e)+\sup_{0\leq t\leq\tau(\e)}\E\left\{|\mathbf{z}^{\e}(s+t)-\wt{\mathbf{z}}^{\e}(s,t)|\times i(\langle\Phi^{\e}\rangle_{\tau})^{-1}\right\} \nonumber  \\
&\lesssim \ \alpha(\e)+\omega(\e).
\end{align}
By choosing $\zeta(\e)=\alpha(\e)+\omega(\e)$, the proposed estimate $\E[\|\wt{\mathsf{F}}^{\e,\mathrm{av}}-\mathsf{H}^{\e}\|_{\tau}i(\langle\Phi^{\e}\rangle_{\tau})^{-2}]\lesssim\zeta(\e)$ follows. \qed
\appendix
\section{Technical points about reflecting \abbr{SDE}s}\label{section:appendix}
We record elementary properties of reflecting Brownian motions on compact (smooth) Riemannian manifolds. To set it up, consider a process $t\mapsto(\mathbf{x}(t),\Phi(t))$ valued in $\bUz\times\mathscr{C}^{10}_{\simeq}(\partial\bUz,\R^{\d})$ as follows. First, following the notation of the beginning of Section \ref{section:couplingproof}, suppose $\mathbf{x}(\cdot)$ solves the following \abbr{SDE}, where $\mathbf{L}$ is the boundary local time (on $\partial\bUz$) of $\mathbf{x}$ and $\mathbf{b}(\cdot)$ is a standard $\R^{\d}$-valued Brownian motion:
\begin{align}
\d\mathbf{x}(t) \ = \ \mathbf{A}^{\Phi(t)}(\mathbf{x}(t))\d\mathbf{b}(t) + \mathbf{m}^{\Phi(t)}(\mathbf{x}(t))\d t + \mathbf{n}(\mathbf{x}(t))\d\mathbf{L}(t).
\end{align}
Suppose also that $\Phi(\cdot)$ is an adapted Poisson process (with clocks of a constant, finite speed).
\begin{lemma}\label{lemma:sde}
The process $t\mapsto(\mathbf{x}(t),\Phi(t))$ has a unique strong solution for all $t\geq0$ (upon fixing initial condition $(\mathbf{x}(0),\Phi(0))$). 

Now, fix $\beta\in[0,1/2)$. There exists a function $j(\cdot,\cdot)$ that is jointly continuous and increasing and a function $h(\cdot)$ that is also continuous and increasing such that given any $\tau>0$ and $\alpha>0$, with probability at least $1-\alpha$, we have
\begin{align}
\sup_{s\neq t\in[0,\tau]}\frac{|\mathbf{x}(t)-\mathbf{x}(s)|}{|t-s|^{\beta}} \ \lesssim_{\beta} \ h(\alpha^{-1})\times \sup_{0\leq t\leq\tau}j(\tau,\langle\Phi(t)\rangle_{\mathscr{C}^{10}}).
\end{align}
Now, suppose $\Phi(t)=\Phi(0)$ and $\mathbf{x}(0)\in\partial\bUz$. Fix $\xi\geq0$, and set $\tau^{\xi}=\inf\{t\geq0: \mathbf{L}(t)=\xi\}$. Then $\tau^{\xi}<\infty$ almost surely.
\end{lemma}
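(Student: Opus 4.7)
Since $\Phi(t)=\Phi(0)$ is frozen, the process $\mathbf{x}(\cdot)$ is a time-homogeneous reflecting diffusion on the compact domain $\bUz$ with respect to the fixed smooth metric $\mathsf{g}^{\Phi(0)}$. Let $L$ denote its generator, a second-order uniformly elliptic operator with smooth coefficients determined by $\mathbf{A}^{\Phi(0)},\mathbf{m}^{\Phi(0)}$ from the beginning of Section \ref{section:couplingproof}. Because $\mathbf{L}(\cdot)$ is nondecreasing and adapted, the a.s.\ finiteness of $\tau^{\xi}$ for every $\xi\geq 0$ is equivalent to showing $\mathbf{L}(t)\to\infty$ almost surely as $t\to\infty$, and I will establish the latter by applying It\^{o}'s formula to a test function engineered so that the reflecting boundary term dominates a linear-in-$t$ drift.

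Concretely, my plan is to construct $v\in\mathscr{C}^{\infty}(\bUz,\R)$ solving the mixed problem $Lv=c_{1}$ on $\bUz$ and $\mathbf{n}\cdot\nabla v=-c_{2}$ on $\partial\bUz$ for suitably chosen constants $c_{1},c_{2}>0$, where $\mathbf{n}$ is the unit inward normal (matching the reflecting direction of the \abbr{SDE}). Existence of $v$ (unique up to an additive constant) will follow from Fredholm theory for strongly elliptic operators on compact manifolds with smooth boundary, provided $c_{1},c_{2}$ are chosen to satisfy the one-dimensional compatibility condition relating them; in the self-adjoint Laplace--Beltrami case this reduces via the divergence theorem to $c_{1}\,\mathrm{Vol}_{g}(\bUz)=\tfrac{1}{2}c_{2}\,\mathrm{Vol}_{g}(\partial\bUz)$, and in the general case an analogous relation involving the invariant measure of $L$ holds. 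Either way both constants can be taken strictly positive.

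Applying It\^{o}'s formula to $v(\mathbf{x}(t))$ along the reflecting \abbr{SDE} will then yield
\begin{align*}
v(\mathbf{x}(t))-v(\mathbf{x}(0)) \ = \ M(t)+c_{1}t-c_{2}\mathbf{L}(t),
\end{align*}
where $M(\cdot)$ is a continuous local martingale with quadratic variation satisfying $[M]_{t}\lesssim t$, by boundedness of $\nabla v$ and $\mathbf{A}^{\Phi(0)}$ on compact $\bUz$. Rearranging and using $\|v\|_{\infty}<\infty$ yields $c_{2}\mathbf{L}(t)\geq c_{1}t+M(t)-2\|v\|_{\infty}$. By Dambis--Dubins--Schwarz (writing $M(t)=B([M]_{t})$ for a standard Brownian motion $B$) and the classical strong law of large numbers, $M(t)/t\to 0$ almost surely as $t\to\infty$; hence $\liminf_{t\to\infty}t^{-1}\mathbf{L}(t)\geq c_{1}/c_{2}>0$ a.s., which forces $\mathbf{L}(t)\to\infty$ and thus $\tau^{\xi}<\infty$ almost surely. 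The principal obstacle will be the existence and compatibility verification for $v$ under the specific (not necessarily self-adjoint) generator $L$; the It\^{o} computation and the martingale law of large numbers are otherwise routine, and the only further care needed is bookkeeping of the inward-versus-outward sign of $\mathbf{n}$ in the reflecting \abbr{SDE}.
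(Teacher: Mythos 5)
Your proposal addresses only the third of the three claims in the lemma. The first claim (global strong existence and uniqueness of the \abbr{SDE} system, including the Poisson-jumping component $\Phi(\cdot)$) and the second claim (the quantitative H\"older-$\beta$ estimate, uniform in the metric up to $j(\tau,\langle\Phi(t)\rangle_{\mathscr{C}^{10}})$ and failing only with small probability) are not addressed at all. In the paper these are handled by a localization-and-gluing argument in local coordinate charts (using the collar splitting so that the local time reduces to that of a one-dimensional reflecting Brownian motion), combined with standard Ito estimates and the H\"older-$\tfrac12^{-}$ regularity of one-dimensional Brownian local time. You cannot simply drop these claims: the Ito formula you invoke presupposes the strong solution exists, so claim one is a prerequisite for your own argument, and the H\"older bound is used elsewhere in the paper (e.g.\ in Lemma~\ref{lemma:coupling2}), so a proof of Lemma~\ref{lemma:sde} that omits it is incomplete.

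For the third claim your approach is correct, and it is genuinely different from the paper's. The paper solves a Dirichlet problem $\Delta_{\mathsf{g}^{\Phi(0)}}\mathscr{U}=1$, $\mathscr{U}|_{\partial\bUz}=0$, applies Ito to $\mathscr{U}(\mathbf{x}(\cdot))$ up to $\tau^{\xi}\wedge m$, and reads off the uniform-in-$m$ bound $\E_{0}(\tau^{\xi}\wedge m)\lesssim 1+\xi$, which gives finiteness by monotone convergence; the argument needs no martingale asymptotics. You instead solve a mixed problem $Lv=c_{1}$, $\mathbf{n}\cdot\nabla v=-c_{2}$, whose solvability requires a compatibility condition that you correctly identify as a divergence-theorem relation (and which is unproblematic here since $L$ is a Laplace--Beltrami-type operator and hence self-adjoint for the Riemannian volume measure, as one sees from the form of $\mathbf{m}^{\Phi}$). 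Ito then yields $c_{2}\mathbf{L}(t)=c_{1}t-v(\mathbf{x}(t))+v(\mathbf{x}(0))+M(t)$ with $[M]_{t}\lesssim t$, and Dambis--Dubins--Schwarz plus the law of the iterated logarithm gives $M(t)/t\to 0$ a.s., hence $\mathbf{L}(t)/t\to c_{1}/c_{2}>0$ a.s. This is slightly more machinery than the paper uses (a mixed rather than Dirichlet boundary-value problem, a martingale strong law) but yields a strictly stronger statement, namely a quantitative almost-sure linear lower bound on the local time, not just finiteness of $\tau^{\xi}$. If you fill in claims one and two, the two approaches to claim three are equally acceptable.
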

\begin{remark}\label{remark:sde}
The assumption $\mathbf{x}(0)\in\partial\bUz$ in the last claim can be removed. Indeed, if not, then wait for $\mathbf{x}$ to hit the boundary $\partial\bUz$ and start counting the boundary local time at that time.
\end{remark}
\begin{proof}
Suppose $\Phi(t)=\Phi(0)$ (so $\Phi(0)$ is constant and deterministic once we condition on the initial data). In this case, existence and uniqueness follows by the classical localization-plus-gluing algorithm below.
\begin{itemize}
\item Strong existence and uniqueness of solutions in any local coordinate chart (until the exit time for this chart) follows by standard Ito theory. Indeed, the coefficients $\mathbf{A}^{\Phi(t)}(\mathbf{x})$ and $\mathbf{m}^{\Phi(t)}(\mathbf{x})$ are uniformly smooth in $\mathbf{x}$ because $\bUz$ is compact, so standard Ito estimates apply. We only need to be careful about the local time $\mathbf{L}$. Note this term is supported at the boundary. In the collar $\mathbf{C}_{2}$, under the coordinates $\mathbf{C}_{2}\simeq\partial\bUz\times[0,2]$, by definition of the metric $\mathsf{g}^{\Phi}$, the \abbr{SDE} for $\mathbf{x}(t)$ pushes forward to standard reflecting Brownian motion on $[0,\infty)$ (which is just the absolute value of standard $\R$-valued Brownian motion) and Brownian motion on $\partial\bUz$ (with respect to a Riemannian metric that depends smoothly on the $[0,2]$-coordinate). As $\partial\bUz$ has no boundary, we can use the same local coordinate chart method but for Brownian motion on $\partial\bUz$ without the issue of a local time.
\item As a consequence, we have local existence and uniqueness with respect to a sequence of stopping times given by exit times of local coordinate charts. It suffices to show that for an appropriate choice of local coordinate charts (on which we solve classical Euclidean \abbr{SDE}s), these stopping times diverge with probability 1. We can choose local coordinate charts so that to leave any new coordinate chart, the process $\mathbf{x}$ must travel Euclidean distance $\mathrm{r}(\bUz)$ (for $\mathrm{r}(\bUz)>0$ depending only on $\bUz$ since $\bUz$ is compact). So, it suffices to show that the sequence of escape times from balls of radius $\geq\mathrm{r}(\bUz)>0$ for a standard Ito \abbr{SDE} (with uniformly bounded, smooth coefficients) diverges with probability 1. This is a standard result proved by escape probability estimates and Borel-Cantelli.
\end{itemize}
Thus, we have global existence and uniqueness if $\Phi(t)=\Phi(0)$. For general $\Phi$, we note that the argument for global existence and uniqueness requires only $\langle\Phi(0)\rangle_{\mathscr{C}^{10}}<\infty$. Since we assume $\langle\Phi(t)\rangle_{\mathscr{C}^{10}}<\infty$ for all $t\geq0$, we can further localize. Precisely, we can find a unique strong solution for $\mathbf{x}$ between any two jump times of $\Phi$ (note that the jump times are stopping times). We then glue these solutions and recall the Poisson clocks for $\Phi(\cdot)$ have constant speed, so the sequence of its jump times diverge (this gives global solutions).

Let us now show the Holder bound. We first write
\begin{align}
\mathbf{x}(t) \ = \ \mathbf{x}(0) + \int_{0}^{t}\mathbf{A}^{\Phi(s)}(\mathbf{x}(s))\d\mathbf{b}(s) + \int_{0}^{t}\mathbf{m}^{\Phi(s)}(\mathbf{x}(s))\d s  + \int_{0}^{t}\mathbf{n}(\mathbf{x}(s))\d\mathbf{L}(s). \label{eq:sdesde}
\end{align}
Holder regularity of the first two integrals on the \abbr{RHS} is classical (by Ito estimates and by uniform boundedness of $\mathbf{A}^{\Phi(s)}$ and $\mathbf{m}^{\Phi(s)}$; again, the jumps in $\Phi(s)$ are stopping times, so the Brownian integral is an honest Ito integral). For the local time integral, note that for any $t_{1}\leq t_{2}$, we have the following since $\mathbf{n}$ is unit-length and the measure $\d\mathbf{L}$ is non-negative:
\begin{align}
|\int_{t_{1}}^{t_{2}}\mathbf{n}(\mathbf{x}(s))\d\mathbf{L}(s)| \ \leq \ \int_{t_{1}}^{t_{2}}\d\mathbf{L}(s) \ = \ \mathbf{L}(t_{2})-\mathbf{L}(t_{1}).
\end{align}
Thus, controlling Holder regularity of the last term in \eqref{eq:sdesde} amounts to controlling Holder regularity of local time itself. Again, under collar coordinates $\mathbf{C}_{2}\simeq\partial\bUz\times[0,2]$, the process $\mathbf{x}$ pushes forward to a standard one-dimensional reflecting Brownian motion and a $\partial\bUz$-\abbr{SDE} (without reflection). Thus, the local time $\mathbf{L}$ is the local time of a standard one-dimensional Brownian motion. The Holder-$1/2^{-}$ property of this local time is classical. (In a nutshell, said local time can be written as the sum of an Ito integral and the absolute value of a Brownian motion, so its Holder regularity follows immediately.) This gives Holder-$1/2^{-}$ regularity of $\mathbf{x}(\cdot)$ with probability 1.

It remains to show the almost sure finiteness of $\tau^{\xi}$ for any $\xi\geq0$. It suffices to show that for any $m\geq0$, we have $\E_{0}(\tau^{\xi}\wedge m)\lesssim_{\Phi(0),\bUz}1+\xi$, where  $\E_{0}$ means expectation conditioning on $(\Phi(0),\mathbf{x}(0))$. (It is crucial that this bound does not depend on $m$.) Let $\mathscr{U}$ solve the following Dirichlet problem:
\begin{align}
\Delta_{\mathsf{g}^{\Phi(0)}}\mathscr{U}(x) \ = \ 1 \quad\mathrm{and}\quad \mathscr{U}(x)|_{\partial\bUz} \ = \ 0.
\end{align}
Existence and regularity of $\mathscr{U}$ follows by elliptic theory. By Ito, the \abbr{PDE} for $\mathscr{U}$, and the assumption $\mathbf{x}(0)\in\partial\bUz$, exactly as in \eqref{eq:coupling3I1}, we have the following identity (for which we recall $\Phi(t)=\Phi(0)$ for all $t$, so the generator acting on $\mathscr{U}$ is always $\equiv1$):
\begin{align}
\E_{0}\mathscr{U}(\mathbf{x}(\tau^{\xi}\wedge m)) \ = \ \E_{0}\{\tau^{\xi}\wedge m\}+\E_{0}\int_{0}^{\tau^{\xi}\wedge m}\{\mathbf{n}\cdot\grad\mathscr{U}\}(\mathbf{x}(r))\d\mathbf{L}(r).
\end{align}
By elliptic regularity, we know that $\mathscr{U}$ is smooth with derivatives bounded in terms of $\Phi(0),\bUz$. In particular, by this and by rearranging the previous display, we get
\begin{align}
\E_{0}\{\tau^{\xi}\wedge m\} \ \lesssim_{\Phi(0),\bUz} 1 + \E_{0}\int_{0}^{\tau^{\xi}\wedge m}\d\mathbf{L}(r) \ = \ 1 + \E_{0}\mathbf{L}(\tau^{\xi}\wedge m) \ \leq \ 1+\E_{0}\mathbf{L}(\tau^{\xi}) \ = \ 1+\xi.
\end{align}
Above, we again used that the measure $\d\mathbf{L}$ is non-negative (or equivalently, $\mathbf{L}$ is non-decreasing). The last identity follows by definition of $\tau^{\xi}$ and almost sure continuity of local time. This completes the proof.
\end{proof}



\end{document}